\documentclass{amsart}
\usepackage{math}
\usepackage{tikz,mathabx,mathrsfs,mathtools,amsrefs,wasysym,comment,lineno,hyperref,fontawesome}
\usetikzlibrary{cd,quotes,angles,decorations,shapes,positioning,
  arrows,automata,calc,decorations.pathreplacing,calligraphy}
\newenvironment{bnftable}{\[\begin{array}{l@{\texttt{ := }}l}}{\end{array}\]}


\newtheorem{mainthm}{Theorem}

\newtheorem{maincor}[mainthm]{Corollary}
\newtheorem{mainobs}[mainthm]{Observation}
\newtheorem{remark}[thm]{Remark}

\newcommand\gfL{{\mathscr L}}

\newcommand\lamp{{\mathcal L}}
\newcommand\sea{{\mathcal S}}
\newcommand\ver{{\mathcal V}}
\newcommand\hor{{\mathcal H}}
\newcommand{\E}{{\mathbb E}}
\newcommand\power{{\mathscr P}}

\newcommand\xqed[1]{%
  \leavevmode\unskip\penalty9999 \hbox{}\nobreak\hfill
  \quad\hbox{#1}}
\newcommand\qee{\xqed{$\fullmoon$}}

\newcommand{\BS}{\mathrm{BS}}
\newcommand{\wee}{X_{\mathrm{tree}}}

\newcommand\shadepath[4]{
  \coordinate (s) at (#3);
  \coordinate (t) at (#4);
  \path[shade,bottom color=#1,top color=#2] ($(s)!0.6pt!90:(t)$) -- ($(t)!0.6pt!-90:(s)$) -- ($(t)!0.6pt!90:(s)$) -- ($(s)!0.6pt!-90:(t)$) -- cycle;
}

\newcommand\faLightbulbON{\tikz{\node at (0,-0.05){\large\textcolor{orange}{\faLightbulbO}};\foreach\a in {-30,0,...,210} {\draw[orange] (\a:0.14) -- (\a:0.25);}}}

\begin{document}
\title{Shifts on the lamplighter group}

\begin{abstract}
    We prove that the lamplighter group admits strongly aperiodic SFTs, has undecidable tiling problem, and the entropies of its SFTs are exactly the upper semicomputable nonnegative real numbers, and some other results. These results follow from two relatively general simulation theorems, which show that for a large class of effective subshifts on the sea-level subgroup, their induction to the lamplighter group is sofic; and the pullback of every effective Cantor system on the integers admits an SFT cover. We exhibit a concrete strongly aperiodic set with $1488$ tetrahedra. We show that metabelian Baumslag-Solitar groups are intersimulable with lamplighter groups, and thus we obtain the same characterization for their entropies.
\end{abstract}

\author{Laurent Bartholdi}
\address{Universit\"at des Saarlandes, Saarbr\"ucken, Germany}
\email{laurent.bartholdi@gmail.com}

\author{Ville Salo}
\address{University of Turku, Turku, Finland}
\email{vosalo@utu.fi}

\date{February 22nd, 2024}
\thanks{L.B. gratefully acknowledges partial support from the ERC AdG grant 101097307}

\maketitle

\section{Introduction}
For a group $G$ and a finite set $A$, a \emph{subshift} is a closed, $G$-invariant subset $X$ of the Cantor set $A^G$. Every subshift is characterized by a collection of \emph{forbidden patterns}: elements of $A^F$ with finite $F\subseteq G$ none of whose translates may appear in elements of $X$. Three classes of subshifts gained prominence: those defined by a finite collection of forbidden patterns (\emph{subshifts of finite type}, SFT); by a recursively enumerable collection (\emph{effective subshifts}; we assume throughout that all groups are countable and have decidable word problem); and \emph{sofic shifts}, see the next paragraph.

Furthermore, much can be learnt by considering \emph{maps} between subshifts. By definition, a subshift $X\subseteq A^G$ is \emph{sofic} if it is a quotient of an SFT $Y\subseteq B^G$ via a map $B\to A$. If $\phi\colon G\twoheadrightarrow Q$ is a quotient map then every subshift $Y\subseteq B^Q$ may be \emph{pulled back} to a subshift $\phi^*(Y)$ on $G$, whose elements are constant on all fibres of $\phi$; and if $\iota\colon H\hookrightarrow G$ is a subgroup inclusion then every subshift $X\subseteq A^H$ may be \emph{induced} to a subshift $\iota_*(X)$ on $G$ consisting of independent copies of $X$ on each coset of $H$. This is just the $G$-shift with same forbidden patterns as $X$. We consider more generally \emph{$G$-systems}, which are closed subsets of Cantor space endowed with a $G$-action.

Clearly the most desirable subshifts are the SFTs, then the sofic ones, and finally the effective ones. The situation we consider, restricted to subshifts, is in the following picture:
\[\begin{tikzcd}
    1\arrow{r} & H\arrow{r}{\iota} & G\arrow{r}{\phi} & Q\arrow{r} & 1\\
    & \text{$H$-subshifts}\arrow{r}{\iota_*} & \text{$G$-subshifts} & \text{$Q$-subshifts}\arrow{l}[above]{\phi^*}\\
    & \text{$H$-effectives}\arrow[hook]{u}\arrow[dashed]{r}[above]{?} & \text{$G$-sofics}\arrow[hook]{u} & \text{$Q$-effectives}\arrow[dashed]{l}[above]{?}\arrow[hook]{u}
  \end{tikzcd}
\]
and we study whether there are dotted arrows that complete the diagram (in which case they are uniquely determined as restrictions of $\iota_*$ and $\phi^*$).

Several results in the literature may be interpreted as the construction of a rightmost dotted arrow; e.g.~\cites{hochman:recursive,aubrun-sablik:simulation,durand-romashchenko-shen:1d2d} in the case $1\to\Z\to\Z^d\to\Z^{d-1}\to1$ for $d\ge1$, and~\cite{barbieri-sablik:semidirect} for $Q$-effective systems in split extensions $1\to\Z^2\to G\to Q\to1$; see also~\cite{barbieri:simulation}. The reason behind such results is that the extra $\Z$ component(s) may be used as ``time'', in which to run Turing machine computations. For this reason, such results are often called ``simulation theorems''.

If $Q$ is non-amenable and the extension is a direct product, \cite{barbieri-sablik-salo:soficity} constructs a leftmost dotted arrow. If $Q$ is amenable, though, we should not expect there to be such a map:
\begin{mainobs}[= Theorem~\ref{obs:effectivenonsofic2}]\label{obs:effectivenonsofic}
  Let $\iota\colon H\hookrightarrow G$ be a subgroup inclusion, and assume that $G$ is amenable and $H$ is infinite and has decidable word problem. Then there exists an effective $H$-subshift $X$ such that $\iota_*(X)$ is not sofic.
\end{mainobs}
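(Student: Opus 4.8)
The plan is to defeat all potential SFT covers at once by encoding an undecidable problem into $X$ in such a rigid way that a sofic cover of $\iota_*(X)$ would have to ``solve'' it using only the bounded local resources available in $G$. The obstruction is combinatorial (an incompressibility/cut argument in the spirit of the classical non-soficity proof for the mirror shift), and it is converted at the very end into a decidability contradiction.

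First I construct the witness. Since $H$ is infinite with decidable word problem, I can computably enumerate an injection $n\mapsto s_n\in H$ whose image $\Sigma=\{s_n\}$ is ``generic'': the $s_n$ are pairwise distinct even after identifying each element with its inverse, and they are spread out in the word metric, so that every finite ball of $H$ meets at most one translate $h\Sigma^{\pm1}$ outside a controlled exceptional set. Fix a recursively enumerable, non-recursive $T\subseteq\mathbb{N}$, and let $X=X_T\subseteq\{0,1\}^H$ be the subshift whose forbidden patterns are $\{e\mapsto1,\ s_m\mapsto1\}$ for $m\in T$; as $T$ is r.e.\ and $n\mapsto s_n$ is computable, $X_T$ is effective. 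Genericity makes the combinatorics transparent: a finitely supported configuration with support $S$ lies in $X_T$ iff no two distinct points of $S$ differ by some $s_m$ with $m\in T$; consequently, placing the support inside a suitable set, one can read off any prescribed bit of a ``left region'' $K$ by a single well-placed $1$ in a far-away ``query region'' $K'$, and all $2^{|K|}$ subsets of $K$ occur as legal patterns.

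Now suppose $\iota_*(X_T)=\pi(Y)$ for an SFT $Y\subseteq B^G$ with window $W$ contained in the $\rho$-ball of $G$ and a $1$-block code $\pi$; I want to conclude that $T$ is recursive. Work inside the identity coset $H$. Choose a set $C\subseteq G$ that $W$-separates $K$ from $K'$ in the Cayley graph of $G$. For each $U\subseteq K$ lift through $\pi$ an $\iota_*(X_T)$-configuration displaying pattern $U$ on $K$ together with the reading queries for the bits off $U$ on $K'$ (these are legal precisely because the queries for positions in $U$ are omitted); restricting these lifts to $C$ and applying the pigeonhole principle, once $|B|^{|C|}<2^{|K|}$ two distinct lifts agree on $C$. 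Splicing one lift on the $K$-side of $C$ with the other on the $K'$-side yields a genuine $Y$-configuration (no new $W$-window sees a disagreement), whose $\pi$-image carries, inside $H$, a $1$ at some position together with the query reading that position as $0$ — a forbidden pattern of $X_T$. This contradicts $\pi(Y)\subseteq\iota_*(X_T)$. Moreover such a contradiction is always witnessed on a finite region: running this construction against $Y$ for each $n$ either exhibits the forbidden configuration, certifying $n\in T$, or does not, uniformly in $n$; combined with $T$ being r.e., this makes $T$ recursive, the desired contradiction.

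The main obstacle is arranging, for arbitrary amenable $G$ and arbitrary infinite $H$, regions $K,K'$ with $|K|$ large and a separator $C$ with $|C|\log|B|<|K|$. This is painless when $H$ has finite index in $G$ — where one can actually avoid cuts entirely, re-encoding $Y$ as an $H$-SFT over the alphabet $B^{[G:H]}$ and exhibiting $X_T$ as a coordinate-factor of it, hence sofic, contradicting that $X_T$ (or any convenient effective non-sofic $H$-subshift, which exists for such $H$) is not sofic. It is also painless when $G$ is ``thin over $H$'' in one direction. But in general $H$ may sit so thinly in $G$ (already $\mathbb{Z}\le\mathbb{Z}^2$) that no small set separates two large pieces of one coset, and it is here that amenability of $G$ must be used in earnest: the Schreier graph of $G$ on $H\backslash G$ is an infinite amenable bounded-degree graph, a Følner set in it is a finite union of cosets that is almost internally closed, and across it the cut/pigeonhole argument can be run with the many independent copies of $X_T$ on those cosets supplying the needed capacity while the boundary cosets contribute a negligible fraction. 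Making this reduction rigorous — in particular keeping the spliced configuration a genuine $Y$-configuration across the boundary cosets — is the technical heart of the argument.
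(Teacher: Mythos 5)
Your overall strategy—exhibiting an effective $H$-subshift with a long-range "copy/query" constraint, then running a pigeonhole argument against a hypothetical SFT cover by means of a Følner set and a paste—is the same in spirit as the paper's. But there is a genuine gap exactly where you acknowledge one, and the fix you sketch is not the one that closes it.

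The subshift you propose, $X_T$, bakes an r.e.\ non-recursive set $T$ into the forbidden patterns, and your contradiction routes through "$T$ would become recursive". This is an unnecessary detour: the paper uses a plain \emph{copy shift} (alphabet with \textsf{in}/\textsf{out} marks and \textsf{in}/\textsf{out} data, at most one mark of each kind, with the constraint that the \textsf{out\_data} is the translate of the \textsf{in\_data} anchoring the two marks). This is effective as soon as $H$ has decidable word problem, and the contradiction is a direct counting argument with no computability content; the computability hypothesis is only needed to make the subshift effective, not to derive the contradiction. Separately, your "genericity" requirements on the sequence $(s_n)$ (balls of $H$ meeting at most one translate of $\Sigma^{\pm1}$ outside a controlled exceptional set) are not established and, for a general infinite finitely generated $H$, are not obviously achievable.

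The geometric obstacle you flag—that inside a single coset $H$ there may be no small cut separating two large regions—is the correct one, and it is precisely what kills your initial plan. But your proposed remedy (Følner sets of the Schreier graph $H\backslash G$, splicing "across boundary cosets") is both incomplete and not how the paper resolves it. The paper does \emph{not} look for a cut inside one coset, nor does it pass to the Schreier graph. Instead it takes a Følner set $F$ of $G$ itself, with $\#(SF\setminus F)<(\log 2/\log\#B)\,\#F$ where $S$ is the memory set of the alleged SFT cover, picks $h\in H$ with $F\cap Fh=\varnothing$, and builds for each pattern $p\colon F\to\{0,1\}$ a configuration $y_p\in\iota_*(X)$: the \textsf{in\_mark} sits at the transversal point of each coset and the \textsf{out\_mark} at its $h$-translate, while $p$ sits on the \textsf{in\_data} over $F$ and its $h$-translate sits on the \textsf{out\_data} over $Fh$. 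Each coset independently satisfies the copy constraint. Lifting to $z_p$ in the SFT, pigeonhole over the $\#B^{\#(SF\setminus F)}<2^{\#F}$ possible restrictions to $SF\setminus F$ yields $p\neq q$ with $z_p,z_q$ agreeing there. Pasting $z_p$ on $F$ with $z_q$ on $G\setminus F$ produces a genuine SFT point whose image has \textsf{in\_data} $p$ on $F$ but \textsf{out\_data} a translate of $q$ on $Fh\subseteq G\setminus F$; since $p\neq q$, the copy constraint fails on the particular coset through the point where they differ. The crucial design decision is that the copy constraint is \emph{local to a coset} while the counting capacity is drawn from the whole Følner set of $G$: the contradiction then appears automatically in whichever coset witnesses $p\neq q$, with no need to manage splices "across boundary cosets". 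That is the piece your proposal is missing, and it is not supplied by the Schreier-graph Følner idea as you describe it.
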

We recover that for $1\to\Z\to\Z^2\to\Z\to1$ there are $\Z$-effective subshifts that do not induce to $\Z^2$-sofic subshifts; it is actually conjectured in~\cite{guillon-jeandel:icc} that only sofic $\Z$-shifts may induce to a sofic $\Z^2$-sofic subshift. We shall soon see that sometimes a large subclass of $H$-effectives may be induced to $G$.

In the extreme case that $H=1$ (or similarly $Q=1$), our question asks whether all $G$-effective subshifts are sofic, in which case $G$ is called \emph{self-simulable}. This happens for instance if $G=F_2\times F_2$, and in numerous other cases, see~\cite{barbieri-sablik-salo:selfsimulable}.

Let us assume that $G$ is amenable, so self-simulation is ruled out, and positive answers to our question entail an infinite ``time'' component $H$. The critical case seems naturally to be a ``small'' infinite, finitely generated $Q$, and an \emph{infinite, locally finite} $H$, namely an infinite subgroup all of whose finitely generated subgroups are finite. This is the situation that we address in this article, in a specific but possibly generic example, the lamplighter group $\lamp$. It is the extension
\[\begin{tikzcd}
    1\arrow{r} & {\displaystyle\bigoplus_{\Z+\frac12}\Z/2}\arrow{r}{\iota} & \lamp\arrow{r}{\phi} & \Z\arrow{r} & 1.
  \end{tikzcd}
\]
(The group $\Z$ acts on the $(\Z+\frac12)$-indexed sum by shifting; see~\S\ref{ss:lamplighter} for a few words on this strange-looking convention.) Our first main result concerns effective \emph{$G$-systems}, analogous to $G$-shifts except that the alphabet is a Cantor set $A^\omega$ rather than a finite set (see~\S\ref{ss:subshifts} for details):

\begin{mainthm}[= Theorem~\ref{thm:effectiveZ2}]\label{thm:effectiveZ}
  Let $X$ be an effective $\Z$-system. Then $\phi^*(X)$ admits an SFT cover.
\end{mainthm}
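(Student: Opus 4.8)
The plan is to carry over the ``fixed-point'' self-similar tiling method of \cites{durand-romashchenko-shen:1d2d,aubrun-sablik:simulation} to a Cayley graph of $\lamp$, namely the Diestel--Leader graph $\DL[2,2]$, using the projection $\phi$ as the ``space'' direction along which a configuration of $X$ will live, and using the exponential, tree-like growth of $\lamp$ --- equivalently, its locally finite kernel $\bigoplus\Z/2$ traversed through the tree structure --- to supply the ``time'' in which computations run.

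First I would isolate the computational data of $X$: since $X$ is an effective $\Z$-system, fix a Turing machine $M$ that, given a time budget $T$, outputs a finite set of \emph{forbidden windows} --- patterns on a block of consecutive integers, each position carrying finitely many initial bits of a point of the Cantor alphabet $A^\omega$ --- in such a way that $X$ is exactly the set of configurations avoiding every window ever output. An SFT cover of $\phi^*(X)$ will then be a finite-alphabet SFT $Y$ on $\lamp$ together with a continuous, $\lamp$-equivariant surjection $\pi\colon Y\to\phi^*(X)$; note that continuity already allows the $c$-th bit of the Cantor symbol $\pi(y)(g)$ to be read off $y$ on a ball around $g$ whose radius may grow with $c$, so there is no obstruction in principle to a finite-alphabet SFT mapping onto a Cantor system.

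The core of the argument is a self-similar hierarchy on $\DL[2,2]$, realized by an SFT with \emph{uniformly} local rules: nested ``macro-tiles'' of every level $\ell$, each cut into level-$(\ell-1)$ macro-tiles according to one fixed combinatorial scheme, every level-$\ell$ macro-tile containing a computation region in which $M$ can be simulated for at least $2^\ell$ steps. The resource making macro-tiles this large available at radius polynomial in $\ell$ is the exponential growth of $\lamp$: combinatorially, a bounded-height slab of $\DL[2,2]$ branches $2$-fold per level, so over an $O(\ell)$-window of positions there is room for a width-$2^\ell$ computation tableau laid out along the lamp coordinates. The delicate point --- and the step I expect to be the main obstacle --- is coherence: because $\DL[2,2]$ is a \emph{twisted} product of two trees rather than a flat grid, macro-tiles do not assemble along straight lines, and one must design the level-$\ell$ decomposition so that it is compatible both with the way the fibres of $\phi$ become confluent as one moves in the $t$-direction and with the tilings carried by neighbouring and enclosing macro-tiles, all while keeping every rule of bounded radius so that a single finite SFT is self-similar at every scale.

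On top of the hierarchy I would put two further layers. A \emph{symbol layer} stores, near each vertex $g$ with $\phi(g)=n$, growing prefixes of a putative symbol $s_n\in A^\omega$; its local rules force these prefixes to be consistent between a macro-tile and the one enclosing it, and force two vertices of a common $\phi$-fibre to agree bit by bit --- which propagates globally because any two vertices of height $n$ eventually lie in a common macro-tile, by the confluence of fibres built into the hierarchy. A \emph{computation layer} runs $M$ inside each level-$\ell$ computation region for $2^\ell$ steps and compares the forbidden windows it outputs against the finitely many bits of $s_n,s_{n+1},\dots$ visible in that macro-tile, a match causing the local rule to fail. Taking $Y$ to be the resulting SFT and $\pi(y)(g):=s_{\phi(g)}$, one checks: $\pi$ is continuous and $\lamp$-equivariant by construction and its values are constant on $\phi$-fibres; the image lies in $X$ because any configuration carrying a forbidden window would have it detected in a sufficiently large macro-tile (every vertex lies in macro-tiles of all levels); and $\pi$ is onto, since given $\bar x\in X$ one fills the symbol layer with $\bar x$ read off the fibres, chooses any legal hierarchy beneath it, and runs the honest computations, which by hypothesis never report a violation. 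Hence $\phi^*(X)$ admits an SFT cover.
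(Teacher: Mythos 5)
The paper's proof is not a direct fixed-point construction spanning the $\phi$-direction of $\DL[2,2]$; it is a short reduction to Theorem~\ref{thm:effectiveH}. There, the macro-tile hierarchy lives entirely \emph{inside the $\sea$-cosets} (squares $\sea_n \cong (\Z/2)^n\times(\Z/2)^n$, via the Wang-tower machinery of~\S\ref{ss:wang2sofic}), never stretching across $\phi$-levels. Theorem~\ref{thm:effectiveZ} is then derived by fixing the generator $f$ of the $\Z$-system $X$, choosing a computable injective coding $\xi\colon X\to\{0,1\}^\hor$, forming the effective $\hor$-subshift $Y$ of pairs $(y,y')$ with $y=\xi(x)$ and $y'\circ\sigma^{-1}=\xi(f(x))$, obtaining an SFT cover of $\iota_*\eta^*(Y)$ from Theorem~\ref{thm:effectiveH}, and finally imposing one extra nearest-neighbour rule forcing the first component of $(a^{-1}z)\restriction_\hor$ to equal the second component of $z\restriction_\hor$ shifted. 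That single rule is what makes the configuration constant on $\phi$-fibres; the whole $\Z$-trajectory is encoded on each $\sea$-coset, and adjacent cosets are synchronized so the $a$-shift implements $f$.

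Your proposal takes a genuinely different route and, as written, has a gap at exactly the place you flag as ``the main obstacle.'' You want level-$\ell$ macro-tiles of $\DL[2,2]$ that span a $\phi$-range and become arbitrarily wide in the $\sea$-direction, so that any two vertices in a common $\phi$-fibre eventually share a macro-tile; this is the claim carrying the entire proof that $\pi$ lands in $\phi^*(X)$, yet it is asserted rather than established. There is a concrete reason to doubt it can be fixed in the form you sketch: $\DL[2,2]$ has no obvious hierarchical cell structure in which a region is cut into disjoint smaller regions of the same shape (a height-$k$ tetrahedron overlaps both height-$(k-1)$ tetrahedra it ``contains''), so there is no evident ``fixed combinatorial scheme'' in the $\phi$-direction; and a decomposition whose pieces must have unbounded $\sea$-width to achieve confluence is in tension with the requirement that a single finite SFT describe the hierarchy. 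Until you exhibit such a decomposition and verify that it enforces constancy on $\phi$-fibres, the argument is a plan, not a proof. The paper avoids the difficulty entirely by keeping the fixed-point construction on $\sea$-cosets and enforcing $\phi$-constancy by the one explicit synchronization rule; you would do well to look at that reduction, which also shows why the $\hor$-case (Theorem~\ref{thm:effectiveH}) is the natural ``hard'' theorem to prove first.
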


(In particular, if $X$ is a subshift, $\phi^*(X)$ is a sofic $\lamp$-subshift.) The kernel of $\phi$ is the locally finite group $\sea=(\Z/2)^{(\Z+\frac12)}$, which further decomposes as $\sea=\hor\times\ver$ for $\hor = (\Z/2)^{(-\N-\frac12)}$ and $\ver = (\Z/2)^{(\N+\frac12)}$. We denote by $\eta\colon \sea \to \hor$ the natural projection. We can also simulate Turing machines within $\ver$, proving the following result which is the main ingredient in the proof of Theorem~\ref{thm:effectiveZ}, and in view of Observation~\ref{obs:effectivenonsofic} is probably the most that can be hoped for: denoting by $\iota^-\colon\hor\hookrightarrow\lamp$ the inclusion, there exists an effective $\hor$-subshift $X$ such that $(\iota^-)_*(X)$ is not sofic, and an effective $\sea$-subshift such that $\iota_*(X)$ is not sofic.

\begin{mainthm}[$\subseteq$ Theorem~\ref{thm:effectiveH2}]\label{thm:effectiveH}
  Let $X$ be an effective $\hor$-system. Then $\iota_*\eta^*(X)$ admits an SFT cover.
\end{mainthm}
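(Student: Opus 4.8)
The plan is to put $\iota_*\eta^*(X)$ into a combinatorial normal form and then to cover it by an SFT in which the lamp sites lying \emph{ahead} of the lamplighter --- which the normal form shows are invisible to the target configuration --- are re-used as the tape and clock of a Turing machine verifying membership in $X$.

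First I would unwind the definitions. Write elements of $\lamp$ as pairs $(f,k)$ with $f\colon\Z+\frac12\to\Z/2$ finitely supported and $k\in\Z$; the cosets of $\sea=\ker\phi$ are the ``levels'' $L_k=\{(f,k):f\}$. Chasing the group law against the conventions of~\S\ref{ss:lamplighter}, a configuration $c\in A^\lamp$ lies in $\iota_*\eta^*(X)$ exactly when: (i) $c(f,k)$ depends only on the restriction of $f$ to positions $<k$ --- equivalently, $c$ factors through the projection of $\lamp$ onto the tree $T$ whose vertices are the pairs (position, lamps behind the lamplighter), one of the two trees in the Diestel--Leader picture of $\lamp$; and (ii) for each $k$, the configuration that $c$ induces on the horocycle of $T$ at height $k$, read as a point of $A^\hor$ via the natural identification of that horocycle with an $\hor$-torsor, lies in $X$. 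The reason for composing with $\eta^*$, rather than inducing from $\hor$ outright, is precisely clause (i): the lamps in the $\ver$-direction relative to the current position never influence $c$, so in a cover we may overwrite them at will --- this is exactly the slack that the bare induction $(\iota^-)_*$ does not provide.

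Next I would build the cover $Y\subseteq(A\times B)^\lamp$, where the second coordinate records the state and tape symbol of a Turing machine $M$ witnessing the effectivity of $X$ --- so that $M$, reading the horocyclic slice through a point, rejects exactly when that slice is not in $X$. The run of $M$ is threaded along the tree $T$: a single lamplighter step turns the frontier lamp from ``ahead'' into ``behind'', and this is scheduled as one step of $M$ consulting and updating one tape cell, the tape being the ahead-lamps re-indexed by $\N$ (this is the ``simulation of Turing machines within $\ver$'' announced above) and the infinite supply of levels providing unbounded running time. The forbidden patterns of $Y$ would enforce, always between group elements within a bounded ball: the Cayley/lamplighter structure together with clause (i) on each level; correct initialization of $M$ wherever a fresh branch of $T$ begins; a correct one-step transition of $M$ between a tree vertex and its neighbour; and --- crucially --- that the local pattern witnessing ``$M$ has rejected'' is forbidden. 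Each constraint inspects only a bounded window, so $Y$ is genuinely an SFT, and the factor map $\pi\colon Y\to\iota_*\eta^*(X)$ is the coordinate projection $(a,b)\mapsto a$.

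Finally I would verify that $\pi$ is onto. Given $c\in\iota_*\eta^*(X)$, place the $A$-data forced by the normal form (well defined by clause (i)) and run $M$ along every branch of $T$ on the slices it meets; by clause (ii) every slice really is in $X$, so $M$ never rejects, the ``rejected'' pattern never occurs, all constraints hold, and the resulting point of $(A\times B)^\lamp$ maps onto $c$. Hence $\iota_*\eta^*(X)$ is a factor of an SFT, i.e.\ admits an SFT cover. I expect the real difficulty to be driving one local rule-set through the \emph{branching and merging of $T$} coherently: distinct branches carrying what is morally the same partial computation must be kept consistent, a tape cell is a lone $\Z/2$ lamp and hence is erased whenever the head returns to it, and the finitely many ones of a lamp configuration can sit arbitrarily far apart, so the data $M$ must consult is not uniformly local. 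Setting up this bookkeeping --- which bounded window of lamps encodes a configuration of $M$, and how that window is transported along $T$ --- is the technical heart, and is where the full argument (for the more general Theorem~\ref{thm:effectiveH2}) does its work.
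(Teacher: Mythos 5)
Your opening reduction is right: $\iota_*\eta^*(X)$ consists exactly of those $c\in A^\lamp$ that factor through the ``rear'' Diestel--Leader tree $T$ and whose trace on each horocycle of $T$ is an element of $X$. That is indeed the slack that $\eta^*$ buys over $(\iota^-)_*$, and the paper exploits it too.

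However, the core of your plan --- run the Turing machine \emph{along} $\Z$, treating levels as time and ahead-lamps as tape --- cannot work, and the problem is not the bookkeeping you flag at the end but something upstream of it. A branch of $T$ meets each horocycle in a \emph{single} vertex. So the machine you are threading along a branch reads one $A$-symbol per level, i.e.\ a $\Z$-indexed sequence of samples, one from each of infinitely many distinct $\hor$-configurations. It never sees a slice. But the forbidden patterns of $X$ live on $\hor_n\cong\{0,\dots,2^n-1\}$: to reject, the machine must compare $2^n$ values that sit at the \emph{same} level, spread across an $\hor_n$-coset. Those values are reached by moves inside a fixed $\sea$-coset, which is exactly the direction your schedule never moves in. Thus the data the machine would need is not on its tape, and cannot be put there by local rules that follow a branch. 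The branching/merging coherence problem you mention is a second, independent obstruction: going one level up in $T$ splits into two children whose rear lamp configurations extend the parent's in two incompatible ways, so one would have exponentially many copies of the machine with no shared state; but even if that were somehow resolved, the access problem above already kills the plan.

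The paper's construction is orthogonal to yours precisely at this point. The computation takes place \emph{within} a single $\sea$-coset, which is where the whole $\hor$-configuration to be tested lives. The $\ver$-direction (not $\Z$) supplies the free ``space'' and ``time'' --- this is what the paper means by ``simulating Turing machines within $\ver$''. To make a two-dimensional computation out of the locally finite group $\sea=\hor\times\ver$, the paper first builds a hierarchical ``grid'' skeleton: a $\Z/3$-coloured tileset $\Theta_0$ whose tilings organize each translate of $\sea$ into a tower of nested squares $\sea_n\cong\{0,\dots,2^n-1\}^2$ (Section~\ref{sec:Corooted} and Propositions~\ref{prop:soficH}--\ref{prop:equalbranchingstructures}), then introduces ``Wang towers'' adapted to this tower structure (Section~\ref{sec:Real}), and finally runs the Durand--Romashchenko--Shen fixed-point/self-simulating tileset on these towers so that macrotiles of level $k$ carry a universal Turing machine verifying forbidden patterns of $X$ of increasing sizes (Sections~\ref{sec:FP}--\ref{sec:InfinitelyMany}). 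The $\Z$-direction in the paper is used only to \emph{force} the skeleton (via the binary-tree structure of the Cayley graph), not as the clock of the simulation. So what you proposed is not a variant of the paper's argument; it is a different scheme, and it founders because the simulated machine is looking in a direction that does not contain its input.
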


(In particular, if $X$ is a subshift, $\iota_*\eta^*(X)$ is a sofic $\lamp$-subshift.) We deduce a number of corollaries for the lamplighter group. The \emph{domino problem} for a group $G$ asks for an algorithm that, upon input a finite collection of forbidden words, determines whether the corresponding SFT is non-empty.
\begin{maincor}[= Corollary~\ref{cor:undecidable2}]\label{cor:undecidable}
  The domino problem on $\lamp$ is undecidable.
\end{maincor}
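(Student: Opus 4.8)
The plan is to reduce the halting problem to the domino problem on $\lamp$ by means of Theorem~\ref{thm:effectiveZ}, exploiting the classical fact that emptiness of effective $\Z$-subshifts is already undecidable.

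\emph{Step 1: a uniform family of effective $\Z$-subshifts.} Fix a recursive enumeration $(M_p)_{p\in\N}$ of Turing machines. For each $p$, let $Y_p\subseteq\{0,1\}^{\Z}$ be the $\Z$-subshift whose forbidden patterns are the two one-cell patterns $0$ and $1$ if $M_p$ halts on empty input, and no pattern otherwise; this forbidden set is enumerated by simulating $M_p$ for more and more steps, so it is recursively enumerable uniformly in $p$. Hence $Y_p$ is the full shift when $M_p$ does not halt and is empty when it does; in particular $Y_p\ne\emptyset$ iff $M_p$ does not halt, and $p\mapsto Y_p$ is a computable map into effective $\Z$-subshifts.

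\emph{Step 2: transport to $\lamp$.} Applying Theorem~\ref{thm:effectiveZ} to $Y_p$, the pullback $\phi^*(Y_p)$ is a sofic $\lamp$-subshift, i.e. the image under a symbol map $B_p\to\{0,1\}$ of an SFT $Z_p\subseteq B_p^{\lamp}$ with $B_p$ finite. I would check that this is produced \emph{uniformly}: the construction underlying Theorem~\ref{thm:effectiveZ} is effective, so from a machine enumerating the forbidden patterns of $Y_p$ one computes a finite alphabet $B_p$ together with a finite list of forbidden patterns defining such a $Z_p$. Since $\phi\colon\lamp\twoheadrightarrow\Z$ is onto, $\phi^*$ sends the empty subshift to the empty subshift and nonempty ones to nonempty ones, and a symbol-map factor is surjective, so $Z_p=\emptyset\iff\phi^*(Y_p)=\emptyset\iff Y_p=\emptyset$. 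Combining with Step~1, $Z_p=\emptyset$ iff $M_p$ halts.

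\emph{Step 3: conclude.} If the domino problem on $\lamp$ were decidable, composing its algorithm with the computable map $p\mapsto(B_p,\ \text{forbidden patterns of }Z_p)$ would decide whether $M_p$ halts on empty input, contradicting Turing's theorem. (One could equally run the argument through Theorem~\ref{thm:effectiveH}, since induction and pullback both preserve nonemptiness, starting from the analogous degenerate family of effective $\hor$-subshifts.) I expect the only delicate point to be the uniformity invoked in Step~2: logically the statement of Theorem~\ref{thm:effectiveZ} need not mention it, but its proof must be inspected (or its statement strengthened) so that the covering SFT is algorithmically extractable from a presentation of the effective system.
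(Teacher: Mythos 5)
Your proof is correct and is essentially the paper's argument: the paper reduces from the emptiness problem for effective $\hor$-subshifts via Theorem~\ref{thm:effectiveH}, while you reduce from emptiness of effective $\Z$-subshifts via Theorem~\ref{thm:effectiveZ}; both simulation theorems preserve emptiness, and both are stated in the paper (Theorems~\ref{thm:effectiveZ2} and~\ref{thm:effectiveH2}) with the explicit uniformity clause ``effectively computed from $X$'', so the delicacy you flag in Step~2 is already taken care of by the paper's formulation. Note also that the paper records the sharper conclusion of $\Pi^0_1$-completeness, whereas your reduction yields bare undecidability, which is all the stated corollary requires.
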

(The best result known beforehand was that the \emph{seeded domino problem}, asking for an algorithm deciding, given a finite collection of forbidden words and a cylindrical set, whether the corresponding SFT intersects it non-trivially~\cite{bartholdi-salo:ll}).

\begin{maincor}[= Corollaries~\ref{cor:examples2}, \ref{cor:examples3}, \ref{cor:examples4} and~\ref{cor:examples5}] \label{cor:examples}
  There exists on $\lamp$
  \begin{enumerate}
  \item  a strongly aperiodic SFT; namely a non-empty subshift on which $\lamp$ acts freely;
  \item an SFT none of whose configurations is recursively enumerable;
  \item a non-empty SFT each of whose configurations has Kolmogorov complexity $2^{\Theta(n)}$ on each of its radius-$n$ balls;
  \item an SFT with entropy $\eta$ for every $\Pi^0_1$ nonnegative real number $\eta$.
  \end{enumerate}
\end{maincor}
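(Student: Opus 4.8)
\emph{Overall plan.} The idea is to obtain each of the four examples by feeding a suitable effective system into Theorem~\ref{thm:effectiveZ} or Theorem~\ref{thm:effectiveH}, taking an SFT cover of the resulting sofic $\lamp$-subshift, and checking that the desired property is inherited through the cover map or forced by the geometry of $\lamp$. \emph{Item (2)} is the quickest: fix an effective $\Z$-subshift $X$ with no computable configuration (e.g.\ one whose configurations code, redundantly and shift-invariantly, the members of a fixed nonempty $\Pi^0_1$ subclass of $2^\N$ with no recursive element); by Theorem~\ref{thm:effectiveZ}, $\phi^*(X)$ is sofic, and any SFT cover $\widehat X$ is nonempty and has no computable point, since a computable point of $\widehat X$ would project --- through the sliding-block cover map, then through the computable quotient $\phi$ --- to a computable configuration of $X$. (Item (2) also follows a posteriori from item (3), since a computable configuration has $O(\log n)$ complexity on its radius-$n$ balls.)

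\emph{Item (1)} combines the two theorems. Let $Y$ be an effective strongly aperiodic $\Z$-subshift --- say a Sturmian subshift with computable irrational slope --- and let $W$ be an effective \emph{free} $\hor$-subshift; such a $W$ exists because $\hor$ is a countably infinite group (Gao--Jackson--Seward) and can be taken effective. Since the intersection of sofic subshifts is sofic, the subshift $Z$ consisting of pairs of configurations, one in $\phi^*(Y)$ and one in $\iota_*\eta^*(W)$, is sofic (by Theorems~\ref{thm:effectiveZ} and~\ref{thm:effectiveH}) and nonempty; let $\widehat Z$ be an SFT cover. As $\widehat Z$ factors onto $Z$, it suffices to see that $\lamp$ acts freely on $Z$. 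Let $g=(\rho,k)\neq 1$. If $k\neq 0$, then on the first coordinate $g$ acts as the shift by $k$ on the underlying $\Z$-configuration, which lies in $Y$ and hence has no period. If $k=0$ and $\rho\neq 0$, a short computation shows that $g$ fixes every $\sea$-coset setwise and acts on the copy of $W$ attached by the induction to the $\sea$-coset over $j\in\Z$ by the translation corresponding to $\eta(\sigma^{-j}\rho)\in\hor$, where $\sigma$ is the shift of $\Z$ on $\sea$; once $|j|$ exceeds the diameter of $\operatorname{supp}(\rho)$ this element equals $\sigma^{-j}\rho\neq 0$, so the free $\hor$-action on $W$ moves that copy. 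In either case $g$ moves the configuration, and $\widehat Z$ is a nonempty SFT on which $\lamp$ acts freely.

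\emph{Items (3) and (4)} transplant the $\Z^2$-style constructions of Durand--Romashchenko--Shen and Hochman--Meyerovitch, with the locally finite direction supplying the ``area''. For (3), let $X$ be an effective \emph{everywhere complex} $\hor$-system, i.e.\ every configuration $x$ satisfies $C(x|_{F_m})=2^{\Theta(m)}$ along the canonical exhaustion $F_m\cong(\Z/2)^m$ of $\hor$; such an $X$ is produced by a self-similar fixed-point construction in the spirit of Durand--Romashchenko--Shen (the naive set of incompressible configurations is not effective). A radius-$n$ ball of $\lamp$ exposes $\Theta(n)$ of the $\hor$-coordinates of the $\sea$-coset through its centre, so in $\iota_*\eta^*(X)$, and hence in any SFT cover, every radius-$n$ ball has complexity at least $2^{\Theta(n)}$; together with the trivial bound $|B_n(\lamp)|=2^{\Theta(n)}$ this gives item (3). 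For (4), that every $\lamp$-SFT has $\Pi^0_1$ entropy is the routine direction: $\lamp$ is amenable, so its entropy is the infimum over a computable F\o{}lner sequence of $\frac1{|F|}\log|X_F|$, which is upper semicomputable. For the converse, fix a $\Pi^0_1$ real $\eta\geq0$. Here pullbacks of $\Z$-systems and inductions of the form $\iota_*\eta^*(\cdot)$ are useless --- they all have $\lamp$-entropy $0$, since the exposed ``area'' grows too slowly against $|F_N|\asymp N2^N$ --- so one must use the full strength of Theorem~\ref{thm:effectiveH}, simulating a Turing machine in the $\ver$-direction, to induce an effective $\sea$-subshift $W$ with $\sea$-entropy exactly $\eta$: the $\sea$-cosets carry the free bits, while a machine running along the $\Z$-direction enforces successive rational upper approximations to $\eta$. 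One then checks $h_\lamp(\iota_*(W))=h_\sea(W)=\eta$, the finitely many early corrections disappearing in the limit.

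\emph{The main obstacle} is not (1) or (2) but (3) and (4): one must carry out the Durand--Romashchenko--Shen and Hochman--Meyerovitch arguments quantitatively in the geometry that Theorems~\ref{thm:effectiveZ} and~\ref{thm:effectiveH} provide. In particular one must pin down a workable F\o{}lner sequence for $\lamp$ --- box-shaped sets are not F\o{}lner, since shifting the lamp support is never almost-trivial --- and verify that the amount of locally finite ``area'' seen in a radius-$n$ ball, respectively in a F\o{}lner set, scales at precisely the required rate, and that the Turing-machine bookkeeping along the $\Z$-direction keeps pace. This bookkeeping is where the real work lies.
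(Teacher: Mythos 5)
Your proofs of items~(1) and~(2) match the paper's strategy. For~(1), the paper takes a product $(\iota^-)_*(X)\times\phi^*(Y)$ with $X$ an aperiodic effective $\hor$-shift and $Y$ an aperiodic effective $\Z$-shift; you take $\iota_*\eta^*(W)\times\phi^*(Y)$, which is actually the cleaner choice, since Theorem~\ref{thm:effectiveH} is stated for $\iota_*\eta^*$ and not for $(\iota^-)_*$. Your explicit check that a nontrivial $\rho\in\sea$ moves every configuration, by conjugating $\rho$ far enough along $\Z$ so that its support lands entirely in $\hor$, is correct and fills in a detail the paper leaves implicit. Item~(2) is as you describe (and, as you note, also follows from~(3)). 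Item~(3) has no written proof in the paper; your argument (push a DRS-type ``everywhere complex'' $\hor$-system through $\iota_*\eta^*$ and observe a radius-$n$ ball reveals an $\hor_{\Theta(n)}$-pattern) is the natural one, though constructing an \emph{effective} everywhere-complex $\hor$-system is itself nontrivial work you leave at the level of a citation.

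Item~(4) is where your approach diverges from the paper's and has a genuine gap. You correctly observe that both $\phi^*(X)$ and $\iota_*\eta^*(X)$ have $\lamp$-entropy zero, so neither simulation theorem produces positive entropy directly. But your proposed fix --- ``use the full strength of Theorem~\ref{thm:effectiveH} to induce an effective $\sea$-subshift $W$ with $\sea$-entropy $\eta$'' --- is not something Theorem~\ref{thm:effectiveH} can give you: that theorem only handles $\sea$-subshifts of the form $\eta^*(X)$ with $X$ an $\hor$-system, i.e.\ those constant on $\ver$-cosets, and those have zero entropy. The only other induction result in the paper is Theorem~\ref{thm:substitutive2} for substitutive $\sea$-shifts, which again have zero entropy. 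There is no theorem in the paper that makes $\iota_*(W)$ sofic for a general positive-entropy effective $\sea$-subshift $W$, and one should not expect one, by Observation~\ref{obs:effectivenonsofic}. The paper's actual mechanism is quite different: pull back an effective \emph{Sturmian} $\Z$-subshift $X$ of density $d$ (a zero-entropy object) via $\phi$, take an SFT cover $Y$, and then cross with the full shift $\{0,\dots,t\}^\lamp$ subject to the local rule that the $\{0,\dots,t\}$-symbol is $0$ exactly where the Sturmian marker is $0$. This \emph{extra unconstrained layer}, not the simulated system, supplies the entropy $d\log t$; the ``same entropy'' clause of Theorem~\ref{thm:effectiveH2} is then invoked to ensure the SFT cover $Y$ contributes nothing. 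Your write-up never introduces an unconstrained layer of this kind, so as stated the construction cannot reach positive entropy.
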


The best result known beforehand was that there exists a non-empty subshift without periodic points~\cite{cohen:lamplighters}, namely in which each $\lamp$-orbit is infinite. We also provide an explicit construction, based on Kari's transducer method~\cite{kari:smallwang}, of a strongly aperiodic SFT with $1488$ legal size-$4$ patterns, and based on the same method a proof of the undecidability of the tiling problem.

\subsection{Strategy}
We develop the theory of substitutional subshifts on locally finite groups such as $\sea$. It is somewhat analogous to substitutional subshifts on $\Z$ or $\Z^2$, though we do not have a general theory containing both. Keeping in mind Mozes's result~\cite{mozes:tilings} that every substitutional $\Z^2$-subshift is sofic, we prove related statements about $\sea$-subshifts.  We then use substitutional subshifts to define a ``scaffolding'' on which to build simulations. As a side effect, we prove the following variant of Theorem~\ref{thm:effectiveH}:
\begin{mainthm}[= Theorem~\ref{thm:substitutive2}]\label{thm:substitutive}
  Let $X$ be a substitutive $\sea$-shift. Then $\iota_*(X)$ is sofic.
\end{mainthm}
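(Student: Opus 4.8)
The plan is to exploit the hierarchical structure of a substitutive $\sea$-shift rather than mere effectiveness: by Observation~\ref{obs:effectivenonsofic} effectiveness alone cannot suffice, since $\sea$ is infinite with decidable word problem and $\lamp$ is amenable. Recall that a substitutive $\sea$-shift comes with an exhausting filtration $\sea_0\le\sea_1\le\sea_2\le\cdots$ of finite subgroups (with $[\sea_{n+1}:\sea_n]$ constant, equal to $4$ for the natural choice inside $\sea=\hor\times\ver$), a finite alphabet $A$, and a substitution rule $\tau\colon A\to A^{\sea_1}$; the points of $X$ are the configurations all of whose finite patterns occur in some iterate $\tau^n(a)$, up to translation. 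First I would package this data into a \emph{scaffold} $\sea$-subshift $S$ whose symbols record, at each $g\in\sea$, the coset $g\sea_n$ for every $n$ (a coherent system of supertile boundaries at all scales) together with the level-by-level substitution history, so that $X$ is a factor of $S$ via the letter-to-letter map that reads off the base symbol. Then $\iota_*(X)$ is a factor of $\iota_*(S)$, and it suffices to prove $\iota_*(S)$ sofic.

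Second, I would build an explicit SFT cover of $\iota_*(S)$ on $\lamp$. The point is that under $\iota$ the locally finite group $\sea$ sits inside $\lamp$ transversally to the $t$-direction (the copy of $\Z$), and moreover $\sea=\hor\times\ver$, so the $\ver$-half together with the $t$-direction are available as ``computation space''. A finite local rule on $\lamp$ can then force the scaffold to be globally coherent: passing from the level-$n$ decomposition to the level-$(n+1)$ one, and checking that $\tau$ is respected, is a bounded-data bookkeeping task over regions whose size grows with $n$, which is precisely the kind of effective verification furnished by the machinery behind Theorems~\ref{thm:effectiveZ} and~\ref{thm:effectiveH}: run the substitution as a Turing computation with ``time'' along $t$ and ``tape'' inside $\ver$ (and/or $\hor$), pulled back and induced to $\lamp$, rejecting any incoherent configuration. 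This produces an SFT $\widehat S$ on $\lamp$ surjecting onto $\iota_*(S)$, hence a chain $\widehat S\twoheadrightarrow\iota_*(S)\twoheadrightarrow\iota_*(X)$ of factor maps, and $\iota_*(X)$ is sofic.

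The main obstacle, where the genuine work lies, is matching the self-similar combinatorics of the locally finite group $\sea$ to the essentially one-dimensional ``time'' available in $\lamp$. Concretely: \textbf{(i)} one must exhibit a uniformly SFT-checkable encoding of the whole tower of $\sea_n$-classes --- an odometer-type coordinate on each $\sea$-coset --- so that a finite local rule forces the global hierarchy, the analogue for $\sea$ of the way a Toeplitz/odometer structure on $\Z$ is pinned down by nearest-neighbour constraints; and \textbf{(ii)} one must ensure that the computation certifying level $n$ actually fits in the available room, since $t$ contributes only one dimension of time while supertiles grow, which forces the verification at level $n$ to reduce to bounded checks relative to the already-certified level-$(n-1)$ data --- the locally finite counterpart of Mozes's soficity argument for $\Z^2$-substitutions. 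This is exactly the ``scaffolding'' developed for Theorems~\ref{thm:effectiveZ} and~\ref{thm:effectiveH}, so Theorem~\ref{thm:substitutive} drops out as a by-product of that construction.
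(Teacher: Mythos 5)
Your proposal follows the paper's own route: both treat the substitutive $\sea$-shift as carrying a hierarchical ``supertile'' scaffold (encoded here as the parse-tree subshift, in the paper via the \texttt{sub}/\texttt{psub} fields and the $\tau\times\tau$-substitutive layer already present in the macrotile machinery of Theorem~\ref{thm:effectiveH}), then have the fixed-point Turing machine verify level-to-level consistency. The one concrete ingredient you leave implicit --- and which the paper spells out --- is how to enforce global consistency of the \emph{non-deterministic} choices in $\tau'$ across sibling macrotiles (the paper's ``choice sequence'' synchronized along $2^j$-blocks); your ``level-by-level substitution history'' gestures at this, but that synchronization is the only genuinely new piece of bookkeeping beyond what Theorem~\ref{thm:effectiveH} already provides.
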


The starting point in the proof of Theorem~\ref{thm:effectiveH} is that when $X\subseteq A^\hor$ is a substitutive subshift, $(\iota^-)_*(X)$ is sofic. This allows simulating tiling systems independently on all $\sea$-cosets.

The next step in the proof is an adaptation of the construction by Durand-Romashchenko-Shen of self-similar tilesets~\cite{durand-romashchenko-shen:fpa} from the theory of plane tilings to the locally finite case. Let us briefly recall the idea. Given a tileset $\sigma$ on a group $G$, we construct another tileset $\tau(\sigma)$, whose tiles are bigger than the tiles of $\sigma$, and which ``implements'' $\sigma$, in the sense that $\tau(\sigma)$ encodes a program checking whether a tuple of colours represents a tile of $\sigma$. If the construction $\sigma\mapsto\tau(\sigma)$ is algorithmic, then it admits a fixed point $\sigma=\tau(\sigma)$, essentially a ``Quine'' (a program that prints itself) in the realm of subshifts. More precisely, this is a ``self-similar'' program, checking that it runs itself on arbitrarily large tiles, and performing on the side any desired calculation. In this manner arbitrary Turing machines may be simulated, by having them run on tiles of all sizes. We devote Section~\ref{sec:FP} to an outline the fixed point construction for our tiling systems, and Section~\ref{sec:Implementation} to an implementation of this scheme using a ``universal tile set'', including some engineering details that we could not find elsewhere.

The geometry of the grid $[0,2^n-1]\times[0,2^n-1]$ in $\Z^2$ and that of $(\Z/2)^n\times(\Z/2)^n$ are very different; nevertheless, we show that the natural bijection between these two sets, given by binary encoding, allows Turing machine calculations to be represented in both cases.

Rather than relying on general fixed-point properties, we actually construct the self-similar tileset. This has the benefit of giving us control on the entropy of the construction, which is quite low (in particular, sublinear).

\section{Definitions and notations}
For a group $H$ and a set $S$ we write $H^{(S)}$ for the group of functions $f\colon S \to H$ whose support $\{s \in S \mid f(s) \neq 1\}$ is finite, under the pointwise product $(f \cdot f')(s) = f(s) \cdot f'(s)$.

If furthermore $G$ is a group acting on $S$, the \emph{wreath product} $H\wr_S G$ is the extension $H^{(S)}\rtimes G$, with action $(f\cdot g)(s)=f(g s)$ of $G$ on $H^{(S)}$. When $S=G$ with action by left translation, we simply write $H\wr G$.

All the groups we consider are assumed to be \emph{effectively countable}: there is a bijection between the group and $\omega$ such that, via this bijection, the multiplication table $\omega\times\omega\to\omega$ is computable.

\subsection{The lamplighter group}\label{ss:lamplighter}
The only wreath product we will consider in this article is the ``lamplighter group'' $\lamp=(\Z/2)\wr\Z$. It is convenient to set $\mathbb E=\Z+\frac12$, identified with the unit intervals with integer endpoints, and to write $\lamp = (\Z/2)^{(\mathbb E)}\rtimes\Z$.

The name ``lamplighter group'' comes from the following interpretation. There is a lamp at every integer-plus-a-half position on the line, and each lamp may be ON or OFF. The lamplighter stands at an integer position. An element of the lamplighter group is a task for the lamplighter: she may move up and down the line an integer number of steps, and in doing so she may toggle a lamp she passes next to.

Seen from the perspective of the lamplighter, there is a natural action of $\lamp$ on the Cantor space of lamp configurations $(\Z/2)^{\mathbb E}$, which we call the \emph{lamp action}; her motions amounts to shifting lamp configurations, possibly toggling the lamp that passes between positions $\tfrac12$ and $\tfrac{-1}2$.

Evidently every such task is a composition of elementary tasks (and their inverses), which are of two kinds: $a$ ``move up one step'', and $b$ ``move up one step, flipping the lamp encountered along the way''. (This explains our at first sight strange convention for the lamp positions: if they were also at integral positions, there would be ambiguity as to whether the departure or arrival lamp is to be flipped). Thus $\{a,b\}$ is a generating set for $\lamp$, and in fact $\lamp$ admits a presentation
\[\lamp=\langle a,b\mid (a^nb^{-n})^2\;\forall n\ge1\rangle.\]
Using these generators, the Cayley graph of $\lamp$ (namely, the graph with vertex set $\lamp$ and edges from $g$ to $ag$ and to $bg$ for every $g\in\lamp$) looks as in Figure~\ref{fig:sealevel}, with edges $a$ in green and $b$ in red.
\begin{figure}[!ht]
  \centerline{\begin{tikzpicture}[x={(5cm,0cm)},y={(-2cm,-1cm)},z={(0cm,6cm)},>=stealth']
    \draw[thick,->] (-1.1,0,-0.2) -- node[pos=0.33,left] {$\Z$} +(0,0,1.2); 
    \draw[thick,->] (-1.15,0,0.4) -- node[below] {$\hor$} +(0.4,0,0); 
    \draw[thick,->] (-1.1,0.1,0.4) -- node[above] {$\ver$} +(0,-0.8,0); 
    \pgfmathsetmacro\h{4}
    \pgfmathsetlengthmacro\rad{3mm*pow(2,-\h)}
    \pgfmathsetmacro\hmo{\h-1}
    \foreach\k in {0,...,\h} {
      \pgfmathsetmacro\z{\k/\h}
      \pgfmathsetmacro\cz{1-\z}
      \pgfmathsetmacro\maxl{pow(2,\k)-1}
      \pgfmathsetmacro\maxm{pow(2,\h-\k)-1}
      \ifnum\k=2
        \foreach\x in {-0.5,-0.166,0.166,0.5} {
          \draw[blue,dashed] (\x,-0.6,\z) -- (\x,0.6,\z);
          \draw[blue,dashed] (-0.6,\x,\z) -- (0.6,\x,\z);
        }
      \fi
      \foreach\m in {0,...,\maxm} {
        \pgfmathsetmacro\y{-1+\m*2/(pow(2,\h-\k)-0.9999)}
        \foreach\l in {0,...,\maxl} {
          \pgfmathsetmacro\x{1-\l*2/(pow(2,\k)-0.9999)}
          \ifnum\k=\h\else
          \foreach\color/\pos in {green/1,red/0} {
            \pgfmathsetmacro\nx{1-(2*\l+(\pos ? mod(\m,2) : 1-mod(\m,2)))*2/(pow(2,\k+1)-1)}
            \pgfmathsetmacro\ny{-1+floor(\m/2)*2/(pow(2,\h-(\k+1))-0.9999)}
            \pgfmathsetmacro\nz{(\k+1)/\h}
            \pgfmathsetmacro\ncz{1-\nz}
            \pgfmathsetmacro\shade{30+20*\y}\def\shadecolor{\color!\shade}
            \pgfmathsetmacro\nshade{30+20*\ny}\def\nshadecolor{\color!\nshade}
            \shadepath{\shadecolor}{\nshadecolor}{\z*\x,\cz*\y,\z}{\nz*\nx,\ncz*\ny,\nz}
          }
          \fi
          \filldraw[fill=white] (\z*\x,\cz*\y,\z) circle (1.5pt);
        }
      }
    }
  \end{tikzpicture}}
  \caption{A tetrahedron in $\gfL$, with in blue the ``sea level'' grid $\sea=\hor\times\ver$; it is represented with $\hor$ going across the page and $\ver$ going into it.}\label{fig:sealevel}
\end{figure}

We single out important subgroups of $\lamp$: first, there is a natural map $\phi\colon\lamp\to\Z$, extracting the movement of the lamplighter. Its kernel is $\sea=(\Z/2)^{(\E)}$, which naturally decomposes as $\sea=\hor\times\ver$ with $\hor=(\Z/2)^{(-\N-\frac12)}$ and $\ver=(\Z/2)^{(\N+\frac12)}$. 

Even though the group $\sea$ is locally finite and therefore very far from the grid $\Z^2$, its representation in the Cayley graph of Figure~\ref{fig:sealevel} appears like a grid, represented by the overlaid dashed blue edges (which may connect arbitrarily distant edges in the Cayley graph). We refer to $\sea$ as the ``sea level'' of the Cayley graph, with $\hor$ and $\ver$ respectively the ``horizontal'' and ``vertical'' directions.

In fact, when coding configurations on the sea-level, we prefer to use the four directions North, South, West, East, based on the standard map orientation (North pointing up), viewing the sea-level ``from above''. The Up and Down directions refer to movement of the lamplighter, assuming the street is oriented up-down.

We finally name some group homomorphisms. We denote by $\iota\colon\sea\hookrightarrow\lamp$ the natural inclusion, and write similarly $\iota^-\colon\hor\hookrightarrow\lamp$ and $\iota^+\colon\ver\hookrightarrow\lamp$. There is a natural projection $\eta\colon\sea\twoheadrightarrow\hor$.

The Cayley graph of the lamplighter group belongs to a family of graphs called \emph{Diestel-Leader graphs}. The \emph{full binary tree} is the two-way infinite tree $\mathcal T_2$, namely the $3$-regular tree oriented with constant out-degree $1$. It comes naturally with a height function $\beta\colon\mathcal T_2\to\Z$, compatible with the edge orientation. Then the Cayley graph in Figure~\ref{fig:sealevel} may be described by its vertex set $\{(x,y)\in\mathcal T_2^2\mid\beta(x)+\beta(y)=0\}$, with an edge from $(x,y)$ to $(x',y')$ precisely when there is an edge from $x$ to $x'$ and an edge from $y'$ to $y$.

More generally, the Diestel-Leader graph $\mathsf{DL}(p,q)$ has vertex set $\{(x,y)\in\mathcal T_p\times\mathcal T_q\mid\beta(x)+\beta(y)=0\}$ and edges as above. It may also be described directly as follows as an edge-labeled graph: its vertex set consists of pairs $(x,n)$ with $n\in\N$ and $x\colon(\Z+\tfrac12)\to\Z$ of finite support, taking values in $\{0,\dots,p-1\}$ at positions $<n$ and values in $\{0,\dots,q-1\}$ at positions $>n$. There is an edge labeled $(x,y)$ from $((\dots,x,\dots),n)$ to $((\dots,y,\dots),n+1)$ with the `$x$' and `$y$' at position $n+\tfrac12$. If $p=q$, then this graph is the Cayley graph of $\Z/p\wr\Z$; edges with label $(x,y)$ correspond to the generator ``shift and add $y-x$ to the lamp just crossed'' of $\Z/p\wr\Z$. In all cases, this labeling $\mathsf{DL}(p,q)$ is at least as rich as the Cayley graph labeling, so all our claims easily extend to graph subshifts on $\mathsf{DL}(p,q)$.

\subsection{Subshifts, induction and pullback}\label{ss:subshifts}
Let $G$ be a group. A \emph{subshift} on $G$ is a closed, $G$-invariant subset of $A^G$, for a finite set $A$; the action of $G$ on $A^G$ is given by $(g x)(h)=x(h g)$. For a finite set $F\subseteq G$ and $p\in A^F$ called a \emph{forbidden pattern} we define
\[S_p=\{x\in A^G\mid gx\restriction_F \neq p\quad\forall g\in G\},\]
namely the subshift of those configurations on $G$ that contain no translate of the pattern $p$. A \emph{subshift of finite type} (SFT) is a finite intersection of $S_p$'s. A \emph{sofic shift} is a factor of a subshift of finite type (on a possibly different alphabet $B$) by a coding map $B\to A$. An \emph{effective subshift} is an intersection of $S_p$'s for a recursively enumerable sequence of forbidden patterns $p$.

More generally, an \emph{(effective) Cantor system} is an (effective) subshift, but with the finite set $A$ replaced by a Cantor set $A^\omega$, and with patterns only checking finitely many coordinates: a pattern is $p\in (A^n)^F$ for some $n\in\N$ and $F\subseteq G$ finite, with corresponding
\[S_p=\{x\in(A^\omega)^G\mid\forall g\in G:\exists f\in F:(g x)(f)\notin p(f)A^\omega\}.\]
A Cantor $G$-system $K$ is isomorphic to a subshift precisely when it is \emph{expansive}, that is, there is a neighbourhood of the diagonal in $K$ such that whenever $x\neq y\in K$ the orbit $\{(g x,g y)\mid g\in G\}$ escapes that neighbourhood.

A subshift of \emph{almost finite type} (AFT) is the factor of an SFT by a covering map that is almost everywhere $1$-to-$1$: there exists an SFT $Y$ with covering map $\pi\colon Y\to X$ such that the set of points in $X$ with unique preimage has full measure with respect to every invariant Borel probability measure on $X$. This definition does not require minimality of $X$, but in that case implies that the set of points in $X$ with unique preimage is comeagre (dense $G_\delta$). See~\cite{hochman:recursive}*{page~134} for a discussion.

If $K$ is a finite abelian group, and $X, Y$ are $G$-dynamical systems for some group $G$, we say that $X$ is a \emph{$K$-extension} of $Y$ if there is a bijection $f\colon X \to Y \times K$ and a cocycle $\eta\colon G \times Y \to K$ such that if $f(x) = (y,k)$ then $gx = (k + \eta(g, x), gy)$. Observe in particular that $X$ is then an extension of $Y$ with all fibers of cardinality $\#K$. Now generalizing the idea of the previous paragraph, we say $Y$ is \emph{almost $K$-to-$1$} if there is a $K$-extension of $Y$ which is AFT. (This is indeed a direct generalization: if $1$ denotes the trivial group, an $1$-to-$1$ extension is just a topological conjugacy, so almost $1$-to-$1$ still means AFT in the new sense.)

Consider two groups $F,G$ and a homomorphism $\phi\colon F\to G$. Then, on the one hand, $G$-systems induce $F$-systems by pull-back: if $G\curvearrowright Y$, set $\phi^*(Y) = Y$ as a set, with $F$-action $f\cdot y=\phi(f)\cdot y$.  On the other hand, $F$-systems induce $G$-systems by induction: if $F\curvearrowright X$, set $\phi_*(X)=\{y\colon G\to X\mid y(\phi(f) g)=f y(g)\;\forall f\in F,g\in G\}$, with natural action $(g\cdot y)(g')=y(g' g)$.

Let us consider these operations for subshifts, namely $Y\subseteq B^G$ and $X\subseteq A^F$. In case $\phi$ is surjective, the pull-back $\phi^*(Y)$ is again a subshift,
\begin{equation}\label{eq:pullback}
  \phi^*(Y)=\{y\circ\phi\in B^F\mid y\in Y\}.
\end{equation}
In case $\phi$ is injective, the induction of $X$ is again a subshift,
\begin{equation}\label{eq:induction}
  \phi_*(X)=\{y\in A^G\mid \forall g\in G: y(\phi(-) \cdot g)\in X\};
\end{equation}
the verification is straightforward, the embedding $\phi_*(X)\hookrightarrow A^G$ being given by $(y\colon G\to X)\mapsto(g\mapsto y(g)(1))$.

We briefly note that the pull-back $\phi^*(Y)$ of a $G$-system by an injective map $\phi$ is called a \emph{subaction}; the pull-back of a subshift has no reason to be expansive, e.g.\ the pull-back of any infinite subshift to the trivial subgroup. The literature sometimes mentions ``projective subactions'' of subshifts; again for $\phi$ injective and $Y\subseteq B^G$, this is the $F$-shift given by~\eqref{eq:pullback}. It is the factor of the subaction given by restriction to a single copy of $\phi(F)$ in $B^G$.

We found out that there was a wide diversity in the simulation literature, where results of the following form are stated: \emph{``consider a subgroup $H\le G$ satisfying `\underline{\qquad}'. Then every effective $H$-subshift $X$ is a factor of a projective $H$-subaction $X$ of a sofic $G$-shift''}. In all the cases we witnessed, it turned out that $H$ was also a quotient of $G$, and the proofs actually gave the stronger statement that the pull-back of $X$ is sofic. See~\cites{aubrun-sablik:simulation,barbieri-sablik:semidirect,crumiere-sablik-schraudner:speed,hochman:recursive,barbieri:simulation} for the articles we have found in the literature that actually prove more than they claim. We exclusively use pull-backs and induction in this text, hoping this clarifies the setting somewhat.

\section{Subshifts on locally finite groups}\label{ss:locfin}
Consider a finite group $F$, and its restricted product $\hor=F^{(\N)}$, the group of finitely-supported functions $\N\to F$. There is a natural map $\sigma\colon \hor \to \hor$, the one-sided shift: $\sigma(f_0,f_1,\dots)=(f_1,\dots)$. For every $N \subset \N$, we can naturally consider $F^{(N)}$ as a subgroup of $F^{(\N)}$ (for finite $N$ we may drop the parentheses). For $n \in \N$ we write $F^n$ for $F^{\{0,1,...n-1\}}$. For $f\in F,n\in\N$ we denote by `$f_{@n}$' the element of $\hor$ with a single `$f$' in position $n$.

\begin{defn}[Substitutional shifts]
  A \emph{substitutional subshift} on $\hor$ is a set $X_\tau\subseteq A^\hor$
  given by a map $\tau\colon A\to \power(A^F)$ as follows. The map $\tau$ induces a map $\tilde\tau\colon A^\hor \to \power(A^\hor)$ by
  \[\begin{split}\tilde\tau(x)&=\{x'\colon (f_0\mapsto x'(f_0,f_1,\dots))\in\tau(x(f_1,\dots))\quad\forall(f_1,\dots)\in\hor\}\\
      &=\{x'\mid (x'(f_{@0}h))_{f\in F}\in\tau(x(\sigma h))\quad\forall h\in F^{(\N_{>0})}\}.\end{split}\]
  In other words, for a configuration $x\in A^\hor$ the new value at $h \in \hor$ is computed (non-deterministically) by $\tau$ from the old value at $\sigma(h)$ and the first co\"ordinate of $h$ via the rule $\tau$, so if $y\in\tilde\tau(x)$ then we have $y(-,f_1,\dots)\in\tau(x(f_1,\dots))$. Set then
  \[X_0=A^\hor\text{ and }X_{n+1}=\bigcup_{f\in F}f_{@0}\tilde\tau(X_n)\text{ for all }n\in\N,\]
  and finally $X_\tau=\bigcap_{n\in\N}X_n$.
\end{defn}

\begin{lem}
  The $X_n$ are $\hor$-invariant and nested.
\end{lem}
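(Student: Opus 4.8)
The plan is to prove both halves of the statement — each $X_n$ is invariant under the $\hor$-action on $A^\hor$, and the sequence is nested, $X_0\supseteq X_1\supseteq X_2\supseteq\cdots$ — by induction on $n$, with base case $X_0=A^\hor$, organized around the operator $\Phi(X)=\bigcup_{f\in F}f_{@0}\,\tilde\tau(X)$ driving the recursion $X_{n+1}=\Phi(X_n)$, where $\tilde\tau$ is extended to subsets of $A^\hor$ by $\tilde\tau(X)=\bigcup_{x\in X}\tilde\tau(x)$. Nesting is then immediate: $\Phi$ is monotone for inclusion (the extended $\tilde\tau$ is a union of images, and $f_{@0}(-)$ and $\bigcup_f(-)$ are trivially monotone), and $X_1=\Phi(A^\hor)\subseteq A^\hor=X_0$, so by induction $X_{n+2}=\Phi(X_{n+1})\subseteq\Phi(X_n)=X_{n+1}$.

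For invariance the structural point is that $\hor=F^{(\N)}$ is the internal direct product of the subgroup $F^{\{0\}}=\{f_{@0}:f\in F\}$ and the subgroup $F^{(\N_{>0})}$; in particular these commute and generate $\hor$, so a subset of $A^\hor$ is $\hor$-invariant iff it is invariant under each of them. Invariance of $X_{n+1}=\bigcup_f f_{@0}\,\tilde\tau(X_n)$ under $F^{\{0\}}$ is automatic, since left translation by $g_{@0}$ merely permutes the terms of the union (using $g_{@0}f_{@0}=(gf)_{@0}$). For invariance under $F^{(\N_{>0})}$ it suffices, because $F^{(\N_{>0})}$ centralizes $F^{\{0\}}$, to show $\tilde\tau(X_n)$ itself is $F^{(\N_{>0})}$-invariant; this is the only step that uses the inductive hypothesis. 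Fix $k\in F^{(\N_{>0})}$. By definition $x'\in\tilde\tau(X_n)$ means there is $x\in X_n$ with $(x'(f_{@0}h))_{f\in F}\in\tau(x(\sigma h))$ for all $h\in F^{(\N_{>0})}$. I would reindex by $h\mapsto hk$ (a bijection of $F^{(\N_{>0})}$) and use that $\sigma$ is a group homomorphism, so $\sigma(hk)=\sigma(h)\sigma(k)$ and hence $x(\sigma(hk))=(\sigma(k)\cdot x)(\sigma h)$, together with $(x'(f_{@0}hk))_f=((k\cdot x')(f_{@0}h))_f$; the condition becomes, equivalently, that there is $x\in X_n$ with $((k\cdot x')(f_{@0}h))_f\in\tau((\sigma(k)\cdot x)(\sigma h))$ for all $h$. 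Since $\sigma(k)\in\hor$ and $X_n$ is $\hor$-invariant by the inductive hypothesis, $\sigma(k)\cdot x$ ranges over all of $X_n$ as $x$ does, so the last condition says exactly $k\cdot x'\in\tilde\tau(X_n)$. Hence $\tilde\tau(X_n)$, and with it $X_{n+1}$, is $\hor$-invariant.

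Neither part is deep; the main difficulty, such as it is, lies in the reindexing in the invariance argument. The map $\sigma$ is a non-injective endomorphism of $\hor$ with kernel $F^{\{0\}}$, so one must work inside $F^{(\N_{>0})}$, on which $\sigma$ restricts to a bijection onto $\hor$; check that right multiplication by $k$ is a legitimate change of variable there; and be careful to feed $\sigma(k)\cdot x$, not $k\cdot x$, into the inductive hypothesis. With those conventions pinned down it is a routine one-line substitution.
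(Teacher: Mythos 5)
Your proof is correct and takes essentially the same route as the paper: induction on $n$, with nesting following from monotonicity of the defining operator and the base $X_0=A^\hor$, and $\hor$-invariance of $X_{n+1}$ obtained by splitting $\hor=F^{\{0\}}\times F^{(\N_{>0})}$, observing that the union over $f\in F$ handles the $F^{\{0\}}$ factor, and using the inductive $\hor$-invariance of $X_n$ (via the reindexing $h\mapsto hk$ and the identity $x(\sigma(hk))=(\sigma(k)\cdot x)(\sigma h)$) to show $\tilde\tau(X_n)$ is $F^{(\N_{>0})}$-invariant. The paper compresses all of this into three clauses; your version supplies the computation it leaves implicit.
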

\begin{proof}
  By induction: $X_0$ is clearly $\hor$-invariant, and the $X_n$ are nested. If $X_n$ is $\hor$-invariant, then $\tilde\tau(X_n)$ is $F^{(\N_{>0})}$-invariant, and $\hor = F \times F^{(\N_{>0})}$ so $X_{n+1}$ is $\hor$-invariant.
\end{proof}

With every $x\in X_\tau$ is associated one or more \emph{branching directions} $b\in F^\N$: if $x\in b_{@0}\tilde\tau(X_\tau)$, say $x=b_{@0}\tilde\tau(x')$ for some $b\in F,x'\in X_\tau$, and $(b',b'',\dots)$ is a branching direction of $x'$, then $(b,b',b'',\dots)$ is a branching direction of $x$.

Let $\hor_n=\ker(\sigma^n) \cong F^n$ be the subgroup of $\hor$ consisting of the first $n$ factors; it is generated by all $f_{@0},\dots,f_{@(n-1)}$ with $f\in F$. Naturally $\hor=\bigcup_{n\in\N}\hor_n$. It is easy to see that, for $m\ge n$, the values of $\tilde\tau^n(x)$ on $\hor_m$ depend only on those of $x$ on $\hor_{m-n}$. Thus $X_\tau$ is also the topological closure of the $\hor$-orbit of fixed points of $\tilde\tau$.

We think of $\tau$ as a non-deterministic function, and when $\#\tau(a) = 1$ for all $a$, we write $\tau(a) = \{b\}$ simply as $\tau(a) = b$. This should not cause confusion.

\begin{exple}[``Sunny-side-up'']\label{ex:ssu}
  The ``sunny-side-up'' $\hor$-shift consists of those sequences $x\in\{*,\dagger\}^\hor$ with $x(h)=*$ for at most one position $h\in\hor$. It is deterministic substitutional: for $A=\{*,\dagger\}$ identify $A^F$ with $A\times\dots\times A$ and set $\tau(\dagger)=(\dagger,\dots,\dagger)$ and $\tau(*)=(*,\dagger,\dots,\dagger)$.

  Note that if $x\in X_\tau$ has a single $*$ value, then its branching direction precisely determines where it is. If $x=\dagger^\hor$ then every $b\in F^\N$ is a branching direction.
\end{exple}

\begin{exple}[``Thue-Morse'']\label{ex:tm}
  Set $A=\{0,1\}$, and recall that the classical Thue-Morse shift is the substitutional subshift $Y\subset A^\Z$ associated with the substitution $0\mapsto01,1\mapsto10$. It is the closure of the word $0110100110010110\dots$ and is aperiodic.

  In analogy, consider $F=\Z/2$ so $\hor=(\Z/2)^{(\N)}$ and define $\tau\colon A\to A^{\Z/2}=A^2$ by $\tau(0)=(0,1)$ and $\tau(1)=(1,0)$. The corresponding subshift $X_\tau$ contains exactly two elements. More precisely, consider the homomorphism $\sigma\colon\hor\to\Z/2$ given by $\sigma(h)=\sum_{n\in\N}h(n)$; then the action of $\hor$ on $X_\tau=\Z/2$ factors via $\sigma$.

  Indeed, there is a natural (set-wise!) bijection between $\hor$ and $\N$, identifying $(f_0,f_1,\dots)\in \hor$ with the binary expansion $f_0+2f_1+\cdots$, when each $f_i$ is treated as an element of $\{0,1\}$. Then given $x=(x_n)\in A^\N$, under this bijection we have $\tilde\tau(x)=y$ with $(y_{2n},y_{2n+1})=\tau(x_n)$ for all $n\in\N$ or $(y_{2n+1},y_{2n})=\tau(x_n)$ for all $n\in\N$.

  Writing $A=\Z/2$, we have $\tau(a)=(a,1-a)$, so the actions of $A$ and $F$ are intertwined; this extra symmetry makes $X_\tau$ periodic, but does not appear in the classical $\Z$-system which is only ``almost periodic''.
  
  Still under this bijection, the elements of $X_\tau$ are $01101001\dots$ and $10010110\dots$; they may be written as $\tilde\tau^\infty(0^\hor)$ and $\tilde\tau^\infty(1^\hor)$ respectively. \qee
\end{exple}

There is a straightforward visualization of substitutional shifts via \emph{coset trees}. The collection $\bigsqcup_{n\ge0}\hor_n\backslash \hor$ of all cosets of $\hor_n$ in $\hor$ naturally forms a ``corooted binary tree'' $\mathscr T_F$, with leaves the cosets of $\hor_0=1$ and an edge between $\hor_n h$ and $\hor_{n+1}h$ for all $n\in\N,h\in \hor$. Since $\hor_n\backslash \hor\cong \hor$ via $\sigma^n$, we may also write the vertex set of $\mathscr T_F$ as $\hor\times\N$, with an edge between $(h,n)$ and $(\sigma(h),n+1)$ for all $h\in H,n\in\N$.

Elements of $X_\tau$ may then be given by \emph{parse trees}. Consider pairs $(b,x)$ with $b\in F^\N$ an infinite sequence of elements of $F$ and $x\colon\mathscr T_F\to A$ a labeling of the corooted $\#F$-regular tree. There is a natural action of $\hor$ on such pairs, in which $\hor$ acts trivially on $F^\N$ and acts on $\mathscr T_F$ by right multiplication on cosets; this last action reads $(g x)(h,n)=x(h\sigma^n(g),n)$ in the notation above. Again writing $\hor\times\N$ the vertices of $\mathscr T_F$, set
\[X_\tau'=\{(b,x)\mid (x((f b_n)_{@0}h,n))_{f\in F}\in\tau(x(\sigma h,n+1))\text{ for all }h\in \hor,n\in\N\}.\]
In effect, we are storing in $X_\tau'$ an entire prehistory of applications of the substitution $\tau$, as well as making explicit as $b$ a choice of branching direction of $(b,x)$.

\begin{lem}
  The set $X_\tau'$ is closed and $\hor$-invariant, and the natural map sending $(b,x)$ to the restriction of $x$ to $\mathscr T_F$'s leaves is a factor map $X_\tau'\twoheadrightarrow X_\tau$.
\end{lem}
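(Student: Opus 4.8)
The statement has three parts — closedness, $\hor$-invariance, and the factor property — and the plan is to take them in that order, the last being where the real work lies. \textbf{Closedness and invariance.} The ambient space $F^{\N}\times A^{\mathscr T_F}$ is a product of finite discrete spaces, hence compact and metrizable, and $X_\tau'$ is the intersection of the conditions $C_{h,n}$ (one for each $n\in\N$ and $h\in\hor$) asserting $(x((fb_n)_{@0}h,n))_{f\in F}\in\tau(x(\sigma h,n+1))$. Because $F$ is finite, $C_{h,n}$ constrains only the coordinate $b_n$ of $b$ together with finitely many coordinates of $x$, and for each of the finitely many values of $b_n$ it is a membership condition in a subset of a finite set; so every $C_{h,n}$ is clopen and $X_\tau'=\bigcap_{h,n}C_{h,n}$ is closed. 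For invariance I would note that $g\in\hor$ fixes the $F^\N$ factor and acts on $x$ by $(gx)(h,n)=x(h\sigma^n(g),n)$, a level-preserving automorphism of $\mathscr T_F$ commuting with the parent map (since $\sigma$ is a homomorphism, $\sigma(h\sigma^n(g))=\sigma(h)\sigma^{n+1}(g)$); a one-line substitution then shows that $C_{h,n}$ holds for $g\cdot(b,x)$ exactly when $C_{h\sigma^n(g),n}$ holds for $(b,x)$, the coordinate $b_n$ being untouched. As $h\mapsto h\sigma^n(g)$ permutes the vertices of level $n$, $g$ permutes the family $\{C_{h,n}\}$ among itself, so $X_\tau'$ is $\hor$-invariant.

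\textbf{The factor map.} Set $\pi(b,x)(h)=x(h,0)$, the restriction of $x$ to the leaves $\hor_0\backslash\hor\cong\hor$. This is continuous, being a projection onto a subfamily of the coordinates, and $\hor$-equivariant since $\pi(g\cdot(b,x))(h)=(gx)(h,0)=x(hg,0)=\pi(b,x)(hg)$, i.e.\ the shift of $\hor$ on $A^\hor$. It remains to identify the image with $X_\tau$. For $\pi(X_\tau')\subseteq X_\tau$: from $(b,x)\in X_\tau'$ read off the level-$n$ slice $x_n:=x(-,n)\in A^\hor$; unwinding $C_{\bullet,n}$ against the definition of $\tilde\tau$ (a reindexing of the position-$0$ coordinate by $b_n$) yields $x_n\in(b_n)_{@0}\tilde\tau(x_{n+1})$ for every $n$, whence $x_0\in(b_0)_{@0}\tilde\tau\bigl(\cdots(b_{N-1})_{@0}\tilde\tau(x_N)\cdots\bigr)$; since $x_N\in A^\hor=X_0$ and $\tilde\tau$ and the translations $f_{@0}\cdot$ are monotone, the recursion $X_{n+1}=\bigcup_f f_{@0}\tilde\tau(X_n)$ forces $\pi(b,x)=x_0\in X_N$ for all $N$, hence $\pi(b,x)\in X_\tau$.

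\textbf{Surjectivity} is the hard direction, and I would get it cheaply from the description of $X_\tau$ already recorded in the excerpt, $X_\tau=\overline{\hor\cdot\{\,w:w\in\tilde\tau(w)\,\}}$. For a fixed point $w\in\tilde\tau(w)$ the constant-in-$n$ labeling $x(h,n):=w(h)$ together with $b\equiv1$ satisfies every $C_{h,n}$ (each reduces to an instance of the defining relation $w\in\tilde\tau(w)$), so $(b,x)\in X_\tau'$ with $\pi(b,x)=w$; by $\hor$-equivariance $\pi(X_\tau')\supseteq\hor\cdot\{w:w\in\tilde\tau(w)\}$. But $\pi(X_\tau')$ is closed, being the continuous image of the compact set $X_\tau'$, so it contains the closure $X_\tau$; with the previous inclusion this gives $\pi(X_\tau')=X_\tau$. (If one prefers not to invoke that description of $X_\tau$, the same conclusion follows from König's lemma applied to the finitely branching tree of ``depth-$N$ parses'' of a fixed $y\in X_\tau$ — a labeling of the first $N$ levels of $\mathscr T_F$ together with $b_0,\dots,b_{N-1}$ satisfying $C_{\bullet,n}$ for $n<N$ with bottom slice $y$ — these forming a nonempty finite set for each $N$ and mapping onto the depth-$(N-1)$ parses by truncation.) The one genuinely fiddly point, in either route, should be the bookkeeping relating the three indexings in play — the cosets $\hor_n\backslash\hor$, their $\sigma^n$-coordinates, and the branching word $b$ — and in particular keeping straight on which side $b_n$ acts; everything else is routine.
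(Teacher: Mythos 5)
Your proof is correct and follows essentially the same route as the paper: closedness is immediate, $\hor$-invariance is verified by the same local computation showing that $g$ merely permutes the defining conditions, and the factor property is obtained by reading off the level-$n$ slices $x_n=x(-,n)$ as a desubstitution sequence. Where the paper builds the parse tree directly from an asserted branching direction for $x\in X_\tau$, you make the underlying compactness step explicit (either via the orbit-closure characterization $X_\tau=\overline{\hor\cdot\{w\in\tilde\tau(w)\}}$ stated just above the lemma, or via K\"onig's lemma on depth-$N$ parses), which is a fair tightening rather than a genuinely different approach.
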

\begin{proof}
  It is clear that $X_\tau'$ is closed. The $\hor$-invariance follows since if $(b,x) \in X_\tau'$ and $g\in\hor$ then
  \[\begin{split}((g x)((f b_n)_{@0}h,n))_{f\in F}&=(x((f b_n)_{@0}h\sigma^n(g),n))_{f\in F}\in\tau(x(\sigma(h\sigma^n(g)),n+1))\\&=\tau(x(\sigma(h)\sigma^{n+1}(g)),n+1))=\tau((gx)(\sigma h),n+1).
    \end{split}\]
  Consider next $x\in X_\tau$; so there exist $b_0,b_1,\dots\in F$ and $x=x_0,x_1,x_2,\dots\in X_\tau$ with $(b_n)_{@0}x_n\in\tilde\tau(x_{n+1})$ for all $n\in\N$. Extend the labeling of $\mathscr T_F$'s leaves by $x$ into a labeling, still written $x$, of all its vertices by $x(h,n)=x_n(h)$, and note that $(b,x)$ projects to $x$, and defines an element of $X_\tau'$ since
  \[\begin{split}(x((f b_n)_{@0}h,n))_{f\in F}&=(x_n((f b_n)_{@0}h))_{f\in F}\\&\in\tau((x_{n+1})(\sigma h))=\tau(x(\sigma h,n+1)).\end{split}\]

  Conversely, given $(b,x)\in X_\tau'$, note that the restrictions $h \mapsto x(h,n)$ of $x$ define elements $x_n$ of $A^\hor$ satisfying $(b_n)_{@0}x_n\in\tilde\tau(x_{n+1})$ for all $n\in\N$, so in particular $x_n\in X_\tau$ for all $n\in\N$ and thus $x_0\in X_\tau$.
\end{proof}

Subshifts of finite type, or sofic shifts, are entirely uninteresting on $\hor$ since $\hor$ is locally finite. However, quotients of substitutional subshifts under letter encodings $A\to B$, and intersections with full shifts $A_0^\hor$ for some $A_0\subset A$, are of great interest, as we shall soon see.

\subsection{Almost odometric subshifts}
There is a factor map $X'_\tau\twoheadrightarrow F^\N$ mapping $(b,x)$ to the branching direction $b$; this map is equivariant with respect to the action of $\hor$ on $X'_\tau$ and the natural action of $\hor$ on its closure $F^\N$, which we call the \emph{odometer}.

\begin{defn}[Uniquely decodable subshifts]\label{def:uniquelydecodable}
  The substitution $\tau$ is \emph{uniquely decodable} if the factor map above descends to a factor map $X_\tau\twoheadrightarrow F^\N$. This holds if every $x\in X_\tau$ has a unique branching direction.

  We call $X_\tau$ \emph{almost odometric} if furthermore there is an almost everywhere $1$-to-$1$ factor map to $F^\N$. Recall, as before~\cite{hochman:recursive}*{page~134}, that ``almost everywhere'' refers to a full-measure event with respect to every invariant Borel probability measure on $X_\tau$. This definition does not require minimality of $X$, but in that case implies that the set of points in $X$ with unique preimage is comeagre (dense $G_\delta$).
\end{defn}

For example, the Thue-Morse subshift from Example~\ref{ex:tm} is not odometric, because it is not even faithful. An example that gets closer, with $F=\Z/2$, is $\tau(t) = (t, t+1)$ on the alphabet $\Z/3$. In this example, the natural odometer is a factor: from $x\in X_\tau$ we recover $b_n$ from $x(1_{@n})-x(0)-1$. There is a natural action of $\Z/3$ on $X_\tau$ by addition of a constant, and which acts freely on fibres. Presumably no factor map is almost $1$-to-$1$, though we have not shown this. The following example will be important for us:
\begin{exple}[``Period doubling'']\label{ex:pd}
  Consider $F=\Z/2$ and $A=\Z/2$, and the deterministic substitution $\tau\colon A\to A^2$ given by $\tau(a)=(0,1-a)$.
\end{exple}

It is the $\hor$-analogue of the ``period-doubling'', or ``ruler'', subshift, generated by the infinite word $0100010101000100\dots$ with $n$th digit the parity of the largest power of $2$ dividing $n$. As a $\Z$-shift, it is a quotient of the Thue-Morse subshift by $\Z/2$, but as an $\hor$-shift it is quite different:

\begin{lem}\label{lem:pd}
  The period-doubling subshift $X_\tau$ is almost odometric.
\end{lem}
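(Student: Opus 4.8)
The plan is to realize the branching‑direction map of $\tau$ completely explicitly and then to check that it is an almost everywhere $1$‑to‑$1$ factor onto the odometer. Identify $\hor=(\Z/2)^{(\N)}$ with $\N$ via binary expansion as in Example~\ref{ex:tm}, so that the group operation of $\hor$ becomes coordinatewise addition mod $2$ (written $n+g$ below), and regard $F^\N=(\Z/2)^\N$ as the compact group containing $\hor$ as a dense subgroup.

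First I would pin down the unique $\tilde\tau$‑fixed point. Since $\tau(a)=(0,1-a)$ is deterministic, $x=\tilde\tau(x)$ forces $x_{2k}=0$ and $x_{2k+1}=1-x_k$ for all $k$, which has the single solution $z_n=\min\{i:n_i=0\}\bmod 2$ (checking the recursion against $\min\{i:(2k+1)_i=0\}=1+\min\{i:k_i=0\}$). By the remark preceding Example~\ref{ex:ssu}, $X_\tau=\overline{\hor\cdot z}$. Translating, $(g\cdot z)_n=z_{n+g}=\min\{i:n_i=g_i\}\bmod 2$; write this configuration $x^{(g)}$. For any $b\in F^\N$ which is \emph{not} eventually $1$, the set $\{i:n_i=b_i\}$ is nonempty for every $n$ of finite support (as $b$ has infinitely many zeros), so $x^{(b)}_n:=\min\{i:n_i=b_i\}\bmod 2$ is well defined; choosing $g_m\to b$ in $F^\N$ and using that $X_\tau$ is closed gives $x^{(b)}=\lim_m x^{(g_m)}\in X_\tau$.

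Next, unique decodability. From $X_\tau\subseteq X_1=\tilde\tau(X_0)\cup 1_{@0}\tilde\tau(X_0)$, every $x\in X_\tau$ has all even positions $0$ or all odd positions $0$; if both hold then $x=0^\hor$, and decoding once would put $1^\hor$ in $X_\tau$, impossible since $1^\hor$ has neither all even nor all odd positions equal to $0$. Hence each $x\in X_\tau$ has a well‑defined first branching digit $b_0\in\{0,1\}$ ($b_0=0$ iff the even positions vanish); the decoded tail $x_1$, with $(x_1)_k=1-x_{2k+(1-b_0)}$, again lies in $X_\tau$, and iterating gives the unique branching direction. Thus $\tau$ is uniquely decodable, with a continuous, $\hor$‑equivariant, surjective branching‑direction factor map $\pi\colon X_\tau\twoheadrightarrow F^\N$. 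Moreover, when $b=\pi(x)$ is not eventually $1$, carrying the decoding out to every coordinate pins down $x$ entirely and yields $x=x^{(b)}$; hence $\pi^{-1}(b)$ is a singleton for every such $b$.

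Finally the measure argument, which is the crux. A direct computation gives $g\cdot x^{(b)}=x^{(b+g)}$, so the $\hor$‑action transported through $\pi$ is the translation action $g\star b=b+g$ on the compact group $F^\N$; since $\hor$ is dense in $F^\N$, translation‑invariance under $\hor$ forces full translation‑invariance (by weak\nobreakdash-$*$ continuity of $g\mapsto g\star\mu$), so the only $\hor$‑invariant Borel probability measure on $F^\N$ is Haar measure, i.e.\ the uniform Bernoulli measure. Consequently, for every invariant probability measure $\mu$ on $X_\tau$ the pushforward $\pi_*\mu$ is Haar and therefore annihilates the countable set $E\subseteq F^\N$ of eventually\nobreakdash-$1$ sequences; since $\{x\in X_\tau:\#\pi^{-1}(\pi(x))>1\}\subseteq\pi^{-1}(E)$ by the previous paragraph, this set is $\mu$‑null. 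Hence $\pi$ is almost everywhere $1$‑to\nobreakdash-$1$, so $X_\tau$ is almost odometric. The only genuinely delicate points are establishing the explicit form $x^{(b)}$ of the elements of $X_\tau$ together with the fact that these exhaust the fibres over non‑eventually‑$1$ directions, and recognizing the transported action as a translation, which is what collapses every invariant measure to Haar.
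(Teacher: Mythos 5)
Your argument is correct and follows essentially the same route as the paper's: establish unique decodability by a local argument, show that the pushforward of any invariant measure to $F^\N$ under the (translation) odometer action must be the Haar/Bernoulli measure, and observe that the set of branching directions with only finitely many zeros is Haar-null and contains all directions over which $\pi$ might fail to be injective. Your explicit formula $x^{(b)}_n=\min\{i:n_i=b_i\}\bmod 2$ is a concrete realization of the paper's ``coincidence'' observation (that $\tau(\cdot)_0$ is constant), and makes the singleton-fibre claim over non-eventually-$1$ directions completely transparent, where the paper instead argues that $b_n=0$ infinitely often determines $x$ on arbitrarily large $\hor_n$-blocks.
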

\begin{proof}
  Given a configuration in $X_\tau$, we can work out its branching direction $b$ locally while going up the coset tree and finding where the value $0$ is. Sometimes, we must look two levels up: when we see $(0,0)$ on some level, we cannot deduce a bit of $b$ --- but then in its neighbour we will see $(0,1)$ or $(1,0)$ and from there deduce the bit of the branching direction.

  This substitution has a \emph{coincidence}: the first symbol of $\tau(t)$ is always the same. It follows that if we know the branching direction $b$ and $b_n = 0$, then we know the substitutive images in the first $\hor_n$-block. Now consider any invariant measure on $X_\tau$. It pushes forward to an invariant measure on the odometer, which is well-known to be unique, namely the uniform Bernoulli measure. We obtain that the branching direction is picked according to the uniform Bernoulli measure, thus the probability that $b_n = 0$ for infinitely many $n$ is $1$.
\end{proof}

\section{Subshifts on the lamplighter group}
\label{sec:SubstitutionConstructions}
We briefly recall that elements of the lamplighter group $\lamp$ may be represented as pairs $(s,n)$ with $s\colon\E\to\Z/2$ finitely supported and $n\in\Z$. We also recall the maps $\iota\colon\sea\hookrightarrow\lamp$ and $\iota^-\colon\hor\hookrightarrow\lamp$ and $\iota^+\colon\ver\hookrightarrow\lamp$ and $\eta\colon \sea \hookrightarrow \hor$ and $\phi\colon \lamp \twoheadrightarrow\Z$.

In this section we prove several variants of the claim ``substitutive $\sea$-subshifts induce to sofic shifts on $\lamp$'': 
\begin{prop}\label{prop:soficH}
  Let $X\subseteq A^\hor$ be a substitutional shift. Then $(\iota^-)_*(X)$ is sofic.
\end{prop}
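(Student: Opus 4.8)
The plan is to realize $(\iota^-)_*(X)$ as a letter-to-letter factor of an SFT on $\lamp$ by exploiting the ``tetrahedral'' geometry visible in Figure~\ref{fig:sealevel}: the Cayley graph of $\lamp$, restricted to a single $\sea$-coset, looks like the product of two corooted binary trees, and a substitutional structure on $\hor$ lives precisely on one of those tree factors. So first I would recall that induction $(\iota^-)_*(X)$ consists of independent copies of $X\subseteq A^\hor$ on each $\hor$-coset of $\lamp$; since $X$ is substitutional, say $X=X_\tau$ for $\tau\colon A\to\power(A^F)$ with $F=\Z/2$, each such copy carries, by the parse-tree description $X_\tau'$, a labeling of the corooted binary tree $\mathscr T_F$ together with a branching direction. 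The key observation is that the vertex set $\hor\times\N$ of $\mathscr T_F$ embeds naturally into $\lamp$: a vertex at height $n$ corresponds to the $\hor_n$-coset data, and the edges $\{(h,n),(\sigma h,n+1)\}$ of $\mathscr T_F$ are realized by moving the lamplighter up/down and toggling a lamp to the left of her position — i.e.\ exactly the ``West'' lamps that constitute $\hor$.

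Second, I would set up the SFT cover $Y$ on $\lamp$ whose alphabet records, at each group element $g=(s,n)$, a bounded amount of data: (i) a symbol of $A$ — the value of the configuration being covered; (ii) a ``candidate parse symbol'' recording, locally, which element of $\tau$-image the two children feed into, plus a branching bit; (iii) enough bookkeeping to detect one's own height parity and the local shape of the tree. The finite-type conditions then assert: the branching bit is constant along the lamplighter direction within a $\sea$-coset (so it defines a genuine $b\in F^\N$), the child-labels are $\tau$-consistent with the parent-label in the sense of the defining relation of $X_\tau'$, and the $A$-values on the ``leaves'' (the $\sea$-coset itself, or rather the $\hor$-factor of it) agree with what the parse tree dictates. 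Checking $\tau$-consistency is a purely local constraint because $\tau$ reads a bounded neighbourhood; the subtlety is that in $\lamp$ the two ``children'' of a vertex are obtained by $a$- and $b$-moves, and one must verify these two neighbours genuinely sit one level up with the correct West-lamp configuration, which is a finite-radius check using the presentation $(a^nb^{-n})^2$.

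Third, I would argue surjectivity and well-definedness of the factor map $\pi\colon Y\to(\iota^-)_*(X)$: given $y\in Y$, restricting to any $\hor$-coset and reading off the $A$-symbols yields, by the SFT constraints and the lemma identifying $X_\tau$ with the image of $X_\tau'$, an element of $X_\tau$; conversely any configuration in $(\iota^-)_*(X)$ restricts on each coset to some $x\in X_\tau$, which lifts (non-uniquely) to a point of $X_\tau'$ and hence to a legal local labeling of the ambient tree structure, so $\pi$ is onto. Since each $\hor$-coset is handled independently and the lift can be chosen independently on each, there is no global compatibility obstruction.

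The main obstacle I anticipate is the geometric bookkeeping in the second step: making precise, in a finite-type way, that the labeled corooted binary tree $\mathscr T_F$ really is embedded in the Cayley graph of $\lamp$ so that ``parent'' and ``two children'' are locally recognizable, and that the branching direction — which is a priori a choice made coset-wise — can be encoded by a single bit per lamplighter position in a shift-invariant, finitely-checkable manner. One has to be careful that $\hor=(\Z/2)^{(-\N-\frac12)}$ sits ``to the West'' and that its coset-tree levels correspond to the lamplighter's position $n$; once the correspondence between $\hor_n$-cosets and group elements at height $n$ is pinned down, the $\tau$-consistency and leaf-agreement conditions are routine local rules. I do not expect entropy or computability issues here, since $\tau$ is a fixed finite object — those concerns enter only in the later effective theorems, not in this purely substitutional proposition.
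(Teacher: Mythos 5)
Your high-level picture is correct — the induced shift should be covered by an SFT whose configurations carry, on corooted binary trees sitting above each $\hor$-coset, a parse tree for $X_\tau'$ together with the branching bits, and the $A$-values on the leaves give the factor map. This is indeed the paper's strategy.

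But you have not supplied a proof: the obstacle you identify at the end — ``making precise, in a finite-type way, that the labeled corooted binary tree $\mathscr T_F$ really is embedded in the Cayley graph of $\lamp$ so that `parent' and `two children' are locally recognizable'' — is not a detail to be deferred; it is the central nontrivial step. In the bare Cayley graph a vertex has two upward and two downward neighbours, and there is no canonical, shift-invariant rule singling out which downward neighbour is ``parent'' and which level is ``leaves''. Nothing in your ``bookkeeping'' layer (iii) resolves this: height parity is not detectable by local rules in $\lamp$, and no finite-radius condition can determine whether a given vertex is a leaf of a corooted tree starting there or an interior node of a tree whose leaves lie above it. The paper's solution (Lemma~\ref{lem:theta_0}) is the tileset $\Theta_0$ with $\Z/3$ edge colours: exactly one downward edge at each vertex has colour one more mod $3$ than the other three, which forces, after a nontrivial induction on tetrahedra, that each colour class decomposes the graph into a forest of corooted binary trees (plus at most one full tree). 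That lemma is what makes ``parent/children'' a local notion; it is not routine, and it is precisely what you declare you do not yet have. (Note also that every vertex must simultaneously participate in many trees — as a leaf of one $\hor$-coset's parse tree and as an interior vertex carrying parse data for $\hor$-cosets at lower $\phi$-levels — so a single embedded tree is not enough; you need a shift-invariant family of mutually compatible forests, which is what the mod-$3$ trick delivers.)

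A secondary inaccuracy: your condition that the branching bit be ``constant along the lamplighter direction within a $\sea$-coset'' is not the right constraint — the branching component $b_n$ must be shared across an entire level of one parse tree, which corresponds to an $\hor_n$-coset, not to movement in the $\phi$-direction or to a whole $\sea$-coset. In the paper this is enforced by the $\Theta_{1,B}$ mechanism, where each tree carries a uniform $B$-label and the label $b'$ on the distinguished edge is inherited by the entire level of the neighbouring tree. Once you have $\Theta_0$ and $\Theta_{1,B}$, the $\tau$-consistency constraints you sketch are indeed routine spider rules, so the rest of your outline would go through.
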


\begin{prop}\label{prop:soficHinduction}
  Let $X\subseteq A^\hor$ be a substitutional shift. Then $\iota_*\eta^*(X)$ is sofic.
\end{prop}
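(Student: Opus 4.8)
The plan is to reduce to Proposition~\ref{prop:soficH} by exploiting the product decomposition $\sea = \hor\times\ver$ and the fact that $\eta$ is the projection onto $\hor$. Recall that $\iota_*\eta^*(X)$ is the $\lamp$-subshift whose configurations, restricted to any $\sea$-coset, lie in $\eta^*(X)$; concretely a configuration $y$ lies in this set iff on every coset of $\sea$ the pattern it carries is constant along the $\ver$-direction and, viewed as an element of $A^\hor$, lies in $X$. So morally $\iota_*\eta^*(X)$ ``is'' $(\iota^-)_*(X)$ together with the extra constraint that the $A$-value does not change when one moves in the $\ver$-direction (equivalently, toggling a lamp at a position $>$ the lamplighter's position does not change the configuration). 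The subshift $(\iota^-)_*(X)$ is already sofic by Proposition~\ref{prop:soficH}; what has to be added is a finite-type constraint forcing $\ver$-invariance across the whole group, and soficity is preserved under intersection with an SFT.

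More precisely, I would proceed as follows. First, by Proposition~\ref{prop:soficH} fix an SFT $Z\subseteq B^\lamp$ with a letter map $\pi\colon B\to A$ realizing $(\iota^-)_*(X)$ as $\pi(Z)$. Second, I want to upgrade $Z$ so that the image configuration is forced to be $\ver$-invariant on every $\sea$-coset. The key geometric observation is that $\ver$-invariance is a \emph{local} condition in the Cayley graph of $\lamp$: moving by the generator that toggles the lamp at position $+\tfrac12$ (i.e.\ a single step of the lamplighter crossing one lamp, composed appropriately) connects two vertices in the same $\sea$-coset differing by a generator of $\ver$, and these are at bounded distance. Hence ``$y(g) = y(g')$ whenever $g,g'$ differ by the generating involution of $\ver_0=(\Z/2)^{\{+1/2\}}$'' is an SFT condition $W$ on $A^\lamp$. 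Because $\ver$ is generated (as the lamplighter moves) by conjugates of this one involution, and conjugation corresponds to translation in $\lamp$, imposing $W$ on all of $\lamp$ forces $\ver$-invariance on every coset. Then $(\iota^-)_*(X)\cap W$ is exactly $\iota_*\eta^*(X)$: a configuration constant in the $\ver$-direction and lying coset-wise in $X$ (as a function of the $\hor$-coordinate) is precisely a configuration in $\eta^*(X)$ on each $\sea$-coset. Third, soficity: take the fiber product $Z' = \{z\in Z \mid \pi(z)\in W\}$, which is an SFT since $W$ is defined by finitely many forbidden patterns and these pull back along the letter map $\pi$ to finitely many forbidden patterns on $B$; then $\pi(Z') = \pi(Z)\cap W = \iota_*\eta^*(X)$, exhibiting it as sofic.

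The main obstacle I anticipate is the second step: verifying carefully that $\ver$-invariance on \emph{every} $\sea$-coset — not just the one through the identity — is captured by a \emph{translation-invariant finite} set of forbidden patterns, and pinning down exactly which generator(s) of $\lamp$ realize the $\ver$-generators as bounded-distance moves. One has to be careful about the half-integer indexing convention for lamp positions and about the fact that different $\sea$-cosets sit at different ``heights'' in the Diestel–Leader picture, so the relevant generator connecting two $\ver$-translates depends on where the lamplighter currently stands; but since there is a single generator type (``move one step toggling the crossed lamp'') and its $\lamp$-translates sweep out all lamps, a uniform local rule does suffice. A secondary point to check is that the letter map $\pi$ from Proposition~\ref{prop:soficH} can indeed be assumed to interact well with these constraints — but since $W$ is a condition on the $A$-image and pulling forbidden patterns back through a letter-to-letter map always yields an SFT, this is routine. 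Once these are nailed down, the argument is just ``soficity is closed under intersection with SFTs,'' which is standard.
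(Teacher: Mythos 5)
There is a genuine gap in your second step: the SFT condition $W$ you propose does not capture $\ver$-invariance on $\sea$-cosets, and the reasoning via conjugation does not repair it.

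Let $e_{1/2}\in\ver$ denote the element toggling only the lamp at position $\tfrac12$; it is a bounded-length word in $a,b$, so the constraint $W$: ``$y(e_{1/2}g)=y(g)$ for all $g$'' is indeed an SFT condition. But imposing $W$ only gives $e_{1/2}$-invariance inside each right $\sea$-coset. You argue that since $\ver$ is generated by the conjugates $e_{k+1/2}=a^{-k}e_{1/2}a^k$ and conjugation corresponds to translation, $W$ forces full $\ver$-invariance. This conflates two different things: applying $W$ at the translated point $a^kg$ yields $y(e_{1/2}a^kg)=y(a^kg)$, i.e.\ $y(a^ke_{k+1/2}g)=y(a^kg)$ — a relation between two points of the \emph{different} coset $\sea a^kg$ — and not the relation $y(e_{k+1/2}g)=y(g)$ inside $\sea g$ that you need. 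The $W$-equivalence class of any $g$ is just the two-element set $\{g,e_{1/2}g\}$, nowhere near the whole fibre $\ver g$. Concretely, take $X=A^\hor$ the full shift, so $(\iota^-)_*(X)=A^\lamp$ and the only remaining constraint is $\ver$-constancy. Write elements of $\lamp$ as pairs $(s,n)$ with $s\in(\Z/2)^{(\E)}$ and $n\in\Z$, and set $y(s,n)=s_{3/2}$. Since $e_{1/2}(s,n)=(\delta_{1/2}+s,n)$ leaves the lamp at $\tfrac32$ untouched, $y$ satisfies $W$ everywhere; but $y(e_{3/2}(s,n))=1+s_{3/2}\ne y(s,n)$, so $y$ is not constant on $\ver$-cosets and $y\notin\iota_*\eta^*(X)$. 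Hence $\pi(Z')\supsetneq\iota_*\eta^*(X)$ in your notation, and the proposed SFT is strictly larger than the target.

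The underlying issue is that ``constant on right $\ver$-cosets'' is not local: $e_{k+1/2}$ has word length $\Theta(k)$, so the needed constraints sit at unbounded distances. The paper sidesteps this by realizing the $\ver$-constancy constraint as a \emph{sofic} shift rather than as an SFT. It applies Proposition~\ref{prop:soficH} with $\hor$ replaced by the isomorphic $\ver$, to the constant substitution $\tau(a)=(a,a)$, whose substitutional subshift $X_\tau\subset A^\ver$ consists precisely of the $\#A$ constant configurations; the induction $(\iota^+)_*(X_\tau)$ is then sofic and equals exactly the set of $\lamp$-configurations constant on every right $\ver$-coset. The target $\iota_*\eta^*(X)$ is the intersection $(\iota^-)_*(X)\cap(\iota^+)_*(X_\tau)$ of two sofic shifts, which is again sofic (take the fiber product over $A$ of their SFT covers). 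So your high-level decomposition — induce $X$ along $\hor$, then overlay a $\ver$-constancy constraint — is exactly the paper's strategy; what must change is the mechanism for the overlay, which has to go through the tree-marking/substitution machinery of Section~\ref{sec:Corooted} rather than a bounded-radius rule.
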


\noindent We also consider a slight variant of substitutional subshifts:
\begin{defn}[$Y_\tau$]\label{defn:ytau}
  Let $\tau\colon A\to\power(A^2)$ be a substitution. Define $Y_{\tau} \subset A^{\sea}=A^{\ver\times\hor}$ as follows:
  \[Y_\tau=\{y\colon\hor\times\ver\to A\mid \exists b\in\{0,1\}^\omega:\;\forall v\in\ver:\;y(v,-)\in X_\tau\text{ with branching direction }b\}\]
  Thus $Y_{\tau}\restriction_{\hor\times\{0\}} = X_{\tau}$, and for every $y \in Y_{\tau}$, there exists $b \in \{0,1\}^\omega$ such that $v \cdot y$ admits the same branching direction $b$ for all $v \in \ver$.
\end{defn}

\begin{prop}\label{prop:equalbranchingstructures}
  Let $\tau\colon A \to\power(A^2)$ be a substitution. Then $\iota_*(Y_{\tau})$ is sofic.
\end{prop}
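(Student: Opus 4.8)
The plan is to present $\iota_*(Y_\tau)$ as a subshift of a shift already known to be sofic, cut out by a further condition that can be enforced at the level of SFT covers. First I would unwind the definitions. By~\eqref{eq:induction}, a configuration $z\in A^\lamp$ lies in $(\iota^-)_*(X_\tau)$ precisely when its restriction to every coset of $\hor$ in $\lamp$ is a point of $X_\tau$; and $z$ lies in $\iota_*(Y_\tau)$ precisely when, in addition, for every coset $\gfC$ of $\sea$ in $\lamp$ the $X_\tau$-points carried by the $\hor$-cosets contained in $\gfC$ (these being indexed by $v\in\ver$) all share a branching direction. By Proposition~\ref{prop:soficH}, $(\iota^-)_*(X_\tau)$ is sofic; fix an SFT $Z$ together with a factor map $\pi\colon Z\twoheadrightarrow(\iota^-)_*(X_\tau)$. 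It then remains only to realize the condition ``equal branching direction along $\ver$''.

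Second, I would pass to a sub-SFT $Z'$ of $Z$ that exposes the branching direction and forces it to be $\ver$-constant on each $\sea$-coset. In the cover built for Proposition~\ref{prop:soficH}, the substitutive structure of the $X_\tau$-point on an $\hor$-coset is verified level by level along the coset hierarchy $\hor_0\subset\hor_1\subset\cdots$ of $\hor$, and the branching bit $b(n)$ chosen at level $n$ is visible (or can be rendered visible) in the level-$n$ cells. Inside a fixed $\sea$-coset $\gfC$, the level-$n$ cells of the various $\hor$-cosets are the cosets of $\hor_n$ in $\gfC$, and these are organized in a tree, a piece of the coset hierarchy of $\sea$; forcing $b(n)$ to be constant over such a tree is a local constraint of the same kind already used to build the cover. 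Let $Z'\subseteq Z$ be the sub-SFT in which these agreements hold for all $n$ (still a genuine SFT, by the self-similar bookkeeping that keeps the construction finitely presented). Then $\pi(Z')$ is exactly the set of $z\in(\iota^-)_*(X_\tau)$ whose per-$\hor$-coset branching directions agree within each $\sea$-coset, that is, $\iota_*(Y_\tau)$; and $\pi$ stays surjective onto it, because any $z\in\iota_*(Y_\tau)$ lifts to $Z$ and, its branching directions already coinciding along $\ver$, that lift satisfies the adjoined constraints. Hence $\iota_*(Y_\tau)$ is sofic.

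The one point that is not purely mechanical is the assertion in the second paragraph that the bit $b(n)$ is recorded by $Z$ within bounded distance of the corresponding level-$n$ cell, so that $Z$ can compare it between neighbouring $\hor$-cosets of a $\sea$-coset using a finite window in $\lamp$. This is a question about the internal layout of the cover of Proposition~\ref{prop:soficH}: one checks that it already threads the branching direction through the self-similar hierarchy at the natural spot --- it must, in order to verify the substitution at all --- or one makes a cosmetic adjustment so that it does, and then verifies that with the synchronization constraints adjoined one has $\pi(Z')=\iota_*(Y_\tau)$ exactly. The heavy machinery --- the hierarchy, the checking of $\tau$, the self-similar implementation --- carries over unchanged from that construction, which is why $Y_\tau$ is indeed only a slight variant.
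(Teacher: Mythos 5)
Your approach is the paper's: reuse the cover of Proposition~\ref{prop:soficH}, whose alphabet $C_{1,\{0,1\}}\times A=\Z/3\times\{0,1\}\times A$ already carries the branching direction in its $\{0,1\}$ factor, and then synchronize that bit (together with the $\Z/3$ tree-marker) across the $\hor$-cosets of each $\sea$-coset. The paper does the synchronization a little differently than you sketch: rather than arguing that the constrained subshift $Z'$ is ``still a genuine SFT'' --- a claim you leave unjustified and which is not needed --- it intersects the sofic cover with the $\iota^+$-induction of the finite $\ver$-fixed-point subshift over $\Z/3\times\{0,1\}$, exactly as in Proposition~\ref{prop:soficHinduction}; this is again sofic by Proposition~\ref{prop:soficH} for the constant substitution, and intersections of sofic shifts are sofic. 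That move also dissolves the ``non-mechanical'' worry you flag about locating $b(n)$: one simply forces the whole $(\Z/3,\{0,1\})$-layer of the cover to be constant on every $\ver$-coset, regardless of what depth a given branching bit sits at. One small slip in your second paragraph: the level-$n$ branching bit is stored on the corooted-tree vertices $n$ steps \emph{below} the sea-level, not on $\hor_n$-cosets inside $\gfC\subseteq\sea$, so the ``tree'' over which you would force agreement hangs below the sea-level rather than lying in it; but since the paper's intersection trick sidesteps that accounting entirely, this is a wrinkle in the exposition rather than a hole in the argument.
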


Of these, Proposition~\ref{prop:equalbranchingstructures} will be used as black box in our first proof of Theorem~\ref{thm:effectiveH} (combined with the analogous results for the other embedding $\iota^+$ of $\ver$ in $\lamp$, at positions $-\frac12-\N$; direct products; intersections with subalphabets, and quotients). All these variants have very close proofs.

We finally prove the following result, which while not directly used for anything, is the starting point for our construction of a ``small'' aperiodic tileset in~\S\ref{sec:Small}:
\begin{prop}\label{prop:soficZ}
  Let $X \subseteq A^\Z$ be a sofic shift. Then $\phi^*(X)$ is sofic.
\end{prop}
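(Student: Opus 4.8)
The plan is to transport a sofic presentation of $X$ on $\Z$ to the lamplighter group by running it along the ``lamplighter direction'' $\Z$, and to use the sea-level $\sea$ grid --- more precisely, a substitutive scaffolding on $\hor$ --- only to enforce that the $\Z$-configuration is constant on fibres of $\phi$. Recall that $\phi^*(X)$ consists of configurations $y\in A^\lamp$ with $y(g)=x(\phi(g))$ for some $x\in X$; so the content of the claim is that ``being constant on $\phi$-fibres, with the induced $\Z$-configuration in $X$'' is a sofic condition on $\lamp$.

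Concretely, I would first fix an SFT $Z\subseteq B^\Z$ and a letter map $\pi\colon B\to A$ with $\pi(Z)=X$. The nearest-neighbour structure of $Z$ is described by a set of allowed pairs $E\subseteq B\times B$. Now build a $\lamp$-subshift $W\subseteq B^\lamp$ by imposing: (i) along each generator edge that moves the lamplighter up (the $a$-edges and $b$-edges both change $\phi$ by $+1$), the pair of $B$-labels lies in $E$; and (ii) $W$ is $\ker\phi$-invariant in the strong sense that the label is constant on each $\sea$-coset. Condition (i) is manifestly of finite type. Condition (ii) says the $B$-label at $g$ depends only on $\phi(g)$; this is \emph{not} finitely checkable directly, because $\sea$ is locally finite and not finitely generated --- two vertices in the same $\sea$-coset can be arbitrarily far apart in the Cayley graph. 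This is where the work of the earlier propositions enters: I would use a substitutive $\hor$-subshift (via $\iota_*\eta^*$, Proposition~\ref{prop:soficHinduction}, or the variant in Proposition~\ref{prop:equalbranchingstructures}) as a sofic ``grid overlay'' on the sea level, whose cells let a finite-type rule propagate a single bit of information coherently across an entire $\sea$-coset. The key step is: superimpose such a sofic scaffolding, and add finite-type rules forcing the $B$-symbol to be transported unchanged along the scaffolding's horizontal and vertical adjacencies; since the scaffolding is connected within each $\sea$-coset, this makes the $B$-symbol $\sea$-invariant, hence a function of $\phi(g)\in\Z$. Intersecting the resulting sofic shift with these finite-type constraints and applying the letter map $\pi\colon B\to A$ coordinatewise yields exactly $\phi^*(X)$, which is therefore sofic.

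The main obstacle is step (ii): propagating a finite amount of data so that it is provably constant on each infinite, locally finite coset $\sea$, using only a finite-type rule plus an already-sofic auxiliary layer. One has to check two things carefully. First, that the substitutive scaffolding on $\sea=\hor\times\ver$ really does connect all vertices of a single $\sea$-coset through bounded-range adjacencies --- this is where the coset-tree / parse-tree picture of substitutional shifts, and the fact that $\iota^-_*$ of a substitutional $\hor$-shift is sofic (Proposition~\ref{prop:soficH}), does the heavy lifting. Second, that the scaffolding does not accidentally over-constrain: distinct $\sea$-cosets (i.e.\ distinct $\phi$-levels) must be allowed to carry unrelated $B$-data except for the nearest-level compatibility $E$, and one must confirm that every $x\in X$ lifts to some $z\in Z$ which in turn lifts to a valid configuration of $W$ --- so that the factor map is onto $\phi^*(X)$ and not just into it. I expect the onto direction to be routine once the scaffolding is in place (lift $x$ to $z\in Z$, spread each $z(n)$ over the $\sea$-coset at level $n$, and choose any admissible scaffolding configuration), and the "sofic, not just effective" direction to follow formally from Propositions~\ref{prop:soficHinduction}/\ref{prop:equalbranchingstructures} together with the fact that finite intersections and letter-image factors of sofic shifts are sofic.
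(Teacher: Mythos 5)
Your proposal is correct and is essentially the same approach as the paper's own proof, which also reduces to an edge shift $Y\subseteq B^\Z$, overlays a sofic layer forcing the $B$-symbol to be constant on $\phi$-fibres, and imposes the nearest-level compatibility via a local spider/tetrahedron rule before projecting by $B\to A$. The one place where you make things harder than they need to be is step~(ii): you do not need a separate ``propagate a bit along the scaffolding'' argument on top of the substitutional overlay. If you apply Proposition~\ref{prop:soficHinduction} to the constant one-step substitutional $\hor$-shift $X_0=\{(b,b,\dots)\mid b\in B\}$ (so $\tau(b)=(b,b)$), then $\iota_*\eta^*(X_0)$ is \emph{already} exactly $\phi^*(B^\Z)$ --- the set of $\lamp$-configurations constant on each $\sea$-coset --- and it is sofic by that proposition; intersecting with the finite-type constraint that the $B$-labels one $\phi$-step apart lie in $E$, and then applying $\pi$, gives $\phi^*(X)$. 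The paper's proof is more self-contained in that it constructs the relevant tilesets $\Theta_{1,A}$ and $C_{2,A}$ directly (running trees both upward and downward and forcing the two $A$-tags to agree, so the tag becomes constant on each $\sea$-coset) rather than invoking the propositions as black boxes, but the underlying mechanism is the same; both organizations of the argument are valid.
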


\subsection{Marking corooted binary trees}
\label{sec:Corooted}

We recall that tilesets on $\lamp$ are described by their neighbourhood: ``spiders'' specify legal edge colours on the neighbouring edges in directions $(a,b,a^{-1},b^{-1})$ of each vertex; and ``tetrahedra'' specify legal vertex colours at vertices $\{v, a v, b^{-1}a v, b v\}$ along each relation $b^{-1}a b^{-1}a v=v$. Note that the set of valid tetrahedra that eventually appear in the SFT is closed under the flip $(x,y,z,w) \leftrightarrow (z,w,x,y)$ which describes the same tetrahedron but viewed from vertex $b^{-1}a v$ rather than from $v$; if we only specify one orientation of a tetrahedron, the interpretation is that the other one is also allowed. We describe a spider tile as a list $(c_a,c_b,c_{a^{-1}},c_{b^{-1}})$ of colours, in that order.

Consider first the tileset $\Theta_0$ with edge colours $C_0=\Z/3$ and allowed spiders $(c,c,c+1,c)$ and $(c,c,c,c+1)$ for all $c\in C_0$:
\[\Theta_0=\{(c,c,c+1,c), (c,c,c,c+1)\mid c\in C_0\}.\]
Thus at each vertex there is a single edge in the $\phi$-decreasing direction that is distinguished, namely the one with colour one more (modulo 3) than its three neighbours. We can also view the configurations as vertex colourings, by colouring every vertex $v$ of the Cayley graph of $\lamp$ by the $c$ appearing three times on the edges touching $v$. This mapping from edge colourings to vertex colourings is obviously injective, since the colour of an edge is determined by its endpoint in the decreasing $\phi$-direction.

Recall from~\S\ref{ss:locfin} that the corooted binary tree has as vertex set $\N\times\N$, with edges connecting $(m,n+1)$ to $(2m,n)$ and $(2m+1,n)$ for all $m,n\in\N$. Its leaves are $\N\times\{0\}$.
\begin{lem}\label{lem:theta_0}
  The SFT on $\lamp$ defined by the tileset $\Theta_0$ has the following structure: for each $c\in C_0$, the subgraph spanned by the $c$-coloured edges is a disjoint collection of corooted binary trees, with possibly a single full binary tree.
\end{lem}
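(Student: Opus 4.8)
The plan is to analyze the local structure forced by the tileset $\Theta_0$ and then globalize. First I would fix a colour $c\in C_0$ and consider the subgraph $\Gamma_c$ spanned by the $c$-coloured edges. The key local observation is that each spider is of the form $(c,c,c+1,c)$ or $(c,c,c,c+1)$: exactly one of the four incident edge-colours differs from $c$, and it is precisely one of the two $\phi$-decreasing edges ($a^{-1}$ or $b^{-1}$). Translating this into the language of $\Gamma_c$: at a vertex $v$ whose surrounding edges carry colour $c$ at all, both $\phi$-increasing edges (towards $av$ and $bv$) and exactly one $\phi$-decreasing edge lie in $\Gamma_c$. So every vertex of $\lamp$ that meets $\Gamma_c$ has exactly one outgoing edge in the decreasing $\phi$-direction and two incoming ones — that is, $\Gamma_c$ is a forest where each component carries a coherent height function (the restriction of $\phi$, suitably normalized), with out-degree exactly $1$ upward and in-degree exactly $2$ downward at every non-minimal... wait — every vertex has out-degree $1$, so there are no minimal vertices; the component is downward-infinite. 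Hence each component of $\Gamma_c$ is either a corooted (two-way infinite, constant out-degree $1$) binary tree, or the full binary tree $\mathcal T_2$ — the latter occurring when the component is unbounded above as well.

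Next I would verify there is no other possibility. A connected, locally finite, oriented graph in which every vertex has out-degree $1$ and in-degree $2$ and which contains no oriented cycle is determined up to isomorphism by whether following the out-edges (the "upward ray" from any vertex) is eventually constant-along-a-single-branch — it always is, since out-degree is $1$ — and whether any two upward rays eventually coalesce. In the Cayley graph of $\lamp$, two $\phi$-increasing rays from vertices $v,w$ coalesce iff $v,w$ lie in the same coset of $\sea$ after enough upward steps, which always happens, since $\lamp/\sea=\Z$ via $\phi$ and the "lamp" coordinates to the right of the lamplighter get absorbed. Actually the cleanest route: the Cayley graph of $\lamp$ is the Diestel–Leader graph $\DL[2,2]$, whose vertex set is $\{(x,y)\in\mathcal T_2^2 : \beta(x)+\beta(y)=0\}$, and the upward edges in $\lamp$ correspond to following the canonical out-edge in the first $\mathcal T_2$ factor. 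The $c$-coloured edges single out, at each vertex, one incoming edge in the second factor; iterating, the $c$-component projects isomorphically onto a subtree of $\mathcal T_2$ in the first coordinate, and this subtree is a corooted binary tree (if bounded above) or all of $\mathcal T_2$ (if not). Since upward rays from distinct vertices must coalesce — two elements of $\mathcal T_2$ have a confluent downward... upward... path — the components are pairwise vertex-disjoint, and at most one of them can be the full tree (being unbounded above forces it to contain cofinally many of the "top" vertices, and two such would have to merge).

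Finally I would assemble the statement: for each $c$, $\Gamma_c$ is a disjoint union of corooted binary trees, together with at most one full binary tree, and as $c$ ranges over $C_0=\Z/3$ these partition the edge set (each edge has a well-defined colour). The main obstacle I anticipate is the rigorous exclusion of degenerate or exotic components — ruling out oriented cycles (immediate, since $\phi$ strictly increases along upward edges and a cycle would have to return to its starting $\phi$-value) and confirming that "unbounded above" really does collapse to exactly one full-tree component rather than several. That merging argument is where I would be most careful: I would phrase it via the second-coordinate $\mathcal T_2$ picture, where choosing one incoming edge per vertex is exactly choosing, for each height, which child to descend into, and two such choices that are unbounded above must agree at all sufficiently negative heights of $\mathcal T_2$, hence define the same component.
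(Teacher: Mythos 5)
The proposal correctly identifies the local structure of $\Gamma_c$: each vertex that meets $\Gamma_c$ has exactly one $c$-coloured edge in the $\phi$-decreasing direction and either zero or two $c$-coloured edges in the $\phi$-increasing direction (the "zero" case being the leaves, which the proposal at first forgets — "exactly one outgoing edge \dots and two incoming ones" is false at a leaf, though this is partly caught later). The projection to a subtree $S$ of $\mathcal T_2$ via the Diestel--Leader description $\DL[2,2]$ is a good way to package this: $S$ is connected, parent-closed, and each vertex of $S$ has $0$ or $2$ children in $S$, with the $\lamp$-side $x$-coordinate determined by the component and the level.

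The genuine gap is the assertion, made twice without justification, that such an $S$ is either a corooted binary tree or all of $\mathcal T_2$. That combinatorial description of $S$ does \emph{not} force it: a parent-closed, sibling-closed subtree of $\mathcal T_2$ with $0$ or $2$ children at each vertex can have leaves at several different heights (for instance a leaf at height $0$ whose sibling has two children, both leaves at height $-1$). Ruling this out is precisely the content of the lemma; as the paper says explicitly, "what remains to be seen is that if such a tree has a leaf, then its set of leaves is precisely the set of its vertices at the same height". The paper proves this by an induction on tetrahedra, establishing that inside every height-$k$ tetrahedron the top row is monochrome, with the primary tree of colour $c$, secondary trees of colour $c+1$ forking off at each level, and so on; the nested-tetrahedra structure then forces all leaves of a component to lie at a single height. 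The smallest form of this constraint is already visible in a single height-$1$ tetrahedron $\{v,av,b^{-1}av,bv\}$: the four spiders force $av$ and $bv$ to have the same majority colour, so siblings in $S$ are always both leaves or both non-leaves, and iterating through larger tetrahedra extends this to all of $S_\ell$ at once. The proposal does anticipate needing to exclude "degenerate or exotic components", but then addresses only cycles and the uniqueness of a possible full-tree component; the ragged-leaf scenario, which is the actual crux of the argument, is never considered. To complete the proof one must supply that tetrahedron-level coherence argument (or the paper's equivalent induction).
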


One configuration in this SFT comes from the vertex colouring $x\colon\lamp\to C_0$ defined by
\[x(s,n)=\#\{k>n:s_k=1\}\bmod 3\quad\text{ for all }s\in(\Z/2)^{(\mathbb E)},n\in\Z.\]
As a typical model of a corooted binary tree appearing in this colouring, consider the span of the vertices $\{(s,n)\mid n\le0,s(k)=0 \; \forall k>n\text{ except }s(\frac12)=1\}$, which is coloured $1$. Note that its leaves are naturally identified with the subgroup $\hor$, via the bijection between $\hor$ and $\N$ mentioned in Example~\ref{ex:tm}. There is, additionally, one full binary tree, $\{(s,n)\mid s(k)=0 \; \forall k>n\}$ which is coloured $0$.

\begin{proof}
  Consider a configuration $x$ in the SFT. The tileset $\Theta_0$ imposes that every vertex has two distinct colours $c,c+1$ on the edges below it, and the two edges above it are coloured $c$. Thus in the subgraph spanned by $c$-coloured edges, every vertex has degree $1$ or $3$, and the connected components are trees growing upwards. What remains to be seen is that if such a tree has a leaf, then its set of leaves is precisely the set of its vertices at the same height.

  This follows from the following claim: ``\emph{in every height-$k$ tetrahedron, there is a height-$k$ monochrome upwards-growing binary tree, say of colour $c$; at each level $\ell\in\{0,\dots,k\}$ there is a height-$(k-\ell)$ binary tree forking off it, coloured $c+1$; from each level of these trees, there is a binary tree forking off it, coloured $c+2$; etc.}'' (Our convention is that the height of a tetrahedron is the number of edges on a path from bottom to top.) Indeed, if a monochrome tree in $x$ has a leaf, then it is a secondary tree in an increasing, nested sequence of tetrahedra containing that leaf, and the claim enforces a whole level of leaves.

  This claim itself is proven by induction; the case $k=0$ being trivial. Consider next a height-$k$ tetrahedron $T$, say between $\phi$-positions $N$ and $N+k$. By induction, the two sub-tetrahedra of $T$ between positions $N$ and $N+k-1$ satisfy the claim, so they contain height-$(k-1)$ binary trees respectively coloured $c$ and $d$. Now the upwards-pointing edges at position $N+k-1$ are also respectively coloured $c$ and $d$; and the rule $\Theta_0$ forces $c\neq d$, so (up to swapping $c$ and $d$) $d=c+1$. Thus $T$ contains a height-$k$ binary tree coloured $c$, and the claim holds for $T$.
\end{proof}

These corooted binary trees will serve as parse trees for the induced shifts, in the proofs of the propositions. We may mark the non-leaf vertices of each tree with a finite amount of data $B$ so that all vertices at same height share this data: set $C_{1,B}=C_0\times B$, and allow spider colourings $((c,b),(c,b),(c+1,b'),(c,b))$ and $((c,b),(c,b),(c,b),(c+1,b'))$ at each spider for all $c\in C_0,b,b'\in B$, calling the resulting tileset $\Theta_{1,B}$. Consider now a vertex $v$ which is a non-leaf in a tree $\mathcal T$. The edge at $v$ which does not belong to $\mathcal T$ is a leaf of a new corooted binary tree $\mathcal T'$, and it carries the data $b'$ which uniformly colours $\mathcal T'$; thus all vertices at same height as $v$ in $\mathcal T$, which are all the leaves of $\mathcal T'$, share the data $b'$.

\begin{proof}[Proof of Proposition~\ref{prop:soficH}]
Let $\tau\colon A\to\power(A^2)$ be a substitution. Consider the set of colours $C=C_{1,\{0,1\}}\times A$, tagging each corooted binary tree with a bit, and allow colours $((c,b,a_0),(c,b,a_1),(c+1,b',a'),(c,b,a))$ and $((c,b,a_0),(c,b,a_1),,(c,b,a),(c+1,b',a'))$ for all $c\in C_0,b,b'\in\{0,1\},a,a'\in A$ whenever $(a_{b'},a_{1-b'})\in\tau(a)$. Then the $A$-labels on every corooted binary tree form a valid parse tree for the substitutional shift, so their leaves form a valid element of $X_\tau$. These collections of leaves are precisely the cosets of $\hor$ in $\lamp$, and are coloured independently. At each vertex, there is a unique edge with different $c$-colour than the other three edges; label the vertex with the $A$-value on that edge. This presents $(\iota^-)_*(X_\tau)$ as a sofic shift.

Note that the colours specify an extra tree, which is not corooted; its labels are simply ignored by this last map.
\end{proof}

\begin{proof}[Proof of Proposition~\ref{prop:soficHinduction}]
Let $X \subseteq A^\hor$ be a substitutional shift. Proposition~\ref{prop:soficH} shows that $Y\coloneqq (\iota^-)_*(X)$ is sofic. Now consider the substitutive $\ver$-subshift defined by the deterministic substitution $\tau\colon A \to A^2, \tau(a) = (a,a)$, so $\#X_\tau = \#A$ and $X_\tau$ consists of fixed-points for the $\ver$-action. As before $(\iota^+)_*(X_\tau)$ is sofic, where $\iota^+\colon \ver \to \lamp$ is the natural embedding, since $\iota^+$ and $\iota^-$ are isomorphic. Now the intersection $Y \cap (\iota^+)_*(X_\tau)$ is also sofic, and coincides with the desired subshift $\iota_*(\eta^*(X))$.
\end{proof}

\begin{proof}[Proof of Proposition~\ref{prop:equalbranchingstructures}]
Let $X\subseteq A^\hor$ be a substitutional shift. Proposition~\ref{prop:soficH} shows that $Y\coloneqq (\iota^-)_*(X)$ is sofic. Now, similarly to what we did in the proof of Proposition~\ref{prop:soficHinduction}, synchronize the $\Z/3$-elements and bits in each $\ver$-coset, by overlaying the induction of a finite subshift of $\ver$-fixed points, this time over the alphabet $\Z/3 \times \{0,1\}$.
\end{proof}

\begin{proof}[Proof of Proposition~\ref{prop:soficZ}]
We begin by proving that the pullback of the full shift $A^\Z$ is sofic. We start by the tileset $\Theta_{1,A}$ on $C_{1,A}=\Z/3\times A$ as above, in which every corooted tree shares an element of $A$. We set then $C_{2,A}=C_{1,A}\times C_{1,A}$, with valid spider colourings those that are valid on the first $C_{1,A}$; that are valid on the second $C_{1,A}$, after inverting generators (thus in the second co\"ordinate we accept all $((c+1,a'),(c,a),(c,a),(c,a))$ and all $((c,a),(c+1,a'),(c,a),(c,a))$); and that have the same $A$-value on the distinguished edge of the first and second factors.

In this manner, each vertex $v$ is a leaf of a corooted binary tree in the positive $\phi$-direction, and also a leaf of a corooted binary tree in the negative $\phi$-direction. Each of these binary trees is tagged with the same $A$-label, which is therefore shared by the whole fibre $\phi^{-1}(\phi(v))$, and these fibres are independently coloured by $A$. The map assigning to each vertex $v$ the unique $A$-colour on the distinguished edges emanating from $v$ expresses the pullback of the full shift $A^\Z$ as a sofic shift on $\lamp$.

Consider now an arbitrary sofic shift $X\subseteq A^\Z$. Without loss of generality, $X$ is the quotient of an SFT $Y\subseteq B^\Z$ defined by two-letter legal words $F\subseteq B^2$ via a map $B\to A$ (so $Y$ is an \emph{edge shift}). Explicitly, $Y=\{y\in B^\Z\mid (y_n,y_{n+1})\in F\;\forall n\}$. Decorate then every vertex of $\lamp$ with an element of $B$, and impose, on each spider, that these extra decorations be of the form $(b',b',b,b)$ for some $(b,b')\in F$. Overlaying this construction with the subshift on $C_{2,B}$ above yields an SFT presentation for the subshift $\phi^*(X)$.
\end{proof}

In the SFTs of this section, the trees are quite wild, and the full dynamics are difficult to describe. In Section~\ref{ss:W}, we tame the trees, by running $\Theta_0$ simultaneously up- and downward, and synchronizing the two processes. This is only strictly needed for our strongest simulation result, but may be of independent interest to some readers.

\subsection{A simple $\sea$-aperiodic tile set}\label{sec:SmallSea}

It seems that tile sets with no nontrivial $\sea$-periods are quite common, and in our search for strongly aperiodic tile sets, we found many of them. In this section, we exhibit the simplest one we know.

Let $\Delta_{\text{count}}$ be the following set
 \[ \begin{tikzpicture}[scale=0.9]
    \foreach\label/\o/\a/\ab/\b/\shift in {tree/{1}/{1}/{2}/{1}/-0.5cm,instruction/{2}/{1}/{1}/{1}/3cm,value/1/2/1/2/6.5cm} {
      \begin{scope}[xshift=\shift]
        \coordinate[label={below:$\scriptstyle\o$}] (A) at (0,0);
        \coordinate[label={below right:$\scriptstyle\ab$}] (B) at (1.5,1);
        \coordinate[label={above:$\scriptstyle\a$}] (C) at (-1.1,2);
        \coordinate[label={above:$\scriptstyle\b$}] (D) at (1.1,2);
        \draw (A) -- (B) -- (D) -- (A) -- (C) -- (D);
        \draw[dashed] (B) -- (C);
      \end{scope}
    }
  \end{tikzpicture} \]
of three tetrahedra (already closed under the permutation $(1 \; 3)(2 \; 3)$).

\begin{lem}
\label{lem:SimpleAperiodic}
The SFT defined by allowed tetrahedra $\Delta_{\text{count}}$ is nonempty, and has no configurations with a nontrivial period in $\sea$.
\end{lem}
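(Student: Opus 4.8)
I want to show the SFT defined by the three tetrahedra $\Delta_{\text{count}}$ is nonempty and admits no nontrivial $\sea$-period. Recall that a tetrahedron records allowed colourings of the four vertices $\{v, av, b^{-1}av, bv\}$ along a relation $b^{-1}ab^{-1}av = v$, and is read modulo the flip $(x,y,z,w)\leftrightarrow(z,w,x,y)$. The three tetrahedra here use three colours, which I will name after their roles: \texttt{tree}, \texttt{instruction}, \texttt{value} (so colour $1$, $2$, $1$ at the bottom vertices respectively in the picture, etc.). Let me first read off, from the picture, exactly which of the $3^4$ colourings of a tetrahedron are allowed — this is a finite check, but the combinatorics of which colour sits at $v$, $av$, $b^{-1}av$, $bv$ is the content, and I would write it out as an explicit list before doing anything else.

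**Nonemptiness.** The cleanest route is to produce one configuration by hand. I expect it to be the vertex colouring built on top of $\Theta_0$-style trees: take the decreasing-$\phi$ tree structure of Lemma~\ref{lem:theta_0}, and colour vertices according to their position in the tree hierarchy. Concretely I would guess the configuration $x(s,n)$ depends on the number of $1$'s of $s$ above position $n$ in a way that cycles through \texttt{tree}, \texttt{instruction}, \texttt{value} — essentially a base-$3$ counter running along the corooted trees, where \texttt{tree} marks the "trunk" edges, \texttt{instruction} the first fork-off, \texttt{value} the next. One then verifies directly that every tetrahedron appearing is one of the three in $\Delta_{\text{count}}$. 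Honestly the safest thing is: exhibit the colouring explicitly as a formula in $(s,n)$, and check the four-vertex constraint on a generic $\phi$-height-$2$ tetrahedron; this is a short finite verification.

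**Aperiodicity — the heart of the argument.** A nontrivial element of $\sea$ is a finitely-supported $t\colon\E\to\Z/2$ with $t\neq 0$; let $k = \max\{j : t(j)\ne 0\}$ be the highest lamp it flips. Acting by $t$ translates a configuration within a single $\phi$-fibre, and the key geometric fact is: the two vertices $v$ and $tv$ differ in the Cayley graph only "below height $k$", i.e.\ they lie in the same $\Theta_0$-tree down to level $k$ but on opposite sides of the fork at level $k$. So if $x$ is $t$-periodic, then for every vertex the colour is forced to be constant across this fork. The plan is to show this is incompatible with $\Delta_{\text{count}}$: analyze what the three tetrahedra force about how colours change as one goes up one level in a tree and forks. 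I expect the three allowed tetrahedra to encode a deterministic "counting" behaviour — reading the names, the point is that along the trunk the colour advances \texttt{tree} $\to$ \texttt{instruction} $\to$ \texttt{value}, and a fork must carry a \emph{different} colour from the sibling (much as $\Theta_0$ forced $c\ne d$ at each fork). Then $t$-periodicity would force two siblings at level $k$ to have the same colour, contradicting the tetrahedron rule. The main obstacle, and where I'd spend the most care, is pinning down precisely this "siblings must differ" consequence of $\Delta_{\text{count}}$ — it requires tracing the flip symmetry and checking no allowed tetrahedron lets the two $\phi$-increasing edges (or the two relevant vertices) carry equal colours in the configuration-class that can actually occur; once that's nailed, the reduction from a $\sea$-period to "equal siblings at level $k$" is the same local picture as in Lemma~\ref{lem:theta_0} and is routine.

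**Summary of steps.** (1) Tabulate the three allowed tetrahedra as explicit $4$-tuples, closed under the flip. (2) Extract the forced local structure: every configuration carries a $\Theta_0$-like forest of trees with a counting discipline along trunks, and siblings at a fork carry distinct colours — this is the lemma-within-the-proof. (3) Nonemptiness: write down one configuration as an explicit formula on $\lamp = (\Z/2)^{(\E)}\rtimes\Z$ and verify the tetrahedron constraint on a generic small tetrahedron. (4) Aperiodicity: given $0\ne t\in\sea$ with top lamp $k$, observe $v$ and $tv$ are siblings in the level-$k$ fork of their common tree; a $t$-period forces those siblings to share a colour, contradicting (2). Conclude.
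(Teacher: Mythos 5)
Your proposal is built on a misreading of the tileset. The three tetrahedra in $\Delta_{\text{count}} = \{(1,1,2,1),(2,1,1,1),(1,2,1,2)\}$ use a \emph{two}-letter alphabet $\{1,2\}$, not three colours, so the ``tree $\to$ instruction $\to$ value'' base-$3$ counting discipline you imagine does not exist. More importantly, your key sub-lemma---``siblings at a fork carry distinct colours''---is false for this tileset, and in fact the exact opposite holds on one side: tabulating the three allowed tetrahedra $(c_1,c_a,c_{ab^{-1}},c_b)$, the pair $(c_a,c_b)$ is $(1,1)$, $(1,1)$, $(2,2)$, i.e.\ the two upper vertices of every tetrahedron are \emph{forced to agree}, and the pair $(c_1,c_{ab^{-1}})$ can be $(1,1)$ (from $(1,2,1,2)$). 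Consequently all $\hor$-cosets carry constant colour, so a naive ``the shift must change a colour across the fork at level $k$'' argument cannot work as stated: for $t\in\hor$ the sea-level restriction \emph{is} $t$-invariant in every configuration. The actual proof uses a quantitative invariant instead: tracking the counts $(a,b)$ of $1$s and $2$s along a row under one application of the substitution gives the matrix $M=\begin{pmatrix}1&2\\1&0\end{pmatrix}$, and an eigenvalue computation (eigenvalues $2$ and $-1$) shows the bottom-row counts of a height-$k$ tetrahedron differ according to whether the top is all $1$s or all $2$s; this forces constant top rows and rules out $\ver$-periods. Periods in $\hor$ are then excluded by a separate step, using that the set of periods is conjugation-closed and the normal closure of any nontrivial $h\in\hor$ meets $\ver$---a step your outline omits entirely.

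Your nonemptiness sketch has the same problem: the explicit formula you guess (a $\bmod\,3$, or after correction $\bmod\,2$, count of lamps above) does not satisfy $\Delta_{\text{count}}$. If you compute the tetrahedra produced by $x(s,n)=1+(\#\{k>n:s_k=1\}\bmod 2)$ you get $(1,1,2,1)$, $(2,1,1,1)$, but also $(2,2,1,2)$ and $(1,2,2,2)$, which are forbidden, and you never produce $(1,2,1,2)$. The paper instead constructs valid tetrahedra of every height inductively (a height-$k$ tetrahedron with all $1$s on top from a height-$(k-1)$ one with all $1$s on top together with one with all $2$s on top, etc.) rather than writing a closed formula. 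So both halves of your plan would fail at the point where one tries to carry them out against the actual tileset; the fix is to abandon the tree-marking picture and redo the analysis with the symbol-counting invariant.
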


\begin{proof}
For nonemptiness, we prove that there exists a valid tetrahedron with only $a$s on the top row, for $a \in \{1, 2\}$. This is clear trivial for $k = 0$. For larger $k$, if $a = 1$, construct a height $k-1$ tetrahedron with $1$s on top, and one with $2$'s on top, and position these under the row; for $a = 2$, use two tetrahedra with $1$s on the top row.

A brief look at the set of legal tetrahedra shows that in a tetrahedron of height $k$, the matrix $M = \begin{pmatrix} 1 & 2 \\ 1 & 0 \end{pmatrix}$ tells how the number of symbols evolves when we move downward in a tetrahedron. Eigenvalue decomposition shows that, writing $(a, b)$ for respectively the number of $1$-symbols and $2$-symbols on the bottom column of a height $k$ tetrahedron, we have $(a, b) = 2^k (2, 1)/3 - (-1)^k (-1, 1)/3$ if any symbol on the top row is $1$, and $2^k (2, 1)/3 + (-1)^k (-1, 1)*2/3$ if any symbol is $2$. These vectors do not agree for any $k$, so the top row is always constant.

This also already shows that the SFT has no $\ver$-period for any configuration: if there were a configuration with period $v \in \ver$, then taking the minimal $n$ with $v \in \ver_k$ we see that if the height $k$ tetrahedron above the identity has $1$'s on its top row, then $v$ exchanges the bottom columns of a tetrahedron with $2$'s on top, and one with $1$'s on top, and then shuffles the columns; this cannot preserve the configuration due to the different letter counts. If the tetrahedron of height $k$ has $2$'s on the top row, then the tetrahedron above it, of height $k+1$, must have $1$'s on the top row, thus, the tetrahedron of height $k$ ``behind'' the first one we considered gives a similar contradiction.

In fact, the previous argument shows that for all configurations $x$ in this SFT, and any $v \in \ver \setminus \{1\}$ we have $vx|_\sea \neq x|_\sea$. Due to the fact right $\hor$-cosets are constant in all configurations, we have $hx|_\sea = x|_\sea$ for all $h \in H$. Thus, no point has period $s \in \sea \setminus \{1\}$ with nontrivial projection to $\ver$. On the other hand, the set of possible periods of points in a dynamical system is closed under conjugation, and the normal closure of every $h \in \hor \setminus \{1\}$ intersects $\ver$.
\end{proof}

To check that this is indeed the smallest example, we note first there is no nontrivial SFT with smaller alphabet, which would have size $1$. The number of tetrahedra is also optimal, in that one can quickly work through all pairs (and singletons) of tetrahedra closed under the permutation $(1 \; 3)(2 \; 4)$, and conclude that the only non-trivial example implements the Thue-Morse substitution in the sense of Example~\ref{ex:tm}, and the resulting SFT has exactly two configurations.

\subsection{Aperiodic tilesets and undecidability}\label{sec:Small}

We now use the tilesets constructed above to solve two problems: the construction of a strongly aperiodic tileset, namely one all of whose tilings have trivial isotropy group, and the proof of undecidability of the tiling problem. These are direct proofs of, respectively, Corollary~\ref{cor:examples}(1) and Corollary~\ref{cor:undecidable} from the Introduction. (This section is independent from the proofs of our main simulation results, whose proofs begin in the following section).

The method, for both problems, is to represent in $\lamp$ an infinite binary tree by $\Z$-translates of $\hor$, and to use $\ver$ and trees in the opposite direction to connect vertices at the same height. In this manner, we realize in $\lamp$ a union of ``discrete hyperbolic planes'': the $1$-skeleton of the tiling of $\mathbb H^2$ by pentagons with vertex set $2^\Z(\Z+i)$.

We then apply the technique of Kari~\cite{kari:revisited}, storing an element of $\R$ or $\R^2$ on each translate of $\hor$, represented as an average (Beatty sequence); and forcing by the tileset each such element to be an affine image of the previous one. The existence of aperiodic piecewise-affine maps on $\R$, respectively the undecidability of the mortality problem for piecewise-affine maps on $\R^2$, let one conclude the proofs.

We have elected to represent the tilesets by tetrahedra; we are therefore considering vertex colourings, by a set $C$ of colours; and a tileset is a subset $\Delta$ of $C^{\{1,a,a b^{-1},b\}}$, with entries written in that order. It determines the subshift
\[X_\Delta=\{x\colon\lamp\to C\mid (x(v),x(v a),x(v a b^{-1}),x(v b))\in\Delta\text{ for all }v\in\lamp\}.\]
Note that if $(c_1,c_2,c_3,c_4)\in\Delta$ then we may assume without loss of generality $(c_3,c_4,c_1,c_2)\in\Delta$ (by applying the condition to $v a b^{-1}$); we only specify half of the allowed tiles, assuming implicitly that they are completed by the above switch operation.

\begin{thm}\label{thm:aperiodic}
  There exists a strongly aperiodic tileset consisting of $1488$ tetrahedra.
\end{thm}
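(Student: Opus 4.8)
The plan is to carry out the Kari transducer construction on the ``discrete hyperbolic plane'' scaffolding described in the overview of \S\ref{sec:Small}. First I would build the geometric substrate: using the tileset $\Theta_0$ of Lemma~\ref{lem:theta_0} run simultaneously in the $\phi$-increasing and $\phi$-decreasing directions (as in the proof of Proposition~\ref{prop:soficZ}, overlaying $C_{2,B}$), so that each vertex $v$ lies on a corooted binary tree going up and one going down. The up-trees realize the binary tree $\mathcal T_2$ whose level-$n$ vertices are indexed by $\Z$-translates of $\hor$, and the down-trees let each such translate ``see'' its sibling and parent at the next scale; concretely one forces the pentagonal adjacency structure of the $1$-skeleton of the pentagon tiling of $\mathbb H^2$, with vertex set $2^\Z(\Z+i)$. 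On this substrate I would store, on each translate of $\hor$ (i.e.\ each horocycle), a real number $x\in[0,1]$ encoded as a Beatty/balanced sequence $\lfloor (n+1)x\rfloor-\lfloor nx\rfloor$ along the bi-infinite line indexing that horocycle, using finitely many symbols. The tileset constraints then enforce, between a horocycle and the one ``above'' it at the next scale, that the stored real is the image under a fixed piecewise-affine map $T\colon[0,1]\to[0,1]$ chosen so that $T$ has no periodic orbit; Kari's encoding of affine maps by transducers/Wang tiles~\cite{kari:smallwang,kari:revisited} shows this is implementable with a finite, explicitly enumerable tile set, and the scaffolding guarantees a valid configuration exists (take any orbit of $T$), so $X_\Delta\neq\emptyset$.

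The aperiodicity argument then runs as follows. Suppose $x\in X_\Delta$ has nontrivial stabilizer. Since $\sea$-periods are already excluded by the counting behaviour of the tree tilesets (as in Lemma~\ref{lem:SimpleAperiodic}, or more simply because a nontrivial $\sea$-period would have to fix the two up/down tree structures, hence a whole horocycle, which carries an aperiodic Beatty sequence), any period projects nontrivially to $\Z$ under $\phi$, i.e.\ shifts the lamplighter by some $k\neq 0$. But a $\Z$-shift by $k$ moves horocycle at level $n$ to level $n+k$ (up to a bounded correction), hence conjugates the stored real $x_n$ to $x_{n+k}=T^{(k)}(x_n)$ if $k>0$; a genuine period would force $x_n=T^{(k)}(x_n)$, contradicting that $T$ (hence $T^{(k)}$) has no periodic point. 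The case $k<0$ is symmetric. Thus no nontrivial element of $\lamp$ fixes $x$, so $\lamp$ acts freely on the nonempty SFT $X_\Delta$.

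The final, and genuinely laborious, step is the count: assembling the explicit tile set $\Delta\subseteq C^{\{1,a,ab^{-1},b\}}$ from (i) the $\Theta_0$-style tree synchronization layer, (ii) the horocycle-indexing/Beatty layer, and (iii) Kari's affine-map transducer for a concrete aperiodic $T$ (e.g.\ a two-interval map with irrational slopes, as in~\cite{kari:smallwang}), optimizing the product alphabet and merging redundant constraints, so that the number of legal tetrahedra — closed under the flip $(c_1,c_2,c_3,c_4)\leftrightarrow(c_3,c_4,c_1,c_2)$ — comes out to exactly $1488$. I expect the main obstacle to be precisely this bookkeeping: reconciling Kari's planar Wang-tile counts with the tetrahedral/spider encoding native to $\lamp$, and in particular handling the ``extra'' non-corooted trees and the scale-doubling carries without blowing up the alphabet; the conceptual content (scaffolding exists and is sofic; $T$ aperiodic $\Rightarrow$ configuration aperiodic) is already in hand from the propositions above and from Kari's method, so the theorem is really an exercise in careful, optimized engineering of the tile set.
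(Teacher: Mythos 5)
Your high-level strategy matches the paper exactly — realize a discrete hyperbolic plane inside $\lamp$ via up- and down-pointing binary trees, store a real as a Beatty sequence on each horocycle, and apply Kari's transducer method with a piecewise-affine map having no periodic orbit — so the conceptual content is indeed in hand. What differs is the implementation, and the difference matters for the count. The paper does not build on the $\Theta_0$/$C_{2,B}$ machinery of Proposition~\ref{prop:soficZ}; instead it uses a lean ad hoc product alphabet $C=(\Z/2)\times I^3\times A\times B^2$ with four independent tetrahedral layers — \emph{count}, \emph{instruction}, \emph{value}, \emph{carry} — where the instruction layer $\{((w,x,z),(x,y,w),(w,y,z),(x,y,z))\}$ achieves ``constant across each level'' by propagating $y$ up and $z$ down using only $\#I^3$ colours, the value layer $\{(\ell,m,n,m)\}$ forces the two top values of each tetrahedron to agree, and the carry layer holds the fractional Beatty carries. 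This direct tetrahedral encoding is a genuinely different and much tighter implementation of the same idea: going through the sofic scaffolding $\Theta_0$/$C_{2,B}$ would reintroduce $\Z/3\times\Z/3$ tree colours plus the $B$-tagging on each tree, and the resulting SFT cover would have vastly more than $1488$ legal tetrahedra before any merging. The number $1488$ comes only from the paper's specific choices $I=\{0,1\}$, $f_0(x)=2x$, $f_1(x)=2x/3$, $A=\{0,1,2\}$, $B=\{-1,-2/3,-1/3,0,1/3,2/3\}$ plus a pruning of the colour set, verified by a short computer enumeration of the $6^7$ candidate tetrahedra against the linear constraint.

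One concrete issue in your aperiodicity argument: your ``option (b)'' for ruling out $\sea$-periods (``a $\sea$-period would have to fix the up/down trees, hence a whole horocycle, which carries an aperiodic Beatty sequence'') does not go through as stated. A $\sea$-period only permutes positions \emph{within} a fixed $\phi$-level, so you would need the Beatty sequence on that one level to be aperiodic; but Kari's construction does not force individual levels to encode irrationals (the maps $2x$ and $2x/3$ preserve $\Q$, so an entire orbit can consist of rationals with eventually periodic Beatty rows), and in any case it is not immediate that the group element permutes the level as a $\Z$-shift of the Beatty sequence. The paper explicitly hedges on whether the Kari layers alone force $\sea$-aperiodicity (``It is likely that the other components also automatically ensure $\sea$-aperiodicity'') and plays it safe by adding the separate $\Delta_{\text{count}}$ layer of Lemma~\ref{lem:SimpleAperiodic}, which settles the $\sea$-direction unconditionally. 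You should commit to your option (a) — include the count layer — and drop option (b) unless you can actually prove the Beatty sequences on individual horocycles are forced to be aperiodic, which the paper's authors evidently could not.
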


\begin{thm}\label{thm:undecidable}
  The domino problem on $\lamp$ is undecidable.
\end{thm}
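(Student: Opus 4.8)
The plan is to derive Theorem~\ref{thm:undecidable} by the same geometric scaffold used for Theorem~\ref{thm:aperiodic}, but feeding it a different one-dimensional dynamical input. Recall that the method realizes inside $\lamp$ a disjoint union of ``discrete hyperbolic planes'': each $\Z$-translate of $\hor$ gives the leaves of a corooted binary tree (via the tilesets $\Theta_0$, $\Theta_{1,B}$ of \S\ref{sec:Corooted}), and $\ver$ together with the trees in the opposite $\phi$-direction are used to wire together the vertices lying at a common height, so that we obtain the $1$-skeleton of the pentagonal tiling of $\mathbb H^2$ with vertex set $2^\Z(\Z+i)$. The first step is therefore to record, on each translate of $\hor$ (equivalently, on each ``row'' of the discrete hyperbolic plane), a real pair in $\R^2$ encoded as a two-dimensional Beatty/average sequence along that row, exactly as in Kari's construction~\cite{kari:revisited}; the binary-tree geometry is precisely what lets the average computed at height $k$ be compared with the average at height $k+1$, so the tileset can impose that consecutive rows are related by a prescribed piecewise-affine map.

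The second step is to invoke the undecidability of the mortality (immortality) problem for piecewise-affine maps on $\R^2$: given such a map $f$, it is undecidable whether there is a point whose forward orbit under $f$ avoids a designated ``halting'' region forever. Kari's encoding turns an immortal orbit of $f$ into a valid tiling of the discrete hyperbolic plane (the successive rows store the successive iterates), and conversely a valid tiling yields an immortal orbit; since our $\lamp$-SFT contains at least one such hyperbolic plane in every configuration (this is exactly what the scaffold of \S\ref{sec:Corooted} plus \S\ref{sec:Small} guarantees, as in the proof of Theorem~\ref{thm:aperiodic}), the $\lamp$-SFT built from $f$ is non-empty if and only if $f$ has an immortal point. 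One subtlety to address: a configuration of the $\lamp$-SFT may contain several independent hyperbolic planes (and some degenerate full binary trees), and these carry independent orbits; but this does not affect the reduction, since non-emptiness of the SFT is equivalent to the existence of a single consistent plane, hence to immortality of $f$. Thus the map $f\mapsto X_\Delta(f)$ is a computable reduction from the immortality problem to the domino problem on $\lamp$, proving undecidability.

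I would structure the write-up by first stating precisely the Kari-style tileset $\Delta(f)$ associated to a rational piecewise-affine map $f$ on $\R^2$ with rational breakpoints: colours record a local window of a Beatty sequence for a coordinate, plus the tree-bookkeeping colours $C_{1,B}\times C_{1,B}$ from \S\ref{sec:Corooted} (running $\Theta_0$ both up and down) so that rows at equal height are synchronized, plus a finite-state ``carry'' for the affine update. Then I would verify the two implications of the equivalence ``$X_\Delta(f)\neq\emptyset \iff f$ immortal'': the forward direction extracts, from any configuration, one hyperbolic plane and reads off a bi-infinite sequence of averages converging (by the standard Beatty-sequence lemma) to a genuine orbit of $f$; the backward direction plants a fixed immortal orbit on the canonical plane coming from the explicit vertex colouring $x(s,n)=\#\{k>n:s_k=1\}\bmod 3$ of \S\ref{sec:Corooted}, and fills the remaining trees arbitrarily with a constant colour.

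The main obstacle is the mismatch between the hyperbolic geometry Kari's argument wants and the geometry we actually get. In Kari's original setting the rows form a genuine infinite binary tree where every vertex at height $k$ has two children at height $k-1$ and one parent; here the ``rows'' are cosets of $\hor$ indexed by $\Z$ via $\phi$, and the tree structure linking consecutive rows is supplied only indirectly, through the corooted trees of $\Theta_0$ whose branching is itself part of the configuration, and whose leaves are identified with $\hor\cong\N$ by a binary-expansion bijection that does \emph{not} respect the additive structure of $\hor$. So the delicate point is to check that an average (Beatty) computation can still be carried out locally within a corooted binary tree of the lamplighter Cayley graph, and that the ``shift by one'' needed to compare a row with the next is realized by passing from a tree to the tree forking off it — this is exactly the phenomenon already exploited in \S\ref{sec:Corooted}, and the paper's own remark that ``the natural bijection between $[0,2^n-1]^2$ and $(\Z/2)^n\times(\Z/2)^n$ given by binary encoding allows Turing machine calculations to be represented in both cases'' is what makes it go through; I would spell this out carefully, since it is the only place where the locally finite group $\sea$ genuinely behaves differently from $\Z^2$ and where a naive transplantation of Kari's proof would fail.
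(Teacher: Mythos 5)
Your proposal is correct and takes essentially the same route as the paper: the paper's own proof of Theorem~\ref{thm:undecidable} is a one-paragraph reference stating that the tileset $\Delta$ built in \S\ref{sec:Small} (the count/instruction/value/carry layers and the affine constraint) gives a computable reduction from the immortality problem for rational piecewise-affine maps on $\R^2$, and cites Kari's \emph{revisited} argument plus Hooper for undecidability of that source problem. You have reconstructed exactly this reduction and, helpfully, spelled out both implications of the equivalence ``$X_\Delta(f)\neq\emptyset\iff f$ immortal'', which the paper leaves implicit.

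One small imprecision worth flagging: in your backward direction you propose to ``plant'' the immortal orbit only on the canonical plane and ``fill the remaining trees arbitrarily with a constant colour''. This will not satisfy the tiling rule: the affine constraint, together with the carry telescoping, must hold at every tetrahedron, not just on a chosen plane. The right statement is that once you fix the per-level instruction $i_n$ and the per-level real $r_n$, the value layer must encode (some shift of) the Beatty sequence of $r_n$ on \emph{every} corooted tree at that level, with carries chosen accordingly; this is a uniform assignment depending only on the level and position in the tree, so it works, but it is not a constant filling. Relatedly, your worry about the ``geometry mismatch'' is not really an obstacle in this formulation: the tetrahedra of the lamplighter Cayley graph directly supply the parent/two-children structure Kari's averaging argument needs, and the paper's instruction and carry layers do exactly the bookkeeping that your ``$\Theta_0$ run both up and down'' would do, just packaged per-tetrahedron. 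Once these two points are tightened, the write-up you sketch is a more detailed version of what the paper asserts.
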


The proofs of these two results begin in the same manner. As colours $C$ we consider a product $C=\Z/2 \times I^3\times A\times B^2$, for some finite sets $I,A,B$. The slots are respectively called the \emph{count}, \emph{instruction}, \emph{value} and \emph{carry}.

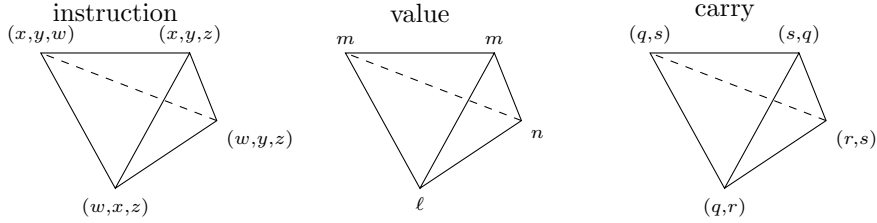
\begin{figure}[h]
  \begin{tikzpicture}[scale=0.9]
    \foreach\label/\o/\a/\ab/\b/\shift in {instruction/{(w,x,z)}/{(x,y,w)}/{(w,y,z)}/{(x,y,z)}/1.5cm,value/\ell/m/n/m/6cm,carry/{(q,r)}/{(q,s)}/{(r,s)}/{(s,q)}/10.5cm} {
      \begin{scope}[xshift=\shift]
        \coordinate[label={below:$\scriptstyle\o$}] (A) at (0,0);
        \coordinate[label={below right:$\scriptstyle\ab$}] (B) at (1.5,1);
        \coordinate[label={above:$\scriptstyle\a$}] (C) at (-1.1,2);
        \coordinate[label={above:$\scriptstyle\b$}] (D) at (1.1,2);
        \draw (A) -- (B) -- (D) -- (A) -- (C) -- (D);
        \draw[dashed] (B) -- (C);
        \node at (0,2.6) {\label};
      \end{scope}
    }
  \end{tikzpicture}
  \caption{The tetrahedra labellings (first component omitted)}
\end{figure}

In the count slot, we allow the SFT from the previous section:
\begin{equation}\label{eq:count}
  \Delta_{\text{count}} = \{ (1, 1, 2, 1), (2, 1, 1, 1), (1, 2, 1, 2) \}.
\end{equation}
The role of this component is simply to ensure that no configuration is $\sea$-periodic (Lemma~\ref{lem:SimpleAperiodic}) in our ``small'' aperiodic tile set. It is likely that the other components also automatically ensure $\sea$-aperiodicity.

Consider next the instruction slot. In it, we allow
\[\Delta_{\text{instruction}} = \{((w,x,z),(x,y,w),(w,y,z),(x,y,z))\mid w,x,y,z\in I\}.\]
This has the effect of specifying an element in $I$ across every level $\phi^{-1}(n)$. Indeed view the numbers $w,x,y,z$ on the four faces of the tetrahedron. The value $y$ propagates in the second co\"ordinate along an upwards-growing tree, the value $z$ propagates in the third co\"ordinate along a downwards-growing tree, and the values $w,x$ are copies along removed edges respectively from an upwards- or downwards-growing tree. It follows that the first co\"ordinate is constant across every level. For a colour $c$ we call that first co\"ordinate $i(c)\in I$. 

The value slot is simply an element of some set $A$, written $v(c)$, and we allow
\[\Delta_{\text{value}} = \{(\ell,m,n,m)\mid \ell,m,n\in A\}.\]
In words, we force the two values at the top of a tetrahedron to be identical. In the carry slot, we allow
\[\Delta_{\text{carry}} = \{((q,r),(q,s),(r,s),(s,q))\mid q,r,s\in B\}.\]
This has the effect of specifying an element between any two neighbouring vertices on a column of every level. Indeed on each downwards-growing binary tree two values are propagated, a ``front'' and a ``back'' one. Across every vertex, the $(q,s)$ values are turned into a $(q,r)$ and an $(r,s)$ value. (The ``$(s,q)$'' in the last position is there because the meaning of ``front'' and ``back'' are reversed at that node). For a colour $c$ we respectively call $p^<(c)$ and $p^>(c)$ the first and second co\"ordinates of its carry slot.

For now we have not specified any constraint between the tree, instruction, value and carry slots. This we do as follows. Consider a vector space $V=\R$ or $\R^2$, and a family of affine transformations $f_i(x)=M_i x+b_i$ on $V$, one for each $i\in I$. Let $A,B$ be finite subsets of $V$. We allow tetrahedron colourings
\[\Delta=\Big\{(c_1,c_a,c_{a b^{-1}},c_b)\mid f_{i(c_1)}(v(c_a))+p^<(c_1)=\frac{v(c_1)+v(c_{a b^{-1}})}2 + p^>(c_{a b^{-1}})\Big\}.\]
In words, we force each tetrahedron to map the value on top, via $f_{i(c_1)}$, to the average of the values on the bottom, give or take some carries.

These tiling constraints mean the following: on every column of every level, there are some values in $V$. Assume that these values have a well-defined average; then every such average is the $f_i$-image of the average above it.

\begin{proof}[Proof of Theorem~\ref{thm:aperiodic}]
  We specialize further to $V=\R$ and $I=\{0,1\}$, with $f_0(x)=2x$ and $f_1(x)=2x/3$. We take $A=\{0,1,2\}$ and $B=\{-1,-2/3,-1/3,0,1/3,2/3\}$, and restrict the colour set $C$ to consist of those $c\in(\Z/3)^2\times I^3\times A\times B^2$ such that if $i(c)=0$ then $v(c)\ne0$ and $p^<(c),p^>(c)\in\{-1,0\}$ while if $i(c)=1$ then $p^<(c),p^>(c)\in\{-1/3,0,1/3,2/3\}$.

  The classical arguments of Kari~\cite{kari:smallwang} apply: there exist bi-infinite orbits of $\{f_0,f_1\}$ on $[1/2,2]$, but no periodic orbit. Thus, in any tiling of $\lamp$ by this tileset, there does not exist any period with non-trivial $\phi$-image. On the other hand, already the tiling by $\Delta_{\text{tree}}$ admits no period with trivial $\phi$-image. It follows that no tiling has any period.

  We finally crudely estimate the number of tiles. The constraints $\Delta_{\text{count}}$ allow $3$ tetrahedra, taking into account the symmetry; the constraints $\Delta_{\text{instruction}}$ allow $2^4$ tetrahedra; the constraints $\Delta_{\text{value}}$ allow $3^3$ tetrahedra; and the constraints $\Delta_{\text{carry}}$ allow $6^3$ tetrahedra so there are $6^7$ tetrahedra to consider. A simple computer program (ours was written in \texttt{Julia}) tests which of those satisfy the compatibility conditions between $i(c),p^<(c),p^>(c),v(c)$ and the linear condition, and answers $1488$.
\end{proof}

\begin{proof}[Proof of Theorem~\ref{thm:undecidable}]
This is analogous to the proof in \cite{kari:revisited}. We derive from the above construction a reduction of the mortality problem for piecewise affine maps: \textit{``Does a given system of rational affine transformations $f_1,\dots,f_n$ of the plane and disjoint unit squares $U_1,\dots,U_n$ with integer corners have an immortal starting point?''} which Kari proves undecidable in \cite{kari:revisited} (based on the mortality problem of Turing machines, whose undecidability is due to Hooper \cite{hooper:tmimmortality}).
\end{proof}

\section{From effective subshifts to sofic shifts}\label{sec:Real}

We now proceed to the proof of our main results. We introduce a variant of the classical Wang tilings adapted to our locally finite groups. Recall that $\sea$ denotes the elementary abelian group $(\Z/2)^\E$, and write $\hor_n=(\Z/2)^{\{-1/2,\dots,1/2-n\}}$ and $\ver_n=(\Z/2)^{\{1/2,\dots,n-1/2\}}$ and $\sea_n=\hor_n\times\ver_n$. For any bit sequence $b\in(\Z/2)^\E$ we have bijections $\eta_b\colon\hor_n\to\{0,\dots,2^n-1\}$ and $\nu_b\colon\ver_n\to\{0,\dots,2^n-1\}$ given by
\begin{equation}\label{eq:etanu}
  g\in\hor_n\mapsto\sum_{i=-1/2}^{1/2-n}(g_i\oplus b_i)2^{-1/2-i},\qquad
  g\in\ver_n\mapsto\sum_{i=1/2}^{n-1/2}(g_i\oplus b_i)2^{-1/2+i}.
\end{equation}
Taken together, $\eta_b\times\nu_b$ gives a bijection $\sea_n\to\{0,\dots,2^n-1\}^2$.
\begin{defn}[Wang towers]
  Let $D$ be a finite set. A \emph{Wang tileset} is a subset $B\subseteq D^{N,E,S,W}\cong D^4$, and a \emph{Wang tower} is a colouring $x\colon\sea\to B$ such that, for some bit sequence $b\in(\Z/2)^\E$, one has
  \[\forall (s',s''),(t',t'')\in\hor\times\ver:\;\begin{cases}
    x(s',s'')_E=x(t',t'')_W & \text{ if }\eta_b(s')+1=\eta_b(t')\text{ and }s''=t'',\\
    x(s',s'')_N=x(t',t'')_S & \text{ if }s'=t'\text{ and }\nu_b(s'')+1=\nu_b(t'').
  \end{cases}\]
  In words, $x$ defines colourings of $\sea_n$ for all $n\in\N$, such that the colouring of $\sea_{n+1}$ consists of colourings of the four cosets of $\sea_n$ in $\sea_{n+1}$, assembled as the four panes of a window in an order specified by $b$.
\end{defn}
Equivalently via the bijection $\eta_b\times\nu_b$, a Wang tower is a tower of tilings of squares $\{0,\dots,2^n-1\}^2$ in such a manner that each square is contained in one of the four quadrants of the next one. Yet equivalently, it is a tiling of a plane, half-plane or quadrant. Thus a Wang tileset admits a tower if and only if it tiles the plane; though for a given Wang tileset the \emph{space} of Wang towers is only vaguely related to the space of Wang tilings of the plane, so these notions should be treated as distinct.

Our argument proceeds in two steps: firstly, we show that every Wang tileset may be encoded into a sofic shift on $\lamp$, in such a manner that the configurations visible on cosets of $\sea$ are precisely the Wang towers. We then show how arbitrary effective shifts may be encoded by Wang towers.

\subsection{From Wang tilesets to sofic shifts}\label{ss:wang2sofic}
Let $B\subseteq D^4$ be a Wang tileset. We prove here that \emph{there is a subshift of finite type on $\lamp$ that, on every coset of $\sea$, projects to a Wang tower for $B$.}

Choose an almost odometric substitution $\tau_0\colon A_0\to\power(A_0^2)$, consider the alphabet $A=A_0\times B^2$ and define the substitution $\tau\colon A\to\power(A^2)$ by
\[\tau\colon(a,(b,b')) \mapsto \{((\tau_0(a)_1, (b,b'')), (\tau_0(a)_2, (b'',b'))) \mid b'' \in B\}.\]

By Proposition~\ref{prop:equalbranchingstructures}, there is a sofic shift $Z_1 \subseteq A^\lamp$ with $Z\restriction_\sea = Y_\tau$. Recall from Definition~\ref{defn:ytau} that $Y_\tau$ is the $\sea$-subshift in which each $\hor$-coset contains an element of $X_\tau$, and the branching directions of all these elements are synchronized. Switching the roles of $\hor$ and $\ver$, there is a sofic shift $Z_2$ whose restriction to $\sea$ is the $\sea$-subshift in which each $\ver$-coset contains an element of $X_\tau$, again with synchronized branching directions.

Consider a configuration in $Z_1$, and specifically the contents on one of the subgroups $\hor_n$. After appropriate translation in $\lamp$, we may assume that the first component of the configuration in $\hor_n$ is $\tau_0^n(a)\in A_0^{\hor_n}$ for some $a \in A$. We refer to passing to this unique shift as \emph{normalization}. Since $\tau$ is uniquely decodable, only the standard branching direction (corresponding to the lexicographic order) can produce this content, so the second component (in $(B^2)^{\hor_n}$) also uses the standard branching direction. We perform the exact same normalization for a configuration in $Z_2$.

Projecting to the second component, every node in $\sea$ contains symbols $(b,b')\in B^2$ coming from $Z_1$ and $(b''',b'')\in B^2$ coming from $Z_2$. We add the constraint $b = b'''$ to our SFT, and declare $b$ to be the symbol at the node, and $b'$ and $b''$ to be respectively the symbols at the east and north neighbours. One can easily check that $\tau$ precisely transfers this information. We thus impose the additional constraints $b_E=b'_W$ and $b_N=b''_S$.

\begin{rem}
  We note that this already proves Corollary~\ref{cor:undecidable}: indeed, it is undecidable, given a Wang tileset, whether it tiles arbitrarily large squares. Together with Theorem~\ref{thm:undecidable} this already provides our second proof this fact.
\end{rem}

\begin{rem}
  At the east and north ends of $\sea_n$ (after normalization) the neighbour information may be meaningless: this happens if and only if these nodes are actually the north and/or east border of $\sea_m$ for all $m \geq n$ (i.e.\ normalization keeps moving them to the north/east boundary). This is a key difference between classical Wang tilings and Wang towers. We note intuitively (and later rigorously) that this happens with ``probability zero'', so for many purposes this can be ignored.
\end{rem}

\begin{rem}
  We can also make the Wang tileset aware of the $A_0$-colouring, formally by having subsets $B_a \subseteq B$ for each $a \in A_0$, and adding the requirement that the Wang tile paired with $a \in A_0$ belongs to $B_a$. In this manner, the Wang tiles may access a finite amount of information on their co\"ordinate: identifying $\sea_n$ with $2^n\times2^n$, its tiling has access to a configuration over $A_0$ whose value at $(i,j)$ is $(\tau_0^n(a)_i, \tau_0^n(a')_j)$ for some $a,a'\in A_0$.
\end{rem}

\subsection{From effective subshifts to Wang tilesets}

Let $X\subseteq C^\hor$ be any effective subshift. We shall construct a Wang tileset whose horizontal configurations represent $X$.

Let us say a few words about what general subshifts on $\hor$ look like. By definition, they are topologically closed subsets of $C^\hor$ that are closed under the action of the locally finite group $\hor$. They are defined by forbidden patterns, which can be taken to have domains $\hor_n$. Unlike on $\Z$, where the different shifts of a pattern overlap by different amounts, with our $\hor_n$-domain convention two forbidden patterns cannot overlap nontrivially without having full overlap. On the other hand,, a single forbidden pattern with domain $\hor_n$ has $2^n$ different orientations in which we check for it, in each $\hor_n$-coset. 

We begin with an informal discussion of the construction. Our Wang tileset will be a product, where on the bottom layer we have a symbol $aa' \in A^2$, in the middle layer we have a symbol $c \in C$, and on the top layer we have a large number of tiles used to implement everything we need. The bottom layer is simply matched against the configuration in the underlying substitutional subshift from the previous section. The $C$-component is what we will project to in the end, so our logic should check that the contents of this comes from $\eta^*(X)$ with $X$ the effective subshift we started with. Thus, we should check that the $C$-values remain equal when moving in the south and north directions, which we can easily do with Wang tile rules directly by adding another layer (on this layer, send the colour $c$ south and north, and simply send $0$ west and east).

We will construct a Wang tileset in which each configuration on a large enough square splits into ``macrotiles of level $k$'', which further split into macrotiles of level $k-1$, and so on (the tiles themselves are thought of as level-$0$ macrotiles). On the macrotiles of level $k$, we overlay a power of the substitution $\tau \times \tau$ (aligned with the position of the macrotile). The macrotiles should also have access to at least some values on the $C$-component, and should be capable of performing universal computation on these values. We then simply enumerate the forbidden patterns of $X$ so that every forbidden pattern $p \in C^{\hor_m}$ gets enumerated in all large enough squares $\sea_n$, and in these squares we check that we do not see the pattern $p$ nor any of its $\hor_n$-shifts in the part of the pattern on the $C$-component that we have access to. It is also important that every $\hor_n$-coset is considered by arbitrarily large macrotiles.

Of course the Wang tiles do not have direct access to the group, so by checking the $\sea_n$-shifts of $p$, what we really mean is that a square on $\sea_n$ will shift $p$ with respect to its own numbering, interpreting the horizontal positions on the square in binary and taking the natural bijection with $\hor_n$.

We claim that if we perform such a construction, we are done. To see this, first note that indeed a forbidden pattern $p$ of $X$ cannot appear anywhere: in a normalized configuration on $\sea_n$ large enough that a large square is constructed there, we explicitly check that $p$ does not appear if it fits the area the square considers, and by assumption every area is considered by some macrotile. In a non-normalized configuration we of course check the same constraint, since if the normalizing shift is $h^{-1}$ then we are simply checking against translates all $gh \cdot p, g \in \hor_n$ instead of all translates $g \cdot p, \hor_n$, and this is the same set. Conversely, if $x \in X$, then no matter what the substitutive structure is, we can simply overlay the macrotiles according to the substitutive structure, and write $x_h$ as the $C$-colour at every element of $\eta^{-1}(h)$ and none of the forbidden patterns will match with the contents of $x$.

There are two known ways to implement this general plan, the Aubrun-Sablik construction (based on Robinson tiles) and the fixed-point tilesets of Durand-Romashenko-Shen. We will use the latter, describing it at a high level in the next subsection and in more detail in~\S\ref{sec:Implementation}.

\subsection{The fixed point construction}\label{sec:FP}
In this section, we outline the fixed point construction of~\cites{durand-romashchenko-shen:fpa,durand-romashchenko-shen:1d2d} in our setting. 
Consider two Wang tilesets $B,B'$ and an integer $N\ge2$. A \emph{simulation} of $B$ by $B'$ is an injective map
\[ S\colon B\to\{\text{tilings of $\{1,\dots,N\}^2$ by }B'\} \]
preserving side matchings and such that every $B'$-tiling is uniquely decomposable into $N\times N$-squares, called \emph{macrotiles}. Consider a $B'$-tiling $x$ all of whose $N\times N$-squares are in the image of $S$, and let $S^{-1}(x)$ denote the $B$-tiling obtained by replacing each such square by its $S$-preimage. Every tile in $x$ is called a \emph{child} of the tile in $S^{-1}(x)$ it corresponds to, while that tile is called its \emph{parent}.

A simulation of $B$ may be produced as follows. Imagine that $B$ is represented as an algorithm testing $B(d_N,d_E,d_S,d_W)$ rather than as a subset of $D^{\{N,E,S,W\}}$, and that this algorithmic description is succinct, i.e.\ consumes much less space than $\#D^4$. Then the $N\times N$-representations of $B'$ could consist of a universal Turing machine with some binary encoding of $D$ on its four sides, and a data area containing the program computing $B$. This universal Turing machine could even perform some side calculations for us in its spare time.

The construction may then be iterated: $B'$ may be simulated by $B''$, using $N'\times N'$ grids, etc. Furthermore, all these tilesets may be assumed to contain the same universal Turing machine, and \emph{execute the same program}: a part of the program should ensure that the data area on the simulated tile contains the same program as the original tile.

This is, in effect, a concrete realisation of Kleene's fixed point theorem, that asserts that for every computable map $\pi\colon\{\text{programs}\}\righttoleftarrow$ there exists a program $p$ such that $p$ and $\pi(p)$ are equivalent.

For our purposes, the side calculations to be performed by the universal Turing machine are to enforce a given effective subshift $X\subseteq C^\hor$. Recall from the previous section that the original tileset contains a `$C$' field to store, on each column, a co\"ordinate of $C^\hor$. Macrotiles have access to larger and larger segments of co\"ordinates, namely longer and longer subwords of elements of $C^\hor$, and can test them against a $\Pi^0_1$ condition.

The discussion in~\S\S\ref{sec:Basic},\ref{sec:SingleBit} is very similar to the one in~\cite{durand-romashchenko-shen:1d2d}*{Sections~3--5}, though we make some slightly different choices already at a high level. In~\S\ref{sec:InfinitelyMany} we deal with effective systems rather than subshifts (namely $C$ is the Cantor set $\{0,1\}^\omega$ rather than a finite set; its bits are fed one after the other to the tileset in a ``Toeplitz'' manner). This requires some new ideas (although compared to many other existing fixed-point constructions, including ones found in \cite{durand-romashchenko-shen:fpa08}, they are not particularly difficult). For the ``standard part'' of the discussion, \S\S\ref{sec:Basic},\ref{sec:SingleBit}, we give in the appendix (Section~\ref{sec:Implementation}) a more complete argument with ``explicit UTM details''.

\subsection{The basic construction}\label{sec:Basic}
Let $n_1, n_2, \dots$ be a sequence of numbers; we will describe a tile set for each \emph{level} $k = 1, 2,\dots$. Ultimately, we will only implement the tile set for $k = 1$, and the $k$th tile set appears only through $k-1$ layers of simulation, but it is useful to at first think of all these tile sets as of the same nature, with $k$ just acting as a parameter.

Initially, we have our tiles of level $k$ remember a \emph{position} in $\{0,\dots,2^{n_k}-1\}^2$. Recall that our tilesets have access to finitely many bits of their address, coming from the substitution (e.g.\ the period-doubling substitution from Example~\ref{ex:pd} on both $\hor$ and $\ver$) using which they are built. We synchronize our tiles with the lowest level of the substitution, so that our tilings split into macrotiles. Furthermore, these tiles will share a symbol \texttt{sub}, and we check that the base substitution agrees with \texttt{sub}, in the sense that the contents of the block indeed are a $n_k$th substitutive image of this symbol. Note that at this point, our tiling system admits a unique tiling assuming the base substitution is $2$-stepwise recognizable (every $2 \times 2$-block has a unique desubstitution, by Lemma~\ref{lem:pd}). We will also need to synchronize a symbol \texttt{psub}, which represents the symbol on the next level (its ``correctness'' will be checked later).

Next, we overlay, on top of our tiles, ``wires'' that allow transmitting information between neighbouring macrotiles. For this we add a new layer to our tiles. On the south borders of our macrotiles, the westmost $4 t_k$ tiles will carry a bit on this layer, and using local rules we transmit the first (respectively second, third, fourth) $t_k$ bits to the corresponding area in the neighbouring macrotile on the west (respectively north, east, south). The drawing of the wires can be essentially arbitrary, the only requirement is that it is computationally easy to describe as a function of $k$.

The value of $t_k$ should be large enough to allow sending all the color information. We need $\mathcal O(n_k)$ bits for the colors to send the position, and it turns out that all the other information (\texttt{sub}, \texttt{psub}, \dots) requires very few bits in comparison, so $\Theta(n_k)$ will suffice for everything.

Next, on yet another layer of the SFT we overlay a computation of a universal computational device $M$, in the sense that successive rows, starting from the bottommost one, will contain successive computational steps of $M$. The idea is that the computation of $M$ makes our macrotiles simulate the macrotiles of level $k+1$ (including the computation of $M$, of course). Then our macrotiles will, as desired, merge into macrotiles of level $k+1$, which then merge into macrotiles of level $k+2$, and so on. After all is said and done, we will set $k=1$ and obtain in this manner the desired SFT.

In the implementation section, we will use a ``universal tile set'' as $M$ (as it is easier to work with), but a more standard choice is to use a universal Turing machine. A universal Turing machine takes as input a word of the form $\texttt\%p\texttt\#w\texttt\$$, where the word $p$ describes an arbitrary computational procedure (another Turing machine), which will then be simulated on the input word $w$. Furthermore, the simulation should not take an excessive amount of time. There exist UTM for which the simulation always takes polynomial time in the computational complexity $t$ of $p$ (our universal tile set performs the simulation in time $t \log t$). As stated, the input $\texttt\%p\texttt\#w\texttt\$$ will be written on the south row of the macrotile, on a new layer, and $M$ performs its computation above this.

After the wire contents, we have many other pieces of information that need to be stored on the bottom row, and we make canonical choices about what is stored where: some area contains a number $k + 1$ telling us the level of the tile set; some area of length $\mathcal O(n_{k+1})$ is used for the position of the higher level macro tile; for macrotiles that are part of wires we need to have one bit available for wire transmission; we need a constant number of bits for simulating $M$ itself; and of course the program $p$ has to be written somewhere as well. Finally, all the information is synchronized with the neighbours. Note that for this to be possible, we need the growth constraint that $2^{n_k}$ is much larger than $n_{k+1}$.

\begin{remark}
  The approach we take here is to describe what a single tile needs to remember, and we leave it somewhat implicit how the wires are used to synchronize that information. Of course, the colors of Wang tiles (the wire contents) are the only thing that matters, so it would be more efficient to have nothing but wires on the bottom rows. However, all of the information that is passed around (position, Turing machine state, substitution symbols, bits) is really information about individual cells, so it is more natural to think about the cell information.
\end{remark}

The above description is straightforward to turn into a program for $M$: In all cells, we perform the calculations to check whether we are on a wire, and transmit bits accordingly, and we check that the parent macrotile is simulating $M$ correctly. We also synchronize the level $k$ (i.e.\ if the present cell is supposed to code a bit of this level number, we check that the number is indeed incremented by one) and check the value of \texttt{psub} (by checking that \texttt{sub} is indeed what appears in the present position, when \texttt{psub} is substituted $n_k$ times). Note that $M$ is fixed, so simulating it amounts to part of $p$ describing a simulation of $M$ itself.

On the bottom row, there is a little more to do. What $M$ should do (i.e.\ what $p$ should require it to do) in the level-$k$ tile is to read the current horizontal position of the present tile in the parent (level-$(k+1)$) tile, and depending on this position check that the colours sent northward correctly initialize the computation. If we are in the program area (describing $p$), we should send the corresponding position of $p$. To do this, $M$ will read the corresponding symbol of $p$ from the tape.

\begin{remark}
  This point tends to seem paradoxical on the first sight, but there is nothing particularly tricky about this. If $M$ is a universal Turing machine, the way $M$ typically simulates the machine described by $p$ is that it at all times marks a position on $p$ to remember the current state (a program marker), and it moves back and forth between this position and another marked position on the actual input $w$ (a data marker), modifying only $w$. Nothing goes wrong if we allow the latter marked position to step back inside $p$, although we then do need to make sure that the machine $M$ knows which way it should go, since now the data marker can be to the left of the program marker.
\end{remark}

The other information is initialized similarly. For example, if the co\"ordinates described by the wire bits are in the area describing the current macrotile level, we should check that a bit of the number $k+1$ is indeed written there. If we are in the area containing a bit of \texttt{sub}, we should check that it contains the correct bit of \texttt{psub}.

We now obtain an infinite sequence of simulations. In particular, note that, assuming unique recognizability of the substitution, all the symbols \texttt{sub} are correct.

\subsection{Remembering sequences of bits}\label{sec:SingleBit}
Let us now explain how to remember finite subwords from an initial sequence overlaid on top of our configuration. The sequence is vertically constant, and horizontally can be arbitrary. First, we decide for each macrotile a set of columns that fit inside it, and which it is ``responsible for''. The important things are that this choice is natural (so that we can describe it quickly with a Turing machine), that it does not grow too quickly, and that it does tend to infinity. Our choice is that a macrotile which is at height $h$ in its parent tile (note that we can read its position from the wire contents) is responsible for the columns inside it, which belong in the natural block of size $2^k$ containing the position $h$. One could of course also just take the $h$th such block, but this is (ever so slightly) more complicated to implement.

To make sure that some block is indeed considered by a macrotile, we should have $2^{n_k} > \prod_{i < k}2^{n_i}$, or equivalently $n_k > \sum_{i < k} n_i$. The choice $n_k = 4^{k+c}$ works for this. Note that $n_{k+1} = 4^{k+1+c}$ is certainly $o(2^{4^{k+c}})$, so the growth constraint discussed in the previous section is satisfied. The macrotile will know the word it is responsible for, in a variable \texttt{word}. Similarly as with the substitution symbol, we should also also know the word of the parent \texttt{pword}, so we can check its value is consistent with the actual word the parent is responsible for. Of course, to check such information, the tiles on level $k$ must know the height at which the parent is in its grandparent of level $k+2$, in a variable \texttt{ppos}. It is easy to check that with our choice of the sequence $(n_k)_k$, we have plenty of space for this information.

The check of consistency of \texttt{pword} is easy: We check if the column the present macrotile is on -- when translated to the internal co\"ordinates of the parent tile, are contained in the interval (which is exactly twice longer) that the parent considers. This calculation is straightforward, as we have access to our own position (in the wires), know the position of the parent in the grandparent, and we have simple formulas for all the sizes involved and the choice of the column a macrotile of a particular level is responsible for.

Note that, as we have added new information to our tiles, we need to ensure its consistency. So again macrotiles that sit on the bottom of their parent tile, if they are in a position describing a bit of \texttt{word}, we check consistency with the word \texttt{pword}, and similarly for \texttt{ppos} (checked against wire contents).

Now, consider the tile set obtained by setting $k = 1$ initially, and initially having the correct bit pairs on the tapes of the macrotiles of level $1$ (pairs because $2^1 = 1$) ensured by the SFT rule. An easy induction shows that on every level, on every level the word in the \texttt{word} variable of a macrotile of level $k$ is indeed the word it believes it is. Finally, we can use a little bit of time to check an arbitrary constraint on this word. 

\subsection{Remembering infinitely many bits}\label{sec:InfinitelyMany}

Finally, let us consider effective $\hor$-systems, namely $X\subseteq C^\hor$ with $C=\{0,1\}^\omega$. For each sequence $b\in C^\hor$ to be checked for inclusion in $X$, each co\"ordinate $b_g$ is a sequence $(b_{g,0},b_{g,1},\dots)$ of bits.

Instead of vertically constant sequences, we store each $b_g$ on a column as a Toeplitz sequence: at height $i=2^j(2k+1)$ we store $b_{g,j}$. In other words, in all natural $2$-blocks, the top symbol is $b_{g,0}$. Then out of the bottom bits, the ones that are in the top half of natural $4$-blocks are all $b_{g,1}$, and we continue recursively. The contents of different columns are independent (apart for sharing the underlying substitutive structure). This is easy to achieve with the substitution machinery built in Section~\ref{sec:SubstitutionConstructions}.

We show in this section that we can in a sense remember all these bits, simultaneously on every column. We will then combine this with ideas used in the previous section to get access to multiple columns at once, which then allows performing arbitrary computation on the entire set of sequences.

To do this, we argue similarly as we did with \texttt{word} and \texttt{pword} in the previous section. Now, each macrotile is responsible for a single column \texttt{col} and contains all the bits in that column of the macrotile in a list called \texttt{bits}, and we also know the corresponding information \texttt{pcol}, \texttt{pbits}. We need to remember exactly $\ell_i\coloneqq\sum_{i \leq k} n_i$ bits in \texttt{bits}, where $L_k\coloneqq2^{\ell_i}$ is the size of the $k$-level macrotile (in absolute number of cells per side, assuming we are actually dealing with the first level tile set). The first $\ell_k-1$ bits are exactly $b_0 b_1 ... b_{\ell_k-2}$, and the last one -- the bit on the southmost row -- can in principle represent any $b_i$ with $i\geq\ell_k-1$. We call this the \emph{high-level bit}.

We need to make a minor modification to how we pick the column we are responsible for. Namely, previously the macrotile of level $k$ at height $h$ in its parent tile was responsible for the bits in the (word that the) column $h$ (belonged to), and we did not have a separate \texttt{col} variable. We cannot do the same now: Suppose a macrotile of level $k$ is responsible for the $j$th column. Then in particular, it needs to know the bit in this column on the bottom row (the high-level bit $b_i$). This bit is of course known to a \emph{single} macrotile of level $k-1$ inside it, namely the one on the bottom row. Thus, on the bottom row, the $(k-1)$-macrotile containing the $j$th column of its parent tile must itself be responsible for that column.

To combat this, at all ``key heights'', namely whenever \texttt{ypos} is zero or a power of $2$, the column of responsibility is changed to be the $j$th in all tiles. As we explain later, it is important (for getting an almost $1$-to-$1$ extension) that at other heights the column of responsibility is chosen based on the height only, so on these columns we set $\texttt{col} \equiv \texttt{ypos} \bmod L_{k-1}$. By our choice of $n_k$, note that $\texttt{ypos}\bmod L_{k-1}$ does reach every value at non-special heights (for example, in the last quarter of the heights). Just as with \texttt{word} and \texttt{pword}, the information between a child and a parent's bit information is synchronized whenever the column of responsibility is the same.

A bit of intelligence is needed to do this synchronization, namely we should check which bit we are responsible for, using the Toeplitz structure: The tile at the key height $0$ will synchronize its high-level bit with the high-level bit of its parent. The tile at key height $2^h$ will synchronize its high-level bit with the $(h + \ell_{k-1})$th bit of \texttt{pbits}. At non-key heights, we should look at the position of the last $1$ in the binary representation of \texttt{col}  and then use the same formula to figure out which bit we are responsible for.

Next, now that each tile knows all the bits of one column inside it, it is not difficult to adapt the construction of the previous section to propagate words instead of individual symbols. We can mostly forget the details of the previous construction, except the fact that we remember the bits coded on a particular column.

As in the previous section, let us again have each tile be responsible for the contents of a sequence of columns inside it, but now it is a word of length $2^k$ over the alphabet $\{0,1\}^k$, where a single symbol represents the bits $b_0, ..., b_{k-1}$ of a particular column. We call this area the \emph{stripe of responsibility}. We want the level to ``drag behind'' the \texttt{bits} process, so we take the stripe of responsibility to be simply the stripe that contains the \texttt{pypos}th column of the grandparent, if it is inside the present tile, and otherwise we do not have a stripe of responsibility. Let us call the information \texttt{psword} (for stripe word of the parent).

To ensure that the bits of \texttt{psword} are correct, we use \texttt{bits}: we observe that the growth of the depth up to which we are responsible for on the stripe of responsibility is small enough that every tile of level $k-1$ already knows all bits up to this level on its column of responsibility (without looking at the high-level bit). Thus, any tile that is responsible for the column containing the grandparent's \texttt{pypos} column (and one exists if said column intersects the parent tile at all, since \texttt{col} goes through every column) can actually supply these bits.

\begin{remark}
  In \cite{durand-romashchenko-shen:fpa08}, a similar-looking ``two-step'' approach was already used for remembering single bits, i.e.\ tiles were first forced to correctly know a single bit, and then these were collected into words in a separate step. We did not take this approach in the previous section, as it does not seem necessary there, and despite the similarity, the flow of information here is slightly different.
\end{remark}

Finally, having access to the stripe words, we can easily verify an arbitrary condition on them.

\begin{remark}
  This construction can be readily adapted to the $\Z^2$-case as well, i.e.\ we can read an infinite sequence of bits into macrotiles if they are already encoded into Toeplitz sequences. For completeness, we write the precise statement that one obtains on the plane (whose proof is exactly the same as discussed in this section).

Define $T \subset \{0,1\}^\Z$ as the subshift where for each $x \in T$, letting $y^i_n = x_{2n + i}$, one of the configurations $y^i$ is constant, and the other is in $T$. Then there is a shift-invariant continuous map $T \mapsto \{0,1\}^\N$ that reads the bits on the constant sequences, and the pull-back of an effectively closed set $C \subset \{0,1\}^\N$ is an effectively closed subshift. If $T' \subset T$, we say \emph{the skeleta of $T'$ agree} if we can pick $i \in \{0,1\}$ so that the configurations $(x_{2n+i})_n$ are constant for all $x \in T'$, and the skeleta of $\{(x_{2n+i+1})_n\mid x \in T'\}$ agree.

\begin{thm}
\label{thm:InfinitelyMany}
Suppose $X$ is the (non-sofic!) $\Z^2$-subshift where each column is independently taken from $T$, so that the skeleta of columns agree. Then for every effective $\Z$-system $Z$ there exists a sofic shift $Y$ such that the horizontal subaction of $X \cap Y$ factors onto $Z$.
\end{thm}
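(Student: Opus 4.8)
The plan is to rerun, over $\Z^2$ in place of $\lamp$, the fixed-point (self-similar) tileset construction of~\S\S\ref{sec:FP}--\ref{sec:InfinitelyMany}; this is the $\Z^2$ analogue of the proof of Theorem~\ref{thm:effectiveH}, and as the remark indicates it proceeds in exactly the same way. Over $\Z^2$ one simplification appears: the macrotile addressing need not be imported from an external substitution, as the local finiteness of $\sea$ forced on us, but is enforced directly by the Durand--Romashchenko--Shen tiles, each of which remembers its coordinates modulo the current scale and checks consistency with its neighbours, so that every tiling decomposes uniquely into macrotiles of every level $k\ge 1$. First I would fix a fast-growing sequence $n_k=4^{k+c}$ --- chosen, as in~\S\ref{sec:SingleBit}, so that $n_k>\sum_{i<k}n_i$ and $n_{k+1}=o(2^{n_k})$ --- and build a self-similar tileset whose level-$k$ macrotiles have absolute side $L_k=2^{\ell_k}$ with $\ell_k=\sum_{i\le k}n_i$, carry wires of width $\Theta(n_k)$ for inter-macrotile communication, and run a universal device $M$ executing a Quine-like program $p$ that checks that it is simulating a level-$(k+1)$ macrotile running the same program. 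The vertical macrotile grid is synchronized with the common $T$-structure of the columns of $X$ (which a finite-state mechanism moving up a column recovers), while its horizontal placement is left free.

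Next I would make $Y$ read the bits that $X$ encodes. Since the columns of $X$ lie in $T$ with coherent skeleta, the bit $b_{g,j}$ --- the constant value appearing at the $j$-th level of the Toeplitz structure of column $g$ --- occurs at every height $\equiv 2^{j}\pmod{2^{j+1}}$ in the common frame, so inside an $L_k$-block a macrotile can reconstruct the list $b_{g,0},\dots,b_{g,\ell_k-1}$ together with a single \emph{high-level bit} on its bottom row. This is precisely the data held in the variables \texttt{col}, \texttt{bits}, \texttt{pcol}, \texttt{pbits} of~\S\ref{sec:InfinitelyMany}: each macrotile is responsible for one column, changing its column of responsibility at the key heights (where \texttt{ypos} is $0$ or a power of $2$) so that the bottom-row $(k-1)$-macrotile holding a given high-level bit is itself responsible for that column, and synchronizing its high-level bit with the appropriate entry of the parent's \texttt{pbits} according to the Toeplitz index. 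These are then gathered, as in~\S\ref{sec:InfinitelyMany}, into \emph{stripes of responsibility}: words of length $2^k$ over $\{0,1\}^k$ recording the first $k$ bits of $2^k$ consecutive columns, synchronized between levels through \texttt{psword}. Because the stripes grow to infinity and every column eventually lies in the stripe of arbitrarily large macrotiles, a level-$k$ macrotile with $k$ large enough sees any prescribed finite window of columns-and-bits.

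Writing $Z\subseteq(\{0,1\}^\N)^\Z$ without loss of generality, I would then define $Y$ by requiring, in the spare computation time of $M$, that the stripe words exhibit no forbidden pattern of the effective $\Z$-system $Z$: a forbidden pattern of $Z$ prescribes finitely many bits of finitely many columns, hence is tested inside every sufficiently large macrotile whose stripe covers the corresponding window, and the tiling is rejected if it occurs. Since all the macrotile layers are overlaid on top of $X$ and constrained by an SFT rule, $Y$ --- the projection forgetting those layers --- is sofic. Let $\Phi\colon X\cap Y\to(\{0,1\}^\N)^\Z$ send a configuration $w$ to the tuple whose $g$-th entry is the image of the $g$-th column $w\restriction_{\{g\}\times\Z}\in T$ under the continuous shift-invariant map $T\to\{0,1\}^\N$ of the present remark. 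Then $\Phi$ is continuous and equivariant for the horizontal shift; its image lies in $Z$ by the constraint just imposed; and it is onto $Z$ because, given $z\in Z$, one encodes each $z_g$ in $T$-form with a fixed skeleton and overlays the self-similar structure together with the bit and stripe bookkeeping, obtaining a point of $X$ on which every constraint of $Y$ holds --- including the $\Pi^0_1$ test, which passes since $z\in Z$. Hence the horizontal subaction of $X\cap Y$ factors onto $Z$ via $\Phi$.

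As in the lamplighter case, the only genuine difficulty is bookkeeping: arranging the column-of-responsibility rule and the Toeplitz indexing so that each high-level bit is supplied by exactly one child and propagated consistently upward, and verifying that with $n_k=4^{k+c}$ all the per-level data (the level number, the positions \texttt{ypos} and \texttt{ppos}, the program $p$, the bit lists and the stripe words) fit on the bottom row of a macrotile while the growth constraints hold. None of this needs ideas beyond~\S\ref{sec:InfinitelyMany}; indeed, the only part of that section that becomes unnecessary here is the refinement --- made there to obtain an almost $1$-to-$1$ extension --- that at non-key heights the column of responsibility be a function of the height alone.
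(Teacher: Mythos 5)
Your proof is correct and follows the same route the paper intends: the paper's ``proof'' of this remark is literally a pointer to the discussion in \S\ref{sec:InfinitelyMany}, and you have faithfully transposed that discussion to $\Z^2$ --- using the Durand--Romashchenko--Shen macrotile grid in place of the substitutive scaffolding that local finiteness forced on $\sea$, keeping the column-of-responsibility/\texttt{bits}/\texttt{psword} bookkeeping verbatim, and correctly observing that the height-dependent refinement (needed only for almost-$1$-to-$1$ness) can be dropped. The one point worth spelling out slightly more carefully than ``a finite-state mechanism moving up a column'' is how the vertical macrotile grid is aligned with the common Toeplitz skeleton of $X$; but adding a skeleton-marking layer to $Y$ that is synchronized with the macrotile addresses and constrained by $X \cap Y$ to match the actual column contents is straightforward and clearly what you have in mind.
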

\end{remark}

\subsection[The SFT $\wee$ on $\lamp$]{\boldmath The SFT $\wee$ on $\lamp$}\label{ss:W}

In this section, we introduce and study in detail an SFT $\wee$, which is a very efficient implementation of the idea from Section~\ref{sec:Corooted}. The idea is to run two copies of the SFT with spiders $\Theta_0$, one upward and one downward, and synchronize them, although we elect to introduce this SFT through tetrahedra instead. 
It is by far the most interesting ``small, naturally occurring'' SFT we have found on the lamplighter group, and thus may be of independent interest. However, only our strongest statements actually need the results of this section.

In $\wee$, the vertices of $\lamp$ are coloured by the set $C_0=(\Z/3)^2$, and the allowed tetrahedra are
\[ \Delta_{\mathrm{tree}} = \{((t,u),(t,u),(t,u+1),(t+1,u)) \mid (t,u) \in C_0\} \]
(as usual, in the order $(v, a v, b^{-1}a v, b v)$) for each $(t,u) \in C_0$. This gives a total of $9$ tetrahedra, and as is our convention, we include also the other orientations, to get a total of $18$. More pictographically (the lamps are explained below) this is
\[  \begin{tikzpicture}[scale=0.9]
    \foreach\label/\o/\a/\ab/\b/\shift/\lamp in {(first orientation; ``lamp off'')/{(t,u)}/{(t,u)}/{(t,u+1)}/{(t+1,u)}/0cm/\large\faLightbulbO,(second orientation; ``lamp on'')/{(t,u+1)}/{(t+1,u)}/{(t,u)}/{(t,u)}/6cm/\faLightbulbON} {
      \begin{scope}[xshift=\shift]
        \coordinate[label={below:$\scriptstyle\o$}] (A) at (0,0);
        \coordinate[label={below right:$\scriptstyle\ab$}] (B) at (1.5,1);
        \coordinate[label={above:$\scriptstyle\a$}] (C) at (-1.1,2);
        \coordinate[label={above:$\scriptstyle\b$}] (D) at (1.1,2);
        \draw (A) -- (B) -- (D) -- (A) -- (C) -- (D);
        \draw[dashed] (B) -- (C);
        \node at (0,2.9) {\label};
        \node at (0.3,1.2) {\lamp};
      \end{scope}
    }
  \end{tikzpicture} \]

In addition to the mental picture of trees, explained in the previous section, another important intuition to keep in mind is that the left component is counting lamps that are on below the head, modulo $3$, and the right component counts the lamps that are on above the head. Since this is a subshift, it cannot know the ``real'' orientation of the lamplighter group, so at each node, either $a$ or $b$ will correspond to flipping a lamp (but as we will see, this orientation will be consistent throughout the group). For example, when moving upwards, i.e.\ from $v$ to $av$ or $bv$, exactly one choice will move upward and leave a lamp lit on the edge it traverses, and this corresponds to the edge of the tetrahedron whose vertices are labeled $(t,u)$, $(t+1,u-1)$ (since one more lamp is now on on the left, and one less on the right). In the figure, we have included a lamp, which indicates whether the light ``at the tetrahedron'' is on, from the perspective the front bottom node $v$.

We will prove the following properties of $\wee$:
\begin{itemize}
\item It has similar structure as the SFT coming from $\Theta_0$, i.e.\ for every $\ver$-coset we can interpret it as the frontier of a corooted binary tree. (Lemma~\ref{lem:WeeTheta})
\item It is nonempty. (Lemma~\ref{lem:WeeNonempty})
\item Every configuration in it is $\sea$-aperiodic. (Lemma~\ref{lem:WeeAperiodic})
\item It is of zero entropy. (Lemma~\ref{lem:WeeZeroEntropy})
\item It is a $(\Z/3)^2$-extension of the lamp action (see~\S\ref{ss:lamplighter}) of the lamplighter group. (Proposition~\ref{prop:WeeTechnical})
\item It is minimal and uniquely ergodic. (Corollary~\ref{cor:Wstrictlyergodic})
\end{itemize}
As a corollary, we obtain also that the action of the lamplighter group on the space of lamps is sofic, but not SFT, see Corollary~\ref{cor:lampactionsofic}.

(Some of these statements imply the others, but this list follows the order in our exposition.)

To begin the study, consider the first co\"ordinate $t$. Projecting to this coordinate, $\wee$ is, in the language of tetrahedra, the same subshift as the tileset $\Theta_0$ above: keeping all edges in $\lamp$ whose $t$-co\"ordinate are equal produces a union of binary trees growing ``downwards'' (in negative $\phi$-direction). More precisely, a direct calculation proves the following lemma (noting that the existence of a block map does not require the subshift to be nonempty).

\begin{lem}
\label{lem:WeeTheta}
The subshift $\wee$ admits a block map into the subshift defined by spiders $\Theta_0$.
\end{lem}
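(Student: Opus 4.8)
The plan is to exhibit the block map explicitly at the level of tetrahedra, rather than going through the edge-colouring picture of $\Theta_0$. Recall that, as explained in the paragraph introducing $\Theta_0$, an edge-colouring satisfying the spider constraints of $\Theta_0$ is the same data as a vertex-colouring by $C_0=\Z/3$: each vertex $v$ gets the colour that appears three times among its four incident edges, and conversely the colour of an edge is the colour of its endpoint in the $\phi$-decreasing direction. So it suffices to produce a $1$-block map $\pi\colon C_0^2=(\Z/3)^2\to\Z/3$ such that whenever $((t,u),(t,u),(t,u+1),(t+1,u))$ is an allowed tetrahedron of $\wee$ (in the order $(v,av,b^{-1}av,bv)$), the image $(\pi(t,u),\pi(t,u),\pi(t,u+1),\pi(t+1,u))$ lies in the tetrahedral subshift associated with $\Theta_0$ by Lemma~\ref{lem:theta_0}. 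Inspecting the form of $\Delta_{\mathrm{tree}}$, the first coordinate is the one that behaves exactly like a $\Theta_0$-colouring: on edges of the tetrahedron the first coordinate stays constant except along the one edge where it increases by $1$. So I would take $\pi$ to be projection onto the first coordinate, $\pi(t,u)=t$.

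First I would pin down precisely what ``the subshift defined by spiders $\Theta_0$'', viewed as a vertex-colouring subshift, looks like in terms of tetrahedra. Translating the spider rules $(c,c,c+1,c)$ and $(c,c,c,c+1)$ through the edge-to-vertex dictionary, a vertex-colouring $\lamp\to\Z/3$ is admissible iff at every vertex $v$ the four neighbours $v,av,b^{-1}av,bv$ carry colours of the form $(t,t,t+1,t)$ or $(t,t,t,t+1)$ — i.e., three of them agree and the fourth (which is one of the two ``upper'' neighbours $b^{-1}av$, $bv$) is one greater modulo $3$. This is a short direct check from the definition of $\Theta_0$ and the fact that the vertices $\{v,av\}$ sit at $\phi$-level $\phi(v)$ while $\{b^{-1}av,bv\}$ sit one level up (or the symmetric reading). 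Then I would verify that $\pi=\mathrm{pr}_1$ sends each of the $9$ generating tetrahedra of $\wee$ — and, by the symmetry convention, their flips $(z,w,x,y)$ — into this set: for $((t,u),(t,u),(t,u+1),(t+1,u))$ the first coordinates are $(t,t,t,t+1)$, which is exactly one of the allowed local patterns; for the flipped orientation $((t,u+1),(t+1,u),(t,u),(t,u))$ the first coordinates are $(t,t+1,t,t)$, again of the required form after accounting for which neighbour is the distinguished ``upper'' one. Since this is a finite check over a $9$-element (really $18$-element) tile set, it is routine.

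The only subtlety — and the step I expect to need the most care — is bookkeeping about orientations: the tetrahedra are stated ``in the order $(v,av,b^{-1}av,bv)$'' and are closed under the flip $(x,y,z,w)\leftrightarrow(z,w,x,y)$, while the $\Theta_0$ spider list distinguishes the two $\phi$-decreasing directions $a^{-1}$ and $b^{-1}$; one must make sure the map respects, consistently, which pair of vertices of a tetrahedron is ``below'' and which is ``above'', so that ``the colour that appears three times is the one at the lower level and the $+1$ happens on an upper edge'' matches up in both orientations. Once the dictionary between ($\phi$-levels of the four vertices of a tetrahedron) and (which neighbours of a spider are the two $\phi$-decreasing edges) is written out once and for all, the verification is immediate. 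Finally I would remark, as the statement's parenthetical already notes, that none of this needs $\wee$ to be nonempty: a block map between subshifts is just a local rule on alphabets compatible with the local constraints, so the lemma is a statement about tile sets, proved by the finite compatibility check above.
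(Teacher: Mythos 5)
Your translation of the $\Theta_0$ spider constraints into tetrahedral constraints is incorrect, and as a consequence the proposed map $\pi(t,u)=t$ does not land in the subshift defined by $\Theta_0$; it is the second-coordinate projection $\pi(t,u)=u$ that works. The root of the error is a misplacement of levels: $\phi(a)=\phi(b)=1$, so $\phi(b^{-1}av)=\phi(v)$ and $\phi(av)=\phi(bv)=\phi(v)+1$. Thus the two lower vertices of a tetrahedron are $v$ and $b^{-1}av$, and the two upper ones are $av$ and $bv$ --- not $\{v,av\}$ and $\{b^{-1}av,bv\}$ as you claim. Inside one tetrahedron the only visible spider constraints are those at $av$ and $bv$, whose two $\phi$-decreasing edges are precisely the tetrahedron's four edges. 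Combining these two conditions with the observation that in a $\Theta_0$-colouring the colour of an edge equals the vertex colour of its lower endpoint, the admissible vertex colourings of a tetrahedron $(v,av,b^{-1}av,bv)$ are exactly
\[
\bigl\{(t{+}1,t,t,t),\ (t,t,t{+}1,t)\ :\ t\in\Z/3\bigr\},
\]
with the incremented colour always on a \emph{lower} vertex. This set is closed under the flip $(x,y,z,w)\leftrightarrow(z,w,x,y)$, as any list of legal tetrahedra must be; your candidate set $\{(t,t,t{+}1,t),(t,t,t,t{+}1)\}$ is not, which is already a red flag.

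Projecting $\Delta_{\mathrm{tree}}$ to the first coordinate gives $(t,t,t,t{+}1)$ and its flip $(t,t{+}1,t,t)$, with the increment on an \emph{upper} vertex. These are exactly the tetrahedra of the subshift defined by the reversed spiders $\{(c{+}1,c,c,c),(c,c{+}1,c,c)\}$, i.e.\ $\Theta_0$ with the vertical orientation flipped (binary trees branching downward rather than upward) --- consistent with the paper's informal description of this coordinate, but not a block map to $\Theta_0$ itself. Projecting to the second coordinate gives $(u,u,u{+}1,u)$ and its flip $(u{+}1,u,u,u)$, which is precisely the $\Theta_0$ set above; this is the block map the lemma asks for. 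The orientation bookkeeping you singled out as the delicate step is exactly where your argument fails. The remainder of your framework (the edge-to-vertex dictionary, the reduction to a finite check over the $18$ tetrahedra, and the remark that non-emptiness of $\wee$ is not needed) is sound once the coordinate is corrected.
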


Similarly, the second co\"ordinate $u$ serves to mark binary trees growing ``upwards'', and the combination of both marks two families of binary trees. The additional property is that the removed edges are the same for the upwards and downwards families. As hinted above, a priori this could mean the subshift is empty (in which case the previous lemma would be trivial). We show that it is not empty.

\begin{lem}
\label{lem:WeeNonempty}
The subshift $\wee$ is not empty.
\end{lem}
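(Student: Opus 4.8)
The plan is to exhibit an explicit configuration, guided by the ``counting lamps'' intuition given just before the statement. Recall the configuration of Lemma~\ref{lem:theta_0}: the vertex colouring $t(s,n)=\#\{k>n : s_k=1\}\bmod 3$ produces a valid $\Theta_0$-tiling (equivalently, in tetrahedral form, a valid colouring for the first coordinate of $\wee$). By the left--right symmetry of $\lamp$ exchanging the roles of lamps above and below the lamplighter's head, the vertex colouring $u(s,n)=\#\{k<n : s_k=1\}\bmod 3$ should likewise be valid for the ``upward'' trees, i.e.\ for the second coordinate. The key point to verify is the compatibility condition built into $\Delta_{\mathrm{tree}}$: the \emph{same} edge is the removed (fork) edge for the downward $t$-tree and the upward $u$-tree at every tetrahedron. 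First I would set $x(s,n)=(t(s,n),u(s,n))\in(\Z/3)^2$ and check directly that for each relation $b^{-1}ab^{-1}a\,v=v$, the four vertices $\{v,av,b^{-1}av,bv\}$ receive colours of the shape $\{(t,u),(t,u),(t,u+1),(t+1,u)\}$ (up to the allowed flip).

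The verification is a short local computation. Moving from $v=(s,n)$ in the $\phi$-increasing direction to one of $\{av,bv\}$: one of these toggles the lamp at position $n+\tfrac12$ and the other does not. If the lamp at $n+\tfrac12$ was on then the toggling move decreases the count of on-lamps above $n$ (so $u$ drops by $1$ on that side) and increases the count below $n+1$; if it was off the toggling move does the opposite. The non-toggling move preserves both counts on the relevant sides. Tracking all four vertices of the tetrahedron $\{v,av,b^{-1}av,bv\}$ and using that $\phi(a)=\phi(b)=1$, one sees that exactly one of the two ``diagonal'' edges of the tetrahedron is the one along which a lamp is lit, and along it $t$ increases by $1$ and $u$ decreases by $1$ (equivalently, reading the tetrahedron from the appropriate vertex, $t$ increases by one on one edge and $u$ increases by one on another, matching the pattern $(t,u),(t,u),(t,u+1),(t+1,u)$). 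Crucially this lit edge is the same whether we are bookkeeping the downward $t$-tree or the upward $u$-tree, because in both cases it is precisely the edge that physically flips the lamp at $n+\tfrac12$; this is exactly the compatibility that $\Delta_{\mathrm{tree}}$ demands, and it is what could fail for an arbitrary pair of $\Theta_0$-tilings but does not fail for this natural one.

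The main (and only mild) obstacle is getting the orientation conventions straight: which of $a,bv$ is the ``toggling'' move depends on the orientation of the street, the subshift cannot see this, and one has to confirm that the bookkeeping is internally consistent at every node and produces one of the two allowed orientations of each tetrahedron rather than an illegal one. Once the local check is done, $x$ is a configuration in $\wee$ by definition of the SFT, so $\wee\neq\emptyset$. I expect this argument to be only a few lines; alternatively, one could deduce nonemptiness abstractly from the structure theory (Lemma~\ref{lem:WeeTheta} plus an argument that the forced fork edges can be matched), but the explicit configuration is cleaner and will be reused when analysing minimality, aperiodicity and the $(\Z/3)^2$-extension structure later.
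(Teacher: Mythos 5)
Your plan matches the paper's approach in spirit (exhibit an explicit lamp-counting configuration and verify the tetrahedron constraint locally), but the specific formula you propose has the two $\Z/3$-coordinates swapped, and that is not a harmless choice: $\Delta_{\mathrm{tree}}$ is \emph{not} invariant under swapping the two $\Z/3$-factors. You set $x(s,n)=(t(s,n),u(s,n))$ with the first coordinate $t(s,n)=\#\{k>n:s_k=1\}$ counting lamps \emph{above} the head and the second $u(s,n)=\#\{k<n:s_k=1\}$ counting lamps \emph{below}. Direct computation in the order used by the definition of $X_\Delta$, namely $(v,va,vab^{-1},vb)$ with $v=(s,n)$ and writing $T=\#\{k<n:s_k=1\}$, $U=\#\{k>n:s_k=1\}$, gives in the case $s_{n+\frac12}=0$ the tuple $\bigl((U,T),(U,T),(U+1,T),(U,T+1)\bigr)$, which is neither $\bigl((t,u),(t,u),(t,u+1),(t+1,u)\bigr)$ nor its flip $\bigl((t,u+1),(t+1,u),(t,u),(t,u)\bigr)$ for any $(t,u)$. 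The correct assignment is the reverse, $x(s,n)=\bigl(\#\{k<n:s_k=1\}\bmod 3,\;\#\{k>n:s_k=1\}\bmod 3\bigr)$: first (left) coordinate below the head, second (right) coordinate above. This is exactly what the intuition paragraph before $\Delta_{\mathrm{tree}}$ states, and it is also forced by the proof of Lemma~\ref{lem:WeeAperiodic}, which uses that the first coordinate is constant on right $\ver$-cosets, i.e.\ unchanged when flipping lamps above the head. With the correct order, the tetrahedron at $(s,n)$ comes out in the first orientation when $s_{n+\frac12}=0$ and the second when $s_{n+\frac12}=1$.

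The step that led you astray is the claim that the $\Theta_0$-configuration $\#\{k>n:s_k=1\}$ from Lemma~\ref{lem:theta_0} is ``equivalently, in tetrahedral form, a valid colouring for the first coordinate of $\wee$''. Lemma~\ref{lem:WeeTheta} only supplies a block map from $\wee$ to the $\Theta_0$-subshift; it does not say that every $\Theta_0$-vertex-colouring occurs as a first coordinate of $\wee$, and indeed with the $\wee$-tetrahedron convention the first coordinate is constrained differently (the two \emph{upper} vertices of each tetrahedron, not the two lower ones, must be equal). One further caveat when you do the bookkeeping: the paper's own stated formula $y(s,n)=\bigl(\#\{k<n:s_k=1\},\;\#\{k>n:s_k=0\}\bigr)$ contains a typo in the second slot — $\#\{k>n:s_k=0\}$ is infinite since $s$ has finite support, and the intended quantity is $\#\{k>n:s_k=1\}$. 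So don't try to reverse-engineer your formula from that line; trust the ``first coordinate constant on $\ver$-cosets'' characterization and you will land on the right order.
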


\begin{proof}
We follow the intuition from the third paragraph of this section, and define
\[y(s,n)=(\#\{k<n:s_k=1\}\bmod 3, \#\{k>n:s_k=0\}\bmod 3). \]
Consider any tetrahedron $(v, av, b^{-1}av, vb)$, and suppose $y(v) = y(s,n) = (t, u)$. A direct calculation shows that if $s_{n+0.5} = 0$, then we see a tetrahedron in the first orientation, and if $s_{n+0.5} = 1$, we see one in the second orientation.
\end{proof}

\begin{lem}
\label{lem:WeeAperiodic}
The subshift $\wee$ is does not contain $\sea$-aperiodic points.
\end{lem}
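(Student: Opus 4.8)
As the bullet list preceding the statement indicates, the claim is that every configuration of $\wee$ has trivial stabiliser in $\sea$; equivalently, $\wee$ contains no nontrivial $\sea$-periodic point. I would prove this by exhibiting an equivariant factor map from $\wee$ onto the lamp action of $\lamp$ and then invoking freeness of that action over $\sea$.

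First I would extract the lamp configuration from a tiling. The local rule defining $\wee$ forces, at every vertex $v$, the colour difference $x(va)-x(v)$ to be either $(0,0)$ or $(1,2)$ --- these are precisely the two colour patterns occurring on the edge from $v$ to $va$ in the two orientations of the allowed tetrahedron --- so one may set $\mu_x(v)\in\{0,1\}$ according to which case occurs; this is the bit ``the lamp immediately above $v$ is lit'', and it is a $2$-block function of $x$. Reading $\mu_x$ along the bi-infinite $a$-geodesic through a vertex assembles a lamp configuration, and I claim the resulting map $\Phi\colon\wee\to(\Z/2)^{\E}$ intertwines the $\lamp$-action on $\wee$ with the lamp action on $(\Z/2)^{\E}$. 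On the explicit configuration $y$ of Lemma~\ref{lem:WeeNonempty} one computes that $\Phi(y)$ is the all-off configuration, and more generally that $\Phi$ records at each vertex the lamp word visible there, exactly as anticipated in the intuitive discussion opening this section. Only this much is needed --- it is the easy half of Proposition~\ref{prop:WeeTechnical}, established directly rather than deduced from it.

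Granting $\Phi$, the lemma follows at once: an element $s=(s_0,0)\in\sea$ acts on a lamp configuration $\ell$ by $\ell\mapsto\ell+s_0$ (viewing $s_0\in(\Z/2)^{(\E)}$ as an element of $(\Z/2)^{\E}$), and this is free since $s_0\neq 0$ whenever $s\neq 1$; hence $sx=x$ with $s\in\sea$ gives $s\cdot\Phi(x)=\Phi(sx)=\Phi(x)$, forcing $s=1$.

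The hard part is constructing $\Phi$ and checking equivariance: one must verify that the locally-read ``lit/unlit'' bits patch consistently into a single lamp configuration along each $a$-geodesic, and that right translating $x$ by a group element translates this configuration by the matching lamp-action element. The delicate point is that right translation by $s=(s_0,0)\in\sea$ shifts the lamps by $n\cdot s_0$ --- an amount depending on the height $n$ --- so the bookkeeping forces one to track precisely how the two colour coordinates of $\wee$ encode the on-lamps below and above the head (cf.\ the computations around Lemma~\ref{lem:WeeNonempty}). A second, more hands-on route avoids $\Phi$: the set of periods realised in any dynamical system is conjugation-closed, and every nontrivial element of $\sea$ is conjugate in $\lamp$ to a nontrivial element of $\ver$, so it suffices to rule out $\ver$-periods; a nontrivial $v\in\ver$ would in particular be a period of the second colour coordinate, which behaves like a count modulo $3$ of the on-lamps strictly above the head and so cannot survive toggling any lamp above the head. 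But making this rigorous for arbitrary configurations again reduces to the consistency of the lamp-reading, so both routes converge on the same core verification.
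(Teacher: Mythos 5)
Your strategy---factor $\wee$ onto the lamp action and invoke freeness of the $\sea$-restriction of that action---is sound in principle (the factor map exists: it is half of Proposition~\ref{prop:WeeTechnical}), and your local observation that $x(va)-x(v)\in\{(0,0),(1,2)\}$ is correct. But as written the proposal has a genuine gap, and you name it yourself: everything hinges on the ``core verification'' that the locally read bits $\mu_x(v)$ assemble consistently, i.e.\ that $\Phi$ is equivariant for \emph{arbitrary} configurations, and you never carry this out. This is not ``the easy half'' done ``directly'': to show, say, that $\mu_x(a^n g)=\mu_x(a^{n+1})$ whenever $g$ toggles only lamps away from position $n+\tfrac32$, you must control how both colour coordinates change under arbitrary lamp flips above and below the head, which is exactly the coset-constancy and uniqueness analysis the paper develops in Lemma~\ref{lem:WeeTheta}, Lemma~\ref{lem:WeeZeroEntropy} and Proposition~\ref{prop:WeeTechnical} (the cocycle~\eqref{eq:cocycle} is computed only there). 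Checking it on the single explicit configuration of Lemma~\ref{lem:WeeNonempty} does not establish it for all of $\wee$. Your second route (reduce to $\ver$-periods by conjugation, as in Lemma~\ref{lem:SimpleAperiodic}, then argue the second coordinate ``counts lamps mod $3$'') founders on the same missing step, again as you acknowledge.

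For comparison, the paper's proof needs none of this machinery: it uses only the already-proved Lemma~\ref{lem:WeeTheta}, which says each colour coordinate separately satisfies $\Theta_0$ and hence (Lemma~\ref{lem:theta_0}) is constant on right $\ver$-cosets, respectively $\hor$-cosets, and carries a corooted-tree structure with a unique \emph{special node} in each large tetrahedron. Writing $s=hv$ with $h\neq 1$, the $\ver$-part acts trivially on the first coordinate, and a nontrivial $h$-shift visibly moves the special node of a sufficiently large tetrahedron, so the first coordinate alone already breaks the period. If you want to salvage your approach, either prove the equivariance of $\Phi$ in full (essentially reproving Proposition~\ref{prop:WeeTechnical} before it is available), or switch to the paper's direct combinatorial argument.
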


\begin{proof}
Let $y \in \wee$ be arbitrary. Consider $s \in \sea$, and write $s = hv$ with $h \in \hor, v \in \ver$. By Lemma~\ref{lem:WeeTheta}, the first coordinate stays constant when moving south and north (i.e.\ when we flip lamps above the head, i.e.\ in its right $\ver$-coset). Symmetrically, the second coordinate stays constant when moving west and south. Suppose now $h \neq 1$, the case $v \neq 1$ being symmetric. In this case, let  $x$ be the projection of $y$ to the first coordinate. Then $x = vx$ so it suffices to show that $x$ is not $h$-periodic.

For this, we use the trees given by Lemma~\ref{lem:WeeTheta}: Recall that in any large tetrahedron of any configuration, looking at a height $k$ tetrahedron ``from the front'', we can see it as a binary tree, and since right cosets of $\ver$ have constant colour, there is a natural way to write labels on the vertices. Then by the structure result for the subshift defined by $\Theta_0$, we in fact see a binary tree with $k+1$ levels where, if the bottom node has label $t$, there is a unique \emph{special path} from the bottom node (corresponding to a coset of $\ver_{k+1}$) to some element on the top. Let us call this the \emph{special node} of the tetrahedron (note that it indeed corresponds to a right coset of $\ver_0$, i.e.\ a single element).

Now, taking the tetrahedron of height $k$ below the identity element, we see that any nontrivial $h$-shift will change the special node of any sufficiently large tetrahedron, showing that indeed $x \neq hx$. The case of $v \neq 1$ is completely symmetric, using the symmetric version of the structure result for $\Theta_0$, and using a tetrahedron above the identity element. 
\end{proof}

Aperiodicity in the $\sea$-direction appears to be very common for SFTs on the lamplighter group, in that we found many examples with this property (which did not lend themselves to proving full strong aperiodicity) before finding $\wee$.

\begin{lem}
\label{lem:WeeZeroEntropy}
The subshift $\wee$ has zero entropy.
\end{lem}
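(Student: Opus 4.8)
The plan is to bound directly the pattern complexity of $\wee$ along a Følner sequence of $\lamp$. Since $\lamp$ is amenable, $h(\wee)=\lim_n|F_n|^{-1}\log N_{F_n}(\wee)$ for any Følner sequence $(F_n)$, where $N_F(\wee)$ denotes the number of distinct restrictions $y\restriction_F$ with $y\in\wee$. I would take the standard Følner sequence of the wreath product, $F_n=\{(s,m)\in\lamp\mid s\text{ is supported in }[-n+\tfrac12,\ n-\tfrac12]\text{ and }|m|\le n\}$, so that $|F_n|=(2n+1)2^{2n}$; it then suffices to prove $\log N_{F_n}(\wee)=o(n\,2^{2n})$, and in fact I expect $\log N_{F_n}(\wee)=O(n)$.

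The structural input is the ``lamp-counting'' description sketched in the paragraph following the definition of $\wee$. By Lemma~\ref{lem:WeeTheta} the first coordinate of any $y\in\wee$ lies in the $\Theta_0$-subshift, and by the mirror statement the second coordinate lies in its reflection (trees growing in the opposite $\phi$-direction). Lemma~\ref{lem:theta_0} then says that in each coordinate the colour classes are monochrome corooted binary trees, and tracing the $\Theta_0$-spiders up a column yields a normal form for $y$: there is a base colour $(\gamma_1,\gamma_2)\in(\Z/3)^2$ and, for every lamp position $k\in\E$, a single ``orientation'' bit $\epsilon_k$ recording which of the two generators crossing $k$ toggles that lamp, such that the colour at $(s,m)$ is obtained by counting modulo $3$ the lit lamps strictly below $m$ (first coordinate), respectively strictly above $m$ (second coordinate), where whether lamp $k$ is lit is read off from $s_k$ and $\epsilon_k$. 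Crucially, the matching conditions in $\Delta_{\mathrm{tree}}$ force a single orientation sequence $\epsilon$ to serve both coordinates and all columns, in line with the remark ``this orientation will be consistent throughout the group''. Granting this, for $(s,m)\in F_n$ all lamps outside $[-n,n]$ are off, so the count of lit lamps below (resp.\ above) $m$ is a fixed function of $\gamma_1$ (resp.\ $\gamma_2$), of $s$, and of the bits $\epsilon_k$ with $k\in[-n+\tfrac12,n-\tfrac12]$ only; hence $y\restriction_{F_n}$ is determined by $(\gamma_1,\gamma_2)$ and $\epsilon\restriction_{[-n+\frac12,n-\frac12]}$, giving $N_{F_n}(\wee)\le 9\cdot 2^{2n}$.

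It follows that $h(\wee)\le\lim_n\frac{\log 9+2n\log 2}{(2n+1)2^{2n}}=0$, and the same argument with the roles of $\hor$ and $\ver$ exchanged gives nothing new. The step I expect to be the actual work is making the normal form rigorous — in particular proving that the ``which generator toggles the lamp'' decision, which can be read off locally from a single tetrahedron, is the same throughout $\lamp$ and for both coordinates. I would prove this by induction on the size of a tetrahedron, using the structure theorem Lemma~\ref{lem:theta_0} in each coordinate together with the coupling between the two coordinates imposed by $\Delta_{\mathrm{tree}}$. Once the later results of this section are available, a softer alternative is to note that $\wee$ is a finite extension (by $(\Z/3)^2$) of the lamp action of $\lamp$ on $(\Z/2)^\E$, and that the lamp action has zero $\lamp$-entropy because a pattern on $F_n$ records only the values of a lamp configuration on an interval of length $2n+1$; this is cleaner but leans on the extension structure proved afterwards.
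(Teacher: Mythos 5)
Your proposal is essentially the paper's proof. The paper bounds the number of legal patterns on height-$k$ tetrahedra by $9\cdot2^{2k}$, using exactly the normal form you describe: a base colour in $(\Z/3)^2$ plus a choice of special paths (equivalently, your orientation bits $\epsilon$), and then observes that $\log(9\cdot2^{2k})/((k+1)2^k)\to0$; you do the same computation on the rectangular Følner sets $F_n$, getting $9\cdot2^{2n}$ patterns on a set of size $(2n+1)2^{2n}$. Using $F_n$ instead of tetrahedra is a harmless change, and your $\epsilon$-description lines up word-for-word with the cocycle formula~\eqref{eq:cocycle} that the paper derives afterward in Proposition~\ref{prop:WeeTechnical}.

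The one place your writeup stops short of a complete argument is exactly where you flag it: establishing that a \emph{single} orientation sequence $\epsilon$ governs both $\Z/3$-coordinates and all columns. The paper does this by an explicit induction on tetrahedron size, proving that after normalizing the two special paths there is a unique legal tetrahedron for each base colour $(t,u)$ — precisely the induction you say ``I would prove'' and correctly attribute to $\Delta_{\mathrm{tree}}$-matching plus Lemma~\ref{lem:theta_0}. You also correctly diagnose the circularity of invoking the $(\Z/3)^2$-extension of the lamp action: Proposition~\ref{prop:WeeTechnical} explicitly relies on the uniqueness claim proved inside Lemma~\ref{lem:WeeZeroEntropy}, so it cannot be used here. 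One cosmetic point: ``counting modulo $3$ the lit lamps strictly below $m$'' is an infinite sum; you should phrase it as a difference against a reference configuration (as the paper's $\theta$-formula does) so the expression is well-defined.
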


\begin{proof}
Consider a height-$k$-tetrahedron above the identity, and project to the first coordinate. After suitable $\hor$-shift permuting the top row $\hor_k v$ (meaning shifting by elements that flip lamps above $k$), we may assume the special path is the path from $1$ to $a^k$, i.e.\ the front left path of the tetrahedron. Symmetrically, in the second coordinate, looking at the tetrahedron from the left, we see a downward oriented binary tree with a single special path from top to bottom, and after a suitable $\ver$-shift we may assume this is the path from $a^k$ to $1$. Furthermore, it is clear that $\wee$ is closed under cellwise $C_0$-translation (i.e.\ summing the same $(t, u) \in C_0$ to the vertex label at every node). 

We claim that there is in fact a unique such tetrahedron for each $(t, u)$. This of course shows that the number of legal tetrahedra is precisely $9 \cdot 2^{2k}$. Since the cardinality of a height $k$ tetrahedron is $(k+1) 2^k$, this shows that the counting entropy per site is
\[ \frac{\log{9 \cdot 2^{2k}}}{(k+1) 2^k} = \frac{\log{9} + 2k \log 2}{(k+1) 2^k} \overset{k \rightarrow \infty}{\longrightarrow} 0, \]
so entropy is zero (and convergence is very fast).

Of course, the unique configuration is be precisely $(t, u) \cdot y$ where $y$ is the configuration from Lemma~\ref{lem:WeeNonempty}. The reader may find it useful to convince themselves that this happens for small $k$ and for $(t,u) = (0,0)$, using the already established properties of $\wee$ that right $\vee$-cosets have constant values in the first coordinate, and right $\hor$-cosets have constant values in the second coordinate (as well as the defining rules).

For the general proof, we proceed by induction. If the special path in both coordinates is the one from $1$ to $a^k$, then in particular the special edge (i.e.\ the one along which the values does not change) is always the $a$-edge, for nodes $a^\ell$. Looking at the tetrahedron above $a$, induction shows that this is precisely the corresponding tetrahedron from $y$. Now look at the tetrahedron of height $k-1$ above $b$. Since the second projection is constant on right $\hor$-cosets, the edges for the second projection on the path from $b$ to $a^{k-1} b$ must be special. Thus, they are also special for the first projection. Thus, the tetrahedron above $b$ is the uniquely determined tetrahedron based on $(t+1, u)$, and by induction this is the height $k-1$ tetrahedron above the origin in $(t+1,u) \cdot y$. This is of course precisely the tetrahedron of height $k-1$ rooted at $b$ in $y$, concluding the inductive proof.
\end{proof}

We now proceed to the main structure theorem for $\wee$. 

\begin{prop}\label{prop:WeeTechnical}
  The SFT $W$ admits a free diagonal action of $C_0$, with quotient $X\coloneqq\{0,1\}^\E$. The action of $\lamp$ on $X$ is induced by the natural action of $\lamp$ on $\E$. In other words, for some cocycle  $\eta\colon\lamp\times X\to C_0$, there is a bijection of $W$ with $C_0\times\{0,1\}^\E$ such that the action of $g\in\lamp$ is given by $(c,x)\mapsto(c+\eta(g,x),g x)$.
\end{prop}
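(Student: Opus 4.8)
The goal is to exhibit an explicit bijection $W \cong C_0 \times \{0,1\}^\E$ which intertwines the $\lamp$-action on $W$ with a skew-product action over the lamp action on $X = \{0,1\}^\E$. The natural candidate for the map $W \to X$ is the one hinted at throughout the section: a configuration $y \in W$ records, at each tetrahedron, whether ``the lamp is on'', and these bits should assemble into a single lamp configuration. Concretely, from $y$ and a vertex $v = (s,n) \in \lamp$, the defining rules say that exactly one of the two edges emanating upward from $v$ is ``special'' in the first coordinate (the one where $t \mapsto t+1$) and exactly one of the two edges emanating downward is special in the second coordinate; the tetrahedron rule forces these two to be consistent, so there is a well-defined bit $\lambda(y)(v) \in \{0,1\}$ telling us whether the edge between $\phi$-levels $n$ and $n+1$ bordering $v$ is lit. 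First I would check, using Lemma~\ref{lem:WeeTheta} and its vertical mirror, that these bits are compatible across the whole Cayley graph, i.e.\ that they genuinely come from a single element $x \in \{0,1\}^\E$ (the bit at ``position $n+\frac12$ as seen from $v$'' does not depend on which $v$ at level $n$ we look from). This uses exactly that right $\ver$-cosets are constant in the first coordinate and right $\hor$-cosets are constant in the second, which was established in the proof of Lemma~\ref{lem:WeeAperiodic}. This gives a continuous map $\lambda\colon W \to X$, and one checks directly from the definitions that it is $\lamp$-equivariant for the lamp action on $X$.

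Next I would identify the fibres. The diagonal $C_0$-action $(t,u)\cdot y := y + (t,u)$ (adding the same constant to every vertex label) clearly preserves $W$, commutes with the $\lamp$-action, acts freely (the labels are not all equal in any configuration — actually even a single label determines the shift), and is contained in the fibres of $\lambda$ since adding a constant does not change which edges are special. So it remains to show $\lambda$ is exactly $\#C_0 = 9$-to-one, i.e.\ that $W/C_0 \to X$ is a bijection. Surjectivity: given any $x \in \{0,1\}^\E$, the formula from Lemma~\ref{lem:WeeNonempty}, namely $y(s,n) = (\#\{k<n : s_k = 1\} \bmod 3,\ \#\{k>n : s_k = 0\} \bmod 3)$ with $s_k$ now the bits of $x$ (and $s$ ranging over $\E$), defines a configuration in $W$ with $\lambda(y) = x$; this is the same direct tetrahedron-by-tetrahedron verification as in that lemma. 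Injectivity modulo $C_0$: if $\lambda(y) = \lambda(y') = x$, then $y$ and $y'$ have the same special edges; normalizing by $C_0$ so that $y(1) = y'(1)$, an induction up and down the tree (as in the proof of Lemma~\ref{lem:WeeZeroEntropy}, where knowing the special path forces the whole tetrahedron) propagates the equality to all of $\lamp$, giving $y = y'$. So $\lambda$ together with the $C_0$-coordinate gives the desired bijection $W \cong C_0 \times X$.

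Finally, once the bijection $f\colon W \to C_0 \times X$ is fixed (choosing, say, the section $x \mapsto y_x$ given by the explicit formula above, so $f(y) = (y(1) - y_x(1),\ \lambda(y))$ where $x = \lambda(y)$), the $\lamp$-action is automatically of skew-product form: writing $g\cdot y$ and applying $f$, the $X$-coordinate is $g\cdot x$ by equivariance of $\lambda$, and the $C_0$-coordinate is $(g\cdot y)(1) - y_{gx}(1)$, which differs from $y(1) - y_x(1)$ by a quantity $\eta(g,x) := \big((g\cdot y)(1) - (g\cdot y_x)(1)\big) + \big((g\cdot y_x)(1) - y_{gx}(1)\big)$ depending only on $g$ and $x$ (the first bracket is $0$ if $g \in \sea$ and in general measures the failure of the section to be equivariant, computable from $x$ via the explicit formula; the second likewise). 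That $\eta$ is a cocycle is then the usual formal consequence of associativity of the action. I expect the main obstacle to be the first step — cleanly verifying that the ``lamp bits'' $\lambda(y)(v)$ are globally consistent and yield a genuine element of $\{0,1\}^\E$ — since this is where one must carefully match the subshift's intrinsic (orientation-free) notion of ``special edge'' with the group's actual structure, exactly the place where the paper repeatedly warns that $W$ ``cannot know the real orientation''; the rest is bookkeeping built on the already-proven structural lemmas.
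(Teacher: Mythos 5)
Your overall strategy---extract a $\lamp$-equivariant factor $\lambda\colon W\to X$ onto the lamp action, show the fibres are free orbits of the diagonal $C_0$-action, get surjectivity from the explicit configuration of Lemma~\ref{lem:WeeNonempty} and injectivity modulo $C_0$ from the rigidity of Lemma~\ref{lem:WeeZeroEntropy}, then read off the cocycle---matches the paper's, and you correctly single out the definition of $\lambda$ as the delicate point. But the claim on which you build $\lambda$, that ``the bit at position $n+\tfrac12$ as seen from $v$ does not depend on which $v$ at level $n$ we look from,'' is false, and this is a genuine gap rather than a technicality. For $y$ corresponding to $(c,x)\in C_0\times X$ and $v=(s,n)$, the bit you extract (whether the $a$- or the $b$-edge upward from $v$ is $t$-increasing) equals $[s(n+\tfrac12)\ne x(n+\tfrac12)]$, which depends on $s$. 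You can see this already inside a single tetrahedron: its two bottom vertices $v$ and $b^{-1}av$ lie at the same level and in the same right $\ver$-coset, so their first coordinates under $y$ agree, as you invoke; yet the rule $\Delta_{\mathrm{tree}}$ forces the $t$-increasing upward edge to be the $b$-edge from $v$ and the $a$-edge from $b^{-1}av$, so the two bits are opposite. What $\ver$-coset constancy gives you is equality of vertex \emph{colours}, not of the $a/b$-label of the special \emph{edge}; these are different, and the discrepancy between them is exactly what the paper means by $W$ ``not knowing the orientation.''

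The fix---and what the paper actually does---is to anchor the reading at the identity: extract the upward and downward \emph{special paths} emanating from $1$, together with the value $y(1)\in C_0$; the structural analysis of Lemma~\ref{lem:WeeZeroEntropy} shows this data determines $y$, and the resulting bijection is the paper's explicit $\theta$. The $v$-dependence you hoped to eliminate is then not an inconsistency but the cocycle itself: the bit seen at $v$ is $(v\cdot x)(\tfrac12)$ for the lamp action, not $x(n+\tfrac12)$, and this is precisely what $\eta$ records. With $\lambda$ replaced by this anchored reading, the remaining steps of your sketch (surjectivity via the explicit section, injectivity modulo $C_0$, the formal cocycle check) do go through and match the paper's proof.
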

\begin{proof}
  We showed in the previous lemma that the contents of a tetrahedron is completely determined by the special nodes on the top row and bottom column, up to a $C_0$-translation. On the other hand, since the projection to first coordinate is constant on right $\ver$-cosets and the second on right $\hor$-cosets, all the information in a configuration is contained in the ``infinite tetrahedron'' (union of all tetrahedra) above the origin, and the one below the origin. Thus, a single configuration is nothing but a choice of two paths to infinity, one upward (in the positive $\phi$-direction), and one downward, together with the value at the origin. Furthermore, one can obtain any such pair of paths by taking a limit of $\sea$-translates (since the maps extracting the paths are continuous), and then applying a suitable $C_0$-translation.

  From this description, it is straightforward to produce an explicit bijection $\theta$ between $C_0\times X$ and $W\subset C_0^\lamp$ and to compute the cocycle $\eta\colon(C_0\times X)\times\lamp\to C_0$ via this bijection. Given $(c,d)\in C_0$, $x\in X$ and $(s,n)\in\lamp$, the cocycle is defined as follows, with `$[p]$' the expression equal to $1$ if $p$ is true and $0$ if $p$ is false:
  \[\left(c + \sum_{i < n}[s(i)\ne x(i)] - \sum_{i<0} [0\ne x(i)],d + \sum_{i>n}[s(i)\ne x(i)] - \sum_{i>0}[0\ne x(i)]\right);\]
note that, on both terms, the sums are infinite but all terms cancel except finitely many so they reduce to finite sums.
    
  It is easy to check that the image of $\theta$ belongs to $W$, by computing the values on each tetrahedron. This map is obviously injective, since the symbol at the origin is $(c, d)$, the special top nodes of tetrahedra above the identity determine the positive values of $x$, and the special bottom nodes of tetrahedra below the identity determine the negative values of $x$. Similarly (and by the first paragraph) it is surjective. Finally, it is obviously continuous, so it is a homeomorphism between the systems. 

  Using $\theta$, it is easy to compute the cocycle as $\eta(g,x)=\theta((0,0),x)(g)-\theta((0,0),x)(1)$. Explicitly, the cocycle $\eta$ is given by
  \begin{equation}\label{eq:cocycle}
    \eta(a,x)=(x(\tfrac12),-x(\tfrac12)),\qquad\eta(b,x)=(1 - x(\tfrac12), -x(\tfrac12))
  \end{equation}
  (extended multiplicatively: $\eta(1,x)=0$ and $\eta(g h,x)=\eta(g,h x)+\eta(h,x)$).
\end{proof}

Intuitively, the cocycle simply describes how the ``sum of lamps'' changes under action of $g$, on both sides of the street. The cocycle value is
\[ (\text{number of differences on left}, \text{number of differences on right}). \]

In the following statement, recall that a subshift is \emph{strictly ergodic} if it is minimal and uniquely ergodic.

\begin{cor}\label{cor:Wstrictlyergodic}
  $W$ is a strictly ergodic SFT. 
\end{cor}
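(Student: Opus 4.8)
The plan is to use the explicit model furnished by Proposition~\ref{prop:WeeTechnical}: identify $W$ with $C_0\times X$, where $C_0=(\Z/3)^2$, $X=\{0,1\}^\E$ carries the lamp action of $\lamp$, and $g\cdot(c,x)=(c+\eta(g,x),g x)$ for the cocycle $\eta$ of~\eqref{eq:cocycle}. Nonemptiness is Lemma~\ref{lem:WeeNonempty} and $W$ is an SFT by construction, so only strict ergodicity of $W$ is at issue. The first step concerns the base system $(X,\lamp)$. The subgroup $\sea$ acts on $X=\{0,1\}^\E$ by finitely supported bit flips, so every $\sea$-orbit is dense — whence $X$ is minimal — and any $\lamp$-invariant (a fortiori any $\sea$-invariant) probability measure must be invariant under all finite flips, which forces it to be the uniform Bernoulli measure $\lambda$ (two cylinders over the same finite window are exchanged by a flip, hence equimeasurable). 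Moreover $(X,\lambda)$ is $\sea$-ergodic by a one-line Fourier--Walsh argument: an invariant $L^2$-function has vanishing Walsh coefficient on every nonempty finite set of coordinates.

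The crucial ingredient is that $\eta$, restricted to $\sea$, is cohomologically nontrivial. Concretely, flipping a single lamp at a position $j$ far to the left of a window $[-N,N]$ fixes every cylinder supported on $[-N,N]$, and by~\eqref{eq:cocycle} (extended multiplicatively) the resulting $\eta$-value equals $(1,0)$ on $\{x(j)=0\}$ and $(2,0)$ on $\{x(j)=1\}$; symmetrically a far-right flip produces $(0,1)$ or $(0,2)$. Approximating a positive-measure set by a cylinder and using such a flip shows that each of $(1,0)$ and $(0,1)$ is an \emph{essential value} of $\eta$. Since these generate $C_0$, for every nontrivial character $\chi\in\widehat{C_0}$ the composite $\chi\circ\eta$ has a nontrivial essential value, hence is not a measurable coboundary over $(X,\lambda,\lamp)$.

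Minimality of $W$ then follows directly: given $w_1=(c_1,x_1)$, $w_2=(c_2,x_2)$ and a radius $N$, first choose a flip in $\sea$ supported in $[-N,N]$ that makes $x_1$ agree with $x_2$ there, then flip finitely many additional lamps outside $[-N,N]$ to push the accumulated $\eta$-value onto $c_2-c_1$ — each far-left flip shifts the first coordinate of the accumulated value by $\pm1\bmod 3$, each far-right flip the second, independently, so any element of $C_0$ is attainable — and the explicit model of Proposition~\ref{prop:WeeTechnical} shows that agreement of the $C_0$-coordinate together with agreement of the $X$-coordinate on a long enough window implies agreement on a large ball of $\lamp$. For unique ergodicity, let $\mu$ be $\lamp$-invariant; its $X$-marginal is $\lambda$, so $\mu=\int\mu_x\,d\lambda(x)$ with $\mu_x$ a probability measure on $C_0$, and invariance gives $\widehat{\mu_{g x}}(\chi)=\chi(\eta(g,x))\,\widehat{\mu_x}(\chi)$ $\lambda$-a.e.\ for each $g$ and $\chi$. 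For nontrivial $\chi$, the function $x\mapsto|\widehat{\mu_x}(\chi)|$ is $\lamp$-invariant, hence a.e.\ constant by ergodicity; if that constant were positive, dividing it out would exhibit a unit-modulus measurable trivialisation of $\chi\circ\eta$, contradicting the previous paragraph. Hence $\widehat{\mu_x}(\chi)=0$ a.e.\ for every nontrivial $\chi$, so $\mu_x$ is the uniform measure on $C_0$ a.e., and $\mu=\lambda\otimes(\text{uniform on }C_0)$ is the unique invariant measure. The main obstacle is the cocycle computation of the second paragraph — everything hinges on $\eta$ being a genuinely nontrivial $C_0$-cocycle, which is precisely where the Bernoulli structure of $\lambda$ is used — whereas the minimality and disintegration arguments are routine once this is in hand.
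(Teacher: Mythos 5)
Your proof is correct, and it uses the same overall skeleton as the paper's: pass to the model of Proposition~\ref{prop:WeeTechnical}, establish strict ergodicity of the base $X$, and then show the $C_0$-extension inherits it because the cocycle $\eta$ is suitably nontrivial. The two arguments part ways precisely at that last step. The paper's proof of the cocycle step is geometric and quite informal: it considers a tetrahedron of height $2n$ nested inside one of height $2(n+m)$ and asserts that the $C_0$-value at a mid-height vertex ``depends non-trivially on all the values of $x$ in the outer regions'' and is therefore ``arbitrarily well randomized'' as $m\to\infty$. You instead invoke the standard machinery for compact-group extensions: you compute, from the explicit formula~\eqref{eq:cocycle} extended multiplicatively, that flipping a single lamp far to the left produces $\eta$-value $(1,0)$ or $(2,0)$, and far to the right $(0,1)$ or $(0,2)$; you conclude that the group of essential values of $\eta\restriction_\sea$ over $(X,\lambda)$ is all of $C_0$, hence no nontrivial character $\chi$ makes $\chi\circ\eta$ a measurable coboundary; and you finish with the usual Fourier disintegration over the fibre $C_0$, showing each $\mu_x$ must be Haar. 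The paper's route is shorter to state but relies on the reader to fill in what ``well randomized'' means, whereas yours is longer but gives an airtight reduction to a classical criterion; your route also buys a self-contained minimality argument (approximate the $X$-coordinate on a window, then use independent far-left and far-right flips to hit the target $C_0$-value), whereas the paper declares minimality ``clear''. Both are legitimate; the observation that drives them is identical (distant flips move the $C_0$-coordinate without disturbing the local $X$-pattern).

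One small caution for the write-up: in the disintegration step you move from ``$\chi\circ\eta$ is a measurable coboundary over $(X,\lambda,\lamp)$'' to a contradiction with the essential-value computation, which you carried out for the $\sea$-subaction. You should say explicitly that a $\lamp$-coboundary restricts to an $\sea$-coboundary, and that the essential values of $\chi\circ\eta$ with respect to $\sea$ must therefore all be trivial, which is what the single-flip computation rules out. This is implicit in what you wrote, but it is the one place where a referee might pause.
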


\begin{proof}
  By definition, $W$ is an SFT. To prove that $W$ is strictly ergodic, we show that in large tetrahedra every pattern occurs asymptotically with the same frequency. Now the factor $X$ is a (compact abelian) topological group, and admits a unique normalized Haar measure $\mu$, i.e.\ a unique probability measure that is invariant under translations. The subgroup $\sea$ acts by translations, thus preserves $\mu$. Conversely since $\sea$ is dense in $X$ any measure preserved by $\sea$ is preserved by general $X$-translations, thus $\mu$ is the unique measure preserved by $\sea$. The action of $\lamp$ is by automorphisms of $X$, thus also preserves $\mu$. This shows unique ergodicity of the action of $\lamp$ on $X$. Minimality is clear.
  
  It remains to see that the cocycle values are equidistributed. Consider a large tetrahedron of size $2n$, contained in an even larger tetrahedron of size $2(n+m)$, and a vertex $v$ at mid-height; by~\eqref{eq:cocycle} the value in $C_0$ at $v$ depends non-trivially on all the values of $x$ in the outer regions of height $m$, without affecting the values of $x$ in the inner region of height $2n$. Thus for large $m$ the value in $C_0$ is arbitrarily well randomized. 
\end{proof}

\begin{cor}\label{cor:lampactionsofic}
  The natural action of $\lamp$ on $\{0,1\}^\E$ is a sofic shift but not an SFT.
\end{cor}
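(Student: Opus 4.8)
The plan is to read off soficity directly from Proposition~\ref{prop:WeeTechnical}, and to prove that the lamp action is not of finite type by a local indistinguishability argument played against the pull-back subshift $\phi^*(\{0,1\}^\Z)$.

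First I would note that the lamp action $X=\{0,1\}^\E$ is expansive: if $x\neq x'$ differ at some lamp, then a suitable power of $a$ --- which acts on $X$ by a pure shift, with no toggling --- carries that lamp onto the observation site, so the $\lamp$-orbit of $(x,x')$ escapes any fixed neighbourhood of the diagonal. Hence $X$ is conjugate to a subshift of $(\Z/2)^\lamp$ and the statement makes sense. Soficity is then immediate: by Proposition~\ref{prop:WeeTechnical} the SFT $\wee$ is $C_0\times X$ with the skew action, so the coordinate projection $\wee\to X$ is a continuous $\lamp$-equivariant surjection, i.e.\ a sliding block code, exhibiting $X$ as a factor of an SFT; thus $X$ is sofic. (Concretely, by~\eqref{eq:cocycle} the lamp value at $\tfrac12$ is the difference of the two $t$-coordinates of the endpoints of any $a$-edge, so this is literally a $2$-block code.)

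To see that $X$ is not an SFT, suppose it were, with all forbidden patterns supported in balls of radius $\le R$. I would use two facts. First, $X$ contains no configuration that is constant on the fibres of $\phi$: such a configuration would be fixed by $\sea=\ker\phi$, but $\sea=(\Z/2)^{(\E)}$ acts on $X=\{0,1\}^\E$ by translations, which are fixed-point-free. Second, every radius-$R$ pattern that occurs in some $\phi$-fibre-constant configuration also occurs in $X$: in a lamp configuration viewed through the action, a radius-$R$ ball centred at a group element with $\phi$-value of large absolute value (say at $a^N$ with $N\gg R$) depends only on $\phi$ --- the finitely many lamps that moves inside the ball can reach, and the distinguished lamp near the lamplighter's feet, all sit far from that $\phi$-value --- and by choosing the underlying lamp configuration one realises there any prescribed values on the $2R+1$ levels the ball meets. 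Together these say that $\phi^*(\{0,1\}^\Z)\not\subseteq X$ while all of its radius-$R$ patterns appear in $X$, contradicting that $X$ is defined by radius-$R$ constraints. As $R$ was arbitrary, $X$ is not an SFT.

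The only point requiring care is the second fact: one has to check precisely which coordinates a radius-$R$ ball of a lamp-action configuration depends on --- equivalently, that $R$ moves of the lamplighter can only toggle lamps within distance $R$ of her current position --- so that balls far enough up (or down) the street see nothing but the $\phi$-coordinate. This is a routine unwinding of the wreath-product action, in the same spirit as the cocycle computation in Proposition~\ref{prop:WeeTechnical}.
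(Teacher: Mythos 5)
Your proof is correct; the soficity half is the same as the paper's (expansiveness via the $\langle a\rangle$-subaction being the binary full shift, plus the factor map from $\wee$ given by Proposition~\ref{prop:WeeTechnical}), but the non-SFT half takes a genuinely different route. The paper invokes the general fact that SFTs have the shadowing property and then constructs an $\epsilon$-pseudo-orbit (lamps on a circular street of length $2k$, lifted to a periodic pseudo-action on $\{0,1\}^\E$) that no genuine orbit $\delta$-shadows. You instead give an explicit ``locally allowed but globally forbidden'' witness: any configuration of $\phi^*(\{0,1\}^\Z)$, which cannot lie in the subshift avatar $Y$ of the lamp action (since $\phi$-fibre-constant configurations would be $\sea$-fixed, contradicting free translation action), yet all of whose radius-$R$ patterns do occur in $Y$ --- because on a ball around $a^N$ with $N\gg R$ every coordinate of a $Y$-configuration depends only on its $\phi$-value, and those $2R+1$ bits are free. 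Your approach is the more elementary one (in the style of showing the even shift on $\Z$ is not SFT) and avoids the shadowing machinery; the paper's version is slicker once one grants shadowing for SFTs, and its circular-street pseudo-orbit is morally the same ``locally consistent, globally impossible'' phenomenon, just phrased dynamically rather than combinatorially. The one step you rightly flag as needing care --- that a radius-$R$ ball around $a^N$ reads only $\phi$-data when $N\gg R$, because the lamps reachable in $R$ steps lie in $[N-R,N+R]$ and so never touch position $\tfrac12$ --- does go through by the standard wreath-product computation, so the argument is complete.
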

\begin{proof}
  The shift $X=\{0,1\}^\E$ is by construction a quotient of an SFT, so it suffices to prove that $X$ is expansive. This is an immediate consequence of the fact that $X$ is the quotient of a subshift by a free shift-commuting action of a finite group, to wit $C_0$; or explicitly, we see that already the action of the subgroup $\langle a \rangle$ is expansive, indeed it is the binary full shift.
  
  To prove that $X$ is not an SFT, we exhibit \emph{pseudo-orbits that are not shadowed by orbits}. Recall that an $\epsilon$-pseudo-orbit in the action of a group $G=\langle S\rangle$ on a metric space $X$ is a map $\phi\colon G\to X$ with $d(\phi(s g),s\phi(g))<\epsilon$ for all $s\in S,g\in G$, and that it $\delta$-shadows an orbit $(g x)_{g\in X}$ if $d(g x,\phi(g))<\delta$ for all $g\in G$. Now SFTs have the \emph{shadowing} property~\cite{walters:pseudo-orbit}: for every $\delta>0$ there is $\epsilon>0$ such that every $\epsilon$-pseudo-orbit $\delta$-shadows an orbit.

  Now given $\epsilon>0$, choose $k\in\N$ such that any two configurations in $X$ that agree on $[-k,k]$ are at distance at most $\epsilon$. Define a $\delta$-pseudo-orbit as follows: consider first the natural action of $\lamp$ on $(\Z/2)^{\Z/2k}$, namely lamps arranged along a circular street, with $a$ acting by rotation and $b$ acting by ``rotation while flipping the encountered lamp''; lift this action to $X$, so $a$ acts by shifting while $b$ acts by shifting and toggling all lamps at position $\frac12+2k i$ for all $i\in\Z$. This is clearly an $\epsilon$-pseudo-orbit; but it is not $\delta$-shadowed by an orbit as soon as $\delta$ is smaller than the distance between (say) $0^\E$ and $1^\E$.
\end{proof}

\section{Proofs of the main statements}
The constructions from the previous section are all that is needed to complete the proofs of our main statements, which we recall for convenience:

\begin{thm}[$\supseteq$ Theorem~\ref{thm:effectiveH}]\label{thm:effectiveH2}
  Let $X$ be an effective $\hor$-system. Then $\iota_*\eta^*(X)$ admits an SFT cover. The SFT cover and covering block map are effectively computable from $X$.

  Furthermore, writing $Y$ for the lamp action of the lamplighter group, 
  the system $\iota_*\eta^*(X) \times Y$ is almost $(\Z/3)^2$-to-$1$ (again with all the data effectively computable). In particular, in this case $\iota_*\eta^*(X)$ always admits a SFT cover of the same entropy (which is precisely the entropy of $X$).
\end{thm}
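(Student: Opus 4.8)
\smallskip
\noindent\emph{Proof plan.} I would establish the two assertions in turn, assembling the machinery of \S\S\ref{sec:FP}--\ref{sec:InfinitelyMany} and \S\ref{ss:W}. For the existence of an SFT cover, feed the effective $\hor$-system $X\subseteq C^\hor$ (with $C$ finite, or $C=\{0,1\}^\omega$) into the fixed-point construction, obtaining a Wang tileset $B$: a product carrying a layer synchronized with a fixed almost odometric substitution $\tau_0$ (e.g.\ period doubling, almost odometric by Lemma~\ref{lem:pd}), a `$C$'-layer propagated vertically constant (Toeplitz-encoded when $C=\{0,1\}^\omega$, as in \S\ref{sec:InfinitelyMany}), and an implementation layer running a self-similar universal computation that examines every $\hor_n$-coset inside arbitrarily large macrotiles and enforces column by column the $\Pi^0_1$ condition defining $X$, so that the `$C$'-contents of the Wang towers of $B$ are exactly the elements of $\eta^*(X)$. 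Applying the Wang-towers-to-sofic construction of \S\ref{ss:wang2sofic} (built on $\tau_0$ via Proposition~\ref{prop:equalbranchingstructures}) produces an SFT $Z$ on $\lamp$ restricting on every $\sea$-coset to a Wang tower of $B$; projecting $Z$ to the `$C$'-colour at each vertex yields precisely $\iota_*\eta^*(X)$ --- vertical constancy forces the colour on each $\sea$-coset to factor through $\eta$, the $\Pi^0_1$ check places its $\hor$-projection in $X$, distinct $\sea$-cosets are independent, and the realizability direction gives surjectivity (``broken'' towers being harmless, as in \S\ref{ss:wang2sofic}). Every step is algorithmic in $X$.

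For the ``almost $(\Z/3)^2$-to-$1$'' statement, the key observation is that $\iota_*\eta^*(X)\times\wee$ is a $(\Z/3)^2$-extension of $\iota_*\eta^*(X)\times Y$: by Proposition~\ref{prop:WeeTechnical}, $\wee$ is a $(\Z/3)^2$-extension of $Y$, and the product with $\iota_*\eta^*(X)$ (pulling back the cocycle along the projection) inherits this. It therefore suffices to show $\iota_*\eta^*(X)\times\wee$ is AFT, i.e.\ to exhibit an SFT cover of it whose covering map is almost everywhere $1$-to-$1$. I would obtain such a cover by re-running the construction of the first paragraph with the SFT $\wee$ of \S\ref{ss:W} used as the tree scaffolding in place of the ``wild'' tileset $\Theta_0$ --- precisely why $\wee$ was introduced. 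Since $\wee$ refines $\Theta_0$ (Lemma~\ref{lem:WeeTheta}), it still supplies the corooted parse trees required by Proposition~\ref{prop:equalbranchingstructures}, so the construction carries over, but now the resulting SFT $Z$ comes equipped with a factor map $\rho\colon Z\to\wee$ alongside $\pi\colon Z\to\iota_*\eta^*(X)$, and one shows $(\pi,\rho)\colon Z\to\iota_*\eta^*(X)\times\wee$ is almost everywhere $1$-to-$1$. Granting this, $Z$ is the required AFT cover: the non-injectivity locus of $(\pi,\rho)$ is invariant, so being null for every invariant measure on $Z$ its image --- the non-uniqueness locus in $\iota_*\eta^*(X)\times\wee$ --- is null for every invariant measure there, because any such measure lifts to $Z$. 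As before, all of this is effective in $X$.

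The main obstacle is exactly this almost-everywhere-$1$-to-$1$ claim: given the `$C$'-colours and the full $\wee$-scaffolding, one must check that every remaining coordinate of a configuration of $Z$ is forced off a set null for all invariant measures. Here one uses that $\wee$ is uniquely ergodic (Corollary~\ref{cor:Wstrictlyergodic}), so any invariant measure of $Z$ restricts on the $\wee$-part to the unique one, under which the branching directions of $\tau_0$ follow the odometer's Haar (uniform Bernoulli) measure; almost odometricity of $\tau_0$ (Lemma~\ref{lem:pd}) then forces the whole substitution layer almost everywhere, since $b_n=0$ infinitely often almost surely; the implementation layer (universal-machine tapes, macrotile bookkeeping) is rigid once the substitution and `$C$' layers are known, by the design of \S\S\ref{sec:Basic},\ref{sec:SingleBit}; the ``high-level bit'' and the choice of column of responsibility of \S\ref{sec:InfinitelyMany} were arranged (depending only on height away from the key heights) precisely to avoid positive-measure ambiguity; and the ``broken'' Wang towers form a null event, again via the odometer measure. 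Carefully verifying these bookkeeping items is where the work lies.

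Finally, the last sentence is formal. An almost $(\Z/3)^2$-to-$1$ system admits an AFT $(\Z/3)^2$-extension $W$; thus $W$ is a factor of an SFT $Z'$ by an almost-everywhere-$1$-to-$1$ map, whence $h(Z')=h(W)$, and since $W$ is a finite-group extension, $h(W)=h(\iota_*\eta^*(X)\times Y)=h(\iota_*\eta^*(X))+h(Y)$ with $h(Y)=h(\wee)=0$ by Lemma~\ref{lem:WeeZeroEntropy} (finite-group extensions preserve entropy). Hence $h(Z')=h(\iota_*\eta^*(X))$, and composing $Z'\twoheadrightarrow W\twoheadrightarrow\iota_*\eta^*(X)\times Y\twoheadrightarrow\iota_*\eta^*(X)$ exhibits $Z'$ as an SFT cover of $\iota_*\eta^*(X)$ of equal entropy; identifying this common value with the entropy of $X$ is a routine computation.
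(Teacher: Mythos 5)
Your proposal is correct and follows essentially the same route as the paper's proof: use the fixed-point Wang tileset machinery of \S\S\ref{sec:FP}--\ref{sec:InfinitelyMany} overlaid on the period-doubling scaffolding to obtain the SFT cover, then for the ``almost $(\Z/3)^2$-to-$1$'' claim replace the wild $\Theta_0$-scaffolding by $\wee$ and argue via unique ergodicity of the odometer and almost odometricity of $\tau_0$ that $\iota_*\eta^*(X)\times\wee$ is AFT, concluding with the formal entropy calculation from $h(\wee)=0$. The only superficial difference is that you spell out more explicitly the reduction chain $Z'\twoheadrightarrow W\twoheadrightarrow\iota_*\eta^*(X)\times Y\twoheadrightarrow\iota_*\eta^*(X)$ and the entropy bookkeeping, which the paper leaves compressed.
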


\begin{proof}
  We first discuss the case of a binary subshift $X\subseteq\{0,1\}^\hor$, and then explain how to deal with the general case, and how to minimize the size of the extension (in the sense of the second paragraph of the statement). We select, for definiteness, the period-doubling substitution $\tau$ from Example~\ref{ex:pd}.
  
  We can use the constructions of Section~\ref{sec:SubstitutionConstructions} to obtain a subshift where on the $2^n$-by-$2^n$ blocks of each $\sea$-coset, up to normalization we always see the pattern $(\tau\times\tau)^n(a)$ for $a \in \{0,1\}$, and on another layer we see sequences that are constant on right $\ver$-cosets. We construct a classical Wang tile set which realizes the fixed point construction in valid Wang towers, and with additional computation checks that the configuration on the rows is one of $X$ (in the sense that it enumerates some number of forbidden patterns, and checks that they do not appear, and as $n \rightarrow \infty$ it deals with all patterns). Then we use Section~\ref{ss:wang2sofic} to actually implement this on the lamplighter group (again using the constructions from Section~\ref{sec:SubstitutionConstructions}).

  For the case of general $X$, we use constructions from Section~\ref{sec:SubstitutionConstructions} to ensure that all columns carry a Toeplitz configuration with the same branching structure on each coset, and then we use the construction from Section~\ref{sec:InfinitelyMany} to remember the infinitely many bits on each column. Again, eventually we have enough space to check any condition on these bits.

 For the second paragraph, i.e.\ optimization of the extension, we consider $\iota_*\eta^*(X) \times W$. We show that this system is AFT. Then, since $W$ is a $(\Z/3)^2$-to-$1$ cover of $Y$, by definition, $\iota_*\eta^*(X) \times W$ is an almost $(\Z/3)^2$-to-$1$ cover of $\iota_*\eta^*(X) \times Y$.
 
  To show the AFT claim, we note that since $W$ admits a block map into $\Theta_0$ and its reverse, we have trees at our disposal. We can use these trees to implement everything we needed from Section~\ref{sec:SubstitutionConstructions}, i.e.\ we can use any configuration of $W$ as the ``skeleton'' on which substitutions are implemented. 
  
  The verification that the first layer contains an element of $\iota_*\eta^*(X)$ can be performed no matter how the substitutive structure plays out, so we need not allow an additional branching sequence, and can directly use a substitution along the $W$-trees, and send all information over them. 

 Then given an element of $W$, and knowing an element of the simulated $\hor$-system $X$, we can deduce the entire configuration of the SFT on $\lamp$, with probability one with respect to any invariant measure. Namely, since $\tau$ is almost odometric, with probability one the $\tau$-substitutive configuration is uniquely determined by the $Y$-configuration, in particular it is determined by the $W$-configuration. Similarly, the Toeplitz configurations used to implement the infinite data (when $X$ is not a subshift) have no extra bits with probability one.

 The only data we do not yet know is the precise contents of the macrotiles. However, the data in a macrotile at level $k$ is uniquely determined unless it is carrying some information about higher level tiles. Clearly this happens for less than half of the macrotiles on a particular level, since our construction of macrotiles is completely deterministic (or, more precisely, unambiguous). This depends only on the corresponding odometer point and the odometer is uniquely ergodic, so with probability one every macrotile is part of a bigger one that carries no such information.
 
 Finally, for the last claim, simply note that $W$ has zero entropy and that an AFT has the same entropy as its SFT cover.
\end{proof}

\begin{remark}\label{rem:CannotBe}
  The result cannot be strengthened to the claim that $\iota_*\eta^*(X)$ is AFT. Indeed, consider for $X$ the unique non-transitive two-point $\hor$-system (it is clearly effective, but not of finite type). The $\lamp$-subshift $Y\coloneqq\iota_*\eta^*(X)$ coincides with the pull-back $\phi^*(\{0,1\}^\Z)$ of the binary full shift. Assume for contradiction that $Y$ is AFT, so there is an SFT cover $W\twoheadrightarrow Y$ which is generically $1:1$. Without loss of generality, $W$ is nearest neighbour and the coding map $W\subseteq A^\lamp\to Y\subset\{0,1\}^\lamp$ is letter-to-letter. Consider a generic $y\in Y$; by genericity there is a unique $w\in W$ above it. Now $w$ is constant on each coset of the sea-level, so in particular on every $\ver$-coset. We can then copy the labels from $w$ above some $\ver$-coset, and the portion of $\lamp$'s Cayley graph above it, into a position at a different height in $\lamp$ whose $\ver$-coset has the same label; since generically $z$ is not periodic, this creates a configuration in $W$ which is not constant on a sea-level, a contradiction.
\end{remark}

\begin{thm}[= Theorem~\ref{thm:effectiveZ}]\label{thm:effectiveZ2}
  Let $X$ be an effective $\Z$-system. Then $\phi^*(X)$ admits an SFT cover with a computable covering map, both effectively computed from $X$.
\end{thm}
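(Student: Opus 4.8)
The plan is to re-use essentially all of the machinery already assembled for Theorem~\ref{thm:effectiveH2} --- the substitutional scaffolding of Section~\ref{sec:SubstitutionConstructions}, the Wang towers of~\S\ref{ss:wang2sofic}, and the self-similar fixed-point construction of~\S\S\ref{sec:FP}--\ref{sec:InfinitelyMany} --- but to feed the embedded computation the values living on the $\phi$-fibres rather than those living along $\hor$. As in the proof of Theorem~\ref{thm:effectiveH2}, one first treats the case of a binary subshift $X\subseteq\{0,1\}^\Z$, and postpones the general Cantor-system case and the effectivity bookkeeping to the end.

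First I would install the ``$\Z$-coordinate'' inside $\lamp$ together with enough structure to read it. By the first half of Proposition~\ref{prop:soficZ} the pull-back $\phi^*(\{0,1\}^\Z)$ is sofic: the two-direction $\Theta_0$-construction yields an SFT on $\lamp$ in which each $\phi$-fibre $\phi^{-1}(n)$ (a coset of $\sea$) carries an independent label $\bar y(n)$, broadcast along two families of corooted binary trees, one growing in each $\phi$-direction. Overlay on this the substitutional Wang-tower scaffolding of Section~\ref{sec:SubstitutionConstructions} and~\S\ref{ss:wang2sofic}, so that each coset of $\sea$ additionally carries a valid Wang tower, and run the fixed-point construction of~\S\S\ref{sec:FP}--\ref{sec:SingleBit} inside these towers. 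This produces, just as before, macrotiles of every level $k$, each carrying a universal machine that may perform an arbitrary side computation.

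The genuinely new step is the one in which the computation must get hold of the data it is supposed to check. In Theorem~\ref{thm:effectiveH2} the effective system sits inside each $\sea$-coset, so every Wang tower reads it for free; here $X$ lives transversally to the $\sea$-cosets, and a level-$k$ macrotile sitting over the fibre $\phi^{-1}(n)$ must be handed a growing window $(\bar y(m))_{|m-n|\le w_k}$ of the neighbouring fibre labels, with $w_k\to\infty$. I would obtain this window from the $\Theta_0$-trees: a path that leaves the macrotile and follows a tree for $\ell$ levels reaches the fibre $\phi^{-1}(n+\ell)$, all of whose vertices carry the single label $\bar y(n+\ell)$, so the trees can be used to relay the labels of nearby fibres back to the macrotiles, and the sizes can be made to match the macrotile hierarchy $(n_k)_k$ using exactly the ``remembering finite subwords'' device of~\S\ref{sec:SingleBit}. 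With the window available, the universal machine enumerates the forbidden patterns of $X$ and rejects if any one of them --- or, up to the square's own numbering, any of its shifts --- occurs inside the accessible window; since every pattern at every offset is examined by all sufficiently large macrotiles, the resulting SFT on $\lamp$ projects onto exactly $\phi^*(X)$. For a general effective $\Z$-system ($C=\{0,1\}^\omega$) one instead stores each $\bar y(m)$ as a Toeplitz sequence along its fibre and reads its bits into the macrotiles as in~\S\ref{sec:InfinitelyMany}, so that this becomes the $\phi$-direction analogue of Theorem~\ref{thm:InfinitelyMany}. Every step is algorithmic in a presentation of $X$, which gives the computability of the SFT cover and of the covering map.

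The main obstacle is precisely this relaying of transversal data at the right scale: one must choose the trees, the macrotile sizes and the portion of the $\Z$-window that each macrotile is responsible for so that (a) the assignment is easy to describe computationally, (b) the windows grow, and (c) every $\Z$-position with every offset is eventually checked --- the same three constraints that govern the ``column of responsibility'' in~\S\S\ref{sec:SingleBit}--\ref{sec:InfinitelyMany}, but now with the combinatorics of the binary trees between $\phi$-heights in place of the flat macrotile grid.
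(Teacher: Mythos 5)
Your plan is a genuinely different route from the one the paper actually takes, and it leaves the hardest part unresolved.

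The paper does \emph{not} re-run the Wang-tower/fixed-point machinery with data flowing in the $\phi$-direction. Instead it exploits the fact that a $\Z$-system is nothing but a single homeomorphism $f\colon X\to X$, and reduces the statement to the already-proved Theorem~\ref{thm:effectiveH2}. Concretely: fix a computable injective coding $\xi\colon X\to\{0,1\}^\hor$ whose $i$th bit is readable from any large $\hor_k$-coset, and consider the $\hor$-subshift $Y\subseteq(\{0,1\}^\hor)^2$ of all pairs $(y,y')$ with $y=\xi(x)$ and $y'=\xi(f(x))\circ\sigma$ for some $x\in X$ (so $y'$ is $\hor_0$-invariant). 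This $Y$ is effective, hence $\iota_*\eta^*(Y)$ has an SFT cover by Theorem~\ref{thm:effectiveH2}. One then adds a single local SFT rule between a $\phi$-fibre and the fibre one step below it: the first component on fibre $n-1$ must be the $\sigma^{-1}$-lift of the second component on fibre $n$. This enforces, fibre by fibre, that the encoded point is replaced by its $f$-image; i.e.\ the generator $a$ of $\lamp$ implements $f$, and the cover projects onto $\phi^*(X)$. All the ``cross-fibre communication'' reduces to one nearest-neighbour constraint because both $\xi(x)$ and $\xi(f(x))$ are stored on the \emph{same} fibre.

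Your proposal instead keeps the data $\bar y(m)$ distributed transversally across the fibres and tries to build a cross-fibre relay via the $\Theta_0$-trees so that a level-$k$ macrotile in $\phi^{-1}(n)$ can read a growing window $(\bar y(m))_{|m-n|\le w_k}$. The geometric observation that tree paths of length $\ell$ reach $\phi^{-1}(n\pm\ell)$, where the label is constant, is correct, and the idea is not unreasonable. But as written it is a sketch with a real gap, which you yourself flag as ``the main obstacle'': the relay is not the same problem as the ``column of responsibility'' in~\S\S\ref{sec:SingleBit}--\ref{sec:InfinitelyMany}. There the data being remembered lives \emph{inside} the macrotile's own square in a single $\sea$-coset and is transported by wires within the square. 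Here the data lives on other $\sea$-cosets; it must travel out of the square along the trees, be read off at leaves (which are distributed wildly across the receiving square and do not respect macrotile boundaries), be re-injected into the internal wire network, and all of this must be coordinated coherently across levels and across nearby macrotiles so that arbitrarily large windows are eventually seen at every $\Z$-position. None of this is carried out, and it is not a routine adaptation of the existing machinery. The paper's reduction avoids the cross-fibre relay entirely, which is why its proof is half a page. If you want to salvage your route you would need, at a minimum, to design the relay SFT rules and prove the coverage property (c) for the $\Z$-window, which is new engineering not covered by the references you cite; the paper's encode-$f$-on-each-fibre trick is the intended shortcut.
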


\begin{proof}
  Let $f\colon X \to X$ be the homeomorphism generating the effective $\Z$-system $X$. Pick any injective and computable coding map $\xi\colon X \to \{0,1\}^\hor$ such that the $i$th bit of $x \in X$ can be decoded from  any large enough $\hor_k$-coset. The subshift $Y \subset (\{0,1\}^\hor)^2$ we consider is essentially an encoding of the graph of $f$: we have $(y, y') \in Y$ if and only if $y$ encodes a point $x\in X$ and the shift of
  $y'$ encodes $f(x)$, in formulas $y=\xi(x)$ and $y'_h=\xi(f(x))_{\sigma(h)}$ for all $h\in\hor$. Clearly $Y$ is an effective subshift, so $\iota_*\eta^*(Y)$ is sofic.

Now we construct an SFT $Z$ on $\lamp$ by adding to the SFT cover of $Y$ the rule that if $z\restriction_\hor \mapsto (y, y')$ and $h \in\hor_0$ then $(a^{-1}z)\restriction_\hor$'s first component is $y'\circ\sigma^{-1}$ (which is well-defined since $y'$ is invariant under $h_0$). This synchronizes the contents of $\sea$-cosets so that the $a$-shift of $z$ will transform the configuration $x$ coded on $\sea$ to $f(x)$; in other words, the map $f$ on $X$ is implemented by the $a$-shift on $Z$.
\end{proof}

\begin{cor}[= Corollary~\ref{cor:examples}(1)]\label{cor:examples2}
  There exists a strongly aperiodic SFT on the lamplighter group.
\end{cor}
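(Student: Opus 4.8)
The plan is to manufacture strong aperiodicity by combining two independent mechanisms: the SFT $\wee$ of Section~\ref{ss:W}, which already eliminates every period coming from the sea level $\sea$ (Lemma~\ref{lem:WeeAperiodic}), and the pull-back of a strongly aperiodic $\Z$-subshift, which eliminates every period with nonzero $\phi$-image. Neither of $\wee$ nor a $\phi$-pull-back is itself strongly aperiodic — $\wee$ fixes configurations whose underlying lamp pattern is $\Z$-invariant, and any $\phi^*(\cdot)$ is fixed by all of $\sea$ — but their product will be free.

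Concretely, first I would fix a strongly aperiodic effective $\Z$-subshift $X_0$; for instance a Sturmian subshift, which is effectively closed and on which $\Z$ acts freely (it has no periodic points). By Theorem~\ref{thm:effectiveZ2} the pull-back $\phi^*(X_0)$ admits an SFT cover $\pi\colon W_0\twoheadrightarrow\phi^*(X_0)$, with $\pi$ a $\lamp$-equivariant factor map. I would then take the product $W_0\times\wee$, with the diagonal $\lamp$-action; it is an SFT (a finite product of SFTs), and it is non-empty because $\wee$ is non-empty (Lemma~\ref{lem:WeeNonempty}) and $W_0$ covers the non-empty shift $\phi^*(X_0)$.

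It remains to verify that $\lamp$ acts freely on $W_0\times\wee$. Suppose $g=(s,n)\in\lamp$ fixes a point $(w_0,w)$. Applying the equivariant map $\pi$, the element $g$ fixes $\pi(w_0)\in\phi^*(X_0)$; but by~\eqref{eq:pullback} any element of $\lamp$ acts on $\phi^*(X_0)$ through its image $\phi(g)=n$ and the $\Z$-action on $X_0$, which is free, so $n=0$ and hence $g=s\in\sea$. If $g\neq 1$, then $s\in\sea\setminus\{1\}$ fixes $w\in\wee$, contradicting Lemma~\ref{lem:WeeAperiodic} (no configuration of $\wee$ has a nontrivial $\sea$-period). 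Hence $g=1$, so the non-empty SFT $W_0\times\wee$ is strongly aperiodic.

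I do not anticipate a serious obstacle: the substance has already been absorbed into Theorem~\ref{thm:effectiveZ2} and into the structural analysis of $\wee$, and the remaining points (equivariance of the SFT cover, so that isotropy descends along $\pi$; and that a finite product of SFTs is an SFT) are routine. The one design choice worth flagging is that we genuinely need an \emph{effective}, non-sofic $\Z$-subshift $X_0$ — no $\Z$-SFT or $\Z$-sofic shift is strongly aperiodic — which is exactly why Theorem~\ref{thm:effectiveZ2}, rather than Proposition~\ref{prop:soficZ}, is invoked. (Alternatively one could bypass the simulation machinery altogether and simply cite the explicit construction of Theorem~\ref{thm:aperiodic}; the product argument above is the conceptually cleaner route given the tools now at hand.)
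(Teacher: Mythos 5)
Your argument is correct, and it takes a genuinely different route from the one in the paper. The paper's proof is also a two-factor product, $(\iota^-)_*(X)\times\phi^*(Y)$ for effective aperiodic subshifts $X\subseteq A^\hor$ and $Y\subseteq A^\Z$: the $\phi^*(Y)$ factor kills periods with nonzero $\phi$-image, and the $(\iota^-)_*(X)$ factor kills $\sea$-periods after conjugating them into $\hor$. Both legs there rely on the simulation machinery, and the resulting shift is a priori only sofic, so one still has to pass to an equivariant SFT cover to get an actual SFT. You instead keep the $\Z$-leg (via Theorem~\ref{thm:effectiveZ2}) but replace the $\sea$-leg with the concrete SFT $\wee$ from Section~\ref{ss:W}, whose $\sea$-aperiodicity is Lemma~\ref{lem:WeeAperiodic} on the nose; this makes the $\sea$-step a one-line appeal, requires only one simulation theorem instead of two, and — since $\wee$ is itself an SFT — makes the product $W_0\times\wee$ an SFT directly without a final covering step. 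The trade-off is that you lean on the detailed structure analysis of $\wee$, which is heavier machinery than strictly necessary for this corollary; the paper's route (and the explicit tileset of Theorem~\ref{thm:aperiodic}, which you correctly flag as a third option) show that lighter ingredients also suffice. Your verification of freeness — push the isotropy down through the equivariant factor map $\pi$ to force $\phi(g)=0$, then invoke Lemma~\ref{lem:WeeAperiodic} on the $\wee$-coordinate — is complete and correct.
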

\begin{proof}
  Let $X\subseteq A^\hor$ be any effective subshift without periodic points, and let $Y \subseteq A^\Z$ be an effective subshift without periodic points. Then $(\iota^-)_*(X) \times \phi^*(Y)$ is sofic and aperiodic.
\end{proof}

Recall that $\Pi^0_1$ is the complexity class of problems that can be (computably, many-one) reduced to the complement of the halting problem of Turing machines. A problem is $\Pi^0_1$-\emph{hard} if the converse holds, and $\Pi^0_1$-\emph{complete} if it is $\Pi^0_1$ and $\Pi^0_1$-hard; see e.g.~\cite{rogers:computability}.

\begin{cor}[= Corollary~\ref{cor:undecidable}]\label{cor:undecidable2}
  The tiling problem on the lamplighter group is undecidable, and more precisely is $\Pi^0_1$-complete.
\end{cor}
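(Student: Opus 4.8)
The plan is to establish $\Pi^0_1$-completeness; undecidability is then immediate, since the complement of the halting problem is undecidable.

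\emph{Membership in $\Pi^0_1$.} I would argue by compactness in the usual way. Fix the generating set $\{a,b\}$ and write $B_n\subseteq\lamp$ for the ball of radius $n$ in the corresponding Cayley graph. Given a finite SFT on $\lamp$, presented by finitely many forbidden patterns which we may assume share a common support $F$, call $p\colon B_n\to A$ \emph{locally admissible} if no translate of a forbidden pattern occurs in $p$; since $\lamp$ is finitely generated with decidable word problem, the finite set $P_n$ of locally admissible patterns on $B_n$ is uniformly computable. The SFT is empty if and only if $P_n=\emptyset$ for some $n$: one direction is clear, and for the other the restriction maps $P_m\to P_n$ ($m\ge n$) organise the nonempty $P_n$ into a finitely branching tree, a branch of which yields a globally admissible configuration, i.e.\ a point of the SFT. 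Thus emptiness of a finite SFT is semidecidable, so the domino problem (deciding non-emptiness) lies in $\Pi^0_1$.

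\emph{$\Pi^0_1$-hardness.} I would reduce the complement of the halting problem. Given a Turing machine $M$, let $X_M\subseteq\{0\}^\hor$ be the effective $\hor$-subshift over the one-letter alphabet $\{0\}$ whose recursively enumerable set of forbidden patterns is empty if $M$ does not halt and is $\{p\}$, where $p$ is the single-cell pattern with value $0$, once $M$ halts; the enumerator simply simulates $M$ and, if and when it halts, outputs $p$. Then $X_M=\{0^\hor\}$ when $M$ does not halt and $X_M=\emptyset$ otherwise, and an r.e.\ presentation of $X_M$ is produced algorithmically from $M$. By Theorem~\ref{thm:effectiveH2}, $\iota_*\eta^*(X_M)$ is sofic and admits an SFT cover $Z_M$ on $\lamp$ which, as a finite SFT description, is effectively computable from the presentation of $X_M$, hence from $M$. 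Now a surjective factor map has empty image precisely when its domain is empty, while $\eta^*$ and $\iota_*$ send nonempty subshifts to nonempty subshifts (pull back along the surjection $\eta$, then take independent copies on the cosets of $\sea$); therefore $Z_M\neq\emptyset$ if and only if $X_M\neq\emptyset$ if and only if $M$ does not halt. So $M\mapsto Z_M$ is a computable many-one reduction of the co-halting problem to the domino problem on $\lamp$, which is accordingly $\Pi^0_1$-hard.

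Combining the two halves gives $\Pi^0_1$-completeness, and in particular re-establishes undecidability, which was also obtained by a different route (Kari's transducer method) in Theorem~\ref{thm:undecidable}, and could equally be read off from the remark in~\S\ref{ss:wang2sofic} that it is undecidable whether a Wang tileset tiles arbitrarily large squares. The only delicate point is uniformity: one must be sure that the passage from $M$ all the way to the finite tetrahedron description of $Z_M$ is genuinely algorithmic, which is exactly the effective-computability clause of Theorem~\ref{thm:effectiveH2}, and that non-emptiness survives each of $\eta^*$, $\iota_*$ and the final factoring — routine, but worth spelling out.
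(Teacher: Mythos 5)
Your proof is correct and takes essentially the same route as the paper's, which also establishes hardness by reducing emptiness of effective $\hor$-subshifts to the tiling problem via Theorem~\ref{thm:effectiveH} (exploiting the effective-computability clause of Theorem~\ref{thm:effectiveH2}) and notes membership in $\Pi^0_1$ as routine. You merely spell out explicitly the compactness argument for membership and the co-halting reduction that the paper compresses into the assertion that emptiness for effective $\hor$-subshifts is ``by definition $\Pi^0_1$-complete.''
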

\begin{proof}
  For an effective $\hor$-subshift $X$, its emptiness problem is by definition $\Pi^0_1$-complete. The tiling problem on the lamplighter group is, again by definition, the emptiness problem for SFTs on $\lamp$, which reduce by Theorem~\ref{thm:effectiveH} to the previous problem, so it is $\Pi^0_1$-hard. Conversely, the problem is clearly in $\Pi^0_1$.
\end{proof}

\begin{cor}[= Corollary~\ref{cor:examples}(2)]\label{cor:examples3}
The lamplighter group admits an SFT where no configuration is recursively enumerable.\qed
\end{cor}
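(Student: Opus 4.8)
The plan is to deduce this quickly from the pull-back simulation theorem (Theorem~\ref{thm:effectiveZ2}), using the classical recursion-theoretic fact that there is a non-empty $\Pi^0_1$ class $P\subseteq\{0,1\}^\omega$ with no computable members (for instance, the characteristic functions of the sets separating a fixed pair of recursively inseparable recursively enumerable sets). First I would encode $P$ as an effective $\Z$-system, then pull it back to $\lamp$, apply Theorem~\ref{thm:effectiveZ2} to get an SFT cover, and finally argue that this cover inherits the property ``no computable configuration''.

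For the encoding, let $X$ be the $\Z$-system whose configurations are the constant maps $\Z\to\{0,1\}^\omega$ whose common value lies in $P$. I would check that this is an effective Cantor $\Z$-system in the sense of~\S\ref{ss:subshifts}: constancy is imposed by the (recursively enumerable) family of forbidden patterns on two-point domains $\{1,g\}$ that prescribe two distinct equal-length words, and membership of the common value in $P$ is imposed by forbidding, for each word $d$ in a recursively enumerable cover of the complement of $P$ by cylinders, the one-point pattern prescribing the cylinder $[d]$. Clearly $X$ is non-empty, and every configuration of $X$ is a constant whose value is a member of $P$; in particular $X$ has no computable configuration.

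By Theorem~\ref{thm:effectiveZ2}, $\phi^*(X)$ admits an SFT cover $\pi\colon W\twoheadrightarrow\phi^*(X)$ with $\pi$ a computable block map, and $W$ is non-empty because $X$ is. Since every configuration of $X$ is fixed by the $\Z$-action, every configuration of $\phi^*(X)$ is globally constant on $\lamp$, with value in $P$. Now suppose toward a contradiction that some $w\in W$ is recursively enumerable; since $W$ is a subshift over a finite alphabet this means $w$ is computable. Then $\pi(w)$ is a computable configuration of $\phi^*(X)$, hence globally constant with some value $z\in\{0,1\}^\omega$ satisfying $z\in P$; as $z=\pi(w)(1)$ is computable, this contradicts the choice of $P$. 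Therefore no configuration of $W$ is recursively enumerable, and $W$ is the desired SFT.

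I do not anticipate a real obstacle here: the argument is short, and the only non-mechanical input is the existence of $P$ together with the routine verification that the \emph{constant, $P$-valued} system really is effective in the precise sense of~\S\ref{ss:subshifts}. The same argument runs via Theorem~\ref{thm:effectiveH2} applied to the analogous $\hor$-system, by restricting $\pi(w)$ to the subgroup $\hor\le\lamp$; one might also try to use a genuine finite-alphabet effective $\Z$-subshift with no computable points, but producing such a subshift is somewhat less elementary, so the Cantor-system packaging above seems the most economical route.
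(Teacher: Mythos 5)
Your argument is correct, and it is essentially the one the paper has in mind: the $\qed$ in the paper signals that the corollary is regarded as an immediate consequence of Theorem~\ref{thm:effectiveZ2} (or Theorem~\ref{thm:effectiveH2}), applied to any non-empty effective Cantor system built from a non-empty $\Pi^0_1$ class with no computable members. Your packaging of the $\Pi^0_1$ class as the trivial (identity) $\Z$-system of constant $P$-valued configurations, together with the observation that a computable configuration of the SFT cover would project under the computable covering map to a computable member of $P$, is exactly the intended short deduction.
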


Recall that the Kolmogorov complexity of a string $s\in\{0,1\}^n$ is the minimal length of a Turing machine computing $s$. Naturally $\{0,1\}$ may be replaced by any finite alphabet $A$, and $n$ may be replaced by any finite set, for example the subset $\bigcup_{k=0}^n\{a,b\}^k$ of $\lamp$.
\begin{cor}[= Corollary~\ref{cor:examples}(3)]\label{cor:examples4}
The lamplighter group admits a non-empty SFT such that the Kolmogorov complexity of every pattern visible on a radius-$n$ ball in $\lamp$ is $2^{\Theta(n)}$.\qed
\end{cor}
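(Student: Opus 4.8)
The plan is to feed Theorem~\ref{thm:effectiveH2} an effective $\hor$-subshift that is itself Kolmogorov-incompressible in the strongest possible sense, and then to push the incompressibility forward through the induction $\iota_*\eta^*$ and the SFT cover. The upper bound $2^{O(n)}$ is free: writing $B_n=\bigcup_{k\le n}\{a,b\}^k$ for the radius-$n$ ball, one has $\#B_n\le 4^{n+1}$, so any pattern over a finite alphabet $A$ on $B_n$ is described by listing its values and has Kolmogorov complexity at most $\#B_n\cdot\log\#A+O(1)=2^{O(n)}$. The content of the corollary is thus the matching lower bound $2^{\Omega(n)}$ for \emph{every} legal $B_n$-pattern.

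For the lower bound, let $X\subseteq A^\hor$ be an effective $\hor$-subshift and $\epsilon>0$ a constant such that every legal pattern of $X$ on $\hor_k$ has Kolmogorov complexity at least $\epsilon\,2^{k}$, for all $k$. Such an $X$ is the $\hor$-analogue of the ``complex tilings'' of Durand--Romashchenko--Shen: one runs the fixed-point construction of Section~\ref{sec:FP} and lets the universal machine use its spare time to refuse, on each macrotile of level $k$, any content whose description is shorter than $\epsilon\,2^{k}$, exactly as in~\cite{durand-romashchenko-shen:fpa08}. The only point special to our setting is that ``complexity $\ge\epsilon\,2^{k}$'' is compatible with the $\hor$-action: the action of $\hor_k$ on $\hor_k\cong\{0,\dots,2^{k}-1\}$ is by bitwise \textsc{xor}, and an \textsc{xor}-reordering of a string of length $2^{k}$ is specified by $k$ bits, so the complexity of a pattern changes by at most $O(k)=o(2^{k})$ under this action --- a loss absorbed into $\epsilon$ and into the ``$2^{k}$ orientations'' convention for forbidden $\hor_k$-patterns. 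Since such an $X$ is non-empty, Theorem~\ref{thm:effectiveH2} provides a non-empty SFT $Z$ on $\lamp$ together with a covering block map $\pi\colon Z\twoheadrightarrow\iota_*\eta^*(X)$.

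It remains to transfer complexity to balls. A walk estimate gives $\sea_{\lfloor n/4\rfloor}\subseteq B_n$: if $m\le n/4$, then every element of $\sea_m$ --- that is, every $(s,0)\in\lamp$ with $s$ supported in the $2m$ half-integer positions strictly between $-m$ and $m$ --- is realised by walking up to $m$ (toggling the positive lamps along the way), down to $-m$ (toggling the negative ones), and back to $0$, a path of length $4m\le n$. Now fix $z\in Z$ that is legal on $B_n$ and put $k=\lfloor n/4\rfloor$. The restriction $\pi(z)\restriction_{\sea}$ lies in $\eta^*(X)$, hence is constant along $\ver$-cosets and has $\hor$-trace in $X$; so $\pi(z)\restriction_{\sea_k}$ is the $\ver_k$-fold repetition of some legal pattern $p$ of $X$ on $\hor_k$. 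Since $\pi$ is a block map and $\sea_k\subseteq B_n$ is computable from $n$ (which $\#B_n$ determines), a description of $z\restriction_{B_n}$ yields one of $p$ of the same length up to $O(1)$, so
\[
  K\!\left(z\restriction_{B_n}\right)\;\ge\;K(p)-O(1)\;\ge\;\epsilon\,2^{\lfloor n/4\rfloor}-O(1)\;=\;2^{\Omega(n)}.
\]
With the trivial upper bound this yields $K\!\left(z\restriction_{B_n}\right)=2^{\Theta(n)}$ for every pattern visible on a radius-$n$ ball of $Z$, so $Z$ is the desired SFT.

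The hard part is the incompressible input. One must be confident that the complex-tilings mechanism really goes through in our locally finite, Wang-tower setting --- most delicately, that the square patterns carried by the $\sea_k$-blocks inherit the $\Omega(2^{k})$ bound even though a Wang tower corresponds to a tiling of a quadrant or half-plane rather than of the full plane. The robust way around this is to impose the complexity bound \emph{inside} the fixed-point construction, as above, so that the $\sea_k$-patterns are incompressible by fiat and no extendability question arises; checking that the resulting family of forbidden patterns is genuinely $\hor$-invariant is then exactly the \textsc{xor}-robustness observation. Everything else --- the walk estimate, the effectivity bookkeeping, the upper bound --- is routine.
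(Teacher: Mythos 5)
Your argument is essentially the same as the paper's intended one (the corollary is given with just a \qed, so it is meant to follow immediately from Theorem~\ref{thm:effectiveH2}), and the overall structure is sound: start from a ``complex'' effective $\hor$-subshift $X$, apply the simulation theorem, use the ball estimate $\sea_{\lfloor n/4\rfloor}\subseteq B_n$ (with respect to the standard generating set $\{a,a^{-1},b,b^{-1}\}$) to locate a full $\hor_{\Theta(n)}$-pattern of $X$ inside a radius-$n$ ball, and push the lower bound through the computable covering block map. The upper bound is indeed free.

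One point in the construction of $X$ deserves cleanup. You write that $X$ is obtained by ``running the fixed-point construction of Section~\ref{sec:FP}'' with a complexity check --- but that machinery takes an effective $\hor$-subshift \emph{as input} and outputs a Wang tileset / $\lamp$-SFT; it does not directly manufacture an $\hor$-subshift. The cleaner and more direct route (and the one implicitly assumed by the paper) is to invoke the existence of ``everywhere complex'' one-dimensional sequences in the sense of Durand--Levin--Shen / Rumyantsev: there is $\alpha>0$ and $x\in\{0,1\}^\N$ such that $K(x\restriction_{[i,j)}\mid i,j)\ge\alpha(j-i)$ for all $j-i$ large enough. Viewing $x$ as an element of $\{0,1\}^\hor$ via the standard bijection $\hor\cong\N$, the cosets of $\hor_k$ become the dyadic intervals $[j2^k,(j+1)2^k)$, and your \textsc{xor}-observation shows the complexity bound survives translation by $\hor_k$ at a cost of only $O(k)=o(2^k)$ bits. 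Hence the effective $\hor$-subshift forbidding all compressible $\hor_k$-patterns is nonempty and satisfies your hypothesis on $X$, with no circular appeal to the fixed-point construction needed. With this substitution your proof goes through exactly as written; the remaining $O(1)$ versus $O(\log n)$ bookkeeping (one needs $n$ to decode, which costs $O(\log n)$ bits) is absorbed since $K(p)=\Omega(2^{n/4})$ dominates.
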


\begin{cor}[= Corollary~\ref{cor:examples}(4)]\label{cor:examples5}
The entropies of SFTs on the lamplighter group are precisely the upper semi-computable (namely, $\Pi^0_1$) nonnegative real numbers.
\end{cor}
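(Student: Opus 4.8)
The plan is to prove the two inclusions separately. That every non-empty SFT $X\subseteq A^\lamp$ has upper semicomputable (i.e.\ $\Pi^0_1$) entropy is the routine compactness argument, valid for SFTs on any amenable group with decidable word problem. Let the finitely many forbidden patterns be supported in a ball $B_r$, and for a finite set $F\subseteq\lamp$ and $k\in\N$ let $M_{F,k}$ be the number of patterns on $F$ that extend to a colouring of the fattened set $FB_{kr}$ containing no translate of a forbidden pattern; this is a computable function of (a code for) $F$ and of $k$, it is non-increasing in $k$, and by compactness it tends as $k\to\infty$ to the number $N(F)$ of patterns of $X$ actually visible on $F$. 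Since $F\mapsto\log N(F)$ is monotone, subadditive and $\lamp$-invariant, the Ornstein--Weiss lemma gives $h(X)=\inf_F\log N(F)/|F|=\inf_{F,k}\log M_{F,k}/|F|$ (infima over all finite $F\subseteq\lamp$), an infimum of a uniformly computable family of reals each of which is $\ge h(X)\ge 0$; hence $h(X)$ is a $\Pi^0_1$ real. I would dispose of this direction in one paragraph.

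For the converse I would route the Hochman--Meyerovitch realisation theorem through the Wang-tower construction of~\S\ref{ss:wang2sofic}. Fix a $\Pi^0_1$ real $\alpha\ge 0$. By the Hochman--Meyerovitch characterisation of the entropies of two-dimensional subshifts of finite type there is a Wang tileset $B\subseteq D^4$ whose $\Z^2$-subshift of valid tilings has topological entropy $\alpha$; in particular the number of valid $B$-tilings of the square $\{0,\dots,2^n-1\}^2$ that extend to a tiling of the plane is $\exp\bigl((\alpha+o(1))2^{2n}\bigr)$. Feeding this concrete $B$ into~\S\ref{ss:wang2sofic} (which uses only Proposition~\ref{prop:equalbranchingstructures}, not the fixed-point machinery of Section~\ref{sec:Real}) produces an SFT $\Sigma$ on $\lamp$ which, on every coset of $\sea$, carries a Wang tower for $B$ on top of an almost odometric substitutive skeleton and the tree/odometer bookkeeping layers, the various $\sea$-cosets being decorated independently. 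It remains to show $h(\Sigma)=\alpha$.

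For the entropy computation one first checks that the space of Wang towers for $B$, seen as a $\sea$-subshift with the normalisation $|\sea_n|=2^{2n}$, has entropy $\alpha$: by the bijection $\eta_b\times\nu_b\colon\sea_n\to\{0,\dots,2^n-1\}^2$ of~\eqref{eq:etanu}, a Wang tower restricted to $\sea_n$ is a $B$-tiling of $\{0,\dots,2^n-1\}^2$ admitting a tower extension, and the number of such tilings is $\exp\bigl((\alpha+o(1))|\sea_n|\bigr)$ --- here one uses that a plane tiling restricts to a tower-admissible square, and in the other direction that the tower-admissible count and the plane-extendable count have the same exponential rate $\alpha$, and that the ``broken'' configurations noted in~\S\ref{ss:wang2sofic}, together with the at most $\exp(O(n))$ choices of skeleton and of branching direction, are swamped by $\exp(\Theta(2^{2n}))$. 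Now take the standard F\o lner sequence $F_m$ of elements $(s,j)\in\lamp$ with $\operatorname{supp}(s)$ among the $m$ lamp positions straddling the origin and $-m/2\le j<m/2$: one has $|F_m|\asymp m\,2^m$, and $F_m$ meets $m$ cosets of $\sea$, each in a translate of $\sea_{m/2}\cong\hor_{m/2}\times\ver_{m/2}$ of cardinality $2^m$. Since the restriction of $\Sigma$ to each such coset has the same entropy as the Wang-tower space (the extra skeleton and bookkeeping layers contributing nothing) and distinct cosets carry independent data, the number of patterns of $\Sigma$ on $F_m$ is $\bigl(\exp((\alpha+o(1))2^m)\bigr)^m=\exp\bigl((\alpha+o(1))\,m\,2^m\bigr)$, so $h(\Sigma)=\alpha$. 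Together with the first inclusion this proves the statement.

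I expect the only genuine difficulty to lie in this last bookkeeping: cleanly separating the exponential rate $\alpha\cdot|\sea_n|$ from the sub-exponential parasitic contributions (skeleton symbols, branching directions, broken configurations), and checking that the two changes of normalisation --- from honest plane tilings to Wang towers, and from a single $\sea$-coset to an ambient F\o lner set of $\lamp$ --- carry the Hochman--Meyerovitch value $\alpha$ across without distortion. The coincidence of the tower-admissible and plane-extendable exponential rates is the one point needing real argument (for instance by passing to a mixing variant of $B$, or by reading it off the rigidity of the Hochman--Meyerovitch construction); everything else is either quoted (\S\ref{ss:wang2sofic}, Proposition~\ref{prop:equalbranchingstructures}, the Ornstein--Weiss and Hochman--Meyerovitch theorems) or routine.
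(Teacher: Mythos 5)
Your route is genuinely different from the paper's. The paper does not invoke Hochman--Meyerovitch at all: it writes $\alpha=d\log t$ with $d\in[0,1]$ a $\Pi^0_1$ real, takes an effective Sturmian $\Z$-subshift of density $d$, pulls it back via $\phi$ to a sofic $\lamp$-shift with an SFT cover $Y$ (Theorem~\ref{thm:effectiveZ}), and decorates $Y$ with free $\{0,\dots,t\}$-symbols only at the density-$1$ levels; a direct count on height-$n$ tetrahedra gives entropy $d\log t$ for the sofic image $\rho(Z)$, and the last step is that the cover $Z\to\rho(Z)$ adds no entropy --- which is exactly the last sentence of Theorem~\ref{thm:effectiveH2}, itself a non-trivial consequence of the $\wee$ machinery of \S\ref{ss:W} (zero entropy, Lemma~\ref{lem:WeeZeroEntropy}, and the almost $(\Z/3)^2$-to-$1$ property, Proposition~\ref{prop:WeeTechnical}). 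The paper's realisation is therefore entirely self-contained and sidesteps both of your bookkeeping worries at once.

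Your Hochman--Meyerovitch-plus-Wang-tower route is natural, but has two gaps, only one of which you flag. The one you flag --- that the tower-admissible and plane-extendable square counts need not share an exponential rate, since a tower may be a half-plane or quadrant tiling rather than a plane tiling --- is real, and closing it requires something like block-gluing for the realising tileset, which is not in the original Hochman--Meyerovitch statement. The second gap is in the assertion that ``the extra skeleton and bookkeeping layers contribut[e] nothing'' because they cost only $\exp(O(n))$. That estimate is off: the SFT cover from Propositions~\ref{prop:soficH}--\ref{prop:equalbranchingstructures} lives on edges, the covering map discards a constant fraction of the edge labels (the paper is explicit that ``the colours specify an extra tree \dots its labels are simply ignored by this last map''), and the discarded labels, together with the branching-direction bits of the $\Theta(2^n)$ corooted trees inside a height-$n$ tetrahedron, give $\exp(O(2^n))$ unrecorded degrees of freedom, not $\exp(O(n))$. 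That this is still $o\bigl((n+1)2^n\bigr)$ and hence contributes zero entropy density is plausible, but it requires a careful parse-tree rigidity argument you do not give. The paper avoids exactly this accounting by replacing the $\Theta_0$ skeleton with $\wee$ and deriving an almost-AFT cover (second paragraph of the proof of Theorem~\ref{thm:effectiveH2}); by electing to ``use only Proposition~\ref{prop:equalbranchingstructures}, not the fixed-point machinery'', you also forgo that tool and must supply the entropy control by hand. The upper bound direction ($\Pi^0_1$-ness of SFT entropies) in your first paragraph is standard and fine as sketched; the paper does not spell it out either.
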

\begin{proof}
  Every $\Pi^0_1$ nonnegative real number may be written as $d\log t$ for some $t\in\N$ and $d\in[0,1]$, where $d$ is also $\Pi^0_1$. There exists an effective Sturmian $\Z$-subshift $X$ of density $d$, namely $X\subseteq\{0,1\}^\Z$ such that every $x\in X$ satisfies $\sum_{i=1}^n x(i)\approx d n$. By Theorem~\ref{thm:effectiveZ}, there exists an SFT cover $Y\subseteq A^\lamp$ of $\phi^*(X)$ via $\pi\colon A\to\{0,1\}$. Consider now the subshift
  \[Z=\{(y,s)\in Y\times\{0,1,\dots,t\}^\lamp\mid \pi(y(g))=0\Leftrightarrow s(g)=0\},\]
  namely for every $x\in X$ allow arbitrary values in $\{1,\dots,t\}$ at levels marked by $1$ in $x$, and only $0$ at levels marked by $0$. This is clearly still an SFT. Consider the projection $\rho\colon Z\to\{0,1,\dots,t\}^\lamp$ on the second component.

  On a large ball, or more simply on a height-$n$ tetrahedron, we have $(n+1)2^n$ vertices, and see $\approx t^{d(n+1)2^n}$ different patterns. The entropy of $\rho(Z)$ is therefore $d\log t$.

  Finally, the extension $\rho$ does not add any entropy by the last sentence of Theorem~\ref{thm:effectiveH2}. 
\end{proof}

The following theorem is not new, but we obtain a new proof of it, illustrating the generality of simulation methods. Recall that the \emph{sunny-side-up} shift on a group $G$ is the set of configurations $x\in\{0,1\}^G$ with at most one `$1$'.

\begin{thm}
  The sunny-side-up is sofic on the lamplighter group.
\end{thm}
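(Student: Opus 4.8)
The plan is to realise the sunny-side-up shift on $\lamp$ as a factor of an intersection of three sofic shifts built from the constructions above, cut down by finitely many radius-$0$ matching rules. Write $A=\{*,\dagger\}$; recall from Example~\ref{ex:ssu} that on each of $\hor,\ver,\sea,\Z$ the sunny-side-up shift (configurations with at most one distinguished symbol) is the subshift whose forbidden patterns are ``two distinguished symbols in a finite window''. I will use without comment the standard facts that sofic shifts over a fixed alphabet are closed under finite intersection, that sofic shifts are closed under finite products and under intersection with SFTs on the product alphabet, and that factors of sofic shifts are sofic.

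First I would locate the marked cell inside its $\sea$-coset. Let $\tau\colon A\to A^2$ be the deterministic substitution $\tau(\dagger)=(\dagger,\dagger)$, $\tau(*)=(*,\dagger)$, so that $X_\tau$ is the sunny-side-up shift on $\hor$ and an element of $X_\tau$ carrying a `$*$' has a unique branching direction recording the position of that `$*$'. By Proposition~\ref{prop:equalbranchingstructures}, $S_1\coloneqq\iota_*(Y_\tau)$ is a sofic $\lamp$-shift which on each $\sea$-coset displays an element of $Y_\tau$; since all the $X_\tau$-slices of such an element share a branching direction, all the `$*$'s on a given $\sea$-coset lie on a single coset of $\ver$ in $\sea$. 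Applying the $\iota^+$-version of Proposition~\ref{prop:soficH} to the sunny-side-up substitution on $\ver$ gives a sofic $\lamp$-shift $S_2$ forbidding two `$*$'s on any coset of $\ver$ in $\lamp$. Then $S_1\cap S_2$ is sofic, and on each $\sea$-coset its configurations have all `$*$'s on one coset of $\ver$ in $\sea$ and at most one per such coset, hence carry at most one `$*$'; conversely, every configuration of $\sea$ with at most one `$*$' occurs on some $\sea$-coset of an element of $S_1\cap S_2$.

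Next I would locate the marked $\sea$-coset and assemble the pieces. Since the sunny-side-up shift on $\Z$ is a standard proper sofic $\Z$-shift, Proposition~\ref{prop:soficZ} gives that $S_3\coloneqq\phi^*(\text{sunny-side-up on }\Z)\subseteq\{0,1\}^\lamp$ is sofic, with elements constant on $\sea$-cosets and labelling at most one $\sea$-coset `$1$'. Let $T\subseteq(A\times\{0,1\})^\lamp$ consist of those $(x,z)\in(S_1\cap S_2)\times S_3$ with $x(g)=*\Rightarrow z(g)=1$ for all $g\in\lamp$; this is a product of sofic shifts intersected with a radius-$0$ SFT, hence sofic, so its image under the first-coordinate projection is sofic, and I claim this image is precisely the sunny-side-up shift on $\lamp$. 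If $(x,z)\in T$, then $x$ has at most one `$*$' per $\sea$-coset, every `$*$' of $x$ forces its $\sea$-coset to be labelled `$1$', and at most one $\sea$-coset is so labelled, so $x$ has at most one `$*$' altogether. Conversely, given $x\in A^\lamp$ with at most one `$*$', take $z$ to be the indicator of the $\sea$-coset containing that `$*$' (and $z\equiv0$ if there is none), and check directly that $x\in S_1\cap S_2$, that $z\in S_3$, and that $x(g)=*\Rightarrow z(g)=1$.

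The only point that needs real thought is the reduction underlying the first step: the per-$\sea$-coset condition genuinely requires the branching synchronisation of Proposition~\ref{prop:equalbranchingstructures}. The naive candidate $(\iota^-)_*(\text{sunny-side-up on }\hor)\cap(\iota^+)_*(\text{sunny-side-up on }\ver)$ only forbids two `$*$'s that share a coset of $\hor$ or share a coset of $\ver$ inside one $\sea$-coset, which still permits two `$*$'s in ``general position'' there, whereas $Y_\tau$ — whose $X_\tau$-slices agree on a common branching direction pinning down the `$*$' — forces all `$*$'s of a $\sea$-coset into a single column. The remaining work is routine bookkeeping: identifying the cosets of $\hor$ and of $\ver$ lying inside a fixed $\sea$-coset with the corresponding cosets of $\sea$, and invoking the cited closure properties of sofic shifts.
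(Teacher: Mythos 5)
Your proof is correct, but it takes a genuinely different route from the paper's. The paper covers the sunny-side-up by taking the \emph{product} of three SFT covers --- namely of $\iota_*\eta^*(\text{ssu}_\hor)$, $\iota_*\eta^*(\text{ssu}_\ver)$ (both via Theorem~\ref{thm:effectiveH}, so in each $\sea$-coset a single distinguished $\ver$-column respectively $\hor$-row of $*$'s), and $\phi^*(\text{ssu}_\Z)$ --- and defining the factor map as the cell-wise \emph{minimum} $\min\{\pi_1,\pi_2,\rho\}$ of the three coding maps; the single $*$ then appears precisely at the unique intersection of the distinguished column, row, and level. You instead build $S_1=\iota_*(Y_\tau)$ from Proposition~\ref{prop:equalbranchingstructures}, using the branching synchronization to pin all $*$'s of a $\sea$-coset into one column, and cut down by $S_2=(\iota^+)_*(\text{ssu}_\ver)$ and a radius-$0$ SFT joining to $S_3=\phi^*(\text{ssu}_\Z)$, then project. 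Both arguments are sound; what yours buys is that it uses only the elementary substitution machinery (Propositions~\ref{prop:soficH}, \ref{prop:equalbranchingstructures}, \ref{prop:soficZ}) and avoids the fixed-point simulation theorem entirely, while the paper's $\min$ trick is shorter and sidesteps the need for $Y_\tau$'s branching synchronization --- indeed the paper never confronts the ``two $*$'s in general position'' issue you discuss, because the $\min$ of a column indicator and a row indicator already isolates a single cell. A minor stylistic point: the paper's appeal to Theorem~\ref{thm:effectiveH} for the first two pieces is also avoidable (Proposition~\ref{prop:soficHinduction} would suffice), so your observation that the result does not require the heavy machinery is a fair one.
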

\begin{proof}
  The sunny-side-up on $\hor$ is an effective subshift, so the induction of its pull-back to $\sea$ is sofic by Theorem~\ref{thm:effectiveH}, say a factor of an SFT $X\subseteq A^\lamp$ by a map $\pi_1\colon A\to\{0,1\}$. Symmetrically, the sunny-side-up on $\ver$ is an effective subshift, so the induction of its pull-back to $\sea$ is sofic by Theorem~\ref{thm:effectiveH}, say a factor of an SFT $Y\subseteq B^\lamp$ by a map $\pi_2 \colon B\to\{0,1\}$. Finally, the sunny-side-up on $\Z$ is sofic, so its pull-back to $\lamp$ is also sofic, say a factor of an SFT $Z \subseteq C^\lamp$ by a map $\rho \colon C\to\{0,1\}$. Set $W =X\times Y \times Z \subseteq(A\times B \times C)^\lamp$ with factor map $\sigma(x,y,z)=\min\{\pi_1(x),\pi_2(y),\rho(z)\}$. This defines an SFT cover of the sunny-side-up on $\lamp$.
\end{proof}

We finally prove that substitutive $\sea$-subshifts are sofic. This is analogous to what happens on the plane. We explain how to obtain this from the fixed point argument, only sketching the differences with the proof of Theorem~\ref{thm:effectiveH}.
\begin{thm}[= Theorem~\ref{thm:substitutive}]\label{thm:substitutive2}
Let $X$ be a substitutive $\sea$-shift. Then $\iota_*(X)$ is sofic.
\end{thm}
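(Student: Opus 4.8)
The plan is to reduce the case of a general substitutive $\sea$-shift to the tools already developed in Sections~\ref{sec:SubstitutionConstructions} and~\ref{sec:Real}, chiefly Propositions~\ref{prop:soficHinduction} and~\ref{prop:equalbranchingstructures} together with the fixed-point/Wang-tower machinery. First I would recall what a substitutive $\sea$-shift is: it is the image under a letter-to-letter map of a subshift $X_\mu$ defined by a substitution $\mu$ on $\sea=\hor\times\ver$, where the ``expansion'' is by doubling independently in the $\hor$ and $\ver$ directions, so that the legal patterns on $\sea_n$ are exactly the $\mu^n$-images of single letters. Since sofic shifts are closed under letter-to-letter factors, it suffices to prove $\iota_*(X_\mu)$ is sofic for a bare two-dimensional substitution $\mu\colon A\to\power(A^{2\times 2})$ on $\sea$.

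The key step is to recognise that such a two-dimensional substitution on $\sea_n\cong 2^n\times 2^n$ is, after identifying coordinates in binary via $\eta_b\times\nu_b$ as in~\eqref{eq:etanu}, an ordinary substitutive structure on $2^n\times 2^n$ squares — precisely the kind of datum that the Wang-tower construction of~\S\ref{ss:wang2sofic} is built to handle. So the plan is: (i) encode $\mu$ as a Wang tileset $B_\mu$ whose Wang towers are exactly the $\sea$-configurations in $X_\mu$; concretely, a Wang tile carries a symbol $a\in A$ together with the four ``child slots'' forced by $\mu$, with side-matching rules ensuring that the $2\times 2$-block of symbols glued from four neighbours is a legal $\mu$-image, and the normalization discussion of~\S\ref{ss:wang2sofic} (using an almost odometric $\tau_0$ such as period-doubling, Lemma~\ref{lem:pd}) guarantees the square being tiled is the correct $\mu^n$-image of a single seed letter; (ii) apply the construction of~\S\ref{ss:wang2sofic} verbatim to turn $B_\mu$ into an SFT on $\lamp$ that, on every $\sea$-coset, projects onto exactly the Wang towers for $B_\mu$, i.e.\ onto $X_\mu$ restricted to that coset; (iii) observe that because the four cosets of $\sea_n$ inside $\sea_{n+1}$ are glued as the four panes of a window, and the substitution acts the same way on each coset, the independently-tiled $\sea$-cosets are exactly the elements of $\iota_*(X_\mu)$ — no inter-coset synchronization beyond the branching direction is needed, since induction imposes no constraint between distinct cosets. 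Then $\iota_*(X_\mu)$ is the letter-to-letter projection of this SFT onto the $A$-component, hence sofic, and composing with the original letter map finishes the general case.

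One technical point to address is that, unlike the effective-subshift proof of Theorem~\ref{thm:effectiveH2}, here no universal computation or enumeration of forbidden patterns is required — the constraint ``the pattern on each $\sea_n$-block is a $\mu^n$-image of a letter'' is already a finite-type condition once the odometric scaffolding of~\S\ref{ss:wang2sofic} is in place, because checking a single step of desubstitution is a local ($2\times 2$) check and the scaffolding propagates the seed letter upward. So the argument is strictly simpler than the main theorems; the only thing to be careful about is the ``broken'' configurations noted in the remarks of~\S\ref{ss:wang2sofic} (cosets whose north/east border never joins another square): on such a coset the Wang tower still corresponds to a tiling of a half-plane or quadrant, which still restricts on every $\sea_n$ to a legal $\mu^n$-image, so it is still a legal element of $X_\mu$, and conversely every element of $X_\mu$ arises this way; hence the projection is onto $\iota_*(X_\mu)$ exactly.

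The main obstacle, such as it is, is bookkeeping: making precise that a ``substitutive $\sea$-shift'' in the sense intended really is captured by the row/window structure of Wang towers, i.e.\ that the $2$-adic geometry of $\sea_n$ under $\eta_b\times\nu_b$ matches the $2^n\times2^n$-square geometry used in~\S\ref{ss:wang2sofic}, and that the substitution's expansion convention (doubling in each direction, children placed according to the branching bits $b$) is exactly the convention under which macro-squares embed as quadrants of larger ones. Once that dictionary is fixed — which is essentially the content already used implicitly in the proof of Theorem~\ref{thm:effectiveH2} — the soficity is immediate from Proposition~\ref{prop:equalbranchingstructures} and the Wang-to-sofic translation. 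I would therefore spend the bulk of the write-up stating this dictionary cleanly and then invoke~\S\ref{ss:wang2sofic} and~\S\ref{sec:SubstitutionConstructions} as black boxes.
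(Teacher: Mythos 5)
Your reduction to a bare two-dimensional substitution $\mu\colon A\to\power(A^{2\times2})$ and your plan to route everything through the Wang-tower machinery of~\S\ref{ss:wang2sofic} is the right framework, and it is also the framework the paper uses. However, there is a genuine gap in step~(i), and it is not merely bookkeeping.

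Your proposed Wang tileset $B_\mu$ only enforces \emph{level-one} consistency: that each scaffold-aligned $2\times2$-block is a legal $\mu$-image. It has no mechanism whatsoever to enforce that the desubstituted level-one configuration is itself $\mu$-desubstitutable, and so on through all levels. A Wang tile is a level-$0$ object carrying a bounded amount of data; it cannot carry the hierarchy of seed symbols. The odometric scaffold of~\S\ref{ss:wang2sofic} does give each tile access to its binary co\"ordinates inside every $\sea_n$-block, but it does not ``propagate the seed letter upward'': it knows \emph{where} the tile sits in the hierarchy, not \emph{which} $A$-symbol governs the block at each level. Concretely, take $\mu(0)=\begin{smallmatrix}0&0\\0&1\end{smallmatrix}$ and $\mu(1)=\begin{smallmatrix}1&1\\1&1\end{smallmatrix}$; the $4\times4$ pattern obtained by placing $\mu(0)$, $\mu(1)$, $\mu(1)$, $\mu(0)$ in the four quadrants passes all your $2\times2$-checks (and extends to a configuration on $\sea$ passing them everywhere), yet the desubstituted level-one pattern $\begin{smallmatrix}0&1\\1&0\end{smallmatrix}$ is not a $\mu$-image, so the result is not in $X_\mu$. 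Your sentence ``the constraint \ldots is already a finite-type condition once the odometric scaffolding is in place'' is therefore false as stated: cutting $X_\mu$ out of $A^\sea$ needs constraints at every level, i.e.\ arbitrarily large windows, and turning that into a finite-type condition is exactly the content of a Mozes-/fixed-point-type hierarchical construction.

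This is why the paper's proof does \emph{not} avoid the machinery of Theorem~\ref{thm:effectiveH2}. It re-uses the fixed-point construction verbatim and observes that the Turing machine there already maintains a \texttt{sub}/\texttt{psub} pair which it checks for $\tau\times\tau$; one then has it additionally apply the deterministic $\tau'$ defining $X$ during the same sweep. The point is precisely that the macrotile hierarchy carries the seed symbol at every level and checks parent–child consistency, which is the piece your proposal omits. For the nondeterministic case, the paper adds one further device that your proposal does not address at all: a ``choice sequence'' of length $\ell$ per macrotile (where macrotiles consist of $2^\ell\times2^\ell$ children), synchronised on dyadic blocks so that all children of a macrotile agree on the nondeterministic branches leading to it. Without that, four independently-built $\mu$-images could be placed in the four quadrants with no common parent in $\power$-preimage, again escaping $X_\mu$. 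So the correct conclusion is the opposite of what you wrote: the proof is \emph{not} strictly simpler than the main theorems, it is a light modification of them; the hierarchical propagation (via \texttt{sub}/\texttt{psub} and the choice sequence) is indispensable and cannot be replaced by a flat Wang tileset plus the scaffold.
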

\begin{proof}
In the proof of Theorem~\ref{thm:effectiveH}, we already keep track of a point in the substitution $\tau \times \tau$, with $\tau\colon A \to A^2$. The Turing machine sweeps once over the fields storing the $x$ and $y$-co\"ordinate, to compute the image of a symbol in this substitution, and we can effortlessly combine $\tau$ with any deterministic substitution $\tau'\colon A \to A^{2\times2}$ defining $X$.

If $X$ is defined by a non-deterministic substitution $\tau'$, we need to be a bit more careful, making sure that the non-deterministic choices are consistent. Perform the basic fixed point construction in which macrotiles of level $k$ always consist of $N\times N$ macrotiles of level $k-1$, for a fixed $N = 2^\ell$. It suffices to evaluate the $\ell$th power of $\tau'$. For this, we assume that each macrotile of level $k$ knows a word -- the \emph{choice sequence} -- of length $\ell$ (over an alphabet of cardinality at most $\#A$), which determines all the non-deterministic choices leading to it. The information must be synchronized between the children, in that the first $j$ symbols of the word have to be shared by macrotiles that are in the same basic $2^j$-block of macrotiles.

It is easy to figure out, from the binary digits of the $x$ and $y$ co\"ordinates, which subword of the choice sequence has to be synchronized with which neighbour. Each macrotile then just sends a word of length at most $\ell$ to each of its neighbours. This is easy to achieve; note that already sharing the co\"ordinates of a macrotile in the parent tile requires sharing words of this length. Once we have the choice sequence, we can proceed as in the first paragraph of the proof. We omit further details.
\end{proof}

We finally rule out in a quite general setting a possible strengthening of Theorem~\ref{thm:effectiveH}:
\begin{thm}[= Observation~\ref{obs:effectivenonsofic}]\label{obs:effectivenonsofic2}
  Let $\iota\colon H\hookrightarrow G$ be a subgroup inclusion, and assume that $G$ is amenable and $H$ is infinite and has decidable word problem. Then there exists an effective $H$-subshift $X$ such that $\iota_*(X)$ is not sofic.
\end{thm}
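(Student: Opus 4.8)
### Proof proposal for Theorem~\ref{obs:effectivenonsofic2}

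The plan is to exhibit an effective $H$-subshift whose induction to $G$ cannot be sofic for entropy (or, more robustly, counting-complexity) reasons. The guiding principle is the classical ``entropy obstruction'' to soficity on amenable groups: a sofic $G$-subshift is the factor of an SFT, and an SFT on an amenable group is defined by finitely many local rules, so the number of globally admissible patterns on a Følner set $F$ can only grow like $C^{|F|}$ for a fixed constant $C$ depending on the alphabet. Hence the topological entropy of a sofic $G$-subshift is an upper-semicomputable (indeed $\Pi^0_1$) real number — one can enumerate larger and larger violations of the local rules and watch the pattern count decrease. By contrast, an \emph{induced} subshift $\iota_*(X)$ can have an arbitrary, not-necessarily-upper-semicomputable entropy, because $X$ itself is only required to be recursively enumerable, not co-recursively-enumerable. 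So the first step is: fix an enumeration of the cosets of $H$ in $G$ meeting a chosen Følner sequence, and compute how the count of $\iota_*(X)$-patterns on a Følner set $F_n$ relates to the count of $X$-patterns on $F_n \cap$ (a single coset). Since $\iota_*(X)$ consists of independent copies of $X$ on each coset, $\log\#(\text{patterns of }\iota_*(X)\text{ on }F_n) = (\text{number of }H\text{-cosets meeting }F_n)\cdot\log\#(\text{patterns of }X\text{ on the trace})+o(|F_n|)$, roughly; the point is that the entropy of $\iota_*(X)$ is (a positive multiple of) the entropy of $X$ as an $H$-subshift, using that $H$ is infinite so the cosets contribute a genuinely growing number of independent factors and that $G$ is amenable so these notions of entropy are well behaved.

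Next I would construct the effective $H$-subshift $X$ with non-upper-semicomputable entropy. Because $H$ is infinite, finitely generated or not, it contains an infinite cyclic... actually it need not; but $H$ is infinite with decidable word problem, so it has an infinite, computably enumerable exhaustion by finite subsets, and we can build a ``one-dimensional-like'' effective subshift on it by a Toeplitz/odometer-type construction, or more simply take $X$ to be the $H$-analogue of a full shift restricted by a recursively enumerable but not recursive set of density constraints. Concretely: pick a $\Sigma^0_1$-but-not-$\Pi^0_1$ real $\alpha\in(0,1)$, i.e. $\alpha=\sup_m q_m$ for a computable increasing sequence of rationals $q_m$, with $\alpha$ not $\Pi^0_1$. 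Build an effective $X\subseteq\{0,1\}^H$ whose pattern counts on the $n$th exhausting set $E_n$ are forced \emph{from below}: enumerate the $q_m$ and, as each is discovered, \emph{add configurations} (equivalently, retract forbidden patterns — note that effective subshifts allow a recursively enumerable list of forbidden patterns, and ``adding configurations over time'' corresponds to never having forbidden those patterns, which is exactly what r.e.\ lists let you defer). Arrange that the entropy of $X$ equals $\alpha\log 2$ (or any desired function of $\alpha$). Then $h(X)$ is a supremum of a computable sequence, hence $\Sigma^0_1$, but by choice of $\alpha$ it is not $\Pi^0_1$.

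Finally, combine the two halves: by the first step $h(\iota_*(X)) = c\cdot h(X)$ for an explicit positive computable constant $c$ (or at any rate $h(\iota_*(X))$ is $\Pi^0_1$ iff $h(X)$ is), so $h(\iota_*(X))$ is not upper-semicomputable; but every sofic subshift on the amenable group $G$ has $\Pi^0_1$ entropy by the enumeration argument sketched above. Therefore $\iota_*(X)$ is not sofic. The main obstacle I expect is the ``conversely'' direction of the entropy--coset bookkeeping: making rigorous, for a general amenable $G$ and a general infinite subgroup $H$ (possibly of infinite index, possibly with complicated coset geometry along a Følner sequence), that $h(\iota_*(X))$ is genuinely controlled above and below by $h(X)$ — in particular that it is $\Pi^0_1$-reducible to $h(X)$ and vice versa — so that non-semicomputability transfers. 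One clean way around this is to sidestep entropy entirely and argue with \emph{pattern-counting complexity} directly: a sofic $G$-subshift has the property that $\{(F,p) : p\text{ is a globally admissible pattern on }F\}$ is the projection of a co-r.e.\ set (the admissible patterns of its SFT cover that extend), hence the function $n\mapsto \#\{\text{admissible patterns on }F_n\}$ is upper-semicomputable; whereas our $\iota_*(X)$, built from the $\Sigma^0_1$ construction above, has this count strictly increasing over time in a way that is $\Sigma^0_1$ but not $\Pi^0_1$. I would write the final proof in this complexity-of-pattern-counts language, as it avoids all Følner-averaging subtleties and needs amenability of $G$ only to guarantee that SFT covers genuinely constrain pattern growth.
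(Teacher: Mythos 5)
Your approach is fundamentally blocked by a computability fact you appear to have overlooked: \emph{every} effective subshift on a group with decidable word problem already has $\Pi^0_1$ entropy, so no such $X$ with $\Sigma^0_1$-but-not-$\Pi^0_1$ entropy exists. Concretely, for an effective subshift the set of globally admissible patterns on a fixed finite window is $\Pi^0_1$: a pattern $p$ is globally admissible iff for every $t$ there exists an extension of $p$ to the $t$th member of an exhaustion which avoids the first $t$ enumerated forbidden patterns, and this is a $\Pi^0_1$ condition (the existential quantifier ranges over a finite set, so it absorbs into the $\Pi^0_1$ level). Hence the counts $\#\{\text{globally admissible patterns on }F_n\}$ are upper semicomputable uniformly in $n$, and so is the entropy (as an infimum of upper-semicomputable reals). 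Your construction tries to ``retract forbidden patterns'' or ``add configurations over time,'' but that is exactly what r.e.\ lists of forbidden patterns do \emph{not} permit: you can defer forbidding, but never un-forbid, so the pattern counts are monotone decreasing in the enumeration time, i.e.\ upper semicomputable. The same objection kills your ``pattern-counting complexity'' fallback: both sofic $G$-shifts and effective $G$-shifts have $\Pi^0_1$ globally-admissible-pattern counts, so no $\Pi^0_1$-vs-$\Sigma^0_1$ separation is available. (Indeed the paper's own Corollary~\ref{cor:examples5} shows SFT entropies on $\lamp$ are exactly the $\Pi^0_1$ nonnegative reals, the same class realized by effective subshifts, so entropy cannot be the obstruction.) There is also a secondary, real but less fatal, issue with relating $h(\iota_*(X))$ to $h(X)$ for a general infinite-index locally-finite $H$; but you do not reach that point.

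The paper's proof avoids entropy entirely. It defines a \emph{copy shift} $X\subseteq A^H$: an alphabet with two ``mark'' slots and two ``data'' slots, at most one $\bullet$ allowed in each mark slot, and the constraint that if both $\bullet$'s are present then the two data slots are translates of one another aligned with the marks. This is effective (using decidability of the word problem in $H$ to enumerate violations) and has \emph{zero} entropy. Assuming $\iota_*(X)$ were sofic with SFT cover $Z$ over alphabet $B$ and memory set $S$, one picks a Følner set $F\subseteq G$ with $\#(SF\setminus F)<(\log 2/\log\#B)\,\#F$ and a translate $h\in H$ with $Fh\cap F=\emptyset$ (this is where infiniteness of $H$ enters). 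For each of the $2^{\#F}$ binary patterns $p$ on $F$ one builds a configuration $y_p\in\iota_*(X)$ with marks at $1$ and $h$ and data $p$ and its $h$-translate, lifts it to some $z_p\in Z$, and by pigeonhole finds $p\neq q$ whose lifts agree on the boundary $SF\setminus F$. Pasting $z_p\restriction_F$ into $z_q$ produces a point of $Z$ whose image in $Y$ violates the copy constraint — contradiction. The entropy-free, ``boundary counting'' form of the amenability argument is exactly what lets the proof go through for a zero-entropy witness, which is what the problem requires since entropy cannot separate effective from sofic.
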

\begin{proof}
  We consider the ``copy'' shift on $H$: its alphabet is $A=(\{\circ,\bullet\}\times\{0,1\})^{\{\mathsf{in},\mathsf{out}\}}$ with components respectively called \textsf{in\_mark}, \textsf{in\_data}, \textsf{out\_mark}, \textsf{out\_data}. The subshift $X\subseteq A^H$ is the set of $x\colon H\to A$ such that $x(h)_{\mathsf{in\_mark}}=\bullet$ for at most one $h\in H$, that $x(h)_{\mathsf{out\_mark}}=\bullet$ for at most one $h\in H$, and that if $x(h_{\mathsf{in}})_{\mathsf{in\_mark}}=x(h_{\mathsf{out}})_{\mathsf{out\_mark}}$ then $x(hh_{\mathsf{in}})_{\mathsf{in\_data}}=x(hh_{\mathsf{out}})_{\mathsf{out\_data}}$; in words, there is at most one $\bullet$ in each mark component, and if there are $\bullet$s then the \textsf{in\_data} and \textsf{out\_data} are translates of each other in such a manner that their position relative to the $\bullet$ mark is the same. It is easy to see that $X$ is an effective subshift, merely using the solution of the word problem in $H$ to enumerate forbidden patterns.

  Let $Y$ be the induction of $X$ to $G$, and assume by contradiction that $Y$ is covered by a SFT $Z\subseteq B^G$ for some finite set $B$. Let $S$ be a memory set for $Z$, so that the forbidden patterns defining $Z$ are defined on (subsets of) the finite set $S$.

  Since $G$ is amenable, there exist by definition \emph{F\o lner sets}: finite subsets $F\subseteq G$ such that $\#(S F \setminus F)<(\log2/\log\#B)\,\#F$. Let $T$ be a right transversal of $H$ in $G$, and let $h\in H$ be such that $F\cap F h=\emptyset$ (such an $h$ exists because $H$ is infinite and $F^{-1}F$ is finite).

  Now for every ``pattern'' $p\colon F\to\{0,1\}$ we construct a configuration $y_p\colon G\to A$ as follows:
  \begin{align*}
    y_p(g)_{\mathsf{in\_mark}} &= \begin{cases} \bullet & \text{ if }g = 1_G,\\ \circ & \text{ otherwise},\end{cases}\\
    y_p(g)_{\mathsf{out\_mark}} &= \begin{cases} \bullet & \text{ if }g = h,\\ \circ & \text{ otherwise},\end{cases}\\
    y_p(g)_{\mathsf{in\_data}} &= \begin{cases} p(g) & \text{ if }g\in F,\\ 0 & \text{ otherwise},\end{cases}\\
    y_p(g)_{\mathsf{out\_data}} &= \begin{cases} p(g h^{-1}) & \text{ if }g\in F h,\\ 0 & \text{ otherwise}.\end{cases}
  \end{align*}
  Note that $y_p$ defines a configuration in $\iota_*(X)$, since on each coset the mark is translated by $h$ and the data (a slice of $F$, extended by $0$'s) is also translated by $h$. Note furthermore that the non-zero \textsf{in\_data} and \textsf{out\_data} never overlap, by our choice of $h$. Choose furthermore for every $y_p$ a lift $z_p\in Z$ under $Z\twoheadrightarrow Y$.

  There are $2^{\#F}$ patterns $p$, and $(\#B)^{\#(S F \setminus F)}<2^{\#F}$ restrictions of $z_p$ to $S F \setminus F$, so there are two patterns $p\ne q$ such that $z_p,z_q$ have the same restriction to $S F \setminus F$. We may ``paste them'': let $z$ be defined as $z_p$ on $F$ and $z_q$ on $G\setminus F$. On the one hand, $z\in Z$ since the local conditions of $Z$ are satisfied; but on the other hand the image of $z$ in $Y$ has \textsf{in\_data} $p$ and \textsf{out\_data} a translate of $q$; this violates the ``copy'' condition of $\iota_*(X)$.
\end{proof}
  
\section{Baumslag-Solitar groups}
We expect that a similar simulation theory could be developed on the amenable Baumslag-Solitar groups, whose Cayley graphs, instead of being horocyclic products of two trees, are horocyclic products of a tree and a hyperbolic tiling. In this section, we show that at least some simulation results can be obtained for Baumslag-Solitar groups by simulating the lamplighter group in them. We concentrate on the group
\[\BS \coloneqq \BS(1, 2) = \langle a, b \mid a^b = a^2 \rangle.\]

We recall an SFT from~\cite{esnay-moutot:bs}. For $c \in \{0, 1\}$ define the substitution $\tau_c\colon\{0, 1\} \to \{0, 1\}^2$ by $\tau_c(1) = 00$, $\tau_0(0) = 01$, $\tau_1(0) = 10$. Let us define an SFT $Y$ on $\BS$ over the alphabet $\{0, 1\} \times \{0, 1\}$ with the following allowed patterns on $\{1, a\}$:
\[ \{ (1 \mapsto (c, *); a \mapsto (c, *)) \mid c \in \{0, 1\} \}, \]
and the following allowed patterns on $\{1,a,b\}$:
\[ \{ (b \mapsto (c, d); 1 \mapsto (*, \tau_c(d)_0); a \mapsto (*, \tau_c(d)_1)) \mid c, d \in \{0, 1\} \} \]
(note that here we define two SFTs by allowed patterns and take their intersection).

A detailed study of this SFT can be found in~\cite{esnay-moutot:bs}*{\S3}. The crucial point is that on each $\langle a \rangle$-coset we have a substitutive configuration for one of the two substitutions, and the $\langle a\rangle$-cosets above (= translated by $b$) contain its desubstitutions under $\tau_0$ and $\tau_1$ respectively. Here is a typical picture of a configuration in $Y$:
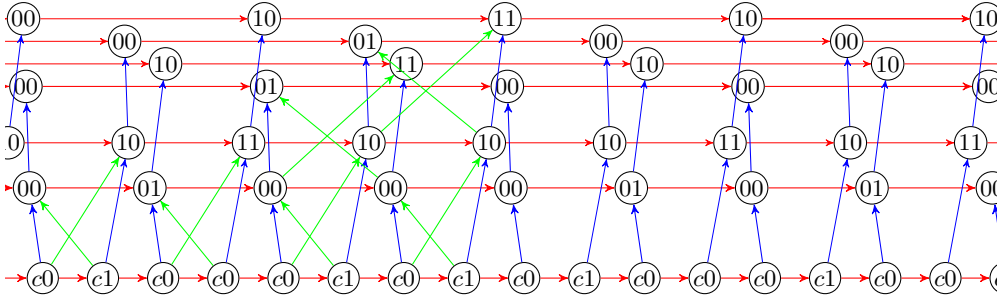
\begin{figure}[h!]
  \centerline{\begin{tikzpicture}[x={(0.8,0)},y={(0.1,1.5)},z={(0.3,0.2)},every node/.style={circle,draw,inner sep=0.5pt},>=stealth']
      \clip (-0.6,-0.2) rectangle (15.6,2.5);
      \foreach\i/\c in {0/0,1/1,2/0,3/0,4/0,5/1,6/0,7/1,8/0,9/1,10/0,11/0,12/0,13/1,14/0,15/0,16/0} {
        \node (a\i) at (\i,0,0) {\small $c\c$};
      }
      \foreach\i/\c/\d in {-1/??/10,0/00/10,1/01/11,2/00/10,3/00/10,4/00/10,5/01/11,6/00/10,7/01/11,8/00/??} {
        \node (b0\i) at (2*\i,1,-1) {\small $\c$};
        \node (b1\i) at (2*\i+1,1,1) {\small $\d$};
      }
      \foreach\i/\c/\d/\e/\f in {-1/??/??/??/00,0/00/00/10/10,1/01/01/11/11,2/00/00/10/10,3/00/00/10/10,4/00/??/??/??} {
        \node (c0\i) at (4*\i,2,-1.5) {\small $\c$};
        \node (c1\i) at (4*\i+1,2,0.5) {\small $\d$};
        \node (c2\i) at (4*\i+2,2,-0.5) {\small $\e$};
        \node (c3\i) at (4*\i+3,2,1.5) {\small $\f$};
      }
      \foreach\i/\j in {0/1,1/2,2/3,3/4,4/5,5/6,6/7,7/8,8/9,9/10,10/11,11/12,12/13,13/14,14/15,15/16} \draw[red,->] (a\i) -- (a\j);
      \foreach\i/\j in {b00/b01,b01/b02,b02/b03,b03/b04,b04/b05,b05/b06,b06/b07,b07/b08,c00/c01,c01/c02,c02/c03,c03/c04} \draw[red,->] (\i) -- (\j);
      \foreach\i/\j in {b1-1/b10,b10/b11,b11/b12,b12/b13,b13/b14,b14/b15,b15/b16,b16/b17,c10/c11,c11/c12,c12/c13,c20/c21,c21/c22,c22/c23,c32/c33,c3-1/c30,c30/c31,c31/c32,c32/c33} \draw[red,very thin,->] (\i) -- (\j);
      \draw[red,<-] (a0) -- +(-1,0,0);
      \draw[red,<-] (b00) -- +(-1,0,0);
      \draw[red,<-] (c00) -- +(-1,0,0);
      \draw[red,very thin] (b17) -- +(1,0,0) (c13) -- +(3,0,0) (c23) -- +(2,0,0);
      \draw[red,very thin,<-] (c10) -- +(-4,0,0);
      \draw[red,very thin,<-] (c20) -- +(-4,0,0);
      \draw[red,very thin,<-] (c3-1) -- +(-4,0,0);
      \foreach\i/\j in {a0/b00,a2/b01,a4/b02,a6/b03,a8/b04,a10/b05,a12/b06,a14/b07,a16/b08,b00/c00,b02/c01,b04/c02,b06/c03} \draw[blue,->] (\i) -- (\j);
      \foreach\i/\j in {a1/b10,a3/b11,a5/b12,a7/b13,a9/b14,a11/b15,a13/b16,a15/b17,b01/c20,b03/c21,b05/c22,b07/c23,b1-1/c3-1,b10/c10,b12/c11,b14/c12,b16/c13,b11/c30,b13/c31,b15/c32,b17/c33} \draw[blue,very thin,->] (\i) -- (\j);
      \foreach\i/\j in {a0/b10,a1/b00,a2/b11,a3/b01,a4/b12,a5/b02,a6/b13,a7/b03,b02/c21,b12/c31,b03/c01,b13/c11} \draw[green,->] (\i) -- (\j);
    \end{tikzpicture}}
  \caption{A configuration in $T$. The generators $a$ and $b$ of $\BS(1,2)$ are coloured respectively \textcolor{red}{red} and \textcolor{blue}{blue}, and the simulated generators $a,b$ of $\lamp$ are coloured respectively in \textcolor{green}{green} and \textcolor{blue}{blue}.}\label{fig:B}
\end{figure}

We can easily interpret every connected component from every configuration in this SFT as a Cayley graph of the lamplighter group: the generator $b$ of $\BS$ corresponds directly to the generator $b$ of $\lamp$, and the generator $a$ of $\lamp$ corresponds to either $a b$ or $a^{-1}b$, depending on the first bit the configuration at the \emph{end} of the edge. This is illustrated in Figure~\ref{fig:B}. In the terminology of~\cite{bartholdi-salo:ll}, we obtain the following:
\begin{prop}
  There exists a $\BS(1,2)$-SFT such that all its configurations simulate the lamplighter group.\qed
\end{prop}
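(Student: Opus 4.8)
The plan is to produce, from any configuration $z$ of the Esnay--Moutot SFT $Y$, a copy of the Cayley graph of $\lamp$ on each connected component, by the recipe already announced: keep every $b$-edge of $\BS(1,2)$, and replace every $a$-edge issuing from a vertex $v$ by the two-step $\BS$-path $v\to vb\to vba^{\pm1}$, the sign $\pm$ read off from the first coordinate of $z$ along the $\langle a\rangle$-coset that the $b$-step lands in (this coordinate is constant on that coset, so the recipe is unambiguous). Call the resulting labelled graph $\Gamma_z$. First I would recall verbatim the structure of $Y$ established in \cite{esnay-moutot:bs}*{\S3}: in every $z\in Y$ the first coordinate is constant on each $\langle a\rangle$-coset $L$, with value $c(L)\in\{0,1\}$; the second coordinate on $L$ is a bi-infinite fixed point of $\tau_{c(L)}$; and the two $\langle a\rangle$-cosets met by moving off $L$ along $b$ carry, in their second coordinates, the $\tau_0$- and $\tau_1$-desubstitutions of that fixed point, synchronized with the $b$-edges exactly as the allowed patterns of $Y$ demand. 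In particular the hierarchical (de)substitution structure is globally coherent in $z$ — no ``broken'' columns — and, because $\tau_0,\tau_1$ are recognizable, it is \emph{unique}.

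Granting this, the verification that $\Gamma_z$ is $\DL[2,2]$, i.e.\ the Cayley graph of $\lamp=(\Z/2)\wr\Z$ for the generating set $\{a,b\}$ of \S\ref{ss:lamplighter}, is a matching of two tree structures. The first tree is intrinsic to $\BS(1,2)$: its $\langle a\rangle$-cosets are the vertices of the Bass--Serre tree, which is $\mathcal T_2$, graded by the level (the image under $\BS\to\Z$, $a\mapsto0$, $b\mapsto1$); moving up by $b$, each coset has two ``children'' (into which its even- and odd-indexed elements are carried) and one ``parent'' below, as visible in Figure~\ref{fig:B}. The second tree is carved out of the \emph{interiors} of the cosets by the substitution: the fixed point of $\tau_{c(L)}$ on $L$ together with the bit $c(L)$ organizes $L\cong\Z$ as the frontier of a binary tree and records how $L$'s blocks align with those of its parent coset, and global coherence of the desubstitution glues these into a single $\mathcal T_2$, graded oppositely to the first. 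One then checks, level by level, that the $b$-edges of $\Gamma_z$ go ``up in tree one, toward the parent in tree two'', that the new $a$-edges supply the second outgoing edge — the other child in tree two — and that reversing all edges yields the two incoming edges; equivalently, that the level is additive along both edge types and the two tree-heights sum to a constant along every edge of $\Gamma_z$. This identifies $\Gamma_z$ with the horocyclic product $\mathcal T_2\times_{\mathrm{horo}}\mathcal T_2=\DL[2,2]$: the relations $(a^nb^{-n})^2$ close up in $\Gamma_z$, and by this horocyclic-product description a word in $a^{\pm1},b^{\pm1}$ returning to its start in $\Gamma_z$ is already trivial in $\lamp$, so no component is a proper quotient. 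Applying this on each component gives the proposition; $Y$ is non-empty, since it contains the configuration assembled from the $\tau_c$-fixed points pictured in Figure~\ref{fig:B}, so the statement is not vacuous, and that $Y$ ``simulates'' $\lamp$ in the sense of \cite{bartholdi-salo:ll} is then immediate, the simulating datum being precisely the finite, position-independent description of the reinterpretation above.

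The one point requiring genuine care — the only place a computation is unavoidable — is the passage from ``$c(L)$ and the local substitution data'' to ``a globally coherent second tree, anti-graded with the Bass--Serre tree, whose horocyclic product with it realizes exactly the $\DL[2,2]$ adjacency''. I would isolate this as a short lemma and prove it by unwinding $\tau_0,\tau_1$ on a single $2$-block straddling a vertex: $\tau_c(1)=00$ forces the two half-positions to merge (the ``toward the parent'' step of the second tree), while $\tau_0(0)=01$ and $\tau_1(0)=10$ encode, through $c$, which half is the distinguished child — so that every vertex acquires one parent and two children in tree two, with the orientation dictated by $c$ matching the $\pm1$ in the reinterpretation. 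All of this is in substance contained in the block analysis of \cite{esnay-moutot:bs}, so the obstacle is one of careful translation between their coordinates and the $\DL[2,2]$ coordinates of \S\ref{ss:lamplighter}, together with pinning down orientation conventions (left versus right Cayley graphs, and which of $ab$, $a^{-1}b$ corresponds to the bit $0$) consistently with Figure~\ref{fig:B} — rather than of new mathematical content.
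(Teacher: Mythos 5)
Your proof takes essentially the same route as the paper: the paper, too, proves this proposition by simply describing the reinterpretation of the Esnay–Moutot SFT $Y$ (keep the $b$-edges; let the simulated $a_\lamp$-edge go up by $b$ and then over by $a^{\pm1}$, the sign determined by the first coordinate of the target $\langle a\rangle$-coset, which is well-defined because the candidate targets lie in the same $\langle a\rangle$-coset) and pointing at Figure~\ref{fig:B}, with the proposition left with an immediate \qed. What you do differently is supply the verification the paper waves at: you unpack Esnay–Moutot's block analysis, isolate as a lemma the fact that the substitution data carves out a second binary tree anti-graded with the Bass–Serre tree, identify the result with the horocyclic product $\DL[2,2]$, and observe that the identification rules out proper quotients on any component. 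This is correct and, if anything, more scrupulous than the paper's argument — the one caveat being that you must pin down the left-versus-right multiplication conventions (the paper works throughout with the left Cayley graph, so the rewired step is $v\to bv\to a^{\pm1}bv$, i.e.\ $a_\lamp$ acts by left-multiplication by $ab$ or $a^{-1}b$; your $vb\to vba^{\pm1}$ notation reads as the right-action version, so in translating to the paper's coordinates one must make sure the $a$-step happens at the upper level, which is exactly the step where the $\langle a\rangle$-cosets carrying the bit are constant). You flag this yourself in your last paragraph, and the resolution is only bookkeeping, not mathematics.
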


Every configuration in $Y$ defines a graph, which generically is the Cayley graph of $\lamp$. In countably many cases, the graph will consist of two disjoint copies of the Cayley graph of $\lamp$.

We now detail the construction of a $\BS$-SFT starting from a $\lamp$-SFT. Let $\Theta\subseteq A^{\{e, a, b^{-1} a, b\}}$ be a set of forbidden tetrahedra on the lamplighter group, defining an SFT $Z$. Define an SFT $X \subset Y \times A^B$ by adding to $Y$ the following forbidden patterns:
\[ \{ (1 \mapsto (*, *, z); b \mapsto (0, *, x); a \mapsto (*, *, w); ba \mapsto (1, *, y)) \mid \forall (x,y,z,w) \in T \}. \]
It is easy to see that the natural projection $\pi_Y\colon X \to Y$ is surjective, and for a configuration $y \in Y$, there is a natural correspondence between $Z$ and the configurations of $X$ in $\pi^{-1}(y)$. We deduce the following results.
\begin{thm}
  The entropies of $\BS(1, 2)$-SFTs are the nonzero $\Pi^0_1$ real numbers.
\end{thm}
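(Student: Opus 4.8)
The plan is to transfer the entropy characterisation of Corollary~\ref{cor:examples5} across the construction $X\subset Y\times A^B$ just described, and to bound the entropy of an arbitrary $\BS(1,2)$-SFT from above by the standard amenability argument.

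For the upper bound, recall that $\BS(1,2)=\Z[1/2]\rtimes\Z$ is metabelian, hence amenable, is finitely generated and has decidable word problem, and admits an explicit Følner sequence $(F_n)$. By the standard fact that topological entropies of SFTs on finitely generated amenable groups with decidable word problem are upper semicomputable (the counts $N_n$ of patterns in $C^{F_n}$ containing no forbidden subpattern are computable in $n$, and $\tfrac1{\#F_n}\log N_n$ is a computable sequence whose infimum is the entropy), every $\BS(1,2)$-SFT has $\Pi^0_1$ entropy, necessarily nonnegative; the value $0$ is attained by the one-letter full shift.

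For the realisation, let $\eta$ be a positive $\Pi^0_1$ real. By Corollary~\ref{cor:examples5} there is a $\lamp$-SFT $Z$, given by forbidden tetrahedra $\Theta$, with $h_\lamp(Z)=\eta$; form the $\BS(1,2)$-SFT $X\subset Y\times A^B$ as above, with projection $\pi_Y\colon X\to Y$, and I claim $h_\BS(X)=\eta$. For a generic $y\in Y$ the graph it encodes is a single copy of the Cayley graph of $\lamp$, which yields a label-preserving identification $\theta_y$ of the vertex set of $\BS(1,2)$ with $\lamp$, an isomorphism of $\{a,b\}$-labelled graphs onto the Cayley graph of $\lamp$. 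Each standard generator of $\BS(1,2)$ is carried by $\theta_y$ into a fixed finite subset of $\lamp$, and conversely $a,b\in\lamp$ are carried to bounded-length words in $a,b\in\BS(1,2)$, so $(\theta_y(F_n))_n$ is a Følner sequence of $\lamp$ with $\#\theta_y(F_n)=\#F_n$. Consequently the number of admissible $Z$-layers of $X$ over a given $Y$-pattern on $F_n$ equals, up to a subexponential factor, the number of $Z$-patterns on $\theta_y(F_n)$, so the conditional entropy of $\pi_Y$ is $h_\lamp(Z)=\eta$. Moreover $Y$ has zero entropy: on $F_n$ a configuration of $Y$ is determined by the first-component bit of each $\langle a\rangle$-coset met by $F_n$ and, on each such coset, a subword of an $S$-adic sequence over $\{\tau_0,\tau_1\}$; the Følner property forces all but $o(\#F_n)$ of these cosets to be met in a bounded interval, and the $S$-adic sequences have linear subword complexity and are linked by substitution, so the number of $Y$-patterns on $F_n$ is subexponential in $\#F_n$. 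Hence $h_\BS(X)=h_\BS(Y)+\eta=\eta$, and as $\eta$ ranges over the positive $\Pi^0_1$ reals (with $\eta=0$ handled by the one-letter SFT) we realise every nonnegative $\Pi^0_1$ value.

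The main obstacle is precisely this entropy bookkeeping: one must make rigorous (i) that the construction $X\subset Y\times A^B$ neither creates nor destroys entropy, which amounts to the Følner-faithfulness of $\theta_y$ above together with Bowen's conditional-entropy inequality for the factor $\pi_Y$, and (ii) that $h_\BS(Y)=0$, i.e.\ one ports the classical zero-entropy estimate for substitutive subshifts from $\Z$ to the (exponentially distorted) geometry of $\BS(1,2)$. Both are soft and require no construction beyond those of this section. Equivalently, the statement may be read as an instance of \emph{intersimulability}: the construction above embeds $\lamp$-SFTs into $\BS(1,2)$-SFTs in an entropy-preserving way, and the upper-bound argument transfers back, so $\BS(1,2)$ and $\lamp$ share the same spectrum of SFT entropies, namely the nonnegative $\Pi^0_1$ reals.
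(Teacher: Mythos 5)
Your proof is correct and follows essentially the same approach as the paper's (one-sentence) argument: the simulation layer $Y$ carries zero entropy and a generic fiber reproduces the given $\lamp$-SFT via a F\o lner-faithful identification of the vertex set, so $h_{\BS}(X)=\eta$, while the upper bound is the standard amenability/upper-semicomputability observation. The paper leaves all of this bookkeeping implicit, whereas you spell it out (including the ``two disjoint copies of $\lamp$'' caveat, and correctly reading the theorem's ``nonzero'' as ``nonnegative'').
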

\begin{proof}
  Because with probability $1$ every Cayley graph of $\BS$ simulates a single Cayley graph of $\lamp$, and the subshift $Y$ has zero entropy, we obtain exactly the entropy of $X$ by the above construction.
\end{proof}

\begin{thm}
  Let $\psi\colon\BS(1, 2) \to \Z$ be the natural projection. Then the $\psi$-pullback of every effective $\Z$-system has an SFT cover.
\end{thm}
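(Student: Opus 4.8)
The plan is to transport the result already available on the lamplighter group, Theorem~\ref{thm:effectiveZ2}, through the simulation of $\lamp$ inside $\BS=\BS(1,2)$ furnished by the SFT $Y$ above. The decisive point is that this simulation is compatible with the two projections to $\Z$: in the graph attached to a configuration of $Y$, the generator $b$ of $\BS$ is identified with the generator $b$ of $\lamp$, while the generator $a$ of $\lamp$ is realized by $a^{\pm1}b$. Since $\psi(a)=0$ and $\psi(b)=1$, whereas $\phi(a)=\phi(b)=1$ in $\lamp$, the function $\psi$ restricted to each connected component of that graph agrees with the lamplighter height $\phi$ up to an additive constant; in particular the $\psi$-fibres of $\BS$ correspond, inside each simulated copy of $\lamp$, exactly to the cosets of $\sea$ (the ``sea levels'').

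So let $Z_0$ be an effective $\Z$-system. By Theorem~\ref{thm:effectiveZ2} the pull-back $\phi^*(Z_0)$ has an SFT cover on $\lamp$ with an effectively computable covering map; passing to a conjugate SFT, we may assume this cover is given by forbidden tetrahedra $\Theta\subseteq A^{\{e,a,b^{-1}a,b\}}$ and that its covering map onto $\phi^*(Z_0)$ is letter-to-letter, induced by a surjection $\rho\colon A\to\Sigma$ onto the alphabet $\Sigma$ of $Z_0$. Apply to $\Theta$ the construction preceding these theorems, obtaining a $\BS$-SFT $X\subseteq Y\times A^B$ whose configurations lying over a fixed $y\in Y$ are precisely the ways of placing an $A$-labelling compatible with $\Theta$ on each connected component of the graph of $y$. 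Refine $X$ to an SFT $X'$ by adding the local rule that the $\rho$-images of the $A$-components at the two endpoints of each $a$-edge of $\BS$ coincide, and let $\bar\rho\colon X'\to\Sigma^{\BS}$ apply $\rho$ to the $A$-component at every vertex.

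It then remains to see that $\bar\rho$ is a covering map onto $\psi^*(Z_0)$, which splits into the generic and the exceptional case. If the graph of $y$ is connected then its vertex set is all of $\BS$, the $A$-component of a point of $X'$ above it is an element of the $\lamp$-SFT, so its $\rho$-image lies in $\phi^*(Z_0)$, i.e.\ is constant on sea levels; as these are the $\psi$-fibres, the $\bar\rho$-image lies in $\psi^*(Z_0)$, and the added $a$-edge rule holds automatically because an $a$-edge joins two vertices of the same $\psi$-fibre. In the remaining, countably many, configurations the graph is a disjoint union of two copies of the Cayley graph of $\lamp$; the only $\BS$-edges joining the two copies are $a$-edges (the $b$-edges already lie in the graph), these preserve $\psi$, and --- by the structure of $Y$, cf.~\cite{esnay-moutot:bs} --- they occur at every $\psi$-level, so the synchronization rule forces the two elements of $Z_0$ carried by the copies to be the appropriate shifts of one another; the $\bar\rho$-image is then of the form $z\circ\psi$ with $z\in Z_0$, hence again lies in $\psi^*(Z_0)$. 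Surjectivity is immediate: for $z\in Z_0$ choose any $y\in Y$ with connected graph and lift the copy of $\phi^*(z)$ on it through $\rho$; and all steps are effective in $Z_0$ since Theorem~\ref{thm:effectiveZ2} is. The one place that genuinely needs care is precisely this exceptional case: one must verify from the description of $Y$ that the two copies really do meet and get linked by $a$-edges at every $\psi$-level, so that the rule forces $\bar\rho$-images into $\psi^*(Z_0)$ rather than merely into a union of two pull-backs; the routine recoding turning the cover of $\phi^*(Z_0)$ into a letter-to-letter tetrahedron cover is the only other detail to spell out.
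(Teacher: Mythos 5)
Your global strategy is the same as the paper's: push the SFT cover of $\phi^*(Z_0)$ from $\lamp$ to $\BS$ via the simulation SFT $Y$, and then add an SFT rule to handle the exceptional configurations where the simulated graph splits into two copies of $\lamp$. Your analysis of the generic case and of the geometric compatibility between $\psi$ on $\BS$ and $\phi$ on the simulated $\lamp$ is correct, as is the observation that the cross-copy $a$-edges, present at every level, are what one must exploit to synchronize the two copies.

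However, your implementation of the synchronization has a genuine gap. You pass to a conjugate of the $\lamp$-SFT cover and ``may assume'' the covering map onto $\phi^*(Z_0)$ is letter-to-letter, induced by a surjection $\rho\colon A\to\Sigma$ with $A$ finite. This is impossible here: $Z_0$ is an effective $\Z$-\emph{system}, so its ``alphabet'' $\Sigma$ is a Cantor set $\{0,1\}^\omega$, and no finite $A$ surjects onto it; the covering map from the SFT onto $\phi^*(Z_0)$ necessarily reads arbitrarily large windows (the bits of the $Z_0$-point are spread out in a Toeplitz fashion along columns, as in \S\ref{sec:InfinitelyMany}), and is not of sliding-block, let alone letter-to-letter, form. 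Consequently the rule ``$\rho$-images agree along $a$-edges'' is not expressible as an SFT constraint, and your closing remark that a ``routine recoding'' produces such a $\rho$ is exactly where the argument fails. Note also that you cannot repair this by simply demanding that the full $A$-symbol be constant along $a$-edges: the cover carries substitutive and computational layers that are genuinely non-constant across a sea-level, and that constraint would kill the SFT.

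The paper's fix is more surgical: it singles out the one finite-valued layer that actually encodes the $Z_0$-data, namely the Toeplitz bit stored on each vertex by the $\S\ref{sec:InfinitelyMany}$ machinery, and imposes the SFT rule that this bit is constant on $\langle a\rangle$-cosets of $\BS$. Since that bit is an honest finite symbol, the rule is local; and since the Toeplitz layer determines the encoded $Z_0$-point, constancy along the interleaving $a$-edges forces the two copies to carry matching (shifted) $Z_0$-points, which is precisely what you want. If you restrict your statement to effective $\Z$-\emph{subshifts}, your letter-to-letter reduction is available and the argument goes through essentially as you wrote it; but for systems you need to target the Toeplitz layer as the paper does.
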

\begin{proof}
  Let $X$ be an effective $\Z$-system. In the construction above, sea-levels in $\lamp$ correspond exactly to sea-levels (cosets of $\Z[\frac12]$) in $\BS$, and furthermore the $\hor$-cosets in $\sea$ are contained in $\langle a\rangle$-cosets in the sea-level of $\BS$.

  In the generic case that the Cayley graph of $\BS$ simulates the Cayley graph of $\lamp$, the SFT cover of $\phi^* X$ to $\lamp$ directly gives, via the simulation $Y$, an SFT cover of $\psi^*X$. We have to add one extra SFT rule to make sure that, if there are two copies of $\lamp$'s Cayley graph in a $\BS$-Cayley graph, they both represent the same configuration of $X$. The construction in~\S\ref{sec:InfinitelyMany} encodes this information from $X$ in a Toeplitz subshift in the $\ver$ direction, and the extra SFT rule on $\BS$ enforces this Toeplitz subshift to be constant on $\langle a\rangle$-cosets.
\end{proof}

\section{Open questions}
For concreteness, we have concentrated on the lamplighter group, but the methods generalize directly to at least some groups. In particular, there is no difficulty in extending all the constructions to $K \wr \Z$ for a finite group $K$.

On the other hand, it would be useful to have more flexible tools to understand subshifts on groups such as $\Z \wr \Z$ or $K \wr \Z^2$, for which our method does not immediately extend. Definitely the undecidability of the domino problem and existence of aperiodic SFT's are not in doubt, but the questions of which effective systems pull back to sofic shifts may be more subtle.

There is a general construction, that of \emph{horocyclic products}, which may be the natural direction for generalizing our construction. Consider two graphs $\mathscr G_1,\mathscr G_2$ each endowed with a \emph{Busemann function}: a height function $\eta_i\colon V(\mathscr G_i)\to\Z$ that varies at most by $1$ along edges; and form the graph $\mathscr G$ with vertex set $\{(v_1,v_2)\in V(\mathscr G_1)\times V(\mathscr G_2)\mid\eta_1(v_1)+\eta_2(v_2)=0\}$, and with an edge from $(v_1,v_2)$ to $(w_1,w_2)$ whenever ($v_1=v_2$ or there is an edge from $v_1$ to $v_2$) and  ($w_1=w_2$ or there is an edge from $w_1$ to $w_2$).

Considering $\mathscr G_1=\mathscr G_2$ the ternary tree with $\eta(v)$ the ``distance to infinity'' along a chosen ray in the tree, we obtain the Cayley graph of $\lamp$. There are other interesting examples: take as before for $\mathscr G_1$ the ternary tree, and let $\mathscr G_2$ be the ($1$-skeleton of the) tiling of $\mathbb H^2$ by hyperbolic pentagons with vertex set $2^\Z(\Z+i)$; the Busemann function is $\eta(2^n(m+i))=n$. Then the horocyclic product is essentially the Cayley graph of the Baumslag-Solitar group $\BS$, which fits in an exact sequence
\[\begin{tikzcd} 1\arrow{r}& \Z[1/2]\arrow{r}& \BS(1,2)\arrow{r}{\psi}& \Z\arrow{r}& 1\end{tikzcd}\]
(More precisely, half the edges have to be shifted one position horizontally.) Here we showed that one can interpret this as the lamplighter group to translate results from the lamplighter group to it, but perhaps a better method would be to have a uniform construction for such horocyclic products.

A uniform approach would certainly help in understanding induction in the context of $\BS(1,2)$. The induction of an $\langle a \rangle$-subshift is not necessarily $\BS$-sofic by Observation~\ref{obs:effectivenonsofic}. However, the analogue of Theorem~\ref{thm:effectiveH} to $\BS$ would ask to pull back effective $\Z[1/2]/\Z$-subshifts to $\Z[1/2]$ and then induce them to $\BS$, expecting to obtain sofic $\BS$-subshifts in this manner.

It is also possible to define the horocyclic product of more than two graphs: given graphs $\mathscr G_i$ for $i\in\{1,\dots,n\}$, each with a Busemann function $\eta_i$, their horocyclic product has as vertex set $\{(v_1,\dots,v_n)\in V(\mathscr G_1)\times\cdots\times V(\mathscr G_n)\mid \eta_1(v_1)+\cdots+\eta_n(v_n)=0\}$, with an edge from $(v_1,\dots,v_n)$ to $(w_1,\dots,w_n)$ when $v_i=w_i$ for all $i$ except at most $2$, which are joined by an edge.

For example, the horocyclic product of three copies of the ternary tree is the Cayley graph of a finitely presented group containing the lamplighter group; see~\cite{baumslag:fpmetabelian} and~\cite{bartholdi-n-w:horo}.

We finish with some questions about possible dynamics of SFTs. The first one may be solvable by combining our techniques with existing fixed-point techniques, but certainly requires more thought:
\begin{question}
  Is there a minimal strongly aperiodic SFT on the lamplighter group?
\end{question}
Our construction is heavily based on a very rigid ``odometric skeleton'', and our systems always have a non-trivial equicontinuous factor. This prevents many types of dynamical behaviors. In the case of $\Z^d$ with $d \geq 2$ one can perform the fixed-point construction without a rigid skeleton by allowing macrotiles of less rigid shapes, see e.g.~\cite{westrick:tcpe}. In our lamplighter case, we have no idea how to perform a similar feat, and expect, for example, the following question to be difficult (here, fancy fixed-point tricks are not helpful, as the skeleton is already in place when we start using the fixed-point method).

For a group $G$, we call a $G$-system \emph{topologically mixing} if for any nonempty open sets $U, V$, there exists a finite subset $C \subset G$ such that $\forall g \notin C: gU \cap V \neq \emptyset$.
\begin{question}
  Is there a topologically mixing aperiodic SFT on the lamplighter group?
\end{question}

\begin{question}
  In Remark~\ref{rem:CannotBe} we show that the subshift $Y=\iota_*\eta^*(\{0,1\})=\phi^*(\{0,1\}^\Z)$ is not AFT, even though $Y\times\wee$ is AFT. Is there a finite-to-one cover of $Y$ that is AFT?
\end{question}
Our transducer construction relies on subshifts such as $Y$, for which we introduced $\wee$; the construction would be simplified without it.

\appendix
\section{Implementation details for remembering single bits}\label{sec:Implementation}

\subsection{Universal tileset}

In this section, we describe a tileset that allows the simulation of arbitrary Turing machines. One can think of this tileset as being a ``universal'' (non-deterministic four-headed) Turing machine which simulates arbitrary deterministic two-headed Turing machines; universality is in quotes, because typically a universal machine simulates all machines of the same type. Though the machine is non-deterministic, it is unambiguous, meaning that every row of the tiling has a unique ``successor'' row above it.

In the informal descriptions, we will talk about tiles receiving and sending signals, since many of the signals can be thought of as being sent due to some event, and then triggering actions elsewhere. Formally, these signals are realized as local consistency rules, since the ``computation'' takes place instantaneously.

The tileset Turing machine has four heads $A,B,C,D$; we call $A,B$ the \emph{simulator} heads, and $C,D$ the \emph{simulated} heads.

\begin{rem}
  The reason for using multiple heads is that two simulator heads $A,B$ allows for an easy and quick  simulation of steps of the simulated machine (we simulate one step of an $n$-state machine in $O(\log n)$ steps). Furthermore, the implementation of the final algorithm will be easier with two heads $C,D$, and in particular we find that the ``magic trick'' of reading the program from the tape is somewhat clearer with two simulated heads. The downside of having a relatively complicated tileset (instead of a very simple universal Turing machine) is that programming it in itself takes some work. Nevertheless, this simply means checking an explicit list of constraints, which is straightforward even if lengthy.
\end{rem}

Since the tileset has to eventually be implemented on a Turing machine, and all but the lowest level of the construction will actually always be using it in encoded form, we directly work in terms of codings.

\newcommand\BNFterm[1]{\ensuremath{\langle\texttt{#1}\rangle}}
\newcommand\XNScolour{\BNFterm{NS colour}}
\newcommand\Xbit{\BNFterm{bit}}
\newcommand\Xtrit{\BNFterm{trit}}
\newcommand\Xphase{\BNFterm{phase}}
\newcommand\Xsymbol{\BNFterm{symbol}}
\newcommand\XA{\BNFterm{A}}
\newcommand\XB{\BNFterm{B}}
\newcommand\XC{\BNFterm{C}}
\newcommand\XD{\BNFterm{D}}
\newcommand\Xz{\texttt0}
\newcommand\Xo{\texttt1}
\newcommand\Xt{\texttt2}
\newcommand\Xa{\texttt@}
\newcommand\Xh{\texttt\#}
\newcommand\Xp{\texttt\%}
\newcommand\Xd{\texttt\$}
We use the alphabet $S = (\{0,1,2\} \times \{0,1,2,3\}) \sqcup \{\Xa,\Xh,\Xp,\Xd\}$, and fix an encoding of $S$ in $4$ bits: $(2a_1+a_0,2b_1+b_0) \in \{0,1,2\} \times \{0,1,2,3\}$ is encoded as $a_1a_0b_1b_0$, and $(\Xa,\Xh,\Xp,\Xd)$ are respectively encoded as $1100, 1101, 1110, 1111$. We refer to elements of $\{\Xa,\Xh,\Xp,\Xd\}$ as \emph{special symbols}. We also identify trits $\Xz,\Xo,\Xt$ with the respective elements $(0,0),(1,0),(2,0)$ of $S$. We strive to use \texttt{teletype} to represent elements of $S$. Thus every element of $S$ automatically has an encoding in $S^4$, see the tag \Xsymbol\ below.

We will encode by $S^9$ the south and north colours of the universal tileset, and by $S^{63}$ the west and east colours (in practice, these symbols come from a much smaller subalphabet). The parsing of the $9$ symbols used as south and north colours is described by the following grammar (we roughly use Backus normal form enhanced with regular expressions and mathematical notation in this and all descriptions that follow).
\begin{bnftable}
  \XNScolour & \Xphase\ \Xsymbol\ \XA\ \XB\ \XC\ \XD\\
  \Xbit & \Xz\mid\Xo\\
  \Xtrit & \Xz\mid\Xo\mid\Xt\\
  \Xphase & \Xtrit\\
  \Xsymbol & \Xbit^4\\
  \XA,\XB,\XC,\XD & \Xbit
\end{bnftable}

The interpretations are as follows: The phase trit is consistent over rows, i.e.\ is shared by the south colours of every tile in the same row. Each simulated computation step will consist of first taking a single step in phase $0$, in which we read the instruction encoded on the tape, write new symbols, and move the heads; then taking one or more steps in phase $1$ in which the simulator heads jump to the next instruction and check that this guess is correct. Phase $2$ will be entered when a special symbol is read by the simulator head, and signifies acceptance. We will refer to the various components of the colour by the nonterminals used in the grammar, e.g.\ \Xphase, \Xsymbol, etc.

We will be considering tilings of square areas of size $2^n \times 2^n$ (eventually, this universal tileset will be used on one of the layers of a macrotile, and the $2^n \times 2^n$ area is precisely the macrotile), and we also describe some additional constraints that should hold on the borders. We implicitly require that everything conforms to the grammars; again this is enforced by finitely many local rules.

On the tiles comprising the north border of the square area, the only constraint is that the \Xphase\ trit of the south colour must contain the symbol \Xt.

\newcommand\Xtape{\BNFterm{tape}}
\newcommand\Xprog{\BNFterm{program}}
\newcommand\Xinstr{\BNFterm{instr}}
\newcommand\Xcmd{\BNFterm{cmd}}
\newcommand\Xid{\BNFterm{id}}
\newcommand\XCDinstr{\BNFterm{CDinstr}}
\newcommand\Xfail{\BNFterm{fail}}
\newcommand\Xok{\BNFterm{ok}}
\newcommand\XCread{\BNFterm{Cread}}
\newcommand\XDread{\BNFterm{Dread}}
\newcommand\XCwrite{\BNFterm{Cwrite}}
\newcommand\XDwrite{\BNFterm{Dwrite}}
\newcommand\XCmove{\BNFterm{Cmove}}
\newcommand\XDmove{\BNFterm{Dmove}}
\newcommand\XJL{\BNFterm{JL}}
\newcommand\XML{\BNFterm{ML}}
\newcommand\Xstay{\BNFterm{stay}}
\newcommand\XMR{\BNFterm{MR}}
\newcommand\XJR{\BNFterm{JR}}
\newcommand\Xdata{\BNFterm{data}}

We now describe the constraints on the south border of a valid tiling of a $2^n \times 2^n$ square, which are meant to begin the computation. The \Xphase\ bit is required to be \Xz\ in all the south colours on the south border. The south border \Xsymbol s, when concatenated, form a word in $((\Xz\mid\Xo)^4)^{2^n}$ and therefore in $S^{2^n}$ by the prescribed encoding of $S$; and this word should obey \Xtape\ in the following grammar, for some number $t$ depending on the simulated tileset (and in particular not fixed):
\begin{bnftable}
  \Xtape & \Xa\ \Xprog\ \Xp\ \Xdata\ \Xd\ \Xt^*\\
  \Xprog & \Xinstr\ \Xcmd^*\\
  \Xcmd & \Xh\ \Xid\ (\Xinstr\mid\Xfail\mid\Xok)\\
  \Xid & \Xbit^t\\
  \Xinstr & \XCDinstr\ \Xid\\
  \XCDinstr & \XCread\ \XCwrite\ \XCmove\ \XDread\ \XDwrite\ \XDmove\\
  \Xfail    & \Xt\Xz\\
  \Xok      & \Xt\Xo\\
  \XCread,\XDread & \Xsymbol\\
  \XCwrite,\XDwrite & \Xsymbol\\
  \XCmove,\XDmove & \XJL\mid\XML\mid \Xstay\mid \XMR\mid \XJR \\
  \XJL      & \Xo\Xz\Xz\Xz\Xz\\
  \XML      & \Xz\Xo\Xz\Xz\Xz\\
  \Xstay    & \Xz\Xz\Xo\Xz\Xz\\
  \XMR      & \Xz\Xz\Xz\Xo\Xz\\
  \XJR      & \Xz\Xz\Xz\Xz\Xo\\
  \Xdata    & (S\setminus\{\Xa,\Xh,\Xd\})^* \text{(to be further restricted later)}
\end{bnftable}

The head $A$ should start at the bottom left corner on \Xa, $B$ one step to the right (in the beginning of the first \Xinstr), and heads $C,D$ on top of the leftmost \Xp. This just means that the \XA,\XB,\XC,\XD\ bits have value \Xo\ in exactly the tiles on these positions of the tape. The tags \XJL, \XML, \XMR, \XJR\ stand for ``jump left'', ``move left'', ``move right'', ``jump right'', respectively.

We start by a high-level description of the behavior of the machine, and in this manner explain the interpretation of the \Xcmd\ expressions in \Xtape. We use the symbol \Xh\ to separate commands. Each such command contains an identifier \Xid, which is just a bit string and serves as a unique identifier for a state of the simulated machine. The \Xid s should all be of the same length $t$ (though it would suffice for correct functioning that they form a prefix code). This is followed by \Xinstr, \Xfail\ or \Xok. The meaning of \Xfail\ is to halt the computation (by a tiling error), while \Xok\ initiates phase $2$. The more interesting (and typical) instruction is \Xinstr. Here, in state $0$ the west and east colours should memorize \XCDinstr, check it against the \Xsymbol s that the heads $C,D$ are seeing, and rewrite the symbols and move the heads according to these instructions (we do all of this in a single row), continuing with the new \Xid\ identifier. The meaning of \XML, \Xstay\ and \XMR is to move the head by $-1,0,1$ respectively. The meaning of \XJL\ and \XJR\ is that the head directly jumps to the nearest special symbol in the direction stated.

The behavior of the heads $A,B$ is as follows. At the beginning of phase $0$, the head $B$ should be standing on the symbol of some \Xcmd\ immediately to the right of the \Xh\ \Xid\ part, except in the very beginning, when it is on the first \Xinstr; this is in fact just a hack to get the computation started. The sweep reading \XCDinstr\ should drop the $A$ head at the start of the \Xid\ to the right of \XCDinstr. The $A$ and $B$ heads that were in the south colours when entering phase $0$ simply disappear (or one may think that $A$ ``jumps'' over \XCDinstr; new $B$s will be born non-deterministically). Phase $0$ always takes just one step, and the next phase after phase $0$ is always $1$. In the beginning of phase $1$, any number of $B$ heads appear to the right of \Xh-symbols. In the beginning of phase 1, we should then have the $A$ head at the beginning of some \Xid\ at the end of an \Xinstr, and should have any number of $B$ heads on the tape. We will now check, using the west and east colours, that there is in fact exactly one $B$ head on the row (barring a tiling error), and that the \Xid\ it reads at the beginning of the \Xcmd\ is precisely the \Xid\ read by $A$. This takes exactly $t$ steps. Note that there is no marker that tells us when we are done in the \Xcmd; we do not need one because the $A$ head knows the \Xid\ is over when it hits a special symbol. (All the \Xid s are of the same length $t$, but our tileset cannot not know what this $t$ is, as we are describing a fixed tileset.)

After phase $1$ (meaning $A$ is about to move to a special symbol) we enter phase $0$ (if we guess the computation continues) or phase $2$. In the latter case, we check that there is an \Xok\ on the tape at the $B$ head, and we simply stay in phase $2$ until the end of the computation. There is no special behavior to account for encountering a \Xfail, so encountering it leads to a tiling error (as desired). (We could alternatively simply enter a loop, and thus hit the north border without being in state $2$.)

We now explain the west and east colours in more detail, and the rules that govern them. This will also make some of the statements from the previous paragraphs more precise. The west and east colours follow the following grammar:

\newcommand\XWEcolour{\BNFterm{WE colour}}
\newcommand\Xphasez{\BNFterm{phase 0}}
\newcommand\Xphaseo{\BNFterm{phase 1}}
\newcommand\Xphaset{\BNFterm{phase 2}}
\newcommand\Xginstr{\BNFterm{ginstr}}
\newcommand\Xtinstr{\BNFterm{tinstr}}
\newcommand\XAbirth{\BNFterm{A birth}}
\newcommand\XBbirth{\BNFterm{B birth}}
\newcommand\XCmsig{\BNFterm{C mvsig}}
\newcommand\XDmsig{\BNFterm{D mvsig}}
\begin{bnftable}
\XWEcolour & \Xz\ \Xphasez\mid \Xo\ \Xphaseo\mid \Xt\ \Xphaset\\
\Xphasez   & \Xtinstr\ \Xginstr\ \XCmsig\ \XDmsig\ \XAbirth\ \XBbirth\\
\Xginstr   & \XCDinstr\\
\Xtinstr    & ((\Xbit^*\ \Xt^+)\cap\Xtrit^{26})\mid\XCDinstr\\
\XAbirth   & \Xbit\\
\XBbirth   & \Xbit\\
\XCmsig    & \Xbit^4\\
\XDmsig    & \Xbit^4\\
\Xphaseo   & \Xtrit^2\ \XAbirth\ \XBbirth\ \Xt^{58}\\
\Xphaset   & \Xtrit\ \Xt^{61}
\end{bnftable}
Here, \Xginstr, \Xtinstr\ stand respectively for ``guessed instruction'' and ``true instruction'', \XCmsig\ stands for ``$C$ movement signal''. Note that we reuse some nonterminals from the previous grammars.

The first symbol of the colour tells us the phase, and the first constraint is that it matches the phase in the south symbol.

Let us start by explaining the constraints of phase $2$, which is the easiest. Here, we suppose that $B$ is standing on top of an $\Xok = \Xt\Xo$ (if $B$ is on a bit, we should be in phase $0$). We should thus check that the symbol under $B$ is \Xt, and we should check that the next symbol is \Xo. This is checked using the first trit of \Xphaset: on the west border we require that this first trit is \Xt. In a cell with $B$ in it (i.e.\ with $\XB = \Xo$ in the south colour) we check that it receives \Xt\ from the west, has \Xt\ as the tape symbol (i.e. we have $\Xsymbol=\texttt{1000}$ in the south colour), and sends \Xo\ to the east. We check that a cell receiving \Xo\ from the west has no $B$, has \Xo\ as tape symbol in the south colour, and sends \Xz\ to the east. Cells receiving \Xz\ from the west must not contain $B$-heads. We check that any cell receiving \Xt\ or \Xz\ from the west just sends it to the east if $B$ is not in the cell. At the east end we check that the trit is \Xz. As for the south and north colours, we copy all the rest of the content verbatim. We also check that head $A$ is on a special symbol (though this is not strictly necessary for correct functioning).
 
It clearly follows that we can tile a row in phase $2$ if and only if there is a unique $B$-head, and it is standing on the leftmost symbol of a \Xt\Xo-subword. Other phases will fail in such a situation (as we see below) so we must then preserve the phase, and in fact copy this row until the northmost row (where there are no forbidden patterns, since the north border accepts rows in phase $2$). Note that we check for phase $2$ on the south border of the northmost row, so that there is at least one row that checks the $B$-on-a-\Xt\Xo\ constraint and we are not just non-deterministically sending a phase $2$ signal to the north at the last minute.

We now proceed to phase $1$. Here the first two bits are used to compare what the $A$ and $B$ heads are seeing. The \XAbirth\ and \XBbirth\ signals are used to give birth to a new head. On the west end, we require \Xt\Xt\ in the first two trits, and \Xz\Xz\ in \XAbirth\XBbirth. At a cell containing the head $A$, we check that the first trit is \Xt\ and change it to the bit coded in the \Xsymbol\ in the south colour (and we check that it is indeed a bit). The \XAbirth\ bit in the east colour is \Xo\ if and only if we have $A$ in this cell. At a cell containing $B$ we do the same but for the second trit. Cells that receive the \XAbirth\ or \XBbirth\ signal from the west will send an $A$ or $B$ head to the north. Finally, at the east end we check that the first two trits are \Xz\Xz\ or \Xo\Xo.

It is clear that we we can tile a row in phase $1$ if and only if $A,B$ are on equal bits (and there is exactly one head of each type), and in the row above we will have $A$ and $B$ heads entering the cells one step to the right. As for the south and north colours, cells not receiving \XAbirth\ or \XBbirth\ from the west will send no $A,B$ heads north. Heads $C,D$ are just copied directly, and so are all tape symbols. The next row can be in any phase (though only one guess will succeed), so we make no requirements on the north phase bits.

In phase $0$, which is the most complicated one, we check that $A$ is on a special symbol, $B$ is on a bit, and we read the instruction to the right of $B$, and according to this instruction, read and write with $C,D$ and change the tape. More precisely, we have the following constraints. On the west end, \Xtinstr\ must contain $\Xt^{26}$, \XCmove\ and \XDmove\ must both contain \texttt{0000}, \XAbirth, \XBbirth\ must both contain \Xz\ (intuitively meaning, no heads come from, or enter, the non-existing cell to the left of the leftmost one).

While we have $\Xt^{26}$ in \Xtinstr, we simply copy it from west to east. When we encounter $B$, we start reading bits from the current south \Xsymbol\ (checking they are indeed bits), always writing them on the leftmost \Xt\ in \Xtinstr\ before sending the adjusted colour to the east. When we rewrite the last \Xt\ to a bit, we have successfully read a \XCDinstr\ from the tape and we send an \XAbirth\ signal to the east (i.e.\ set $\XAbirth = 1$ on the east colour). Once \Xtinstr\ contains no \Xt s, we again simply copy it from west to east on each step. Also, we check that once \Xtinstr\ is no longer $\Xt^{26}$ we do not see any $B$ heads. On the east end we check $\Xtinstr = \Xginstr$. If head $C$ (respectively $D$) is present, we check that the \Xsymbol\ on the south colour is equal to \XCread\ (respectively \XDread); if not, we produce a tiling error. We may (but don't have to) send \XBbirth\ signals to the east from any cell with south $\Xsymbol$ equal to \Xh, i.e.\ we only impose $\XBbirth=0$ when the south \Xsymbol\ is not \Xh.

In the north and south directions, we by default simply copy the current tape symbol from south to north, except if $C$ or $D$ is present; we then instead write the symbol in \XCwrite\ and/or \XDwrite\ (and if both $C$ and $D$ are present and require conflicting writes we trigger a tiling error). We introduce $A$ and/or $B$ if and only if \XAbirth\ and/or \XBbirth\ is present on the west. As for $C,D$, we do the same. We send a $C$ north if and only if there is a $C$ head in the current cell and \XCmove\ of \Xginstr\ contains \texttt{00100} (the stay command); or there is no $C$, but there is a \texttt{0010} in \XCmsig\ meaning ``move right'', coming in from the west; or there is no $C$, but there is a \texttt{0100} in \XCmsig\ meaning ``move left'', coming in from the east; or \texttt{0001} meaning ``jump right'' (respectively \texttt{1000} meaning ``jump left'') in \XCmsig\ on the west (respectively east) and the current cell contains a special symbol. If there is no special symbol, we just propagate the signals \texttt{1000} or \texttt{0001} forward, checking that west and east contents agree.

This concludes the high-level (but complete) description of the tileset. 

We now explain how to use the tileset to perform universal computation of any deterministic two-headed Turing machine (one can also perform non-deterministic computation with this tileset, but we do not, and the claims about unambiguity of course need not be correct if we do).

The machines we simulate have two heads ($C,D$) which cannot sense each other's presence directly, but have shared state, and can also accept or fail by moving to a special state. They can read the tape content and rewrite it (under the cell), move around, and jump to the next or previous special symbol. If the machine to be simulated has $k$ states, we pick $t = \lceil \log k \rceil$ and pick a binary string of length $t$ for each state. After the initial \Xa-symbol, let us agree to put an instruction of the form `$\texttt{1110} \; \texttt{1110} \; \texttt{00100} \; \texttt{1110} \; \texttt{1110} \; \texttt{00100} \; \Xz^t$', where $\Xz^t$ is the identification of the initial state; this means that, when it is booted, the machine checks that heads $C,D$ are above a \Xp\ and starts at state $\Xz^t$. Then, for each transition (in the machine to be simulated) from state $q$ to state $q'$ where head $C$ reads symbol $r$, head $D$ reads symbol $r'$, the heads write $w,w'$ respectively, and the move instructions (stays, moves or jumps) are $j, j'$, we include
\begin{equation}\label{eq:cmd}
  \Xh\ [q]\ [r]\ [w]\ [j]\ [r']\ [w']\ [j']\ [q']
\end{equation}
as one of the \Xcmd s in \Xtape, where the brackets denote the natural encoding in the grammar of \Xtape, namely $j\in\XCmove$, $j'\in\XDmove$, etc. For an accept state, we use an `\Xh\ \Xok' type \Xcmd. To fail, we can similarly use `\Xh\ \Xz', or we can simply not simulate the transition at all; both choices lead to a tiling error.

\subsection{The fixed-point construction in detail}
We list some conventions we follow and numerical choices:
\begin{itemize}
\item For simplicity we assume that the simulated subshift $X$ is over binary alphabet.
\item The responsibility zones of macrotiles of level $k$ are of length $2^k$ and the $i$th tile is responsible for the $\lfloor i/2^k \rfloor$th such zone, which of course corresponds to a natural $\hor_k$-block. In this manner, many tiles are responsible for the same block, but this leads to particularly simple calculations.
\item Instead of a computation zone at the center, we simply start the computation at the south border of the macrotile, and the entire macrotile is used as a computation area.
\item The neighbour colours are stored to the right of the program (starting after the first \Xp\ symbol), in order $N,S,W,E$ or $S,N,W,E$ depending on the parity of the y-co\"ordinate of the position of the macrotile in its parent. (The first thing we will do in our program is to swap these back in software if the y-co\"ordinate is odd, after which the order is always $N,S,W,E$.)
\item Tiles will be referred to as macrotiles of level $0$. For $k>0$, macrotiles of level $k$ will consist of $2^{n_k}$ tiles of level $k-1$, with $n_k = 4^{k + c}$ for a suitable constant $c$. We expect $c = 1$ to suffice, meaning macrotiles of level $1$ already have side length $65536$. The actual size of the macrotile of level $k$ (measured in macrotiles of level $0$) is $L_k = \prod_{1\leq i \leq k} 2^{4^{i+c}} \leq 2^{4^{k+1+c}}$, meaning there are more macrotiles of level $k$ in a macrotile of level $k+1$ than there are tiles in a macrotile of level $k$. Note that $L_k$ is a power of $2$, so a macrotile is of the shape $\hor_\ell \times \ver_\ell$ for some $\ell$.
\item We define $t_k = 4^{k + 4 + c}$; this will be used as the length of ``packets'' exchanged between macrotiles.
\item We assume that the uniquely decodable substitution $\tau\colon A \to A^2$ has $\#A=2$, so elements of $A \times A$, encoding $\ver$ and $\hor$ configurations of $\tau$, can be conveniently encoded as $\Xsymbol$s in $(\Xz\mid\Xo)^4$.
\item Great care is taken so that computations are done in tiny morsels; in particular, each cell only takes care of a small part of the checking.
\end{itemize}

The macrotile of level $k$ consists of macrotiles of level $k-1$. The macrotiles of level $k-1$ should be thought of as a tileset with two layers. From the point of view of level $k$, its level-$(k-1)$ macrotiles come from a tileset of constant size, but with complex rules governing the allowed patterns. At level $1$, this is simply enforced by SFT rules, but in general the job of the $(k-1)$-macrotiles is to ensure the correct structure on level $k$. The first layer of a level-$(k-1)$ macrotile simulates a tile of the universal tileset from the previous section, and the second carries individual symbols along ``wires''.

\newcommand\XNSWE{\BNFterm{NSWE}}
\newcommand\XN{\BNFterm{N}}
\newcommand\XS{\BNFterm{S}}
\newcommand\XW{\BNFterm{W}}
\newcommand\XE{\BNFterm{E}}
\newcommand\Xk{\BNFterm{k}}
\newcommand\Xc{\BNFterm{c}}
\newcommand\Xpos{\BNFterm{pos}}
\newcommand\Xppos{\BNFterm{ppos}}
\newcommand\Xword{\BNFterm{word}}
\newcommand\Xpword{\BNFterm{pword}}
\newcommand\Xsub{\BNFterm{subst}}
\newcommand\Xpsub{\BNFterm{psubst}}
\newcommand\Xxpos{\BNFterm{xpos}}
\newcommand\Xypos{\BNFterm{ypos}}
\newcommand\Xpxpos{\BNFterm{pxpos}}
\newcommand\Xpypos{\BNFterm{pypos}}
\newcommand\Xpacket{\BNFterm{packet}}
\newcommand\Xwire{\BNFterm{wire}}
\newcommand\Xsimu{\BNFterm{simu}}
We are ready to explain how we program the universal Turing machine of the previous section. The data portion in the south colours of the $(k-1)$-macrotiles on the southmost row of a $k$-macrotile is split into a fixed number ($4\cdot9+10=46$) of \Xp-separated tritfields. They are of the following form, with the number of \Xt's at the end of \Xtape\ adjusted so the total length of the expression is $2^{n_k}$, and the number of \Xt's at the end of \Xpacket\ adjusted so the total length of the packet is $t_k$:
\begin{bnftable}
  \Xdata & \begin{array}[t]{l}
                 \XNSWE\ \Xp\ \Xk\ \Xp\ \Xc\ \Xp\ \Xpos\ \Xp\ \Xppos\\
                 \Xp\ \Xword\ \Xp\ \Xpword\ \Xp\ \Xsub\ \Xp\ \Xpsub
               \end{array}\\
  \XNSWE & \XN\ \Xp\ \XS\ \Xp\ \XW\ \Xp\ \XE\\
  \XN,\XS,\XW,\XE & \Xpacket\cap S^{t_k}\\
  \Xk & \Xz^k\\
  \Xc & \Xz^c\\
  \Xpos & \Xxpos\ \Xp\ \Xypos\\
  \Xxpos,\Xypos & (\Xz\mid\Xo)^{4^{k+1+c}}\\
  \Xppos & \Xpxpos\ \Xp\ \Xpypos\\
  \Xpxpos,\Xpypos & (\Xz\mid\Xo)^{4^{k+2+c}}\\
  \Xword & (\Xz\mid\Xo)^{2^k}\mid \Xt^{2^k}\\
  \Xpword & (\Xz\mid\Xo)^{2^{k+1}}\mid \Xt^{2^{k+1}}\\
  \Xsub,\Xpsub & \Xsymbol\\
  \Xpacket & \Xpword\ \Xp\ \Xpos\ \Xp\ \Xppos\ \Xp\ \Xwire\ \Xp\ \Xsimu\ \Xp\ \Xpsub\ \Xp\ \Xt^*\\
  \Xwire & (\Xz\mid\Xo\mid\Xt)^4\\
  \Xsimu & \XNScolour\ \Xt^{54}\mid \XWEcolour
\end{bnftable}

Here \Xppos, \Xpword\ refer to the position and word of a macrotile's parent (one level higher). The ``beams'' or ``packets'' sent along the \Xpacket\ regions are of great importance, and will be detailed later. If a beam points to outside of a macrotile, \Xpword\ is simply the $0$-word. \Xsimu\ serves to simulate the universal tileset, by propagating \XNScolour\ or \XWEcolour\ which follows the grammar of the previous section.

We turn to the program \Xprog\ to be run on our universal Turing machine, ultimately a list of \Xcmd\ as in~\eqref{eq:cmd}, namely the transcription of a two-headed deterministic Turing machine into the format understood by the universal tileset. Instead of giving the transitions, we explain the (many, easy) feats that the Turing machine must perform. Our task is to write a program that checks that, if every level-$k$ macrotile executes correctly \Xprog\ according to its position \Xpos, then every level-$(k+1)$ macrotile also follows \Xprog, and its data is consistent with its children in the obvious way (the $k$-level macrotiles directly contain some information about the parent, such as the parent word \Xpword, and we simply check the consistency of these).

This is at the heart of the self-reference we wish to implement: the program and data are encoded in the \Xsymbol\ fields of the south border of each level-$k$ macrotile, namely words in $S^*$. Each macrotile also behaves as a single tile, and has in each (N,S,W,E) direction a ``beam'', which contains in particular a symbol (its \Xwire)


First, the Turing machine will swap the north and south neighbour data beams if the vertical position \Xypos\ is odd. This (like everything else) is easy enough that one can directly describe how the simulated two-headed Turing machine does it: we use $32$ right jumps to get $D$ to the \Xp-symbol just after \Xpos, then move one step backwards to read its parity, and branch to different states of the Turing machine. If the parity is even, we jump $C$ and $D$ leftwards (again through a fixed number of \Xp s) to get back to the initial symbol. Otherwise, we jump $C$ and $D$ leftwards (still a fixed number of \Xp s) to the beginnings of the \XS\ beam and \XN\ beam, respectively, and then have both heads step right. Now, in a single sweep over these areas, we can swap their contents using our two heads.

Next, we perform a sequence of consistency checks. First of all, we check that in the \Xsimu\ areas of all packets we have an element of the universal tileset. This means that our Turing machine must check the logic described in the previous section. If the \Xpos\ of our macrotile is on one of the borders, we also check the border conditions for the tiles of the universal tileset. This is just a finite list of conditions, which was implicitly listed in the previous section. 

We describe some of the details involved in the above paragraphs. First of all, we must be able to check whether we are on a boundary; for this we must check whether the \Xxpos\ (respectively \Xypos) of our own data is equal to $0$ or to $2^{n_{k+1}} - 1 = 2^{4^{k+1+c}} - 1$, as the present macrotile of level $k$ is a cell in a level $k+1$ macrotile which is of size $2^{4^{k+1+c}}$. This amounts to checking if the word \Xxpos\ (respectively \Xypos) equals $\Xz^{4^{k+1+c}}$ or $\Xo^{4^{k+1+c}}$.

On the boundaries we perform some constant number of checks. The only difficulty is the south border, where the checks refer to specific positions of the heads. We check that $\Xphase = \Xz$ (easy), that $A$ is present (in the packet \XS) if and only if $\Xxpos = 0$, that $B$ is present if and only if $\Xxpos = 1$, and that $C,D$ are present if and only if we are on the first \Xp. We will (below) ensure that the program is the same, so it is easy to figure out where the \Xp\ is, just having $C$ walk along the program and increment a counter with $D$, and check if this counter value is \Xxpos\ when $C$ reaches \Xp. Note that we can have $D$ write the counter directly on top of the \Xxpos\ area, since our alphabet is $\{0,1,2\} \times \{0,1,2,3\}$, so it is easy to compare these numbers. (It is in such places that it is convenient to have a second component in $S$, giving us effectively a scratch area along every field of \Xdata).


We are now ready to discuss the fixed-point part of the construction. What the macrotiles of level $k$ need to ensure is that the the data on the south row in \Xsimu\ in the macrotile of level $k+1$ is as expected, and wire bits are propagated correctly. Then the next macrotile will also be sending information correctly and performing the calculations on the correct data, and we get by induction that all levels work similarly. We start with \Xsimu. Let us describe things checked on the south border, namely what the level $k$ macrotile checks about the symbols in its south beam \XS\ in case $\Xypos = 0$. We describe the checks in order from left to right.

We can check that the program is correct in the same way as we checked for the \Xp-symbol after it when placing $C,D$: while walking along the program with $C$, if the counter incremented by $D$ hits the value in \Xxpos, we check that the south packet has the same symbol on the tape as $C$ is reading. The symbols that are part of the program are nothing special, so $C$ can read them just fine, and it is easy to locate \Xsimu\ of the south packet for comparison. (This is sometimes called the ``magic trick'' of the fixed-point argument.) Also, we check that if we are on the position exactly after the program (where we enforced $C,D$ already), then the symbol is indeed \Xp.

If \Xxpos\ belongs to the \XNSWE\ area of the next macrotile, we just check that the symbols belong to $\{\Xz,\Xo,\Xt,\Xp\}$, and \Xp's appear in the correct positions. (There are many consistency checks on the values, but these are checked by the macrotile of level $k+1$, as described later, so the macrotiles of level $k$ need not worry about them.) To go over this area and perform the checks, we must increment our counter by $4t_{k+1} + 4$. It is easy to increment a scratch binary counter (stored using the second component of $S$ within \Xxpos) by $t_{k+1} = 4^{k + 4 + c} = 2^{2k+2c+8}$: simply use one head to walk over the \Xk\ and \Xc\ areas twice, then perform $8$ more steps, and then add $1$. Each time the binary counter is incremented, check that if it equals \Xxpos\ then the symbol in the south colour is \Xp, and otherwise it is a trit.

We then check that the parent \Xk\ and \Xc\ values are correct, namely $\text{parent}\Xk = k+1$ and $\text{parent}\Xc = c$. For this, beginning with the counter value we already have on \Xxpos\ (which should be where the \Xk-area begins in the parent tile) we have the $C$ head walk on the tape over \Xk\ while incrementing the counter. At the end we take one extra step to increase by one the \Xk-value in the parent tile. Then we do the same with \Xc. Again if the counter value equals \Xxpos\ then we check we have the correct value \Xz\ or \Xp\ in the \Xsymbol\ of \Xsimu\ of \XS.

Next we check that that the parent's own position on tape is correct. If \Xxpos\ represents a bit string in this area, we simply check this bit string against \Xdata.\Xppos. (The operator $.$ refers to indexing parts of a field, so this means the $\Xppos$-component inside the $\Xdata$ field.) What this means in practice is that we increment the counter while walking with $C$ on \Xdata.\Xppos, and if the counter equals \Xxpos\ then we check that \XS.\Xsimu.\Xsymbol\ has the same bit as our \Xdata.\Xppos. Next we skip over the parent.\Xppos\ area (by incrementing the counter by $4^{k+c+2}$), as we have no idea what its value should be, and only check that if \Xxpos\ is in this area, the symbol is a bit, except at the exact middle where it must be an \Xp. 

Next we similarly check the parent's \Xword, namely compare it with \Xpword. Then we check parent.\Xsub\ against \Xpsub. For parent.\Xpword\ and parent.\Xpsub\ again we only check the alphabet constraints.

We then check that at the end we have a \Xd\ symbol and then only \Xt s. This is straightforward, we just scan for (the encoding of) \Xd\ in \XS.\Xsimu.\Xsymbol\ if \Xxpos's value is exactly the expected length of \Xdata\ of the parent, and if it is larger, we check that the symbol encoded is \Xt.

We then check that the data in the beams \XNSWE\ is consistent with the data in \Xdata. First we check that
\[ \XE.\Xpos = \XN.\Xpos = \Xdata.\Xpos \equiv \XW.\Xpos+1 = \XS.\Xpos+1\pmod{2^{n_{k+1}}}.\]
We then check $\BNFterm{I}.\Xppos = \Xdata.\Xppos$ if $I \in \{\texttt N,\texttt S,\texttt W,\texttt E\}$ is an edge that is not on the boundary of the macrotile. We do the same for \Xpsub, and finally check that after all these entries there are only trailing \Xt s.

The only non-trivial part to check about the \XNSWE\ beams is that \Xwire\ contents are correct. What makes this non-trivial is that the wires make some turns, and we have to figure out whether the current macrotile is part of a wire or not. We follow the picture below (note that each macrotile checks that its \emph{parent} looks like this, by routing according to its position in this picture) when \Xpypos\ is even; when \Xpypos\ is odd, the beam that goes up is from the \XS\ zone.
\begin{center}
  \begin{tikzpicture}
    \draw (0,0) rectangle (10,6);
    \node[anchor=north] at (0.1,0.08) {\Xa};
    \draw [decorate,very thick,decoration = {calligraphic brace}] (1.85,-0.1) -- node[below] {\Xprog} ++(-1.6,0);
    \node[anchor=north] at (2.0,0.08) {\Xp};
    \foreach \s/\x in {\XN/2.85,\XS/3.85,\XW/4.85,\XE/5.85} {
      \draw [decorate,very thick,decoration = {calligraphic brace}] (\x,-0.1) -- node[below] {\s} ++(-0.7,0);
      \node[anchor=north] at (\x+0.15,0.08) {\Xp};
    }
    \foreach \x in {0,0.15,0.3,0.45,0.6} {
      \draw [very thick,black!60] (2.2+\x,0) -- ++(0,6)
          (4.2+\x,0) -- ++(0,\x+0.15) -- (0,\x+0.15)
          (5.2+\x,0) -- ++(0,0.75-\x) -- (10,0.75-\x);
    }
  \end{tikzpicture}
\end{center}
We now explain the arithmetic of making these signals move correctly. There are of course many alternative ways of routing the signals, but the one we chose has the advantage of being easy to implement in linear time. The following rules ensure that the \Xwire\ symbol is either the correct one transmitted along a wire, or \texttt{2222} to indicate ``no wire''.

First, let us attach the ends of wires correctly. We assume $\Xypos = 0$. Suppose first that \Xpypos\ is even. If \Xxpos\ points to the parent.\XN\ area (which we already know how to check) then our north packet's \Xwire\ bit should be equal to the \Xsimu.\Xsymbol\ of the south packet, and the south wire symbol should be \texttt{2222}. If \Xxpos\ points to the parent.\XS\ area then instead the south wire symbol should equal the south colour's \Xsimu.\Xsymbol\ and the north wire symbol should be \texttt{2222}. On the west and east, we copy data to the north wire bits. If \Xxpos\ points somewhere else and $\Xypos = 0$, then the north and south wire bits carry \texttt{2222}. If \Xpypos\ is odd we do the same but with the roles of north and south exchanged.

We then propagate the data along the wires, and make sure that there is no additional information passed on edges not part of wires (otherwise our extension would have additional entropy). If $\Xypos>0$ and \Xxpos\ is not in the parent.\XW\ or parent.\XE\ areas, then we just check that the north and south bits agree. For the west beam, if \Xxpos\ is in the parent.\XW\ area, we compute $h = \Xxpos - \text{parent}.\XW.\text{start}$, namely how far \Xxpos\ is within the parent.\XW\ area. If $\Xpypos<h$ then we copy the south wire symbol to north, and send \texttt{2222} west and east. If $\Xpypos = h$ then we copy south to west and send \texttt{2222} north and east. If $h < \Xpypos < t_{k+2}$ then we copy east to west and send \texttt{2222} south and north. If $\Xpypos \geq t_{k+1}$ then we send \texttt{2222} in all four directions. For the east beam, the same calculation is performed but pointing in the east direction.

We next check that the substitution is correctly implemented, namely that \Xdata.\Xsub\ is consistent with \Xdata.\Xpsub. For this we have to actually compute the $n_{k+1}$th power of the substitution on \Xpsub, and check that our \Xsub\ is indeed what is expected in the \Xpos\ position of $\tau^{n_{k+1}}(\Xpsub)$. In practice, we memorize \Xpsub\ in a state of our Turing machine, and put heads $C,D$ respectively at the beginnings of \Xxpos\ and \Xypos\ by using a constant number of jumps. Each time both heads read a symbol we know which of the four quadrants we stepped into, and we can deduce the substituted symbol. In this manner, after a single sweep over \Xxpos\ and \Xypos\ we know what the \Xsub\ symbol should be.

Finally, we make sure that all macrotiles have access to the same word in the subshift $X$ under consideration; this means that \Xword\ should be consistent with \Xpword. For this, let us introduce the notation $\lfloor m\rfloor_k$ for the integer $\lfloor m/2^k\rfloor2^k=m-(m\%2^k)$; in binary, this is obtained from the representation of $m$ by clearing its lowest $k$ bits. The present tile's \Xword\ contains by induction the symbols in the columns in the interval $[\lfloor\Xypos\rfloor_k,\lfloor\Xypos+1\rfloor_k[$ counting from the left border (along level-$0$ tiles of course), if this interval fits inside the tile. Otherwise we are not responsible for anything and simply require that \Xword\ is a string of \Xt s. Similarly the parent is responsible for the columns $[\lfloor\Xpypos\rfloor_{k+1},\lfloor\Xpypos+1\rfloor_{k+1}[$ inside itself. We first translate our interval to the parent level by adding $L_k\Xxpos$, recalling that $L_k = \prod_{1\leq i \leq k} 2^{4^{i+c}} \leq 2^{4^{k+1+c}}$ is the size of the present macrotile. This number is very easy to construct: we simply run a $4$-ary counter on the $\Xk\ \Xp\ \Xc$ area with one head while moving the other head left, and whenever we detect a new carry, we add a bit in that position (and propagate a possible carry).

In practice, since the responsibility zones on our level are delimited by multiples by $2^k$ and those on the parent level by multiples of $2^{k+1}$, we just need to check whether $L_k\Xxpos+\lfloor\Xypos\rfloor_k=\lfloor\Xpypos\rfloor_{k+1}$ or $L_k\Xxpos + \lfloor\Xypos+1\rfloor_k = \lfloor\Xpypos+1\rfloor_{k+1}$,
both of which are straightforward. To detect the former condition, we check whether \Xword\ equals the left half of \Xpword, and to detect the latter, we check whether \Xword\ equals the right half of \Xpword.

All of this was bookkeeping to make sure that macrotiles of all levels perform the same computation. We finally have some time to do actual work: we run $k$ steps of computation of the Turing machine defining the $\hor$-subshift $X$, to obtain at most $k$ forbidden patterns, all of length at most $k$. We make sure that none of the subwords of \Xword\ is one of these forbidden patterns.


\begin{bibsection}
  \begin{biblist}
    \bibselect{math}
  \end{biblist}
\end{bibsection}

\tableofcontents

\end{document}